\newcommand{\blind}{1}
\DeclareMathOperator*{\dsum}{{\sum}^\prime}
\newcommand{\s}{{\mathcal{S}}}
\newcommand{\E}{\mathbb{E}}
\newcommand{\p}{\mathbb{P}}
\newcommand{\R}{\mathbb{R}}
\newcommand{\h}{\mathcal{H}}
\newcommand{\X}{\mathcal{X}}
\newcommand{\Y}{\mathcal{Y}}
\newcommand{\Z}{\mathcal{Z}}
\newcommand{\mcg}{\mathcal{G}}
\newcommand{\emgn}{\mathcal{E}(\mathcal{G}_n)}
\newcommand{\indep}{\perp \!\!\! \perp}
\newtheorem{theorem}{Theorem}
\newtheorem{lemma}{Lemma}
\newtheorem{corollary}{Corollary}
\newtheorem{remark}{Remark}
\newtheorem{proposition}{Proposition}
\newtheorem{defn}{Definition}
\newtheorem{assump}{Assumption}
\begin{document}

\def\spacingset#1{\renewcommand{\baselinestretch}%
{#1}\small\normalsize} \spacingset{1}


\if1\blind
{
  \title{\bf A Kernel Measure of Dissimilarity between $M$ Distributions}
  \author{Zhen Huang\hspace{.2cm}\\
    Department of Statistics, Columbia University\\
    e-mail: \url{zh2395@columbia.edu}\\
    and \\
    Bodhisattva Sen\thanks{Supported by NSF grant DMS-2015376}\\
    Department of Statistics, Columbia University\\
    e-mail: \url{bodhi@stat.columbia.edu}}
  \maketitle
} \fi

\if0\blind
{
  \bigskip
  \bigskip
  \bigskip
  \begin{center}
    {\LARGE A Kernel Measure of Dissimilarity between $M$ Distributions}
\end{center}
  \medskip
} \fi

\begin{abstract}
Given $M\geq 2$ distributions defined on a general measurable space, we introduce a nonparametric (kernel) measure of multi-sample dissimilarity (KMD) --- a parameter that quantifies the difference between the $M$ distributions. The population KMD, which takes values between 0 and 1, is 0 if
and only if all the $M$ distributions are the same, and 1 if and only if all the distributions are mutually singular. Moreover, KMD possesses many properties commonly associated with $f$-divergences such as the data processing inequality and invariance under bijective transformations.
The sample estimate of KMD, based on independent observations from the $M$ distributions, can be computed in near linear time (up to logarithmic factors) using $k$-nearest neighbor graphs (for $k \ge 1$ fixed). We develop an easily implementable test for the equality of $M$ distributions based on the sample KMD that is consistent against all alternatives where at least two distributions are not equal. We prove central limit theorems for the sample KMD, and provide a complete characterization of the asymptotic power of the test, as well as its detection threshold. The usefulness of our measure is demonstrated via real and synthetic data examples; our method is also implemented in an \texttt{R} package. 
\end{abstract}

\noindent%
{\it Keywords:} Asymptotic power behavior, detection threshold, $k$-nearest neighbor graph, multi-distribution $f$-divergence, nonparametric test for equality of distributions 

\spacingset{1.0} 
\section{Introduction}\label{sec:introduction}
Suppose that $\Z$ is a general measurable space, and we have $M$ distributions $P_{1},\ldots,P_{M}$ on $\Z$ from which we observe independent samples.
A natural statistical question to ask here is: ``Are these $M$ distribution the same? If they are not the same, then how different are they?". In this paper, we answer these questions
by proposing a nonparametric measure that quantifies the differences between the multiple samples.
To rephrase this in statistical language, we define a measure $\eta \equiv \eta(P_{1},\ldots,P_{M}) $ such that:
\begin{itemize}
	\item[(i)] $\eta$ is a deterministic number between $[0,1]$; 
	
	\item[(ii)] $\eta = 0$ if and only if $P_{1} =\ldots= P_{M}$ (i.e., all the $M$ distributions are the same), and 
	
	\item[(iii)] $\eta = 1$ if and only if the $M$ distributions are mutually singular, i.e., there exist disjoint measurable sets $\{A_i\}_{i=1}^M$ such that $P_i(A_i) = 1$, for $i=1,\ldots,M$. Thus, $\eta=1$ quantifies that the $M$ distributions are very different.
	
\end{itemize}
Moreover, any value between 0 and 1 of $\eta$ would convey an idea of how different these $M$ distributions are.  We call our proposed $\eta$ as a {\it kernel measure of multi-sample dissimilarity} (KMD) as its definition involves a positive semi-definite kernel matrix.

While there is a rich literature on the multi-sample testing problem:
\begin{equation}\label{eq:hypo}
{\rm H}_0: P_1=\ldots = P_M\quad\qquad {\rm against}\quad\qquad {\rm H}_1:P_i\neq P_j,\ {\rm for\ some\ }1\leq i<j\leq M,
\end{equation}
(see Section~\ref{sec:related} for a detailed review) most of these tests do not quantify to what extent these distributions are different, when the null is violated.
Moreover, many popular distances or similarities --- such as the KL-divergence, Hellinger distance, and other $f$-divergences \cite{Renyi1961entropy,   Hellinger1909, Beran1977Hellinger} --- that quantify the difference between two distributions can be difficult to estimate when $\Z$ is not a Euclidean space. In modern statistical applications, it is quite often necessary to compare more than two distributions, and they can be defined on a general measurable space. 
Two motivating applications in this direction are:

\vspace{0.05in}
{\noindent {\bf Example 1} (Multivariate functional data)}: In speech recognition, data may be viewed as functional inputs.
\begin{figure}
    \centering
    \includegraphics[width = 1\textwidth]{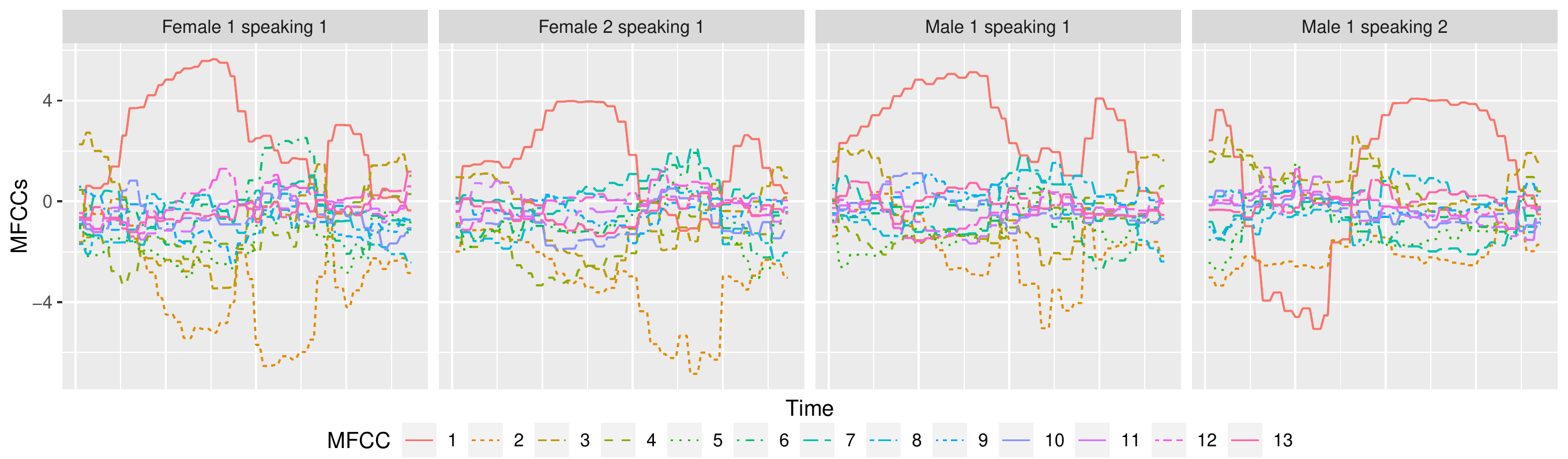}
    \caption{Four instances of spoken Arabic digits, each of which contains time series of 13 Mel Frequency Cepstrum Coefficients (MFCCs).}
    \label{mfccs}
\end{figure}
For example, Figure~\ref{mfccs} shows four instances of spoken Arabic digits from the ArabicDigits data set in \citet{gorecki2017package}, each of which
contains time series of 13 Mel Frequency Cepstrum Coefficients (MFCCs).
Hence, in this scenario, $\Z$ is the space of all 13-dimensional time series.
A natural problem here is to distinguish the 10 spoken digits, which can be seen as $M=10$ different distributions on $\Z$.
Knowing how different these $M$ distributions are on a scale between 0 to 1 provides a sense of how well any machine learning algorithm can perform in distinguishing these $M$ digits. One can also study how the digits spoken may differ by gender. For example, how males speak the number 1 should be different from how males speak the number 2, but this difference should be smaller than that between how females speak 1 and how males speak 2. A statistical measure that quantifies the extent of these differences can be obtained by our procedure.

\vspace{0.05in}

{\noindent {\bf Example 2} (Distribution over documents)}: In sentiment analysis \citep{maas2011learning,text2vec2020},
the goal is to identify the sentiment (e.g., positive, negative, neutral) of a document.
These documents may each contain hundreds of words, and are typically of different lengths.
For example, \citet{maas2011learning} considered 2000 movie reviews, each of which is either positive or negative.
If there are in total $M$ sentiments, then there are naturally $M$ distributions in the space of documents --- each distribution corresponding to a different sentiment.
Let $V$ be the vocabulary, i.e., the set of all possible words.
Then, in this case, $\Z=\{(v_1,\ldots,v_n):v_i \in V, n\in\mathbb{N}\}$ with each element in $\Z$ denoting a generic document.
Given a specific data set with $n_i$ documents, conveying the $i$-th sentiment, for $i=1,\ldots,M$, a data analyst may want to know how different the $M$ sentiments are in this data set.
Our procedure can quantify this difference, so as to suggest whether it is possible to effectively classify the different sentiments.

Often in practice, we have a distance between two objects in $\Z$ which has already been demonstrated to be useful in the domain area.
For the first example above, a useful metric is the {\it dynamic time warping} (DTW) distance between multi-dimensional time series \citep{berndt1994using,gorecki2015multivariate}, a distance which could account for differences in speaking rates between speakers and can be computed even when the two time series are discretized into different lengths.
For the second example above, one can use the {\it Jaccard distance} or other distances between documents \citep{text2vec2020}.
In these situations, applying usual Euclidean methods may require additional embedding efforts (see e.g., \cite{gysel2018neural} on document embedding).
In contrast, it will be seen that our method can directly provide an easy, practical, and interpretable way to quantify the difference between multiple distributions,
as long as a metric is available on the space $\Z$.

In this paper, given $n_i$ independent observations from $P_{i}$, $i=1,\ldots,M$, we propose and study an empirical estimator $\hat \eta$ of $\eta$, constructed using {\it geometric graphs} \citep{deb2020kernel, bhattacharya2019general} (e.g., the $k$-nearest neighbor ($k$-NN) graph for $k\geq 1$).
The main contributions of the paper, and some important properties of $\eta$ and $\hat{\eta}$ are summarized below:

\begin{enumerate}
\item We propose a nonparametric measure of multi-sample dissimilarity $\eta = \eta(P_{1},\ldots,P_{M}) \in[0,1]$  that satisfies properties (i)--(iii) mentioned above (see Theorem~\ref{thm:3prop}).
Moreover, any value of $\eta$, between 0 and 1, conveys an idea about how different these distributions are. For example, in a large class of location and scale distributions, $\eta$ increases as the ``difference" between the parameters gets larger (see Proposition~\ref{prop:monotonicity}).
Our $\eta$ also satisfies many desirable properties commonly associated with $f$-divergences \citep{Csi1967info}, including the data processing inequality and invariance under bijective transformations (Proposition~\ref{prop:Inv}) and joint convexity (Proposition~\ref{prop:convex}). Indeed, $\eta$ is a member of the multi-distribution $f$-divergence proposed in \citet{pmlr-v23-garcia12};
however, no estimation strategy was given in \cite{pmlr-v23-garcia12}.

\item We develop an estimator $\hat{\eta}$ of $\eta$, which is consistent (Theorem~\ref{thm:consist}),  interpretable, easily implementable and computationally efficient. It can be computed in  linear time (up to logarithmic factors); this is an enormous reduction from the $O(n^2)$ complexity of energy and kernel based methods \citep{szekely2013energy,Gretton12}, and other $O(n^3)$ complexity / NP complete approaches \citep{Rosenbaum05nonbi,Biswas2014Hamiltonian,deb2019multivariate,mukherjee2020distribution,hallin2020fully}. We further show that $\hat{\eta}$ is a generalization of the two-sample statistic based on $k$-NN proposed in \citet{Sc86} and \citet{Henze88NN} (see Remark~\ref{rk:connection_graph_est}).
The multi-sample test~\eqref{eq:hypo} based on $\hat{\eta}$ is also consistent against all alternatives for which $P_i\neq P_j$, for some $1\leq i<j\leq M$ (Corollary~\ref{cor:universal_consist}). 


\item The asymptotic distribution of $\hat{\eta}$ is Gaussian, which yields easy-to-use asymptotic tests. Under the null hypothesis \eqref{eq:hypo}, the permutation distribution\footnote{Given the pooled sample without the information on their sample identities, any permutation of the sample identities is equally likely under the null hypothesis \eqref{eq:hypo}. This conditional distribution is referred to as the {\it permutation distribution} in this paper.} and the unconditional distribution of $\hat{\eta}$ are both asymptotically normal (Theorem~\ref{thm:asympnull}). Further, when $\Z$ is a Euclidean space, the asymptotic null distribution is distribution-free if the common distribution has a Lebesgue density 
(see Theorem~\ref{thm:DistFree} for a precise statement).


\item We further provide in Section~\ref{sec:asymp_power_and_threshold} a complete characterization of the asymptotic power of the test for~\eqref{eq:hypo} based on $\hat \eta$ (see Theorem~\ref{thm:detection_threshold0}), and its detection threshold, using the technique of Poissonization (cf., the recent paper~\citet{BB20detection} where the detection threshold for as a class of graph-based two-sample tests is studied). In particular, we show that, under both fixed and shrinking alternatives converging to the null, $\hat{\eta}$ has an asymptotic normal distribution after proper centering; see Appendix~\ref{sec:normal_alternative} (Theorems~\ref{thm:normal_alternative} and \ref{thm:shrinking_alternative}).

\item In Section~\ref{sec:simulations}, we demonstrate the usefulness of the proposed methodology via real and synthetic data examples. Our method is also implemented in an \texttt{R} package\footnote{See \url{https://cran.r-project.org/package=KMD} and \url{https://github.com/zh2395/KMD}.}.
\end{enumerate}
The outline of the paper is as follows. In Section~\ref{sec:def_eta}, we formally define $\eta$ and investigate its properties. Its estimator $\hat{\eta}$ is studied in Section~\ref{sec:eta_hat} along with its basic characteristics. In Section~\ref{sec:asymp_dist} we provide the asymptotic distribution of $\hat{\eta}$, under the null~\eqref{eq:hypo}. A rigorous study of the power behavior of the test based on $\hat{\eta}$ and its detection threshold is given in Section~\ref{sec:asymp_power_and_threshold}. Simulations and real data experiments are provided in Section~\ref{sec:simulations}. All the proofs of our main results, further discussions, additional numerical experiments, and more results on the behavior of $\hat\eta$ under the alternative are given in Appendices~\ref{sec:general_discussion}-\ref{sec:further_simu}.

\subsection{Related Works}\label{sec:related}
The $M$-sample testing problem \eqref{eq:hypo} has been extensively studied in the statistics literature, both in parametric and nonparametric regimes. Parametric tests (e.g., $t$-test, MANOVA, likelihood ratio tests, Wald tests) are provably powerful when the underlying model assumptions hold true (see e.g.,~\cite{Lehmann2005test} and the references therein), but could have poor performance when the model is misspecified. In comparison,
nonparametric methods are usually powerful against general alternatives, under much more relaxed assumptions on the data distributions; this will be the framework adopted in this paper.

There is a long history of nonparametric two-sample tests. In the one-dimensional case, classical well-known distribution-free tests include the Kolmogorov-Smirnov test \citep{KS1939} and the Wald-Wolfowitz run test \citep{Wald1940run}. In the multivariate setting,~\citet{FR79} proposed generalizations of the Wald-Wolfowitz test based on the minimum spanning tree of the pooled sample. Its theoretical properties were further analyzed by \citet{HenzePenrose1999}. Multivariate two-sample tests based on nearest neighbor ideas were proposed in~\cite{Sc86,Henze88NN,hall2002permutation}.~\citet{Chen2017object} extended a number of two-sample tests based on type coincidences in a geometric graph.~\citet{bhattacharya2019general} proposed a general asymptotic framework for studying  graph-based two-sample tests; also see~\cite{BB20detection}.
Apart from the above tests based on geometric graphs, there are two-sample tests based on data depth~\citep{liu1993index,Zuo06depth}.
In the past decade, energy statistics \citep{szekely2013energy} and tests based on the maximum mean discrepancy (MMD) \citep{Gretton12} in the kernel literature have also drawn great attention in two-sample testing, and their equivalence has been established \citep{Sejdinovic2013}.
There are also methods that achieve finite-sample distribution-freeness by using minimum non-bipartite matching \citep{Rosenbaum05nonbi,mukherjee2020distribution}, the shortest Hamiltonian path \citep{Biswas2014Hamiltonian}, or multivariate ranks defined via optimal transport \citep{deb2019multivariate,hallin2020fully}. 

When moving to general $M$-sample testing, \citet{Petrie2016} generalized a number of graph-theoretic tests to the multi-sample scenario.
Recently \citet{mukherjee2020distribution} generalized the test in \cite{Rosenbaum05nonbi} based on minimum non-bipartite matching, retaining the exact distribution-freeness in finite samples.
Energy statistic has also been used in \citet{DISCO2010} for $M$-sample testing, providing a nonparametric extension of ANOVA.
\citet{liu1993index} proposed an index $Q$ between $[0,1]$ measuring the dissimilarity between two distributions, based on data depth, with the null being achieved when $Q=\frac{1}{2}$; however $Q$ does not satisfy properties (i)-(iii). Although some $f$-divergences, such as the Hellinger distance \citep{Hellinger1909} or the total variation distance, satisfy (i)-(iii), they cannot be easily extended beyond $M=2$.



\section{KMD: The Population Version}\label{sec:def_eta}
In this section we define the population version of our measure of dissimilarity $\eta$ between the $M$ distributions $P_1,\ldots, P_M$. Our definition of $\eta$ involves the use of a {\it reproducing kernel Hilbert space} (RKHS) over the finite discrete space $\s := \{1,\ldots,M\}$,
and hence we call our proposal the {\it kernel measure of multi-sample dissimilarity} (KMD). Although the discrete kernel $K(x,y) := I(x=y)$, for $x,y\in\s$, seems to be the most natural choice of the kernel over the discrete space $\s$, our results are applicable to other kernels as well. \vspace{0.08in}

\noindent{\bf Reproducing Kernel Hilbert Space (RKHS)}: While there is a general theory of RKHS on arbitrary spaces, the RKHS over a finite space is much simpler, which will be introduced in the following. For an introduction to the theory of RKHS and its applications in statistics we refer the reader to~\cite{BT-A04,SVM}.
By a {\it kernel} function $K:\s\times\s \to\mathbb{R}$ we mean a symmetric and nonnegative definite function, i.e., the matrix $[K(i,j)]_{i,j=1}^M$ is positive semi-definite.
The kernel $K(\cdot,\cdot)$ is said to be {\it characteristic} if
for any $(\alpha_1,\ldots,\alpha_M) \neq (0,\ldots,0)$ with $\sum_{i=1}^M \alpha_i =0$, we have $\sum_{i,j=1}^M \alpha_i\alpha_jK(i,j) > 0$. Note that the usual definition of a characteristic kernel is through the uniqueness of the {\it kernel mean embedding} \citep{charac-Bharath}, but it is equivalent to this simpler definition when the space $\s$ is finite; see e.g.,~\citep[Section 4.4]{charac-Bharath}.

\subsection{Definition of $\eta$}\label{subsec:def_eta}

Suppose that we have $n_i$ independent observations $X_{i1},\ldots,X_{i n_i}$ from the distribution $P_i$ taking values in $\Z$, for $i=1,\ldots,M$.
Denote the pooled sample $\{X_{11},\ldots,X_{1 n_1},\ldots,X_{M1},\ldots,$ $X_{Mn_M}\}$ as $\{Z_1,\ldots,Z_n\}$ with $n=n_1+\ldots+n_M$,
and the corresponding labels as $\Delta_1,\ldots,\Delta_n$, i.e., $\Delta_j = i\in\{1,\ldots,M\}$ if $Z_j$ comes from distribution $P_i$.
If $\frac{n_i}{n}\approx\pi_i \in (0,1)$ such that $\sum_{i=1}^M \pi_i = 1$, then $\{(\Delta_i,Z_i)\}_{i=1}^n$ can be ``approximately" thought of as an i.i.d.~sample of size $n$ from $(\tilde{\Delta},\tilde{Z})$ whose distribution is specified as follows:
\begin{enumerate}
\item $\mathbb{P}(\tilde{\Delta}=i)=\pi_i$, for $i=1,\ldots,M$.
\item Given $\tilde{\Delta}=i$, $\tilde{Z}$ is drawn from distribution $P_i$.
\end{enumerate}
The following lemma (proved in Appendix~\ref{pf_lem:obs}) is a crucial observation that will allow us to formally define $\eta$.
\begin{lemma}\label{lem:obs}
The $M$ distributions $P_1,\ldots, P_M$ are the same if and only if $\tilde{\Delta}\indep \tilde{Z}$,
and the $M$ distributions are mutually singular if and only if $\tilde{\Delta}$ is a function of $\tilde{Z}$.
\end{lemma}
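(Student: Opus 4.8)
The starting point is to make the joint law of $(\tilde\Delta,\tilde Z)$ fully explicit. By construction, for every $i\in\s$ and every measurable $A\subseteq\Z$ we have $\p(\tilde\Delta=i,\ \tilde Z\in A)=\pi_i P_i(A)$, and hence the marginal law of $\tilde Z$ is the mixture $Q:=\sum_{j=1}^M\pi_j P_j$. Both equivalences will follow by comparing this formula with what independence, respectively functional dependence, force on the joint law; the hypothesis $\pi_i\in(0,1)$ is used throughout so that we may divide by $\pi_i$.

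For the first claim, suppose first that $P_1=\cdots=P_M=:P$. Then $Q=P$ and $\p(\tilde\Delta=i,\ \tilde Z\in A)=\pi_i P(A)=\p(\tilde\Delta=i)\,\p(\tilde Z\in A)$ for all $i$ and all measurable $A$, so $\tilde\Delta\indep\tilde Z$. Conversely, if $\tilde\Delta\indep\tilde Z$, then for every $i$ and every measurable $A$, $\pi_i P_i(A)=\p(\tilde\Delta=i,\ \tilde Z\in A)=\pi_i\,Q(A)$; dividing by $\pi_i>0$ gives $P_i=Q$ for every $i$, whence $P_1=\cdots=P_M$.

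For the second claim, interpret ``$\tilde\Delta$ is a function of $\tilde Z$'' as: there is a measurable $g:\Z\to\s$ with $\tilde\Delta=g(\tilde Z)$ almost surely. If $P_1,\ldots,P_M$ are mutually singular, pick disjoint measurable sets $A_1,\ldots,A_M$ with $P_i(A_i)=1$; replacing $A_M$ by $\Z\setminus\bigcup_{i<M}A_i$ (which still has $P_M$-measure $1$, since each $A_i$ with $i<M$ lies in $A_M^{c}$ and so is $P_M$-null) we may assume the $A_i$ partition $\Z$. Define $g:=\sum_{i=1}^M i\,\mathbf 1_{A_i}$. Conditionally on $\tilde\Delta=i$ we have $\tilde Z\in A_i$ almost surely, so $g(\tilde Z)=i=\tilde\Delta$ a.s.\ on that event; summing over the finitely many $i$ gives $g(\tilde Z)=\tilde\Delta$ a.s. Conversely, if $\tilde\Delta=g(\tilde Z)$ a.s., set $A_i:=g^{-1}(\{i\})$, which are disjoint and measurable; then $P_i(A_i)=\p(\tilde Z\in A_i\mid\tilde\Delta=i)=\p(g(\tilde Z)=i\mid\tilde\Delta=i)=\p(\tilde\Delta=i\mid\tilde\Delta=i)=1$, so the $P_i$ are mutually singular.

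The argument is almost entirely bookkeeping; the only points requiring a little care are the precise reading of ``$\tilde\Delta$ is a function of $\tilde Z$'' (as equality almost surely with a measurable map, rather than literal equality everywhere) and the harmless reduction of the witnessing sets $\{A_i\}$ to a measurable partition of $\Z$ so that $g$ is well defined on all of $\Z$. No deeper obstacle is anticipated.
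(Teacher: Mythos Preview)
Your proof is correct and follows essentially the same approach as the paper's own proof: both argue the independence equivalence by comparing the joint law $\p(\tilde\Delta=i,\tilde Z\in A)=\pi_i P_i(A)$ with the product form, and both handle the mutual-singularity equivalence by passing between disjoint full-measure sets $A_i$ and the measurable label map $g=\sum_i i\,\mathbf 1_{A_i}$ (respectively $A_i=g^{-1}(\{i\})$). Your version is simply more explicit about making the $A_i$ a partition and about the almost-sure reading of ``function of,'' which the paper leaves implicit.
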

The connection between $M$ sample testing and measures of association between $\tilde{\Delta}$ and $\tilde{Z}$ has been noted before; see e.g., \cite{Friedman1983asso,panda2019nonpar}.
The above lemma motivates the use of a certain measure of association
\citep{Chatterjee2021new,azadkia2021simple,deb2020kernel,Huang2022KPC} between $\tilde{\Delta}$ and $\tilde{Z}$ to quantify the dissimilarity between the $M$ distributions.
In particular, we adopt the ideas from the {\it kernel measure of association} (KMAc) proposed in \citet{deb2020kernel}. 
Let $\mu$ be the distribution of $(\tilde{Z},\tilde{\Delta})$ as defined above. Here we assume that $\frac{n_i}{n}$ converges to some $\pi_i \in (0,1)$ as $n\to\infty$, such that $\sum_{i=1}^M \pi_i = 1$. Let $(\tilde{Z}_1,\tilde{\Delta}_1)$ and $(\tilde{Z}_2,\tilde{\Delta}_2)$ be i.i.d.~$\mu$. Also, let $(\tilde{Z},\tilde{\Delta}, \tilde{\Delta}')$ be such that $(\tilde{Z},\tilde{\Delta}) \sim \mu$,  $(\tilde{Z},\tilde{\Delta}') \sim \mu$, and $\tilde{\Delta}$ and $\tilde{\Delta}'$ are conditionally independent given $\tilde{Z}$. Define the {\it kernel measure of multi-sample dissimilarity} (KMD) as:
\begin{eqnarray}\label{eq:def_eta}
\eta \equiv \eta(P_1,\ldots,P_M) :=   \frac{\E \big[K(\tilde{\Delta},\tilde{\Delta}{'})\big] - \E \big[ K(\tilde{\Delta}_1,\tilde{\Delta}_2)\big]  }{ \E\big[ K(\tilde{\Delta},\tilde{\Delta}) \big]-  \E \big[K(\tilde{\Delta}_1,\tilde{\Delta}_2)\big] }. \label{eq:eta}
\end{eqnarray}
Note that $\eta$, as defined above, also depends on the mixing proportions $\{\pi_i\}_{i=1}^M$, but for notational simplicity we do not highlight this dependence here.
If we let $\bar{P}:=\sum_{i=1}^M \pi_i P_i$ be the distribution of $\tilde{Z}$.
Given $\tilde{Z}=z$, the conditional probability of $\tilde{Z}$ coming from $P_i$ is $\frac{{\rm d} (\pi_i P_i)}{{\rm d} \bar{P}}(z)$, where $\frac{{\rm d} (\pi_i P_i)}{{\rm d} \bar{P}}$ denotes the Radon-Nikodym derivative of $\pi_i P_i$ with respect to $\bar{P}$. Hence $\eta$ in \eqref{eq:eta} has the alternative expression:
\begin{equation}\label{eq:eta_alternative_Radon}
\eta =   \frac{\int \sum_{i,j=1}^M \pi_i \pi_jK(i,j)\frac{{\rm d} P_i}{{\rm d} \bar{P}}(z)  \frac{{\rm d} P_j}{{\rm d} \bar{P}}(z) {\rm d}\bar{P}(z)  - \sum_{i,j=1}^M \pi_i\pi_j K(i,j) }{ \sum_{i=1}^M \pi_i K(i,i) -  \sum_{i,j=1}^M \pi_i\pi_j K(i,j)}.
\end{equation}
When $P_1,\ldots,P_{M-1}$ are absolutely continuous w.r.t.\ $P_M$, $\eta$ reduces to a member of the multi-dimensional $f$-divergences proposed in \citet{pmlr-v23-garcia12}.

Observe that if all the $M$ distributions $P_1,\ldots,P_M$ are the same, then $\tilde{\Delta}$ is independent of $\tilde{Z}$ (by Lemma~\ref{lem:obs}), and thus $(\tilde{\Delta}, \tilde{\Delta}')$ has the same distribution as
$(\tilde{\Delta}_1,\tilde{\Delta}_2)$; so the numerator of $\eta$ in \eqref{eq:def_eta} equals 0.
If all the $M$ distributions are mutually singular, then $\tilde{\Delta}$ is a function of $\tilde{Z}$ (by Lemma~\ref{lem:obs}), and $\eta=1$ (as $\tilde{\Delta}'=\tilde{\Delta}$).
The converse is also true, i.e., $\eta=0$ (resp.\ $\eta=1$) also implies $P_1=\ldots=P_M$ (resp.\  all the $M$ distributions are mutually singular); thus $\eta$ satisfies properties (i)--(iii) mentioned at the beginning of the Introduction. We formalize this in the following result (proved in Appendix~\ref{sec:pf3prop}).
\begin{theorem}\label{thm:3prop}
Suppose the kernel $K(\cdot,\cdot)$ is characteristic. 
Then $\eta$, defined in \eqref{eq:eta}, satisfies properties (i)--(iii) mentioned in the Introduction.
	
	
\end{theorem}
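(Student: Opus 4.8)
The plan is to recast the numerator and denominator of $\eta$ in \eqref{eq:def_eta} as Hilbert-space variances via the RKHS feature map of $K$, and then read off (i)--(iii) from an orthogonality (law of total variance) identity together with the characteristic property of $K$ and Lemma~\ref{lem:obs}. Since the matrix $[K(i,j)]_{i,j=1}^M$ is positive semidefinite, we may choose vectors $\phi_1,\ldots,\phi_M\in\R^M$ with $K(i,j)=\langle\phi_i,\phi_j\rangle$. Setting $g(z):=\E[\phi_{\tilde{\Delta}}\mid\tilde{Z}=z]=\sum_{i=1}^M \mathbb{P}(\tilde{\Delta}=i\mid\tilde{Z}=z)\,\phi_i$ and using that $\tilde{\Delta}_1\indep\tilde{\Delta}_2$ while $\tilde{\Delta}$ and $\tilde{\Delta}'$ are conditionally independent given $\tilde{Z}$, a short computation gives $\E[K(\tilde{\Delta},\tilde{\Delta})]=\E\|\phi_{\tilde{\Delta}}\|^2$, $\E[K(\tilde{\Delta}_1,\tilde{\Delta}_2)]=\|\E\phi_{\tilde{\Delta}}\|^2$, and $\E[K(\tilde{\Delta},\tilde{\Delta}')]=\E\|g(\tilde{Z})\|^2$, with $\E g(\tilde{Z})=\E\phi_{\tilde{\Delta}}$. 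Thus the denominator of $\eta$ equals $D:=\E\|\phi_{\tilde{\Delta}}-\E\phi_{\tilde{\Delta}}\|^2$ and the numerator equals $N:=\E\|g(\tilde{Z})-\E\phi_{\tilde{\Delta}}\|^2\ge 0$, and since $g(\tilde{Z})$ is the $L^2$-projection of $\phi_{\tilde{\Delta}}$ onto $\sigma(\tilde{Z})$ (equivalently, by the law of total variance),
\begin{equation*}
D - N \;=\; \E\big\|\phi_{\tilde{\Delta}}-g(\tilde{Z})\big\|^2 \;\ge\; 0 .
\end{equation*}

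First, for (i): the random vector $\phi_{\tilde{\Delta}}$ takes each value $\phi_i$ with probability $\pi_i>0$, and the characteristic property (applied to $\beta=e_1-e_2$, using $M\ge 2$) forces $\phi_1\neq\phi_2$, so $D>0$ and $\eta=N/D$ is well defined; combined with $N\ge 0$ and $D-N\ge 0$ this yields $\eta\in[0,1]$. For (ii): $\eta=0$ iff $N=0$ iff $g$ is $\bar{P}$-a.s.\ constant (necessarily equal to $\E\phi_{\tilde{\Delta}}$). The characteristic property says exactly that $\sum_i\beta_i\phi_i=0$ together with $\sum_i\beta_i=0$ implies $\beta=0$; applying this to $\beta=\big(\mathbb{P}(\tilde{\Delta}=i\mid\tilde{Z}=z)-\pi_i\big)_{i=1}^M$, whose coordinates sum to $0$, shows that $g$ is a.s.\ constant iff $z\mapsto\mathbb{P}(\tilde{\Delta}=i\mid\tilde{Z}=z)$ is $\bar{P}$-a.s.\ constant for every $i$, i.e.\ iff $\tilde{\Delta}\indep\tilde{Z}$; by Lemma~\ref{lem:obs} this is equivalent to $P_1=\cdots=P_M$. (The same conclusion also follows directly from \eqref{eq:eta_alternative_Radon}, whose numerator equals $\E\big\|\sum_i(p_i(\tilde{Z})-\pi_i)\phi_i\big\|^2$ with $p_i=\mathrm{d}(\pi_i P_i)/\mathrm{d}\bar{P}$ and $\sum_i(p_i(z)-\pi_i)=0$.)

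Finally, for (iii): $\eta=1$ iff $D-N=0$ iff $\phi_{\tilde{\Delta}}=g(\tilde{Z})$ a.s., i.e.\ iff conditionally on $\tilde{Z}=z$ the variable $\phi_{\tilde{\Delta}}$ is a.s.\ constant for $\bar{P}$-a.e.\ $z$; since the $\phi_i$ are pairwise distinct (again by the characteristic property), this holds iff the conditional law of $\tilde{\Delta}$ given $\tilde{Z}=z$ is a point mass for a.e.\ $z$, i.e.\ iff $\tilde{\Delta}$ almost surely equals a measurable function of $\tilde{Z}$, which by Lemma~\ref{lem:obs} is equivalent to $P_1,\ldots,P_M$ being mutually singular. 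The only steps that need real care are the bookkeeping behind the three feature-map identities (carefully unwinding the conditioning and independence structure of the triples defining $\eta$) and the precise equivalence between ``$K$ is characteristic'' and ``$\phi_1,\ldots,\phi_M$ are affinely independent'', which is exactly what drives the injectivity arguments in (ii) and (iii); the rest is routine.
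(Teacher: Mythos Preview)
Your proof is correct. The paper's own proof is much shorter: it simply observes that $\eta$ coincides with the kernel measure of association (KMAc) between $\tilde{Z}$ and $\tilde{\Delta}$ from \cite{deb2020kernel}, cites \cite[Theorem 2.1]{deb2020kernel} to obtain that (under a characteristic kernel) this measure lies in $[0,1]$, equals $0$ iff $\tilde{\Delta}\indep\tilde{Z}$, and equals $1$ iff $\tilde{\Delta}$ is a measurable function of $\tilde{Z}$, and then invokes Lemma~\ref{lem:obs} to translate these into statements about $P_1,\ldots,P_M$.

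Your argument is essentially a self-contained unpacking of that cited result: the feature-map identities $\E[K(\tilde{\Delta},\tilde{\Delta})]=\E\|\phi_{\tilde{\Delta}}\|^2$, $\E[K(\tilde{\Delta}_1,\tilde{\Delta}_2)]=\|\E\phi_{\tilde{\Delta}}\|^2$, $\E[K(\tilde{\Delta},\tilde{\Delta}')]=\E\|g(\tilde{Z})\|^2$ recast $\eta$ as a ratio of (vector-valued) variances, and the law-of-total-variance decomposition $D=N+\E\|\phi_{\tilde{\Delta}}-g(\tilde{Z})\|^2$ then gives (i)--(iii) directly once you use that ``$K$ characteristic'' is equivalent to affine independence of $\phi_1,\ldots,\phi_M$. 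Both proofs close with Lemma~\ref{lem:obs}. What your approach buys is that it is fully self-contained and makes the role of the characteristic assumption transparent at each step; what the paper's approach buys is brevity, since all of this machinery is already packaged in the cited KMAc result.
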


Theorem~\ref{thm:3prop} describes two extreme cases corresponding to $\eta = 0$ or 1.
The following proposition 
further illustrates that any value of $\eta$ between 0 and 1 indeed conveys an idea of how different the distributions are. More specifically, for common location and scale families, $\eta$ increases as the ``difference" between the distributions becomes ``larger".
\begin{proposition}[Monotonicity of $\eta$]\label{prop:monotonicity}
Consider $M=2$ and $\Z=\R^d$ for $d\geq 1$. Suppose $P_1$ is a log-concave\footnote{A distribution is called {\it log-concave} if its density can be written as $f(x)=\exp\left(-h(x)\right)$ for some convex function $h:\R^d\to \R\cup\{+\infty\}$. Many common probability distributions are log-concave, such as Gaussian distribution, uniform distribution over a convex set, and gamma distribution if the shape parameter is $\geq 1$.} distribution with density $f(\cdot)$, and $\pi_1,\pi_2\in(0,1)$ are fixed.
\begin{enumerate}
\item (Location family) Suppose $P_1$ and $P_2$ have densities $f(\cdot)$ and $f(\cdot -\theta)$ respectively, where $\theta = \lambda h$ for a fixed $h\in \R^d\backslash \{\mathbf{0}\}$ and $\lambda \geq 0$. Then $\eta(P_1,P_2)$ is a function of $\lambda$. Moreover, $\eta(P_1,P_2)$ monotonically increases from 0 to 1 as $\lambda$ increases from 0 to $\infty$.

\item (Scale family) Suppose $P_1$ and $P_2$ have densities $f(\cdot)$ and $\lambda f(\lambda\times \cdot)$ respectively, where $\lambda > 0$. Suppose further that $f$ is twice differentiable in the interior of its support. Then $\eta(P_1,P_2)$ monotonically decreases from 1 to 0 as $\lambda$ increases from 0 to 1, and monotonically increases from 0 to 1 as $\lambda$ grows from 1 to $+\infty$.
\end{enumerate}
Further, the above monotonicity of $\eta$ for scenarios 1 and 2 is strict (i.e., strictly increasing and strictly decreasing) when $\eta(P_1,P_2)< 1$ (i.e., when $P_1$ and $P_2$ are not mutually singular).
\end{proposition}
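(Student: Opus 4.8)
The plan is to reduce both scenarios to a single overlap functional and then exploit log-concavity. Using the alternative expression~\eqref{eq:eta_alternative_Radon} for $\eta$ together with the fact that $K$ is characteristic, a direct computation shows that, when $M=2$, the quantity $K(1,1)-2K(1,2)+K(2,2)$ — strictly positive precisely because $K$ is characteristic — factors out of both the numerator and the denominator of $\eta$ and cancels, leaving
\[
  \eta(P_1,P_2)\;=\;1-\int_{\R^d}\frac{f_1(x)f_2(x)}{\pi_1 f_1(x)+\pi_2 f_2(x)}\,dx\;=\;\frac{\pi_1}{\pi_2}\Big(\int_{\R^d}\frac{f_1^2}{\bar f}\,dx-1\Big),\qquad \bar f:=\pi_1 f_1+\pi_2 f_2,
\]
where $f_1,f_2$ are the Lebesgue densities of $P_1,P_2$. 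In particular the choice of characteristic kernel is immaterial, and it suffices to study the monotonicity of $\Psi:=1-\eta=\int f_1 f_2/\bar f$ along each family. I will also use the elementary pointwise bound $f_1 f_2/(\pi_1 f_1+\pi_2 f_2)\le\min(f_1,f_2)$, so that $\Psi$ is dominated by the overlap $\int\min(f_1,f_2)$.

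For the location family, set $f_1=f$ and $f_2(\cdot)=f(\cdot-\lambda h)$, and write $f=e^{-\psi}$ with $\psi:\R^d\to\R\cup\{+\infty\}$ convex, so that
\[
  \Psi(\lambda)=\int_{\R^d}\frac{dx}{\pi_1 e^{\psi(x-\lambda h)}+\pi_2 e^{\psi(x)}}.
\]
The exponents $\psi(x-\lambda h)$ and $\psi(x)$ are convex functions of $(x,\lambda)\in\R^d\times\R$ (compositions of the convex $\psi$ with affine maps), so $e^{\psi(x-\lambda h)}$, $e^{\psi(x)}$, and hence their positive combination, are log-convex in $(x,\lambda)$; therefore the integrand is jointly log-concave in $(x,\lambda)$. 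By Prékopa's theorem (marginalization preserves log-concavity), $\lambda\mapsto\Psi(\lambda)$ is log-concave on $\R$. Since $\Psi(0)=\int f=1$ while $\Psi(\lambda)=1-\eta(\lambda)\in[0,1]$ for every $\lambda$ by Theorem~\ref{thm:3prop}, the point $\lambda=0$ is a global maximiser of the unimodal function $\Psi$, so $\Psi$ is non-increasing on $[0,\infty)$, i.e.\ $\eta(\lambda)$ is non-decreasing, with $\eta(0)=0$; and $\eta(\lambda)\to1$ as $\lambda\to\infty$ because $\Psi(\lambda)\le\int\min\{f(x),f(x-\lambda h)\}\,dx\to0$. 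For strictness, if $\eta$ were constant and $<1$ on some $[\lambda_1,\lambda_2]\subset[0,\infty)$, then $\log\Psi$ would be concave on $[0,\infty)$, equal to $0$ at $0$ (its maximum) and constant $\le0$ on $[\lambda_1,\lambda_2]$; concavity then forces $\Psi\equiv1$ on $[0,\lambda_2]$, hence $f=f(\cdot-\lambda h)$ there — impossible for an integrable density. Thus $\eta$ is strictly increasing on $\{\lambda:\eta(\lambda)<1\}$.

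For the scale family, set $f_1=f$ and $f_2(\cdot)=\lambda^d f(\lambda\,\cdot)$, so $\Psi(\lambda)=\int_{\R^d}\frac{f(x)\,\lambda^d f(\lambda x)}{\pi_1 f(x)+\pi_2\lambda^d f(\lambda x)}\,dx$. Scaling is not affine in $(x,\lambda)$ jointly, so the Prékopa argument no longer applies; instead I slice $\R^d$ into rays $x=r\omega$, $\omega\in S^{d-1}$. Along a ray the function $g_\omega(r):=r^{d-1}f(r\omega)$ is log-concave on $(0,\infty)$ — its logarithm is $(d-1)\log r-\psi(r\omega)$, with second derivative $-(d-1)/r^2-\omega^{\!\top}\nabla^2\psi(r\omega)\,\omega\le0$ — and a change of variables gives $\Psi(\lambda)=\int_{S^{d-1}}\phi_\omega(\lambda)\,d\omega$ with $\phi_\omega(\lambda)=\int_0^\infty\frac{\lambda\,g_\omega(r)g_\omega(\lambda r)}{\pi_1 g_\omega(r)+\pi_2\lambda g_\omega(\lambda r)}\,dr$. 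It therefore suffices to show that for a one-dimensional log-concave $g$ (here $C^2$ in the interior of its support, by the hypothesis on $f$) the function $\phi_g(\lambda)$ is unimodal in $\lambda\in(0,\infty)$ with maximum at $\lambda=1$. I would do this by differentiating under the integral — legitimate since $g\in C^2$ with the usual domination — and showing that $\phi_g'(\lambda)$ has the sign of $1-\lambda$: after a change of variables and an integration by parts, $\phi_g'(\lambda)$ becomes an integral against $g$ whose integrand's sign is controlled by the convexity of $-\log g$ (equivalently $(\log g)''\le0$). The endpoints $\eta(\lambda)\to1$ as $\lambda\to0^+$ and as $\lambda\to\infty$ follow as before from $\Psi(\lambda)\le\int\min\{f(x),\lambda^d f(\lambda x)\}\,dx\to0$ (using that log-concave $f$ is bounded with exponential tails), while $\eta(1)=0$ since then $P_1=P_2$; strictness is obtained exactly as in the location case, using that $f$ cannot equal a nontrivial rescaling of itself.

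The main obstacle is the sign analysis in the scale case: unlike the location family there is no joint log-concavity to exploit, and establishing $\operatorname{sign}\phi_g'(\lambda)=\operatorname{sign}(1-\lambda)$ requires combining the integration-by-parts identity with the second-order information $(\log g)''\le0$ in a genuinely computational way — which is exactly why the $C^2$ hypothesis on $f$ is needed for the scale family but not for the location family. Everything else (the closed form for $\eta$, the location monotonicity via Prékopa, the reduction of the scale case to one dimension, and the endpoint and strictness statements) is routine once this reduction is in place.
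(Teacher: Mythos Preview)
Your treatment of the two cases is quite different in quality.

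\textbf{Location family.} This part is correct and takes a genuinely different route from the paper. The paper writes $1-\eta=\int\frac{dx}{\pi_1 g(x)+\pi_2 g(x-\theta)}$ with $g=1/f$ convex, performs a translation $x\mapsto x+\pi_2\theta$, and shows by an explicit convex-combination argument that the denominator is pointwise monotone in $\lambda$. Your argument instead exploits that $e^{\psi}$ is log-convex, so the denominator $\pi_1 e^{\psi(x-\lambda h)}+\pi_2 e^{\psi(x)}$ is log-convex in $(x,\lambda)$ (sum of log-convex functions), hence the integrand is jointly log-concave, and Pr\'ekopa's theorem gives log-concavity of $\Psi$. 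This is more conceptual and immediately yields unimodality; your strictness argument via concavity of $\log\Psi$ is also clean. Both approaches are valid, and yours is arguably slicker for this half of the proposition.

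\textbf{Scale family.} Here there is a genuine gap. Your reduction to rays is correct and the one-dimensional function $g_\omega(r)=r^{d-1}f(r\omega)$ is indeed log-concave, but the resulting problem
\[
\phi_g(\lambda)=\int_0^\infty\frac{\lambda\,g(r)g(\lambda r)}{\pi_1 g(r)+\pi_2\lambda g(\lambda r)}\,dr
\]
is \emph{exactly} the original scale problem in one dimension; the reduction has not removed any difficulty. You then assert that after ``a change of variables and an integration by parts, $\phi_g'(\lambda)$ becomes an integral whose integrand's sign is controlled by $(\log g)''\le 0$'', but you do not identify which change of variables, which boundary terms arise, or how the sign condition emerges --- and this is precisely the heart of the matter. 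The paper's proof shows why this step is delicate: after the substitution $y=\lambda^{\pi_2}x$ (a multiplicative centering you do not have), one differentiates twice and must establish the pointwise inequality
\[
L(x):=g(x)-\nabla g(x)^{\top}x+x^{\top}\nabla^2 g(x)\,x>0,\qquad g=1/f,
\]
which is proved by writing $g=e^{h}$ with $h$ convex and completing the square:
$L=e^{h}\bigl[\tfrac34+(\tfrac12-\nabla h^{\top}x)^2+x^{\top}\nabla^2 h\,x\bigr]>0$.
This is not a straightforward consequence of ``$(\log g)''\le 0$'' plus an integration by parts; it genuinely uses the log-convexity of $1/f$ in a second-order way. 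Your outline does not locate this key identity (or an equivalent), so the scale case remains unproved in your proposal. Your strictness claim for the scale family also relies on this missing sign analysis, since without strict positivity of the derivative you cannot rule out plateaus as you did (via log-concavity) in the location case.
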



In the following, we state a few important properties of $\eta$. These properties are commonly associated with $f$-divergences \citep{Liese1987convex}. Our first result, Proposition~\ref{prop:Inv} (proved in Appendix~\ref{sec:pf_Inv}), shows that ``processing" the $M$ distributions makes them  ``less different", as measured by $\eta$.

\begin{proposition}[Data processing inequality and invariance]\label{prop:Inv}
Recall $\tilde{\Delta}$ and $\tilde{Z}\in\Z$ defined at the beginning of Section~\ref{subsec:def_eta}. Let $\Z'$ be another measurable space, and $\kappa(\cdot ,\cdot)$ be a transition kernel from $\Z$ to $\Z'$, i.e., for any $z\in \Z$, $\kappa(z,\cdot)$ specifies a distribution on $\Z'$. Suppose $P_i$ is transitioned to $Q_i$ by $\kappa$, i.e., $Q_i(B):=\int \kappa(z,B){\rm d}P_i(z)$, for every measurable set $B \subset \Z'$, for $i=1,\ldots,M$. If the mixture proportions $\{\pi_i\}_{i=1}^M$ are held fixed, then
$$\eta(P_1,\ldots,P_M) \geq \eta(Q_1,\ldots,Q_M).$$
Further, equality holds in the above display if and only if $\tilde{\Delta} \mid \tilde{Z} \stackrel{d}{=} \tilde{\Delta} \mid \tilde{Z}'$ where $\tilde{Z}'$ is obtained by passing $\tilde Z$ through the transition kernel $\kappa$. In particular, $\eta$ is invariant under any measurable bijective transformation.
\end{proposition}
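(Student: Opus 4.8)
The plan is to exploit the alternative expression \eqref{eq:eta_alternative_Radon} for $\eta$, rewriting the numerator and denominator in terms of the conditional law of $\tilde{\Delta}$ given $\tilde{Z}$. Write $p_i(z) := \frac{{\rm d}(\pi_i P_i)}{{\rm d}\bar{P}}(z) = \mathbb{P}(\tilde{\Delta}=i \mid \tilde{Z}=z)$, so that $\sum_i p_i(z) = 1$ and $\mathbb{E}[p_i(\tilde{Z})] = \pi_i$. Since the denominator $\sum_i \pi_i K(i,i) - \sum_{i,j}\pi_i\pi_j K(i,j)$ depends only on $K$ and the fixed proportions $\{\pi_i\}$, it is \emph{unchanged} by the transition kernel $\kappa$ (as $\{\pi_i\}$ are held fixed); so the entire claim reduces to a statement about the numerator $N(P_1,\dots,P_M) := \mathbb{E}\big[ \sum_{i,j} \pi_i\pi_j K(i,j)\, \frac{p_i(\tilde{Z})}{\pi_i}\frac{p_j(\tilde{Z})}{\pi_j} \big] - \sum_{i,j}\pi_i\pi_j K(i,j) = \mathbb{E}\big[ \sum_{i,j} K(i,j)\, p_i(\tilde{Z}) p_j(\tilde{Z}) \big] - \sum_{i,j}\pi_i\pi_j K(i,j)$. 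Equivalently, $N = \mathbb{E}\big[ \sum_{i,j} K(i,j)\, q_i(\tilde{Z}) q_j(\tilde{Z})\big]$ where $q_i(z) := p_i(z) - \pi_i$, since the cross terms involving $\mathbb{E}[q_i(\tilde{Z})] = 0$ vanish and $\sum_{i,j}K(i,j)\pi_i\pi_j$ cancels.

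The key step is then to express the post-processed numerator in terms of a conditional expectation and apply Jensen's inequality. Let $\tilde{Z}'$ be obtained by passing $\tilde{Z}$ through $\kappa$, and let $p_i'(z') := \mathbb{P}(\tilde{\Delta} = i \mid \tilde{Z}' = z')$. By the tower property and the fact that $\tilde{\Delta} \to \tilde{Z} \to \tilde{Z}'$ is a Markov chain, $p_i'(\tilde{Z}') = \mathbb{E}[p_i(\tilde{Z}) \mid \tilde{Z}']$, hence $q_i'(\tilde{Z}') = \mathbb{E}[q_i(\tilde{Z}) \mid \tilde{Z}']$. Now consider the quadratic form $\Phi(v) := \sum_{i,j} K(i,j) v_i v_j = v^\top [K(i,j)] v$ on $\mathbb{R}^M$; since $K$ is nonnegative definite, $\Phi$ is convex. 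Therefore, conditionally on $\tilde{Z}'$,
\begin{equation*}
\Phi\big( q'(\tilde{Z}') \big) = \Phi\big( \mathbb{E}[q(\tilde{Z}) \mid \tilde{Z}'] \big) \le \mathbb{E}\big[ \Phi(q(\tilde{Z})) \mid \tilde{Z}' \big],
\end{equation*}
and taking expectations over $\tilde{Z}'$ gives $N(Q_1,\dots,Q_M) = \mathbb{E}[\Phi(q'(\tilde{Z}'))] \le \mathbb{E}[\Phi(q(\tilde{Z}))] = N(P_1,\dots,P_M)$. Dividing by the common positive denominator yields $\eta(Q_1,\dots,Q_M) \le \eta(P_1,\dots,P_M)$.

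For the equality characterization, note that $\Phi$ need not be \emph{strictly} convex, so equality in the conditional Jensen step does not force $q(\tilde{Z})$ to be $\tilde{Z}'$-measurable outright; one gets $\mathbb{E}[\Phi(q(\tilde{Z})) - \Phi(q'(\tilde{Z}')) ] = 0$, and since the integrand is nonnegative it must be $0$ a.s. Writing $K = L^\top L$ (Cholesky/square-root), $\Phi(v) = \|Lv\|^2$, so equality a.s. means $\|L q(\tilde{Z})\|^2 = \|L\,\mathbb{E}[q(\tilde{Z})\mid \tilde{Z}']\|^2$ a.s., which by the conditional variance decomposition forces $L q(\tilde{Z}) = L\,\mathbb{E}[q(\tilde{Z})\mid\tilde{Z}']$ a.s., i.e. $L q(\tilde{Z})$ is $\sigma(\tilde{Z}')$-measurable. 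Because $K$ is characteristic, $L$ restricted to the hyperplane $\{\sum_i v_i = 0\}$ (which contains all vectors $q(z)$) is injective; hence $q(\tilde{Z})$ itself, and therefore $p(\tilde{Z}) = q(\tilde{Z}) + \pi$, is $\sigma(\tilde{Z}')$-measurable a.s. This is precisely the statement $\tilde{\Delta}\mid\tilde{Z} \stackrel{d}{=} \tilde{\Delta}\mid\tilde{Z}'$. The invariance under a measurable bijection $\psi$ follows since then $\tilde{Z}' = \psi(\tilde{Z})$ generates the same $\sigma$-algebra as $\tilde{Z}$, so the equality condition holds automatically; alternatively apply the inequality to $\psi$ and $\psi^{-1}$.

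The main obstacle I anticipate is the equality case: handling the non-strict convexity of $\Phi$ carefully (reducing to a.s. equality of an $\mathbb{R}^{\mathrm{rank}(K)}$-valued random vector with its conditional expectation via the conditional $L^2$-projection), and then correctly invoking the characteristic property to upgrade measurability of $Lq(\tilde{Z})$ to measurability of $q(\tilde{Z})$ — this requires verifying that all relevant vectors lie in the zero-sum hyperplane where $L$ is injective. The inequality itself is a routine conditional-Jensen argument once the numerator is put in quadratic-form shape and one observes the denominator is invariant.
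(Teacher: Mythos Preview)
Your approach is correct and gives a genuinely different, more self-contained argument than the paper's. The paper does not work directly with the quadratic form; instead it observes that the difference between the two numerators is (up to normalization) the \emph{kernel partial correlation} $\rho^2(\tilde\Delta,\tilde Z\mid \tilde Z')$ of \cite{Huang2022KPC}, and then quotes that $\rho^2\ge0$ with equality iff $\tilde\Delta\perp\!\!\!\perp\tilde Z\mid\tilde Z'$. Your route avoids this external dependency: you rewrite the numerator as $\E[\Phi(q(\tilde Z))]$ with $\Phi(v)=v^\top Kv$ convex, use $q'(\tilde Z')=\E[q(\tilde Z)\mid\tilde Z']$ from the Markov chain $\tilde\Delta\to\tilde Z\to\tilde Z'$, and apply conditional Jensen. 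For the equality case you exploit the characteristic assumption directly via injectivity of $L$ on the zero-sum hyperplane, which is exactly the right way to convert non-strict convexity of $\Phi$ into the desired rigidity.

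One small point of exposition: the sentence ``equality a.s.\ means $\|Lq(\tilde Z)\|^2=\|L\,\E[q(\tilde Z)\mid\tilde Z']\|^2$ a.s.'' is not literally true as a pointwise statement. What you actually get is $\E[\|Lq(\tilde Z)\|^2\mid\tilde Z']=\|\E[Lq(\tilde Z)\mid\tilde Z']\|^2$ a.s.\ (nonnegative by Jensen, zero expectation, hence zero), i.e.\ the conditional covariance of $Lq(\tilde Z)$ given $\tilde Z'$ vanishes, whence $Lq(\tilde Z)=\E[Lq(\tilde Z)\mid\tilde Z']$ a.s. Your final conclusion is right; just tighten that intermediate sentence. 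Also, for the bijection clause, note that ``apply the inequality to $\psi$ and $\psi^{-1}$'' needs $\psi^{-1}$ measurable; the paper's statement implicitly uses this as well.
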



The above result has an interesting consequence. Suppose that $T(Y_i)\sim Q_i$ where $Y_i\sim P_i$ and $T:\Z\to \Z'$ is a bijection. Then Proposition~\ref{prop:Inv} implies that
$\eta(P_1,\ldots,P_M) = \eta(Q_1,\ldots,Q_M)$, thereby showing that a bijective transformation of the $M$ distributions does not change our measure of dissimilarity $\eta$.

Many distance measures, including $f$-divergences, satisfy a convexity property  \citep{Liese1987convex}. The following result (proved in Appendix~\ref{sec:pf_prop_convex}) shows that $\eta$ is also jointly convex in its inputs, i.e., the $M$ distributions.

\begin{proposition}[Joint convexity of $\eta$]\label{prop:convex}
Let $P_1,\ldots,P_M$, $Q_1,\ldots,Q_M$ be distributions on $\Z$, and $\lambda \in [0,1]$.
If the mixture proportions $\{\pi_i\}_{i=1}^M$ are held fixed, then
$$\eta\left(\lambda P_1 + (1-\lambda)Q_1,\ldots,\lambda P_M + (1-\lambda)Q_M\right)\leq 
\lambda \eta( P_1 ,\ldots, P_M ) + (1-\lambda) \eta(Q_1,\ldots,Q_M).$$
\end{proposition}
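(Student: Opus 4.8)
The plan is to work from the alternative expression \eqref{eq:eta_alternative_Radon}, which exhibits $\eta$ as (numerator)/(denominator) where the denominator $c := \sum_i \pi_i K(i,i) - \sum_{i,j}\pi_i\pi_j K(i,j)$ depends only on the fixed kernel and the fixed mixing proportions, hence is a positive constant (positive since $K$ is characteristic) unaffected by the choice of distributions. Thus it suffices to prove that the numerator
\[
N(P_1,\ldots,P_M) := \int \sum_{i,j=1}^M \pi_i \pi_j K(i,j)\,\frac{{\rm d}(\pi_i P_i)}{{\rm d}\bar P}(z)\,\frac{{\rm d}(\pi_j P_j)}{{\rm d}\bar P}(z)\,\frac{{\rm d}\bar P}{\pi_i\pi_j}(z)\,{\rm d}\bar P(z)
\]
— more cleanly, $N = \int g(z)\,{\rm d}\bar P(z)$ where, writing $r_i(z) := \frac{{\rm d}(\pi_i P_i)}{{\rm d}\bar P}(z)$ so that $\sum_i r_i \equiv 1$, we have $g(z) = \sum_{i,j} K(i,j) r_i(z) r_j(z) - \sum_{i,j}\pi_i\pi_j K(i,j)$ — is jointly convex in $(P_1,\ldots,P_M)$.

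The key observation is that $N$ admits a clean probabilistic reading: $N = \E\big[K(\tilde\Delta,\tilde\Delta')\big] - \E\big[K(\tilde\Delta_1,\tilde\Delta_2)\big]$, and the second term is constant in the $P_i$'s (it only involves the marginal $\pi_i$'s). So the real task is convexity of $P\mapsto \E[K(\tilde\Delta,\tilde\Delta')] = \E\big[\sum_{i,j} K(i,j)\,p_i(\tilde Z)p_j(\tilde Z)\big]$ where $p_i(z) := \p(\tilde\Delta = i\mid \tilde Z = z) = r_i(z)$ is the conditional label probability and the outer expectation is over $\tilde Z\sim\bar P$. The first step is to rewrite this as a single integral against a fixed dominating measure: pick $\nu$ dominating all the $P_i$'s and $Q_i$'s (e.g.\ $\nu = \frac1{2M}\sum_i(P_i + Q_i)$), let $f_i = {\rm d}P_i/{\rm d}\nu$, and check by direct substitution that
\[
\E\big[K(\tilde\Delta,\tilde\Delta')\big] \;=\; \int \frac{\sum_{i,j=1}^M \pi_i\pi_j K(i,j) f_i(z) f_j(z)}{\sum_{k=1}^M \pi_k f_k(z)}\,{\rm d}\nu(z),
\]
interpreting the integrand as $0$ when the denominator vanishes. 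The point is that the map from the density-vector $(f_1,\ldots,f_M)$ to $\eta$ is now linear-in-the-measures under the integral sign, so convexity reduces to a pointwise statement about the integrand.

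The main obstacle — and the crux of the proof — is therefore the pointwise convexity lemma: the function
\[
\Phi(u_1,\ldots,u_M) := \frac{\sum_{i,j=1}^M \pi_i\pi_j K(i,j)\, u_i u_j}{\sum_{k=1}^M \pi_k u_k}, \qquad u = (u_1,\ldots,u_M)\in[0,\infty)^M\setminus\{0\},
\]
is jointly convex on the positive orthant. This is a classical-flavoured fact: it is the \emph{perspective} of the quadratic form $v \mapsto v^\top A v$ (with $A = [\pi_i\pi_j K(i,j)]$ positive semidefinite, since $K$ is a kernel) composed with linear maps — indeed, writing $s = \sum_k \pi_k u_k$ and $v_i = u_i/s$, $\Phi(u) = s\cdot (v^\top A v)$ with $\sum_k \pi_k v_k = 1$, and the perspective $g(x,t) = t\,q(x/t)$ of a convex function $q$ is jointly convex on $\{t>0\}$. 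I would verify this either by citing the perspective-function construction (e.g.\ Boyd--Vandenberghe) after a linear change of variables absorbing the $\pi_i$'s, or, to be self-contained, by a short Hessian computation / by the direct inequality $\Phi(\lambda u + (1-\lambda)w) \le \lambda\Phi(u) + (1-\lambda)\Phi(w)$ obtained from the Cauchy--Schwarz-type bound $\frac{(a+b)^2}{s+t}\le \frac{a^2}{s}+\frac{b^2}{t}$ generalized to the PSD quadratic form $A$ (i.e.\ $\frac{(a+b)^\top A (a+b)}{s+t}\le \frac{a^\top A a}{s} + \frac{b^\top A b}{t}$ for vectors $a,b$ and scalars $s,t>0$, which follows from writing $A = B^\top B$ and applying the scalar inequality coordinatewise to $Ba, Bb$). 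Once $\Phi$ is convex, apply it with $u_i = \pi_i(\lambda f_i(z) + (1-\lambda)g_i(z))$ where $f_i,g_i$ are the $\nu$-densities of $P_i,Q_i$; since $\pi_i(\lambda f_i + (1-\lambda)g_i) = \lambda(\pi_i f_i) + (1-\lambda)(\pi_i g_i)$, pointwise convexity of $\Phi$ gives the integrand inequality, and integrating against $\nu$ and subtracting the constant $\sum_{i,j}\pi_i\pi_j K(i,j)$ then dividing by the positive constant $c$ yields $\eta(\lambda P_\bullet + (1-\lambda)Q_\bullet) \le \lambda\,\eta(P_\bullet) + (1-\lambda)\,\eta(Q_\bullet)$, as desired. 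A minor technical point to dispatch along the way is the handling of the set where $\sum_k \pi_k f_k(z) = 0$ (or the analogous sets for $g$ or the mixture), where the Radon--Nikodym derivatives are only defined $\bar P$-a.e.; this is routine since those sets are $\nu$-null for the relevant reference measures after one fixes $\nu$ as above.
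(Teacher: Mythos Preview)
Your approach is essentially identical to the paper's: both reduce to showing that the first term in the numerator of \eqref{eq:eta_alternative_Radon} is jointly convex, pass to a common dominating measure, and invoke the convexity of the quadratic-over-linear (perspective) function $x\mapsto x^\top A x/(c^\top x)$ with $A$ PSD---the paper simply cites \cite[Exercise~3.23]{Boyd2004convex} where you sketch the perspective argument and the $\tfrac{(a+b)^\top A(a+b)}{s+t}\le \tfrac{a^\top Aa}{s}+\tfrac{b^\top Ab}{t}$ inequality.

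One small slip to clean up: having defined $\Phi(u)=\tfrac{\sum_{i,j}\pi_i\pi_j K(i,j)u_iu_j}{\sum_k\pi_k u_k}$, the correct substitution is $u_i=\lambda f_i(z)+(1-\lambda)g_i(z)$ (the raw densities), not $u_i=\pi_i(\lambda f_i+(1-\lambda)g_i)$; with your $\Phi$ the $\pi_i$'s are already built in, so the extra factor would give the wrong integrand. (Equivalently, you could drop the $\pi$'s from $\Phi$ and plug in $u_i=\pi_i f_i$, which is the paper's parameterization.)
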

Combining the above convexity result with Jensen's inequality yields the following  corollary of Proposition~\ref{prop:convex}.
\begin{corollary}[Conditioning increases $\eta$]
Let $Y_i \sim P_i$, for $i=1,\ldots, M$,  and $W$ be any random variable. With the mixture proportions $\{\pi_i\}_{i=1}^M$ held fixed, we have:
$$\E_W\left[\eta \left(Y_1|W,\ldots,Y_M|W\right) \right]\geq \eta \left(P_1,\ldots,P_M\right).$$
\end{corollary}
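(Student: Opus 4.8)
The plan is to derive the corollary from the joint convexity of $\eta$ (Proposition~\ref{prop:convex}) together with Jensen's inequality, in exactly the way the text announces. The key conceptual point is that conditioning on $W$ expresses each $P_i$ as a mixture (an average over $W$) of the conditional distributions $P_{i\mid W}$, and convexity of $\eta$ in its $M$ arguments says that $\eta$ of the mixture is at most the mixture of the $\eta$'s.

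First I would fix notation: for $i=1,\dots,M$ let $Y_i \sim P_i$, and for a value $w$ of $W$ let $P_{i\mid w}$ denote the conditional distribution of $Y_i$ given $W=w$. I should be a little careful here about what "$Y_i \mid W$" means across different $i$: the cleanest reading is that $(Y_1,\dots,Y_M,W)$ need not have any particular joint structure, but what matters is only that, for each $w$, the $P_{i\mid w}$ are well-defined (sub-)distributions and that $P_i = \int P_{i\mid w}\, \mathrm{d}P_W(w)$, i.e.\ $P_i = \E_W[P_{i\mid W}]$. With the mixture proportions $\{\pi_i\}_{i=1}^M$ held fixed, I would then invoke Proposition~\ref{prop:convex} in its general (integral) form: $\eta$ is convex as a function of the $M$-tuple of distributions, so for any probability measure $\rho$ on the index space and any measurable family $\{(P_1^{(w)},\dots,P_M^{(w)})\}$,
\[
\eta\!\left(\int P_1^{(w)}\,\mathrm{d}\rho(w),\dots,\int P_M^{(w)}\,\mathrm{d}\rho(w)\right)\;\le\;\int \eta\!\left(P_1^{(w)},\dots,P_M^{(w)}\right)\mathrm{d}\rho(w).
\]
Applying this with $\rho = P_W$ and $P_i^{(w)} = P_{i\mid w}$, the left-hand side is $\eta(P_1,\dots,P_M)$ since $P_i = \E_W[P_{i\mid W}]$, and the right-hand side is $\E_W[\eta(Y_1\mid W,\dots,Y_M\mid W)]$, which is the claimed inequality.

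There are two points that need a little care, and these are where I expect the only real work to lie. The first is purely a matter of upgrading Proposition~\ref{prop:convex}, which is stated for a two-term convex combination with scalar $\lambda$, to an integral/Jensen form; this is standard — one gets finite convex combinations by induction on the number of terms, and then general mixtures by a limiting argument (approximating $P_W$ by discrete measures and using, e.g., that the map from distributions to $\eta$ is continuous enough, or by directly invoking the alternative expression \eqref{eq:eta_alternative_Radon} which makes $\eta$ manifestly of the form "(convex functional of Radon--Nikodym derivatives) normalized by a constant" and hence amenable to Jensen directly). The second point is a regularity/measurability caveat: the conditional distributions $P_{i\mid W}$ should exist (a regular conditional distribution), $w\mapsto \eta(P_{1\mid w},\dots,P_{M\mid w})$ should be measurable so the outer expectation makes sense, and the mixing proportions must genuinely be the same $\pi_i$ for every $w$ — which is automatic here because $\pi_i$ is a fixed external parameter of $\eta$, not something read off from the data. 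Modulo these standard measure-theoretic provisos, the argument is a one-line consequence of convexity plus Jensen, so I would keep the proof short, state the integral form of convexity as the key step, and relegate the approximation argument to a sentence.

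I do not anticipate a genuine obstacle; the "hard part," such as it is, is simply being precise that Proposition~\ref{prop:convex}'s two-point convexity extends to arbitrary mixtures — i.e.\ that $\eta(\cdot,\dots,\cdot)$, as a functional on $M$-tuples of probability measures with fixed $\{\pi_i\}$, is convex in the full Jensen sense. Since $\eta$ is, up to an affine reparametrization and a positive normalizing constant, the integral functional in \eqref{eq:eta_alternative_Radon}, whose integrand $(r_1,\dots,r_M)\mapsto \sum_{i,j}\pi_i\pi_j K(i,j)\, r_i r_j$ is a positive semidefinite quadratic form (hence convex) in the Radon--Nikodym derivatives, this extension is routine and I would cite the computation already done in Appendix~\ref{sec:pf_prop_convex} rather than redo it.
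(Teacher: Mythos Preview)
Your proposal is correct and is exactly the approach the paper takes: the text states explicitly that the corollary follows by ``combining the above convexity result with Jensen's inequality,'' and gives no further detail. Your write-up simply fleshes this out, correctly noting that $P_i=\E_W[P_{i\mid W}]$ and that the two-point convexity of Proposition~\ref{prop:convex} extends to general mixtures (indeed, the argument in Appendix~\ref{sec:pf_prop_convex} via the convex integrand $x\mapsto x^\top[K(i,j)]x/(1^\top x)$ already yields the full Jensen form directly).
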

The above corollary has an information theoretic interpretation:
If we view $Y_i \sim P_i$ as the output distribution after passing $W$ through a ``channel" $p(Y_i|W)$ \citep{Cover2006info}, the above relation tells us that the average ``difference" between the corresponding channel transitions is at least the ``difference" between the output distributions.

\begin{remark}[Connection to Henze-Penrose dissimilarity  \citep{HenzePenrose1999}]
In fact, $\eta$, in the special case when $\Z=\R^d$ and $M=2$, has a close connection to existing measures of dissimilarity between distributions.
Suppose that $P_1$ and $P_2$ have densities  $f$ and $g$ w.r.t.~the Lebesgue measure on $\R^d$, then, it can be shown that
$
\eta =  1 - \int \frac{f(x) g(x)}{\pi_1 f(x) +\pi_2 g(x)}   \mathrm{d}x
$ (see Appendix~\ref{sec:pf_special_eta} for a proof).
This has a close connection to the Henze-Penrose dissimilarity \citep{HenzePenrose1999} defined as $
\delta(f, g) := 1 - 2 \pi_1 \pi_2 \int \frac{f(x) g(x)}{\pi_1f(x) + \pi_2 g(x)} \mathrm{d}x,$
which belongs to a general class of separation measures between distributions \citep{gyorfi1975f}, and arises as the limit of a large class of graph-based two-sample tests, including the Friedman-Rafsky test \citep{FR79,HenzePenrose1999},
nearest-neighbor based tests \citep{Sc86,Henze88NN}, and the crossmatch test \citep{Arias2016cm,mukherjee2020distribution}.
\end{remark}


%

\section{Estimation}\label{sec:eta_hat}
In this section the sample version $\hat{\eta}$ of $\eta$ will be introduced.
While the definition of $\eta$ does not require $\Z$ to be a metric space, to establish some useful properties of $\hat{\eta}$ we will assume that $\Z$ is a metric space with distance function $\rho_\Z(\cdot,\cdot)$. The notion of a \emph{geometric graph} \citep{deb2020kernel, bhattacharya2019general} is crucial in our construction. Intuitively, in a geometric graph on a set of points $z_1,\ldots,z_m\in\Z$ ($m\ge 1$), an edge $(z_i,z_j)$ will appear if $z_i$ and $z_j$ are ``close" in distance.

Formally, $\mcg$ is said to be a  geometric graph on $\mathcal{Z}$ if, given any finite subset $S$ of $\mathcal{Z}$, $\mcg(S)$ defines a graph with vertex set $S$ and the corresponding edge set $\mathcal{E}(\mcg(S))$. 
The graph can be both directed or undirected, and we will restrict ourselves to simple graphs, i.e., graphs without multiple edges and self loops. Examples of such graphs include {\it minimum spanning trees} (MSTs) and {\it $k$-nearest neighbor ($k$-NN) graphs} (where $k\ge 1$), as described below. 
\begin{enumerate}
    \item \textbf{$k$-NN graph}: The directed $k$-NN graph puts an edge from each node $z_i$ to its $k$-NNs among $z_1,\ldots,z_{i-1},z_{i+1},\ldots,z_m$ (so $z_i$ is excluded from the set of its $k$-NNs). Ties are broken at random if they occur to ensure the out-degree is always $k$. The undirected $k$-NN graph is obtained by ignoring the direction in the directed $k$-NN graph and removing multiple edges if they exist.
    
    \item \textbf{MST}: An MST is a subset of edges of an edge-weighted undirected graph which connects all the vertices in the graph with the least possible sum of edge weights and contains no cycles. For instance, given the set of points $z_1,\ldots,z_m\in\Z$ one can construct an MST for the complete graph with vertices as $z_i$'s and edge weights being the distance $\rho_\Z (z_i,z_j)$ between vertices $z_i$ and $z_j$. 
\end{enumerate}
In practice, a $k$-NN graph is recommended as the primary choice over the MST for its flexibility and computational convenience (see Remark~\ref{rk:computational_efficiency} below).


\subsection{Definition of $\hat{\eta}$}\label{sec:exact_def_eta_hat}
Recall our $M$ sample problem setting (in Section~\ref{subsec:def_eta}) and let $Z_1,\ldots,Z_n$ be the pooled sample and $\Delta_1,\ldots,\Delta_n$ be the corresponding observation labels.
Let $\mathcal{G}_n := \mathcal{G}(Z_1,\ldots ,Z_n)$ where $\mathcal{G}$ is a geometric graph on $\mathcal{Z}$ such that $(Z_i,Z_j)\in\emgn$ implies $Z_i$ and $Z_j$ are ``close". Let $d_i$ be the out-degree of $Z_i$ in $\mathcal{G}_n$.
We consider the following estimator of $\eta$:
\begin{equation}\label{eq:Eta-Hat}
\begin{aligned}
\hat{\eta} := \frac{\frac{1}{n} \sum \limits_{i=1}^n \frac{1}{d_i} \sum \limits_{j:(Z_i,Z_j) \in \emgn} K(\Delta_i,\Delta_j)-\frac{1}{n(n-1)}  \sum \limits_{i \ne j} K(\Delta_i,\Delta_j)}{\frac{1}{n}\sum_{i=1}^n K(\Delta_i,\Delta_i)-\frac{1}{n(n-1)}  \sum_{i \ne j} K(\Delta_i,\Delta_j)}.\end{aligned}
\end{equation} 
The definition of this estimator is intuitive:
first, $\E\big[ K(\tilde{\Delta},\tilde{\Delta}) \big]$ in the denominator of $\eta$ (see \eqref{eq:eta}) is estimated by $\frac{1}{n}\sum_{i=1}^n K(\Delta_i,\Delta_i)$; second, $\E \big[K(\tilde{\Delta}_1,\tilde{\Delta}_2)\big]$ is estimated by 
\begin{equation}\label{eq:2nd-Term}
\frac{\sum_{i \ne j} K(\Delta_i,\Delta_j)}{n(n-1)}=\frac{\sum_{i=1}^M n_i(n_i-1)K(i,i) + \sum_{i\neq j}^M n_in_jK(i,j)}{n(n-1)}.
\end{equation}
For estimating the remaining term $\E\left[\E \big[K(\tilde{\Delta},\tilde{\Delta}{'})|\tilde{Z}\big]\right]$,
ideally we would want two independent observations $\tilde{\Delta},\tilde{\Delta}{'}$ from the conditional distribution $\tilde{\Delta}|\tilde{Z}=Z_i$,
so we take one to be $\Delta_i$, and the other to be the label $\Delta_j$ of an observation $Z_j$ which is `close' to $Z_i$.
The first term in the numerator of \eqref{eq:Eta-Hat} formalizes this intuition via the geometric graph $\mathcal{G}_n$.
Our estimator $\hat{\eta}$ also has a nice interpretation: it can be shown that $\hat{\eta}$ is linearly related to the leave-one-out cross-validation accuracy of a $k$-NN classifier  if the discrete kernel is used; see Appendix~\ref{sec:CVinterpretation} for the details.

With a $k$-NN graph, $\hat{\eta}$ has {\it near linear computational complexity} $O(kn\log n)$. Note that finding the $k$-NN graph has computational complexity $O(kn\log n)$. Further, the first term in the numerator of~\eqref{eq:Eta-Hat} is a sum of $kn$ terms (as $d_i \equiv k$ for all $i$); the second term in the numerator of~\eqref{eq:Eta-Hat} can in fact be computed in $O(1)$ time (see~\eqref{eq:2nd-Term}).



The following theorem (proved in Appendix~\ref{sec:pfconsist}) generalizes the above special case and shows that $\hat{\eta}$ is strongly consistent in estimating $\eta$ under mild assumptions.
\begin{theorem}[Consistency]\label{thm:consist} Recall our setting as described at the beginning of Section~\ref{subsec:def_eta} where we assume that $\frac{n_i}{n}\to \pi_i\in(0,1)$ as $n\to\infty$, for $i=1,\ldots,M$. Suppose that $\tilde{\mathcal{G}}_n := \mathcal{G}(\tilde{Z}_1,\tilde{Z}_2,\ldots, \tilde{Z}_n)$, where $\tilde{Z}_1,\tilde{Z}_2,\ldots, \tilde{Z}_n$ are i.i.d.~from the mixture distribution $\sum_{i=1}^M \pi_i P_i$ on $\Z$, satisfies Assumptions \ref{assump:conv_nn}--\ref{assump:degupbd} (detailed in Appendix~\ref{sec:Assump-Graph}).
Then, $\hat{\eta}\overset{a.s.}{\longrightarrow}\eta$, as $n\to\infty$.
\end{theorem}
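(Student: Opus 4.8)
The plan is to show almost-sure convergence of both the numerator and the denominator of $\hat\eta$ in \eqref{eq:Eta-Hat} to the corresponding population quantities in \eqref{eq:eta}, and then conclude by the continuous mapping theorem (the denominator limit, $\E[K(\tilde\Delta,\tilde\Delta)] - \E[K(\tilde\Delta_1,\tilde\Delta_2)]$, is strictly positive whenever $K$ is characteristic, though one should note the theorem as stated does not even assume $K$ characteristic — in that degenerate case $\eta$ is simply undefined, so I would implicitly work on the event the denominator limit is nonzero). The three pieces are: (a) $\frac1n\sum_{i=1}^n K(\Delta_i,\Delta_i)$, (b) $\frac{1}{n(n-1)}\sum_{i\ne j}K(\Delta_i,\Delta_j)$, and (c) the graph-based term $\frac1n\sum_{i=1}^n \frac1{d_i}\sum_{j:(Z_i,Z_j)\in\emgn}K(\Delta_i,\Delta_j)$.

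Pieces (a) and (b) are easy and require no graph structure: since $K$ takes finitely many values (it is a fixed $M\times M$ matrix) and $n_i/n\to\pi_i$, the quantity in \eqref{eq:2nd-Term} is a deterministic function of the $n_i$'s and converges to $\sum_{i,j}\pi_i\pi_j K(i,j) = \E[K(\tilde\Delta_1,\tilde\Delta_2)]$; similarly $\frac1n\sum_i K(\Delta_i,\Delta_i)=\frac1n\sum_i n_i K(i,i)\to\sum_i\pi_i K(i,i)=\E[K(\tilde\Delta,\tilde\Delta)]$. So the real work is piece (c), and here the strategy is to couple the actual sample $\{(Z_i,\Delta_i)\}$ with the i.i.d.\ sample $\{(\tilde Z_i,\tilde\Delta_i)\}$ from $\mu$ appearing in the statement, so that one can invoke the graph assumptions \ref{assump:conv_nn}--\ref{assump:degupbd} (which are stated for $\tilde{\mathcal G}_n$ built on i.i.d.\ data from $\bar P=\sum_i\pi_i P_i$). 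The standard device (as in \citet{deb2020kernel}) is: generate the pooled sample by first sampling labels according to the exact multinomial/fixed-$n_i$ scheme and $Z$'s conditionally, versus sampling i.i.d.\ pairs; a coupling argument shows the two pooled empirical configurations differ in only $o(n)$ coordinates almost surely (by a Glivenko–Cantelli / concentration bound on $|n_i - n\pi_i|$ combined with an exchangeability argument for which points get relabeled), and since $K$ is bounded and the graph has bounded average degree (Assumption \ref{assump:degupbd}), changing $o(n)$ labels perturbs the average in (c) by $o(1)$. Then on the i.i.d.\ sample one applies the convergence-of-nearest-neighbor-type assumption: Assumption \ref{assump:conv_nn} should say something like $\frac1n\sum_i \frac1{d_i}\sum_{j\in N(i)} h(\tilde Z_i,\tilde Z_j)\to \E[\E(h(\tilde Z,\tilde Z'')|\tilde Z)^{\,\text{-style}}]$ for suitable $h$; applying it with the (random, but measurable given the $Z$'s) function that reads off labels — more precisely, conditioning on the $\tilde Z$'s and using that $\tilde\Delta_i|\tilde Z_i$ are independent across $i$ — lets one push the label-averaging through, yielding the limit $\E[\E(K(\tilde\Delta,\tilde\Delta')|\tilde Z)] = \E[K(\tilde\Delta,\tilde\Delta')]$, which is exactly the first term in the numerator of $\eta$.

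Subtracting, the numerator of $\hat\eta$ converges a.s.\ to $\E[K(\tilde\Delta,\tilde\Delta')]-\E[K(\tilde\Delta_1,\tilde\Delta_2)]$ and the denominator to $\E[K(\tilde\Delta,\tilde\Delta)]-\E[K(\tilde\Delta_1,\tilde\Delta_2)]$; dividing (on the event the latter is nonzero) gives $\hat\eta\overset{a.s.}{\to}\eta$. The main obstacle is step (c): making the coupling between the fixed-composition pooled sample and the i.i.d.\ sample rigorous enough that the graph assumptions — which are hypotheses about geometric graphs on genuinely i.i.d.\ points — can be invoked, and then correctly handling the fact that the ``test function'' fed to Assumption \ref{assump:conv_nn} depends on the labels $\tilde\Delta_i$, which are themselves random and only conditionally (given $\tilde Z_i$) independent. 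The clean way around the latter is a two-stage argument: first condition on all the $\tilde Z_i$'s (so the graph $\tilde{\mathcal G}_n$ and the out-degrees $d_i$ are fixed), show the label-average concentrates around its conditional mean $\frac1n\sum_i \frac1{d_i}\sum_{j\in N(i)} g_i^\top K g_j$ where $g_i(z)=\big(\tfrac{\mathrm d(\pi_\ell P_\ell)}{\mathrm d\bar P}(z)\big)_{\ell=1}^M$ is the vector of posterior label probabilities, via a bounded-differences inequality over the independent-across-$i$ labels; then apply Assumption \ref{assump:conv_nn} to the deterministic function $h(z,z')=g(z)^\top K\, g(z')$ to get convergence of that conditional mean to $\int g(z)^\top K g(z)\,\mathrm d\bar P(z)=\E[\E(K(\tilde\Delta,\tilde\Delta')\mid\tilde Z)]$.
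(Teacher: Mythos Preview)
Your high-level strategy --- couple the fixed-composition sample to an i.i.d.\ sample from $\mu$, bound the coupling error, then handle the i.i.d.\ case --- matches the paper. But the coupling step as you describe it has a real gap, and your i.i.d.\ step takes a harder road than necessary.

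\textbf{The coupling step.} You phrase the discrepancy between the two samples as ``which points get relabeled'' and bound the perturbation from ``changing $o(n)$ labels'' using only Assumption~\ref{assump:degupbd}. That is not what happens, and no label-only coupling is available. In the paper's coupling (Appendix~\ref{sec:pfconsist}) one draws common reservoirs $X_1^{(m)},X_2^{(m)},\ldots\overset{\text{i.i.d.}}{\sim}P_m$ and forms the fixed-composition pooled sample from the first $n_m$ of each and the i.i.d.\ pooled sample from the first $\tilde n_m=\sum_i I(\tilde\Delta_i=m)$; every point common to both pooled samples carries the \emph{same} label in both, and the entire discrepancy is that $\tfrac12\sum_m|n_m-\tilde n_m|=o(n)$ a.s.\ points are \emph{replaced} (e.g., an $X^{(1)}_j$ swapped for an $X^{(2)}_{j'}$). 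Replacing a point alters the graph, and bounding that alteration is exactly the job of Assumption~\ref{assump:degree} (the $q_n$ bound on $|\mathcal E(\tilde{\mathcal G}_n)\setminus\mathcal E(\tilde{\mathcal G}_{n,i})|$), which you never invoke. The paper combines Assumptions~\ref{assump:degree} and~\ref{assump:degupbd} to show that a single point-replacement perturbs the graph term in the numerator by at most $2(q_n+t_n)\|K\|_\infty/(n r_n)=O(1/n)$, whence $o(n)$ replacements contribute $o(1)$ almost surely.

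\textbf{The i.i.d.\ step.} The paper does not redo this analysis: it observes that on the i.i.d.\ sample $\tilde\eta$ is exactly the KMAc estimator of \cite{deb2020kernel} between $\tilde Z$ and $\tilde\Delta$, checks the (trivial, since $\s$ is finite) moment and separability hypotheses of their Theorem~3.1, and cites that result for $\tilde\eta\overset{a.s.}{\to}\eta$. Your two-stage route is a plausible reproof of that theorem, but two points are glossed over: (i) the function $h(z,z')=g(z)^\top K\,g(z')$ is built from Radon--Nikodym derivatives that need not be continuous, so ``neighbors are close $\Rightarrow$ $h$-values are close'' requires an approximation argument; and (ii) Assumption~\ref{assump:conv_nn} is a convergence-in-probability statement about a single neighbor pair, and lifting the $n$-average to almost-sure convergence again needs Assumptions~\ref{assump:degree}--\ref{assump:degupbd} (e.g., via a bounded-differences argument over the $\tilde Z_i$'s themselves, not just over the labels).
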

Assumptions~\ref{assump:conv_nn}--\ref{assump:degupbd} required on the geometric graph $\tilde{\mathcal{G}}_n$ for the above result were made in~\citet[Theorem 3.1]{deb2020kernel}.
For the $k$-NN graph and the MST, these conditions are satisfied under mild assumptions. For example, in an Euclidean space, they hold for the $k$-NN graph when $\|\tilde{Z}_1-\tilde{Z}_2\|$ has a continuous distribution and $k=o(\frac{n}{\log n})$; for the MST these are satisfied when $\tilde{Z}_1$ has an absolutely continuous distribution \citep[Proposition 3.2]{deb2020kernel}.

The following remark shows that $\hat{\eta}$ in \eqref{eq:Eta-Hat} generalizes, in various directions, many previously known nearest-neighbor type statistics for two-sample testing.
\begin{remark}[Connection to \cite{Sc86} and \cite{Henze88NN}]\label{rk:connection_graph_est}
Under a Euclidean setting with $M=2$, if the discrete kernel and the directed $k$-NN graph (with fixed $k$) are used,
then from \eqref{eq:Eta-Hat},
$$\hat{\eta} =\frac{\frac{1}{n} \sum \limits_{i=1}^n \frac{1}{d_i} \sum \limits_{j:(Z_i,Z_j) \in \emgn} I(\Delta_i=\Delta_j) - \frac{1}{n(n-1)}\sum \limits_{i=1}^M  n_i(n_i-1)   }{1-\frac{1}{n(n-1)}\sum \limits_{i=1}^M  n_i(n_i-1)}.$$
A linearly transformed version of this statistic was first proposed in \citet{Sc86} 
for two-sample testing. Its detailed asymptotic properties were later analyzed by \citet{Henze88NN}.
Our proposed $\hat{\eta}$ in this paper can be viewed as a normalized version of this statistic,
allowing $M>2$ distributions defined on general metric spaces besides the Euclidean space, using more general kernel functions in addition to the discrete kernel, and other geometric graphs besides the $k$-NN graph. Even for the directed $k$-NN graph, $k$ is allowed to be unbounded and grow with $n$ in the analysis of our consistency result (Theorem~\ref{thm:consist}) and the CLT (see Theorem~\ref{thm:asympnull} below), instead of being fixed as in \citet{Sc86,Henze88NN,BB20detection}.

\end{remark}

\section{Asymptotic Behavior of $\hat{\eta}$ under ${\rm H}_0$}\label{sec:asymp_dist}
In this section the asymptotic normality of $\hat{\eta}$ under ${\rm H}_0$ will be derived, which will yield a simple asymptotic test for the equality of the $M$ distributions in \eqref{eq:hypo}.
We show that under the null, the permutation distribution of $\hat{\eta}$, i.e., the distribution of $\hat{\eta}$ given the pooled sample $\{Z_1,\ldots,Z_n\}$, is asymptotically normal, and as a result, the unconditional asymptotic distribution of $\hat{\eta}$ is also normal. 

\subsection{Asymptotic Permutation Distribution}
Let $\mathcal{F}_n$ be the $\sigma$-algebra generated by the unordered pooled sample $\{Z_1,\ldots,Z_n\}$ without the labeling information (note that the number of observations from each distribution $n_i$, for $i=1,\ldots,M$, is known). The following result (proved in Appendix~\ref{sec:pfNormal}) 
states that the permutation distribution, i.e., the conditional distribution of $\hat{\eta}$ given $\mathcal{F}_n$, is asymptotically normal. Furthermore, the asymptotic variance of $\hat{\eta}$ under ${\rm H}_0$ is distribution-free under suitable conditions, in the sense that it does not depend on the underlying common distribution $P_1 = \ldots = P_M$.

\begin{theorem}\label{thm:asympnull}
Suppose Assumptions~\ref{assump:degree} and \ref{assump:degupbd} (in Appendix~\ref{sec:Assump-Graph}) are satisfied, $\frac{n_i}{n}\to \pi_i\in(0,1)$ as $n\to\infty$, for $i=1,\ldots,M$,
the kernel $K(\cdot,\cdot)$ is characteristic,
and the vertex degrees of $\mathcal{G}_n$ (recall that $\mathcal{G}_n$ is the geometric graph constructed on $\{Z_1,\ldots,Z_n\}$) are bounded above by
$t_n$ such that $\frac{t_n^r}{n}\to 0$ for all $r\in \mathbb{N}$ (e.g., $t_n=C(\log n)^\gamma$ for some $C>0,\gamma\geq 0$).
Then, under the null hypothesis \eqref{eq:hypo} that all the $M$ distributions are equal,
$$\frac{\hat{\eta}}{\sqrt{{\rm Var}(\hat{\eta}|\mathcal{F}_n)}}\big| \mathcal{F}_n \overset{d}{\to} N(0,1),\quad {\rm where}$$
$$\frac{\hat{\eta}}{\sqrt{{\rm Var}(\hat{\eta}|\mathcal{F}_n)}} =\frac{\sqrt{n}\left(\frac{1}{n} \sum \limits_{i=1}^n \frac{1}{{d}_i} \sum \limits_{j:(Z_i,Z_j) \in \emgn} K({\Delta}_i,{\Delta}_j) - \frac{1}{n(n-1)}  \sum \limits_{i \ne j} K(\Delta_i,\Delta_j)\right)}{\sqrt{\tilde{a} \left( \tilde{g}_1 + \tilde{g}_3 - \frac{2}{n-1}\right) + \tilde{b} \left( \tilde{g}_2 - 2\tilde{g}_1 -2\tilde{g}_3 - 1 + \frac{4}{n-1} \right) + \tilde{c}\left(\tilde{g}_1-\tilde{g}_2+\tilde{g}_3 +\frac{n-3}{n-1} \right)}},$$
with $\tilde{g}_1 := \frac{1}{n}\sum_{i=1}^n \frac{1}{d_i}$, $\tilde{g}_2 := \frac{1}{n}\sum_{i,j=1}^n \frac{T^{\mathcal{G}_n}(i,j)}{d_id_j}$,
$T^{\mathcal{G}_n}(i,j):=\sum_{k=1}^n I\{(Z_i,Z_k),(Z_j,Z_k)\in\emgn \}$ being the number of common out-neighbors, $\tilde{g}_3:=\frac{1}{n}{\sum_{i,j:(Z_i,Z_j),(Z_j,Z_i)\in\emgn}}\frac{1}{d_id_j}$, and
\begin{equation}\label{eq:tilde_a_b_c}
\begin{aligned}
\tilde{a} &:=\frac{1}{n(n-1)}{\sum}' K^2(\Delta_i,\Delta_j),\qquad \quad
\tilde{b} \;:=\frac{1}{n(n-1)(n-2)}{\sum}'  K(\Delta_i,\Delta_j) K(\Delta_i,\Delta_l),\\
\tilde{c} &:=\frac{1}{n(n-1)(n-2)(n-3)}{\sum}'  K(\Delta_i,\Delta_j) K(\Delta_l,\Delta_m).
\end{aligned}
\end{equation}
Here $\sum'$ means the summation is over distinct indices. Further, ${\rm Var}(\hat{\eta}|\mathcal{F}_n)=O\left(\frac{1}{n}\right)$.
From the above conditional CLT, the unconditional CLT also follows: $\frac{\hat{\eta}}{\sqrt{{\rm Var}(\hat{\eta}|\mathcal{F}_n)}} \overset{d}{\to} N(0,1).$
\end{theorem}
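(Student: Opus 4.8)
The plan is to establish the conditional CLT via a combinatorial central limit theorem for functions of a uniformly random permutation, and then derive the unconditional CLT by a standard conditioning argument. First I would observe that, under ${\rm H}_0$, given $\mathcal{F}_n$ (the pooled sample without labels) the label vector $(\Delta_1,\ldots,\Delta_n)$ is distributed as a uniformly random assignment of the known multiset of labels (with multiplicities $n_1,\ldots,n_M$) to the $n$ sample points. Hence the numerator of $\hat\eta$, namely $W_n := \frac{1}{n}\sum_{i=1}^n \frac{1}{d_i}\sum_{j:(Z_i,Z_j)\in\emgn} K(\Delta_i,\Delta_j) - \frac{1}{n(n-1)}\sum_{i\ne j}K(\Delta_i,\Delta_j)$, is, conditionally on $\mathcal{F}_n$, a quadratic form in the random permutation: $W_n = \sum_{i\ne j} w_{ij}^{(n)} K(\Delta_i,\Delta_j) + (\text{diagonal terms})$, where the weights $w_{ij}^{(n)}$ are deterministic functions of the graph $\mathcal{G}_n$ (essentially $\frac{1}{n}\cdot\frac{1}{d_i}I\{(Z_i,Z_j)\in\emgn\}$ minus the constant $\frac{1}{n(n-1)}$). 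The centering is already built in: $\E[W_n\mid\mathcal{F}_n]=0$ under ${\rm H}_0$, since the expectation of $K(\Delta_i,\Delta_j)$ over a random permutation does not depend on $(i,j)$ and the edge weights are constructed to cancel it (this is where characteristicness of $K$ is \emph{not} yet needed; it enters only to ensure the limiting variance is strictly positive so the normalization is legitimate).

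Second, I would compute $\Var(W_n\mid\mathcal{F}_n)$ exactly. This is a moment computation for quadratic forms in random permutations: one expands $\E[W_n^2\mid\mathcal{F}_n]$ and groups terms by the number of distinct indices among the (up to four) label-slots involved, which is exactly why the statistics $\tilde g_1,\tilde g_2,\tilde g_3$ (counting, respectively, reciprocal degrees, common out-neighbors weighted by $1/d_id_j$, and mutual edges weighted by $1/d_id_j$) and the label-combinatorial quantities $\tilde a,\tilde b,\tilde c$ in \eqref{eq:tilde_a_b_c} appear. The claimed closed-form denominator is precisely this bookkeeping, so I would verify it by matching coefficients of $\tilde a,\tilde b,\tilde c$ against the cardinality-of-intersection cases. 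The degree-boundedness hypothesis $t_n^r/n\to 0$ for all $r$ guarantees that $\tilde g_1,\tilde g_2,\tilde g_3$ are all $O(t_n^2/n)$ in the relevant regime and hence $\Var(W_n\mid\mathcal{F}_n)=O(t_n^2/n)$; combined with the lower bound from characteristicness and Assumption~\ref{assump:degree} on the limiting behavior of the degree-based graph functionals, one gets $\Var(\hat\eta\mid\mathcal{F}_n)\asymp 1/n$, in particular $=O(1/n)$ as claimed.

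Third, and this is the heart of the argument, I would prove asymptotic normality of $W_n/\sqrt{\Var(W_n\mid\mathcal{F}_n)}$ conditionally on $\mathcal{F}_n$. The cleanest route is the method of moments: show that all conditional moments of the normalized statistic converge (in probability over the realization of $\mathcal{F}_n$, or almost surely) to the moments of $N(0,1)$. Each $p$-th moment is a sum over tuples of edges of the graph, weighted by permutation-moments of products of $K$-values; the dominant contribution comes from ``pairing'' configurations (giving the Gaussian moments) while all other configurations are lower order because they involve more graph vertices relative to the number of edges and are controlled by the degree bound $t_n$ together with the graph assumptions. This is the technically heaviest step and I expect it to be the main obstacle: one must carefully enumerate the combinatorial configurations of edge-tuples in $\mathcal{G}_n$, show the non-pairing ones vanish at rate governed by $t_n^r/n\to 0$, and handle the interaction between the randomness of the permutation and the (conditioned-on, but still complicated) geometry of $\mathcal{G}_n$. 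An alternative would be to invoke an existing combinatorial CLT for generalized correlation statistics / graph-based permutation statistics in the style of \citet{bhattacharya2019general} or the dependency-graph/Stein's-method machinery, checking that the weight array $w_{ij}^{(n)}$ satisfies the required negligibility-of-leverages condition (no vertex has disproportionate influence), which again reduces to the degree bound.

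Finally, the unconditional CLT follows routinely: since $\hat\eta/\sqrt{\Var(\hat\eta\mid\mathcal{F}_n)}\mid\mathcal{F}_n\overset{d}{\to}N(0,1)$ with a limit that does not depend on $\mathcal{F}_n$, dominated convergence applied to characteristic functions (conditioning on $\mathcal{F}_n$ and taking expectations) gives $\E[e^{it\,\hat\eta/\sqrt{\Var(\hat\eta\mid\mathcal{F}_n)}}]\to e^{-t^2/2}$, hence the stated unconditional convergence. The only care needed is that the conditional convergence holds for almost every sequence of $\mathcal{F}_n$'s (or in probability), which is exactly what the moment computation delivers.
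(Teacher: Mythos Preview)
Your proposal is correct and follows essentially the same route as the paper: the paper also writes the (numerator of) $\hat\eta$ as a quadratic permutation statistic $\sum a_{ij}B_{ij}$ with graph-based weights $a_{ij}$ and centered kernel values $b_{ij}$, establishes the conditional CLT via an extension of \citet{Pham1989permu} (Theorem~\ref{thm:extPham}, itself proved by the method of moments with exactly the ``pairing configurations dominate, non-pairing ones are killed by $t_n^r/n\to 0$'' argument you sketch), computes the exact permutation variance to obtain the displayed denominator, and passes to the unconditional CLT by dominated convergence on conditional distribution functions. One minor slip: the graph functionals $\tilde g_1,\tilde g_2,\tilde g_3$ are $O(1)$ under Assumptions~\ref{assump:degree}--\ref{assump:degupbd} (not $O(t_n^2/n)$), and it is this $O(1)$ bound combined with the $1/n$ normalization that yields $\Var(\hat\eta\mid\mathcal{F}_n)=O(1/n)$.
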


Although the statement of the above theorem is similar to \citet[Theorem 4.1]{deb2020kernel}, the proof technique is quite different, since we are dealing with the conditional distribution given $\mathcal{F}_n$ instead of the unconditional one. 
The main technical tool used here is a modification of the CLT in \citet{Pham1989permu}, based on moment matching.
Such a technique has been used to show the asymptotics for a variety of graph-based statistics \citep{Bloemena1964graph,Henze88NN,HenzePenrose1999,Petrie2016}.
Our general result (see Theorem~\ref{thm:extPham} in Appendix~\ref{sec:pfNormal}), compared to~\cite{Pham1989permu}, can deal with unbounded vertex degrees, which may be of independent interest.

Theorem~\ref{thm:asympnull} has the benefit that if we regard the randomness as coming from random permutations (with the pooled sample fixed), then the variance ${\rm Var}(\hat{\eta}|\mathcal{F}_n)$ can be computed exactly.
This could lead to a better approximation of the sampling distribution of $\hat{\eta}$,
compared to the test that uses the limiting value of ${\rm Var}(\hat{\eta}|\mathcal{F}_n)$ (see Section~\ref{sec:asymp_dist_free} below).
Note that we allow $k$, the number of nearest neighbors, to be unbounded and grow with $n$ instead of being fixed as in some previous relevant works
\citep{Sc86,Henze88NN,Petrie2016}.


\begin{corollary}\label{cor:universal_consist}
Consider the testing problem~\eqref{eq:hypo}. Under the assumptions of Theorem~\ref{thm:asympnull}, if furthermore Assumption~\ref{assump:conv_nn} (in Appendix~\ref{sec:Assump-Graph}) holds, then
the $M$-sample test with rejection region:
\begin{equation}\label{eq:rej_region}
\frac{\hat{\eta}}{\sqrt{{\rm Var}(\hat{\eta}|\mathcal{F}_n)}} \geq z_{1-\alpha},
\end{equation}
where $z_{\alpha}$ is the $\alpha^{\rm th}$ quantile of the standard normal distribution, has asymptotic level $\alpha \in (0,1)$ and is consistent against any alternative where at least two distributions are different.
\end{corollary}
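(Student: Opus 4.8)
The plan is to deduce both halves of the statement directly from the results already in hand: Theorem~\ref{thm:asympnull} supplies the level, and Theorem~\ref{thm:consist} together with the variance bound supplies consistency.

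For the asymptotic level, I would invoke the unconditional CLT in Theorem~\ref{thm:asympnull}: under ${\rm H}_0$ one has $\hat\eta/\sqrt{{\rm Var}(\hat\eta\mid\mathcal{F}_n)}\overset{d}{\to}N(0,1)$, and since $z_{1-\alpha}$ is a continuity point of the standard normal distribution function, the probability of the rejection event \eqref{eq:rej_region} converges to $\mathbb{P}(N(0,1)\ge z_{1-\alpha})=\alpha$. Note that this half uses only Assumptions~\ref{assump:degree}--\ref{assump:degupbd}, not Assumption~\ref{assump:conv_nn}.

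For consistency, fix an alternative with $P_i\ne P_j$ for some $i<j$. First I would note that $\eta>0$, which is exactly Theorem~\ref{thm:3prop}, since $\eta=0$ would force $P_1=\dots=P_M$. Next, Theorem~\ref{thm:consist} applies verbatim --- its hypotheses are precisely Assumptions~\ref{assump:conv_nn}--\ref{assump:degupbd}, which are in force here --- giving $\hat\eta\overset{a.s.}{\to}\eta>0$, hence $\sqrt n\,\hat\eta\overset{a.s.}{\to}+\infty$. It then remains to show the normalizer shrinks at rate $1/\sqrt n$. The key observation is that ${\rm Var}(\hat\eta\mid\mathcal{F}_n)$ appearing in \eqref{eq:rej_region} is the permutation variance given by the explicit formula in Theorem~\ref{thm:asympnull} --- a deterministic function of $\mathcal{F}_n$, since $\tilde g_1,\tilde g_2,\tilde g_3$ depend only on the graph $\mathcal{G}_n$ while $\tilde a,\tilde b,\tilde c$ depend only on the bounded kernel $K$ and the fixed label counts $n_1,\dots,n_M$ --- and the proof of ${\rm Var}(\hat\eta\mid\mathcal{F}_n)=O(1/n)$ there uses only the geometry of $\mathcal{G}_n$ and the degree bound $t_n$ with $t_n^r/n\to0$, hence carries over unchanged to the alternative. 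Combining these, $n\,{\rm Var}(\hat\eta\mid\mathcal{F}_n)=O_p(1)$, so
\[
\frac{\hat\eta}{\sqrt{{\rm Var}(\hat\eta\mid\mathcal{F}_n)}}=\frac{\sqrt n\,\hat\eta}{\sqrt{n\,{\rm Var}(\hat\eta\mid\mathcal{F}_n)}}\overset{p}{\longrightarrow}+\infty,
\]
and the probability of \eqref{eq:rej_region} tends to $1$. One should also check that the normalizer is eventually strictly positive so that the statistic is well defined; this holds once each $n_i\ge1$, because with a characteristic kernel the permutation variance of this non-degenerate graph statistic does not vanish for $n$ large.

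The main obstacle --- such as it is --- is bookkeeping rather than mathematics: verifying that the $O(1/n)$ control of ${\rm Var}(\hat\eta\mid\mathcal{F}_n)$ established in Theorem~\ref{thm:asympnull} under ${\rm H}_0$ persists under the alternative, where the pooled sample is a stratified draw from $\bar P=\sum_i\pi_iP_i$ rather than i.i.d.\ from a single distribution. This is genuinely harmless, because the bound is a statement purely about the graph geometry and the fixed label multiset, both governed by the standing geometric-graph assumptions (posed for i.i.d.\ samples from $\bar P$), but it is the one point worth spelling out. A secondary routine point is the eventual positivity of the normalizer noted above.
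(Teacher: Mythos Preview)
Your proposal is correct and follows essentially the same approach as the paper's proof: asymptotic level from the unconditional CLT of Theorem~\ref{thm:asympnull}, and consistency from $\hat\eta\overset{a.s.}{\to}\eta>0$ (Theorems~\ref{thm:3prop} and~\ref{thm:consist}) combined with ${\rm Var}(\hat\eta\mid\mathcal{F}_n)=O(1/n)$. The paper's proof in Appendix~\ref{sec:universal_consist} handles your ``main obstacle'' exactly as you anticipate, by directly bounding $\tilde g_1,\tilde g_2,\tilde g_3$ via the degree bounds $r_n,t_n$ and noting $\tilde a,\tilde b,\tilde c$ are bounded since the kernel is bounded --- these bounds indeed depend only on the graph geometry and the label multiset, not on whether ${\rm H}_0$ holds.
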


The above result is a direct consequence  of Theorem~\ref{thm:asympnull} and the fact that $\hat{\eta}$ converges in probability to $\eta>0$ under any alternative (by Theorem~\ref{thm:consist}). 

\begin{remark}[Computational efficiency]\label{rk:computational_efficiency}
Another appealing property of our method is that it is computationally efficient and easy to implement.
When a Euclidean $k$-NN graph is used, the computation complexity for $\hat{\eta}$ and $\frac{\hat{\eta}}{\sqrt{{\rm Var}(\hat{\eta}|\mathcal{F}_n)}}$ is $O(kn\log n)$ (see Appendix~\ref{sec:implement_scheme} for more detailed implementation schemes; in particular, $\tilde{a}$, $\tilde{b}$, $\tilde{c}$ can be computed in $O(1)$ time given $n_1,\ldots,n_M$), which is near linear time when $k$ does not grow too fast. 
\end{remark}

\subsection{Asymptotic Distribution-Free Property}\label{sec:asymp_dist_free}
{
We will show in this sub-section that the asymptotic distribution of 
$\hat{\eta}$ under ${\rm H}_0$ (see~\eqref{eq:hypo}) is distribution-free under mild assumptions.
Such a property actually holds true for a variety of graph-based test statistics \citep{Henze88NN,HenzePenrose1999,Rosenbaum05nonbi},
provided that the null distribution is absolutely continuous and the geometric graph $\mathcal{G}$ is ``local" in the sense of a {\it stabilizing} graph \citep{penrose2003weak,Baryshnikov2005Gaussian,BB20detection}, which includes MST and $k$-NN graphs (with fixed $k$) \citep{penrose2003weak}.}

From Theorem~\ref{thm:asympnull} above, it is clear that the variance of $\hat{\eta}$ under ${\rm H}_0$ involves $\tilde{a}$, $\tilde{b}$, $\tilde{c}$, $\tilde{g}_1$, $\tilde{g}_2$, and $\tilde{g}_3$.
From \eqref{eq:tilde_a_b_c}, it is easy to see that $\tilde{a}$, $\tilde{b}$, $\tilde{c}$ converge to some $a$, $b$, $c$ depending only on the kernel and the mixture proportions $\{\pi_i\}_{i=1}^M$. It can also be shown that $\tilde{g}_1$, $\tilde{g}_2$, $\tilde{g}_3$ converge to some $g_1$, $g_2$, $g_3$ respectively. To motivate these limits, take $\tilde{g}_1 = \frac{1}{n}\sum_{i=1}^n \frac{1}{d_i}$ as an example. Under the null, $Z_1,\ldots,Z_n$ are i.i.d.~from a common distribution.
In a ``local" graph, $d_i$, the out-degree of $Z_i$, may only depend on the points near $Z_i$.
If the common distribution has a density continuous at $Z_i$, then the points near $Z_i$ are approximately sampled from a distribution with constant density.
This means that if the geometric graph is {\it translation} and {\it scale invariant} (defined formally later), then $\frac{1}{d_i}$ may be similar in distribution to $\frac{1}{d(\mathbf{0},\mathcal{G}(\mathcal{P}_1^\mathbf{0}))}$, where $\mathcal{P}_1$ is the homogeneous Poisson process\footnote{For a homogeneous Poisson process on $\R^d$ with intensity $\lambda$ (denoted by $\mathcal{P}_\lambda$), the number of points in a set $A\subset\R^d$ follows a
${\rm Poisson}(\int_A \lambda {\rm d}x)$ distribution, and the number of points in disjoint sets are independent.}
of intensity 1 on $\mathbb{R}^d$, $\mathcal{P}_1^\mathbf{0}$ is $\mathcal{P}_1\cup\{\mathbf{0}\}$, and
$d\big(\mathbf{0},\mathcal{G}(\mathcal{P}_1^\mathbf{0})\big)$ is the out-degree of $\mathbf{0}$ in $\mathcal{G}(\mathcal{P}_1^\mathbf{0})$ --- the geometric graph constructed on $\mathcal{P}_1^\mathbf{0}$.
Hence, it is reasonable to expect that $\tilde{g}_1=\frac{1}{n}\sum_{i=1}^n \frac{1}{d_i}$ converges to $g_1 := \mathbb{E}\left[\frac{1}{d(\mathbf{0},\mathcal{G}(\mathcal{P}_1^\mathbf{0}))}\right]$, which is distribution-free.

In the following, we introduce the necessary mathematical concepts in order to state our formal results.
A geometric graph $\mathcal{G}$ on $\R^d$ is said to be {\it translation invariant} if translation by $x$ induces a graph isomorphism\footnote{That is, for $x_1,x_2\in\X$, $(x_1,x_2)$ is an edge in $\mathcal{G}(\X)$ if and only if $(x_1+x,x_2+x)$ is an edge in $\mathcal{G}(\X+x)$.} from $\mathcal{G}(\X)$ to $\mathcal{G}(x+\X)$ for all $x\in\R^d$ and all finite $\X\subset \mathbb{R}^d$. 
Similarly, $\mathcal{G}$ is {\it scale invariant} if scalar multiplication by $a$ induces a graph isomorphism from $\mathcal{G}(\X)$ to $\mathcal{G}(a\X)$ for all finite $\X$ and all $a > 0$.
For $\lambda\geq 0$, denote by $\mathcal{P}_\lambda$ the homogeneous Poisson process
of intensity $\lambda$ in $\mathbb{R}^d$, and define $\mathcal{P}_\lambda^x :=\mathcal{P}_\lambda\cup\{x\}$.
$\mathcal{G}$ is said to be {\it stabilizing} on $\mathcal{P}_\lambda$ if, for almost all realizations $\mathcal{P}_\lambda$, there exists a random variable $R<\infty$ such that the set of edges incident at the origin is not changed by modifying points outside a ball of radius $R$, i.e.,
$\mathcal{E}\left(\mathbf{0},\mathcal{G}\big((\mathcal{P}_\lambda^\mathbf{0}\cap B(\mathbf{0},R))\cup\mathcal{A}\big)\right) = \mathcal{E}\left(\mathbf{0},\mathcal{G}\big(\mathcal{P}_\lambda^\mathbf{0}\cap B(\mathbf{0},R)\big)\right)$
for all finite $\mathcal{A}\subset \mathbb{R}^d\backslash B(\mathbf{0},R)$, where $B(\mathbf{0},R)$ is the closed Euclidean ball of radius $R$ centered at the origin $\mathbf{0}\in\mathbb{R}^d$, and $\mathcal{E}(x,\mathcal{G}(\X))=\{(x,y):(x,y){\rm\ is\ an\ edge\ in\ } \mathcal{G}(\X)\}$ is the set of edges of $\mathcal{G}(\X)$ incident\footnote{Note that in a directed graph, $\mathcal{E}(x,\mathcal{G}(\X))$ only includes the edges starting from $x$.} to $x\in\X$. In such a case, the definition of $\mathcal{G}$ can be extended to the infinite point set $\mathcal{P}_\lambda^\mathbf{0}$, with $
\mathcal{E}\left(\mathbf{0},\mathcal{G}(\mathcal{P}_\lambda^\mathbf{0})\right):=\mathcal{E}\left(\mathbf{0},\mathcal{G}\big(\mathcal{P}_\lambda^\mathbf{0}\cap B(\mathbf{0},R)\big)\right)$.
It is known that both MST and $k$-NN graphs (with fixed $k$) are translation and scale invariant, and stabilizing on $\mathcal{P}_\lambda$ for all $\lambda>0$; see e.g.,~\citep{penrose2003weak}. The following result (proved in Appendix~\ref{sec:pfDistFree}) formally states the asymptotic distribution-free property of $\hat{\eta}$.

\begin{theorem}\label{thm:DistFree}
Under the same assumptions as in Theorem~\ref{thm:asympnull}, if furthermore $\mathcal{G}$ is translation and scale invariant, and stabilizing on $\mathcal{P}_\lambda$ for some $\lambda>0$, then under the null hypothesis \eqref{eq:hypo} where $P_1=\ldots=P_M$ is assumed to have a Lebesgue density on $\R^d$, we have:
$$
\sqrt{n}\hat{\eta}\overset{d}{\to}N\left(0,\sigma^2_{\mathcal{G},K,d,\pi}\right),
$$
where $\sigma^2_{\mathcal{G},K,d,\pi}$ is a positive constant not depending on the common density. More specifically,
\begin{equation}\label{eq:null_variance}
\sigma^2_{\mathcal{G},K,d,\pi} := \frac{{a} \left( {g}_1 + {g}_3 \right) + {b} \left( {g}_2 - 2{g}_1 -2{g}_3 - 1  \right) + {c}\left({g}_1-{g}_2+{g}_3 +1 \right)}{\left(\sum_{i=1}^M \pi_i K(i,i)-\sum_{i,j=1}^M \pi_i\pi_j K(i,j)\right)^2} >0,\quad {\rm where}
\end{equation}
$$
\begin{aligned}
a &:= \sum_{i,j=1}^M \pi_i\pi_j K^2(i,j),\qquad b:=\sum_{i,j,l=1}^M \pi_i\pi_j\pi_l K(i,j)K(i,l),\qquad c:=\left(\sum_{i,j=1}^M \pi_i\pi_j K(i,j)\right)^2,\\
g_1&:=\mathbb{E}\left[\frac{1}{d\big(\mathbf{0},\mathcal{G}(\mathcal{P}_1^\mathbf{0})\big)}\right],\quad \ 
g_2:=\mathbb{E}\left[\sum_{y\neq z:(y,\mathbf{0}),(z,\mathbf{0})\in\mathcal{E}(\mathcal{G}(\mathcal{P}_1^\mathbf{0}))}\frac{1}{d\left(y,\mathcal{G}(\mathcal{P}_1^\mathbf{0})\right) d\left(z,\mathcal{G}(\mathcal{P}_1^\mathbf{0})\right)}\right] + g_1,\\
g_3&:=\mathbb{E}\left[ \sum_{y:(y,\mathbf{0}),(\mathbf{0},y)\in\mathcal{E}(\mathcal{G}(\mathcal{P}_1^\mathbf{0}))}\frac{1}{d\left(\mathbf{0},\mathcal{G}(\mathcal{P}_1^\mathbf{0})\right) d\left(y,\mathcal{G}(\mathcal{P}_1^\mathbf{0})\right)} \right].
\end{aligned}$$
\end{theorem}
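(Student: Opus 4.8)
The plan is to bootstrap from Theorem~\ref{thm:asympnull}: that result already gives $\hat\eta/\sqrt{\Var(\hat\eta\mid\mathcal F_n)}\overset{d}{\to} N(0,1)$ with an explicit formula for $\Var(\hat\eta\mid\mathcal F_n)$ in terms of $\tilde a,\tilde b,\tilde c$ (which depend only on the labels and $n_1,\dots,n_M$) and $\tilde g_1,\tilde g_2,\tilde g_3$ (which depend only on the geometric graph structure on the pooled points $Z_1,\dots,Z_n$). Since $\tfrac{n_i}{n}\to\pi_i$, elementary counting gives $\tilde a\to a$, $\tilde b\to b$, $\tilde c\to c$ deterministically. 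So the entire task reduces to showing $\tilde g_1\overset{p}{\to} g_1$, $\tilde g_2\overset{p}{\to} g_2$, $\tilde g_3\overset{p}{\to} g_3$, where $g_1,g_2,g_3$ are the Poisson-process functionals in the statement; once this is in hand, $n\,\Var(\hat\eta\mid\mathcal F_n)\overset{p}{\to}\sigma^2_{\mathcal G,K,d,\pi}$ by the continuous mapping theorem (the denominator $(\sum\pi_iK(i,i)-\sum\pi_i\pi_jK(i,j))^2$ is bounded away from zero because $K$ is characteristic), and then Slutsky combined with the conditional CLT of Theorem~\ref{thm:asympnull} upgrades the self-normalized convergence to $\sqrt n\,\hat\eta\overset{d}{\to} N(0,\sigma^2_{\mathcal G,K,d,\pi})$. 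Positivity of $\sigma^2$ needs a short separate argument: one checks that the numerator is a nonnegative combination that cannot vanish — e.g.\ it coincides with the limiting conditional variance of a nondegenerate statistic, or one verifies the algebraic inequality directly using $a\ge c$, $b\ge c$ (Cauchy--Schwarz on the kernel sums) and $g_2\ge g_1$, $g_1,g_3\ge 0$.

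The core of the proof is therefore the convergence of the graph functionals $\tilde g_1,\tilde g_2,\tilde g_3$, and this is exactly where stabilization enters. For $\tilde g_1=\tfrac1n\sum_{i=1}^n \tfrac{1}{d_i}$ I would write it as $\tfrac1n\sum_i \xi_n(Z_i; \{Z_1,\dots,Z_n\})$ for the bounded functional $\xi_n = 1/d_i$, and invoke the standard law-of-large-numbers machinery for stabilizing functionals of point processes under a continuous density (this is the content of results à la Penrose--Yukich \citep{penrose2003weak,Baryshnikov2005Gaussian}): translation- and scale-invariance plus stabilization on $\mathcal P_\lambda$ for all $\lambda>0$ imply that the local statistic around a typical point $Z_i$, after rescaling, behaves like the corresponding statistic for $\mathcal P_1^{\mathbf 0}$, and the density cancels out in the limit, yielding $\tilde g_1\overset{p}{\to}\E[1/d(\mathbf 0,\mathcal G(\mathcal P_1^{\mathbf 0}))]=g_1$. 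The functionals $\tilde g_2$ and $\tilde g_3$ are slightly more involved because they are sums over pairs $(i,j)$, but $T^{\mathcal G_n}(i,j)$ and the ``mutual-edge'' indicator are both nonzero only when $Z_i$ and $Z_j$ are graph-neighbors (or share a neighbor), so these too are \emph{local} sums — one rewrites each as $\tfrac1n\sum_i$ of a bounded functional of the configuration near $Z_i$ (the inner sum over $j$ has boundedly many terms because vertex degrees are $\le t_n$ and the stabilization radius is a.s.\ finite), and the same stabilization LLN applies, matching the Poisson-process expectations $g_2,g_3$ in the statement. The degree bound $t_n=O((\log n)^\gamma)$ and $t_n^r/n\to0$ (already assumed) control the tails so that boundedness/uniform integrability hold.

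\textbf{Main obstacle.} The delicate point is the rigorous transfer from the \emph{finite-sample binomial-input} graph $\mathcal G_n$ on i.i.d.\ draws from the density to the \emph{infinite Poisson-process} limit: one must handle (a) the de-Poissonization/coupling between $n$ i.i.d.\ points and a Poisson number of points, (b) boundary effects (points near the support boundary of the density, where the local-homogeneity heuristic fails) — these contribute $o(n)$ to the sums because the boundary has measure zero and the functionals are uniformly bounded, but this must be said carefully, and (c) the fact that $\mathcal G_n$ here is built on the \emph{pooled} sample $\{Z_1,\dots,Z_n\}$, which under $\mathrm H_0$ is i.i.d.\ from the common density — so the analysis is really about $\tilde{\mathcal G}_n$ of Theorem~\ref{thm:consist}, and I would point out that the required stabilization LLN for $\tilde g_1,\tilde g_2,\tilde g_3$ is precisely analogous to (and can be quoted from, or proved by the same argument as) the convergence statements underpinning the distribution-free results for MST/$k$-NN graphs in \citep{Henze88NN,HenzePenrose1999,penrose2003weak}. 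Everything else — the algebraic limits $\tilde a\to a$ etc., the continuous-mapping and Slutsky steps, and positivity of $\sigma^2$ — is routine.
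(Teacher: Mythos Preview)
Your proposal is correct and follows essentially the same route as the paper: bootstrap from Theorem~\ref{thm:asympnull}, take elementary limits $\tilde a\to a$, $\tilde b\to b$, $\tilde c\to c$, and invoke the Penrose--Yukich stabilization LLN \citep{penrose2003weak} (applied directly to i.i.d.\ inputs, so no de-Poissonization is needed) to get $\tilde g_i\to g_i$ in $L^2$. One small correction on positivity: your suggested inequalities $g_2\ge g_1$ and $a\ge c$ are not quite what is needed---the paper rewrites the numerator as $(a-2b+c)(g_1+g_3)+(b-c)(g_2-1)$ and shows $a-2b+c>0$ from the characteristic-kernel assumption, $b\ge c$ trivially, and $g_2\ge 1$ via Cauchy--Schwarz applied to the finite-$n$ identity $\tilde g_2=\tfrac1n\sum_i\bigl(\sum_{j:(Z_j,Z_i)\in\mathcal E}\tfrac{1}{d_j}\bigr)^2\ge 1$.
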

The proof of the above result proceeds by showing that the permutation variance in Theorem~\ref{thm:asympnull} converges to the distribution-free limiting variance in Theorem~\ref{thm:DistFree}.
Compared to Theorem~\ref{thm:DistFree},
Theorem~\ref{thm:asympnull} is more general in the sense that it does not require a Euclidean space.
Even in a Euclidean space, Theorem~\ref{thm:asympnull} is based on the exact permutation variance which may yield a more accurate approximation of the sampling distribution of $\hat{\eta}$ than Theorem~\ref{thm:DistFree}, especially when the dimension is high, where the convergence of $\tilde{g}_i$, for $i=1,2,3$, may be slower; see Appendix~\ref{sec:validation_distfree} for empirical evidence.

\section{Asymptotic Power and Detection Threshold}\label{sec:asymp_power_and_threshold}
We have already seen that $\hat{\eta}$ is asymptotically normal under the null.
A natural question to ask now is: ``what is the asymptotic distribution of $\hat{\eta}$ under alternatives?". It turns out that under a fixed alternative or certain shrinking alternatives converging to the null,
$\hat{\eta}$ is also asymptotically normal, if it is properly centered. Using these CLTs, we can provide a complete characterization of the asymptotic power of the test~\eqref{eq:rej_region}. We will focus on $\Z =\R^d$ in this section, and assume that $P_i$ has a density $f_i$ w.r.t.~the Lebesgue measure.

The asymptotic distribution under alternatives and the detection threshold of graph-based statistics were not available until the recent work by \citet{BB20detection},
where the analysis was carried out under a Poissonized setting --- instead of assuming $\frac{n_i}{n}\to\pi_i$ as $n \to \infty$, it is assumed that $n_i\sim {\rm Poisson}(N_i)$,
and $\frac{N_i}{N}\to\pi_i>0$
as $N\to\infty$ where $N := \sum_{i=1}^M N_i$.
In the Poissonized framework, the pooled data $\{Z_1,\ldots,Z_n\}$ follows a non-homogeneous Poisson point process. The spatial independence of this Poisson process facilitates the computation of the variance of $\hat \eta$ under alternatives, and helps us establish CLTs of $\hat{\eta}$ using Stein's method \citep{chen2004normal}. We consider this Poissonized setting in this section.\vspace{0.08in}

\noindent \textbf{CLT under a general fixed alternative.}
Under ${\rm H}_0$, $\hat{\eta}$ is unbiased in estimating $\eta \equiv 0$, so it is reasonable to conjecture that $\sqrt{n}(\hat{\eta}-\eta)$ would converge to a normal distribution (as shown in Theorem~\ref{thm:asympnull}).
However, this is not the case under a fixed  alternative, where the bias of $\hat{\eta}$ can be large and dominating.
Although $\hat{\eta}$ converges to $\eta$ under any fixed alternative (Theorem~\ref{thm:consist}),
the distance between $\hat{\eta}$ and $\eta$ under such an alternative is in general of order $n^{-1/d}$ \citep[Corollary 5.1]{deb2020kernel}, having the same order as the distance between a data point and its nearest neighbor. Hence in such a case, it is unrealistic to expect that $\sqrt{n}(\hat{\eta}-\eta)$ would be asymptotically normal.
Instead, we show in Theorem~\ref{thm:normal_alternative} in Appendix~\ref{sec:normal_alternative} that $\hat{\eta}$ is asymptotically normal after appropriate centering. \vspace{0.08in}

\noindent \textbf{CLT under shrinking alternatives.}
Given that our level $\alpha$ test \eqref{eq:rej_region} is consistent against all fixed alternatives, it is natural to study its power behavior under shrinking alternatives converging to the null. For this purpose, a CLT under shrinking alternatives is needed, which is shown in Theorem~\ref{thm:shrinking_alternative} in Appendix~\ref{sec:normal_alternative} under the setting where the $i$-th distribution has a Lebesgue density $f_i^N$ converging Lebesgue almost everywhere to some density $f$ as $N\to\infty$, for $i=1,\ldots,M$.
\vspace{0.08in}

{\noindent \textbf{Asymptotic power and detection threshold.} Here we answer the following question: ``Can we characterize the exact limiting power of our test \eqref{eq:rej_region} under a sequence of shrinking alternatives?". To answer the above question, we consider $M=2$ and study the power of the test along a parametric sub-model \citep{BB20detection}:
suppose $P_i$ has Lebesgue density $f(\cdot |\theta_i)$ on $\R^d$ belonging to the parametric family $\{f(\cdot| \theta): \theta \in \Theta \subset \R^p\}$ ($p \ge 1$), for $i=1,2$. 

Theorem~\ref{thm:detection_threshold0} below (proved in Appendix~\ref{sec:detection_threshold}) describes the asymptotic power of our test \eqref{eq:rej_region}, whose proof depends on the CLT under shrinking alternatives. As a consequence of Theorem~\ref{thm:detection_threshold0}, we will also be able to answer the closely related question: ``At what rate should $\theta_2$ converge to $\theta_1$ (assumed fixed) so that our test \eqref{eq:rej_region} would be powerless (i.e., power $\leq \alpha$) if the convergence is faster than the rate, and would have asymptotic power 1 if the convergence is slower than the rate, as the sample size increases?".
In such a situation, the order of $|\theta_2-\theta_1|$ as a function of the sample size $N$ is called the {\it detection threshold} of the test~\citep{BB20detection}.}

\begin{theorem}\label{thm:detection_threshold0}
Suppose $\{\mathbb{P}_\theta\}_{\theta\in \Theta}$ is a parametric family of distributions with a convex parameter space $\Theta\subset \R^p$; we further assume that $\mathbb{P}_\theta$ has a Lebesgue density $f(\cdot|\theta)$. Suppose that $M=2$ and the discrete kernel $K$ and the $k$-NN graph (for a fixed $k \ge 1$) are used in defining $\hat{\eta}$. Let $P_1$ and $P_2$ have densities $f(\cdot|\theta_1)$ and $f(\cdot|\theta_2)$ respectively, and  $\theta_2 = \theta_1 + \varepsilon_N$, where $\varepsilon_N \to 0$ as $N \to \infty$. {
Let $\sigma^2_{\mathcal{G},K,d,\pi}$ be as defined in Theorem~\ref{thm:DistFree}, and ${\rm H}_x f(x|\theta_1)$ be the Hessian of $f(x|\theta_1)$ (taken w.r.t.\ $x$)}. Also, define, for $h \in \R^p$,
\begin{equation}\label{eq:a0}
a_{k,\theta_1}(h) := -\frac{ (1-2\pi_2)C_{k,2}}{4\,k \,d\,\sigma_{\mathcal{G},K,d,\pi}}\int h^\top \nabla_{\theta_1}\left(\frac{ {\rm tr}({\rm H}_x f(x|\theta_1))}{f(x|\theta_1)}\right)f^{\frac{d-2}{d}}(x|\theta_1) {\rm d}x,
\end{equation}
$$b_{k,\theta_1}(h):=\frac{\pi_1\pi_2}{\sigma_{\mathcal{G},K,d,\pi}}\E_{X\sim f(\cdot|\theta_1)} \left[\frac{h^\top \nabla_{\theta_1}f(X|\theta_1)}{f(X|\theta_1)}\right]^2,\;{\rm with} \;\;C_{k,2}:=\E \Bigg[ \sum_{x\in\mathcal{P}_1^{\mathbf{0}}: (\mathbf{0},x)\in\mathcal{E}(\mathcal{G}_{k{\rm NN}}(\mathcal{P}_1^{\mathbf{0}}))} \|x\|^2 \Bigg].$$
{Under suitable assumptions on (i) the smoothness of the parametric family, (ii) conditions such that the $k$-NN graph is nicely behaved 
(see Appendix~\ref{sec:detection_threshold_of_our_test} for the detailed list of assumptions), and (iii) $\sqrt{N}\left(\frac{N_i}{N} - \pi_i\right)\to 0$, as $N\to \infty$, for $i=1,\ldots,M$, we have the following result.}
\begin{itemize}
\item[1.] If $d\leq 8$, then the following hold:
\begin{itemize}
\item[(a)] when $\|N^{\frac{1}{4}}\varepsilon_N\|\to 0$: the limiting power of the test \eqref{eq:rej_region} is $\alpha$.

\item[(b)] when $N^{\frac{1}{4}}\varepsilon_N \to h \in \R^d$: if $d\leq 7$, the limiting power of \eqref{eq:rej_region} is
$\Phi \left(z_\alpha + b_{k,\theta_1}(h)\right)$; if $d=8$ the limiting power is
$\Phi \left(z_\alpha + a_{k,\theta_1}(h) + b_{k,\theta_1}(h)\right)$.

\item[(c)] when $\|N^{\frac{1}{4}}\varepsilon_N\|\to \infty$: the limiting power of the test \eqref{eq:rej_region} is 1.
\end{itemize}
\item[2.] If  $d \geq 9$, then the following hold:
\begin{itemize}
\item[(a)] when $\|N^{\frac{1}{2}-\frac{2}{d}}\varepsilon_N\|\to 0$: the limiting power of the test \eqref{eq:rej_region} is $\alpha$.

\item[(b)] when $N^{\frac{1}{2}-\frac{2}{d}}\varepsilon_N \to h \in \R^d$: the limiting power of the test \eqref{eq:rej_region} is 
$\Phi \left(z_\alpha + a_{k,\theta_1}(h) \right)$.

\item[(c)] when $\|N^{\frac{1}{2}-\frac{2}{d}}\varepsilon_N\|\to \infty$ such that $\|N^{\frac{2}{d}}\varepsilon_N\|\to 0$: then depending on whether
\begin{equation}\label{eq:threshold0}
N^{\frac{1}{2}-\frac{2}{d}}(1-2\pi_2)\int \varepsilon_N^\top\nabla_{\theta_1}\left(\frac{{\rm tr}({\rm H}_x f(x|\theta_1))}{f(x|\theta_1)}\right)f^{\frac{d-2}{d}}(x|\theta_1) {\rm d}x\to\left\{\begin{aligned}
&\infty,\\
&-\infty,\\
\end{aligned}\right.\end{equation}
the limiting power of the test \eqref{eq:rej_region} is 0 or 1, respectively.

\item[(d)] when $N^{\frac{2}{d}}\varepsilon_N \to h \in \R^d$: the limiting power of the test \eqref{eq:rej_region} is 0 or 1, depending on whether $a_{k,\theta_1}(h)+ b_{k,\theta_1} (h)$ is negative or positive, respectively.

\item[(e)] $N^{\frac{2}{d}}\varepsilon_N\to\infty$: the limiting power of the test \eqref{eq:rej_region} is 1.
\end{itemize}
\end{itemize}
\end{theorem}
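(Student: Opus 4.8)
\emph{Step 1: reduction to the bias.} Working in the Poissonized model, apply the CLT under shrinking alternatives (Theorem~\ref{thm:shrinking_alternative}) with $f_i^N:=f(\cdot|\theta_i)$, which converges Lebesgue--a.e.\ to $f(\cdot|\theta_1)$ since $\varepsilon_N\to0$: this furnishes a deterministic centering $\mu_N^{\star}$ with $\sqrt{N}(\hat\eta-\mu_N^{\star})/\sigma_N\overset{d}{\to}N(0,1)$ and $\sigma_N^2\to\sigma^2_{\mathcal{G},K,d,\pi}$, and inspecting its proof shows $\mu_N^{\star}=\E[\hat\eta]+o(N^{-1/2})$. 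Next, the permutation variance in Theorem~\ref{thm:asympnull} equals $\big(\tilde a(\tilde g_1+\tilde g_3)+\tilde b(\tilde g_2-2\tilde g_1-2\tilde g_3-1)+\tilde c(\tilde g_1-\tilde g_2+\tilde g_3+1)+o(1)\big)/(n\,\mathrm{Den}^2)$, where $\mathrm{Den}$ denotes the denominator of $\hat\eta$; here $\tilde a,\tilde b,\tilde c$ depend only on $n_1,\dots,n_M$ (hence converge to $a,b,c$ under assumption (iii)), and $\tilde g_1,\tilde g_2,\tilde g_3$ are local, stabilizing functionals of the pooled pattern, so since $f_i^N\to f(\cdot|\theta_1)$ Lebesgue--a.e.\ the stabilization argument of Theorem~\ref{thm:DistFree} still gives $\tilde g_j\overset{P}{\to}g_j$ and $n\,\Var(\hat\eta\mid\mathcal{F}_n)\overset{P}{\to}\sigma^2_{\mathcal{G},K,d,\pi}$. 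Consequently the test statistic obeys $\hat\eta/\sqrt{\Var(\hat\eta\mid\mathcal{F}_n)}-\mu_N\overset{d}{\to}N(0,1)$ with $\mu_N:=\sqrt{N}\,\E[\hat\eta]/\sigma_{\mathcal{G},K,d,\pi}$, and since \eqref{eq:rej_region} uses the threshold $z_{1-\alpha}=-z_\alpha$, its power converges to $\Phi(z_\alpha+\lim_N\mu_N)$ when $\mu_N$ converges, and to $1$ (resp.\ $0$) when $\mu_N\to+\infty$ (resp.\ $-\infty$). Everything now reduces to the asymptotics of $\E[\hat\eta]$.

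\emph{Step 2: two--term expansion of $\E[\hat\eta]$.} With the discrete kernel and $M=2$, $\mathrm{Den}$ depends only on $n_1,n_2$ and equals $2\pi_1\pi_2+o_P(N^{-1/2})$ under (iii), so it may be replaced by $2\pi_1\pi_2$; thus it suffices to expand $\E[\mathrm{Num}]$, the numerator of $\hat\eta$. Condition on the pooled point pattern $\mathcal{P}$ (the $k$-NN graph $\mathcal{G}_n$ is a function of $\mathcal{P}$): in the Poissonized model a point at $z$ carries label $i$ independently with probability $p_i^N(z):=N_if(z|\theta_i)/\sum_\ell N_\ell f(z|\theta_\ell)$, so $\E[K(\Delta_i,\Delta_j)\mid\mathcal{P}]=\psi^N(Z_i,Z_j)$ with $\psi^N(z,w):=\sum_{a,b}p_a^N(z)p_b^N(w)K(a,b)$ (and $\psi^N(z,z)=1-2p_1^N(z)p_2^N(z)$). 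Apply the Mecke equation to the superposed Poisson process of intensity $N\bar f^N$ (where $\bar f^N:=\sum_i\tfrac{N_i}{N}f(\cdot|\theta_i)$), then use translation-- and scale--invariance and stabilization of the $k$-NN graph to pass to a homogeneous Poisson process of the local intensity $N\bar f^N(z)$, rescale to unit intensity (length scale $\asymp(N\bar f^N(z))^{-1/d}$), and Taylor--expand $w\mapsto\psi^N(z,w)$ around $w=z$ to second order. This splits $\E[\mathrm{Num}]$ into: (i) a zeroth--order piece $\int\bar f^N(z)\psi^N(z,z)\,\mathrm{d}z$ which, with the explicitly computable subtracted term of $\mathrm{Num}$, reconstructs $\mathrm{Den}\cdot\eta(f(\cdot|\theta_1),f(\cdot|\theta_2))$ via \eqref{eq:eta_alternative_Radon}; (ii) a linear piece, which vanishes for a homogeneous Poisson process by symmetry but picks up an $O((N\bar f^N(z))^{-2/d})$ correction proportional to $\nabla_w\psi^N(z,z)^\top\nabla\log\bar f^N(z)$ (with constant dictated by $C_{k,2}$) because the true local intensity has a nonzero gradient; and (iii) a quadratic piece equal, by isotropy of the homogeneous $k$-NN graph, to $\tfrac{C_{k,2}}{2kd}(N\bar f^N(z))^{-2/d}\,\mathrm{tr}({\rm H}_w\psi^N(z,z))$, with $C_{k,2}$ as in the statement. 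The density--gradient correction in (ii) is precisely what is needed so that, after linearizing via $\bar f^N=f(\cdot|\theta_1)+O(\|\varepsilon_N\|)$ and $p_2^N-\pi_2=\pi_1\pi_2\,\varepsilon_N^\top\nabla_{\theta_1}\log f(\cdot|\theta_1)+O(\|\varepsilon_N\|^2)$ and using the identity $\mathrm{tr}({\rm H}_x\log f)+\|\nabla_x\log f\|^2=\mathrm{tr}({\rm H}_xf)/f$, pieces (ii) and (iii) fuse into one term; the upshot is
\[
\E[\mathrm{Num}]=\mathrm{Den}\cdot\eta\big(f(\cdot|\theta_1),f(\cdot|\theta_2)\big)-\frac{(1-2\pi_2)\pi_1\pi_2\,C_{k,2}}{2kd}\,N^{-\frac2d}\!\int \varepsilon_N^{\top}\nabla_{\theta_1}\!\Big(\tfrac{\mathrm{tr}({\rm H}_xf(x|\theta_1))}{f(x|\theta_1)}\Big)f^{\frac{d-2}{d}}(x|\theta_1)\,\mathrm{d}x+o\big(N^{-\frac2d}\|\varepsilon_N\|\big).
\]

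\emph{Step 3: the two leading terms and the regimes.} A second--order Taylor expansion of $\eta(f,g)=1-\int\frac{fg}{\pi_1f+\pi_2g}$ (cf.\ the Henze--Penrose remark in Section~\ref{sec:def_eta}) around $g=f=f(\cdot|\theta_1)$, using $\int\nabla_{\theta_1}f\,\mathrm{d}x=0$, gives $\eta(f(\cdot|\theta_1),f(\cdot|\theta_2))=\pi_1\pi_2\,\E_{X\sim f(\cdot|\theta_1)}[(\varepsilon_N^\top\nabla_{\theta_1}\log f(X|\theta_1))^2]+o(\|\varepsilon_N\|^2)$. Dividing $\E[\mathrm{Num}]$ by $\mathrm{Den}$ and multiplying by $\sqrt{N}/\sigma_{\mathcal{G},K,d,\pi}$, a short computation matching constants with \eqref{eq:a0} and \eqref{eq:null_variance} yields, in the appropriate regime,
\[
\mu_N=a_{k,\theta_1}\!\big(N^{\frac12-\frac2d}\varepsilon_N\big)+b_{k,\theta_1}\!\big(N^{\frac14}\varepsilon_N\big)+o(1),
\]
with $a_{k,\theta_1}$ linear and $b_{k,\theta_1}\ge0$ quadratic. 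Since $a_{k,\theta_1}(N^{\frac12-\frac2d}\varepsilon_N)\asymp N^{\frac12-\frac2d}\|\varepsilon_N\|$ and $b_{k,\theta_1}(N^{\frac14}\varepsilon_N)\asymp N^{\frac12}\|\varepsilon_N\|^2$, their relative order is governed by $\|N^{2/d}\varepsilon_N\|$, while whether each is $o(1)$, $\Theta(1)$, or $\to\infty$ is governed by $\|N^{1/2-2/d}\varepsilon_N\|$ and $\|N^{1/4}\varepsilon_N\|$; comparing the natural thresholds $N^{-1/4}$ and $N^{-1/2+2/d}$ gives the split $d\le 8$ versus $d\ge9$. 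Plugging the stated rates of $\varepsilon_N$ into the display for $\mu_N$: for $d\le8$ the $b$-term dominates while the $a$-term carries the extra factor $N^{1/4-2/d}$ and hence survives only at $d=8$, giving cases 1(a)--(c); for $d\ge9$ the $a$-term dominates except on the scale $N^{2/d}\varepsilon_N\to h$, where $a_{k,\theta_1}(h)$ and $b_{k,\theta_1}(h)$ both enter at the diverging order $N^{1/2-4/d}$ so that the sign of $a_{k,\theta_1}(h)+b_{k,\theta_1}(h)$ decides, and where $N^{-1/2+2/d}\ll\|\varepsilon_N\|\ll N^{-2/d}$ the $b$-term is negligible against the $a$-term and \eqref{eq:threshold0} governs, giving cases 2(a)--(e). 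The claimed limiting powers then follow from Step 1.

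\emph{Main obstacle.} The crux is making Step 2 rigorous \emph{to the order $o(N^{-2/d})$}: one must control the stabilization radius and the tails of $k$-NN distances tightly enough that the ``rescale--and--Taylor'' procedure leaves a remainder (all cubic and higher Taylor terms, and the second--order curvature of $\bar f^N$) that is genuinely $o(N^{-2/d})$ after integration over $z$, uniformly in $z$; and one must pin down the density--gradient correction of piece (ii) with the correct constant, since it is exactly this term that upgrades $\mathrm{tr}({\rm H}_x\log f)$ to $\mathrm{tr}({\rm H}_xf)/f$ in the final formula. The $\varepsilon_N$--linearization in Step 2 and the regime bookkeeping in Step 3 are then routine though lengthy. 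The three limit theorems invoked --- Theorems~\ref{thm:shrinking_alternative}, \ref{thm:asympnull}, and the distribution--free variance of Theorem~\ref{thm:DistFree} --- are taken as given; the genuinely new ingredient is the bias expansion.
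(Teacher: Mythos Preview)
Your proposal is correct, and it takes a genuinely different (and more self-contained) route than the paper.

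\textbf{Where the paper's proof differs.} The paper's argument is extremely short: after the same reduction to the bias (your Step~1), it rewrites the expectation via Palm theory as a constant multiple of
\[
\frac{\mu_N^{(S)}(\theta_1,\theta_2)-\mu_N^{(S)}(\theta_1,\theta_1)}{\sqrt{N}},\qquad
\mu_N^{(S)}(\theta_1,\theta_2):=N^2\!\int_{S\times S}\!\big(f(x|\theta_1)f(y|\theta_2)+f(x|\theta_2)f(y|\theta_1)\big)\rho_K^{\theta_1,\theta_2}(x,y)\,dx\,dy,
\]
Taylor-expands this \emph{in the parameter} $\theta_2$ around $\theta_1$, and then simply cites Appendices~B and~E of \citet{BB20detection} for the limits of $\nabla_{\theta_2}\mu_N^{(S)}$ and ${\rm H}_{\theta_2}\mu_N^{(S)}$ and for the regime bookkeeping. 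In that organization, the $a$-term arises from the gradient (linear in $\varepsilon_N$, carrying the $N^{-2/d}$ nearest-neighbor scale) and the $b$-term from the Hessian (quadratic in $\varepsilon_N$).

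\textbf{What you do instead.} You expand \emph{spatially} first (Taylor $\psi^N(z,w)$ in $w$ around $z$, pass to the local homogeneous Poisson model, identify the $C_{k,2}$-quadratic piece), and only then linearize in $\varepsilon_N$. This reverses the order of the two expansions but lands on exactly the same pair of leading contributions: in your decomposition the $b$-term is precisely the second-order Taylor expansion of $\eta(f(\cdot|\theta_1),f(\cdot|\theta_2))$, while the $a$-term is the $O(N^{-2/d})$ $k$-NN bias correction. Your ``main obstacle'' paragraph correctly pinpoints where the real work lies; the paper sidesteps it by citation.

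\textbf{What each buys.} The paper's route is shorter and modular --- by matching the structure to $\mu_N^{(S)}$ it can reuse the BB20 computations verbatim. Your route is more transparent about the origin of the two terms (population dissimilarity vs.\ nearest-neighbor bias) and is self-contained. One small bonus of your Step~1: you explicitly argue that $n\,\Var(\hat\eta\mid\mathcal F_n)\to\sigma_{\mathcal G,K,d,\pi}^2$ under the shrinking alternative, which is needed because Theorem~\ref{thm:detection_threshold0} is stated for the permutation-variance test \eqref{eq:rej_region}; the paper handles this only by asserting the asymptotic equivalence of \eqref{eq:rej_region} and \eqref{eq:test}.
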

The above result shows that the detection threshold of $\hat{\eta}$ exhibits a ``$d=8$" phenomenon (see~\citet{BB20detection}):
when $d\leq 8$, the detection threshold is $N^{-\frac{1}{4}}$;
while when $d\geq 9$, the detection threshold is somewhere between $N^{-\frac{2}{d}}$ and $N^{-\frac{1}{2}+\frac{2}{d}}$,
depending on the direction of $\varepsilon_N$ and the sign of $a_{k,\theta_1}$; see \eqref{eq:a0} and \eqref{eq:threshold0}.
If $\varepsilon_N=\delta_N h$ for $\delta_N>0$ and some fixed $h\in\R^d$, and
$a_{k,\theta_1}(h) > 0$, then the detection threshold is $\delta_N \sim N^{-\frac{1}{2}+\frac{2}{d}}$;
on the other hand, if $a_{k,\theta_1}(h) < 0$, then the detection threshold is $\delta_N \sim N^{-\frac{2}{d}}$.
When $a_{k,\theta_1}(h) = 0$, the precise location of the detection threshold has to be determined on a case by case basis (see \citep{BB20detection}).

The detection threshold for some particular choices of $P_1$ and $P_2$ are already known: for distinguishing two truncated normal distributions with different location parameters the detection threshold is $N^{-\frac{1}{4}}$ for all $d$, while for distinguishing two truncated normals with different scale parameters the detection threshold can attain both $N^{-\frac{2}{d}}$ and $N^{-\frac{1}{2}+\frac{2}{d}}$ for $d\geq 9$ depending on the sign of $a_{k,\theta_1}(h)$ (see \citep[Section 4.2]{BB20detection}). Thus in certain cases, this nonparametric detection threshold can be very close to the parametric rate $N^{-\frac{1}{2}}$.

In Appendix~\ref{sec:check_detection_threshold}, we empirically illustrate that the same detection threshold also holds for the non-Poissonized setting, i.e., the original setting of Section~\ref{sec:def_eta} where the $n_i$'s are nonrandom constants instead of $n_i\sim {\rm Poisson}(N_i)$.

Our results crucially use the general framework established in \cite{BB20detection}. However, we fill in some gaps in the original proof of \cite[Theorem 4.2]{BB20detection}, where only the CLT under a fixed alternative was shown; the CLT under shrinking alternatives was not explicitly formulated, but was assumed to hold instead. Our CLTs (Theorems~\ref{thm:normal_alternative} and \ref{thm:shrinking_alternative}) are also applicable to $M\geq 2$ distributions.

\section{Numerical Studies}\label{sec:simulations}
In this section, the finite-sample performance of our methods will be investigated on both real and synthetic data. We compare the power behavior of the tests based on $\hat{\eta}$ to other competing methods, based on synthetic data. We also analyze many real data sets and demonstrate the usefulness of KMD. Further simulation experiments are relegated to Appendix~\ref{sec:further_simu}.


\subsection{Power Study on Synthetic Data}
The empirical power behavior of some special cases of our test statistic has been partly investigated in~\citet{Sc86} and~\citet{Petrie2016} as $\hat{\eta}$ is equivalent to the statistics in these papers when $M=2$ with $k$-NN graphs and the discrete kernel.
Thus, in this subsection, we focus on the case $M >2$. We consider the following settings:

\begin{enumerate}
\item {\it Normal location}: $P_1 = N(\mathbf{0},I_d)$, $P_2 = N(0.1\cdot \mathbf{1},I_d)$, $P_3=N(0.2\cdot \mathbf{1},I_d)$, as $d$ varies.

\item {\it Normal scale}: $P_1=N(\mathbf{0},I_d)$, $P_2=N(\mathbf{0},1.5\cdot I_d)$, $P_3=N(\mathbf{0},2\cdot I_d)$, as $d$ varies.

\item $t$-{\it distribution location}: $P_1$ (on $\R^{16}$) has each coordinate drawn i.i.d.~from $t(1)$ with noncentrality parameter $\delta$; $P_2 = P_3$ has each coordinate drawn i.i.d.~from $t(1)$. 

\item {\it U-shaped scale}: $P_1 = P_2$ is a ``U-shaped" distribution which is a mixture of
$$
\scriptsize  N\left(\begin{pmatrix}
0\\
0
\end{pmatrix},
\begin{pmatrix}
2 & 0\\
0 & \frac{1}{8}
\end{pmatrix} \right), \quad
N\left(\begin{pmatrix}
-3\\
1
\end{pmatrix}, 
\begin{pmatrix}
\frac{1}{2} & -\frac{1}{3}\\
-\frac{1}{3} & \frac{1}{2}
\end{pmatrix} \right), \quad
N\left(\begin{pmatrix}
3\\
1
\end{pmatrix},
\begin{pmatrix}
\frac{1}{2} & \frac{1}{3}\\
\frac{1}{3} & \frac{1}{2}
\end{pmatrix} \right)
$$
with mixing weights $\frac{1}{2}, \frac{1}{4}, \frac{1}{4}$. $P_3$ is a scalar multiple (with scale $\in[1,1.4]$) of $P_1$.

\item {\it S-shaped rotation}: $Y_1 \sim P_1 \equiv P_2$ is a ``S-shaped" distribution which is a mixture of
$$
\scriptsize N\left(\begin{pmatrix}
-\frac{9}{2}\\
-\frac{1}{2}
\end{pmatrix},
\begin{pmatrix}
\frac{3}{2} & -\sqrt{\frac{3}{8}}\\
-\sqrt{\frac{3}{8}} &1
\end{pmatrix} \right),
N\left(\begin{pmatrix}
0\\
-\frac{1}{2}
\end{pmatrix},
\begin{pmatrix}
\frac{3}{2} & \sqrt{\frac{3}{8}}\\
\sqrt{\frac{3}{8}} &1
\end{pmatrix} \right),
N\left(\begin{pmatrix}
\frac{9}{2}\\
1
\end{pmatrix},
\begin{pmatrix}
\frac{3}{2} & -\sqrt{\frac{3}{8}}\\
-\sqrt{\frac{3}{8}} &1
\end{pmatrix} \right).
$$
with weights $\frac{1}{3}, \frac{1}{3}, \frac{1}{3}$. $P_3$ is obtained by multiplying $Y_1$ by 
$\tiny \begin{pmatrix}
\cos \theta & \sin \theta \\
-\sin \theta & \cos \theta 
\end{pmatrix}$, for $\theta\in [0,0.1\pi]$.

\item {\it Spherically symmetric}: $P_1, P_2, P_3$ are {\it spherically symmetric distributions}\footnote{$Y_1 \sim P_1$ has a spherically symmetric distribution with radial distribution $L_1$ if $Y_1 = L_1 \times U$, where $U$ is the uniform distribution over the unit sphere $\mathcal{S}^{d-1}$ in $\R^d$.} on $\R^d$ with different radial densities. We assume that $P_1, P_2$ and $P_3$ have Uniform[0,1], ${\rm Beta}(1-\alpha,1+\alpha)$, and ${\rm Beta}(1+\alpha,1-\alpha)$ radial densities respectively (as $\alpha$ varies). 
\end{enumerate}

The first two settings are classical, which were also considered in~\citet{mukherjee2020distribution}.
\begin{figure}
    \centering
    \includegraphics[width = 1\textwidth]{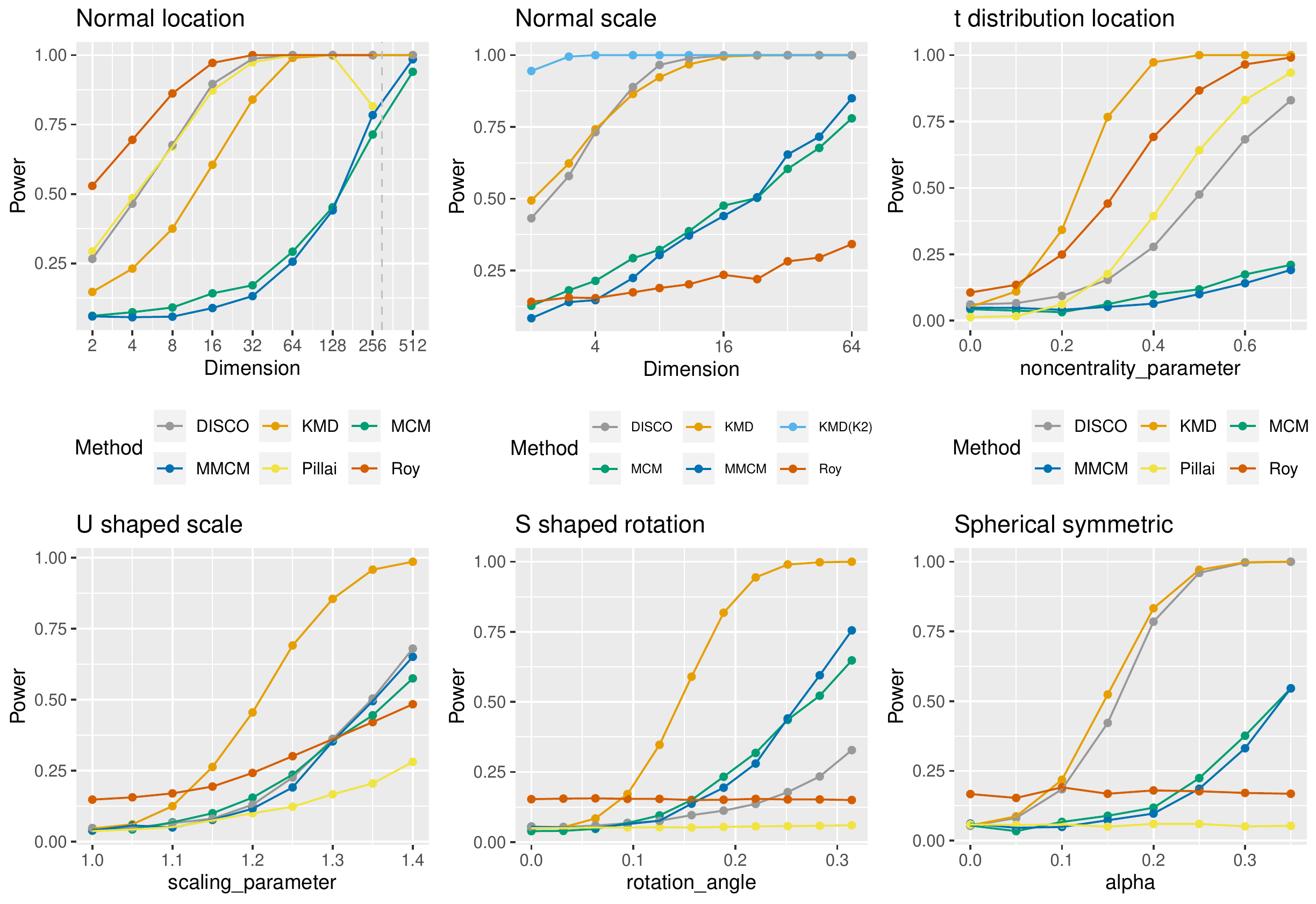}
    \caption{Empirical power curves for different methods under the various setings.}
    \label{tests}
\end{figure}
The rest of the examples go beyond normality, considering general distributions exhibiting different kinds of dissimilarities.
The U-shaped and S-shaped distributions are taken from \citet{hallin2020fully}.
We set $n_i=100$, for $i=1,2,3$.
All tests in the following (except \texttt{MANOVA()} built in \texttt{R}) use 500 random permutations to compute their $p$-values. The null hypothesis is rejected if the observed $p$-value is less than 0.05, and the power is estimated by averaging over 1000 replications.
We compare our methods with other parametric and nonparametric tests:
`Pillai' and `Roy' are the parametric tests implemented by \texttt{MANOVA()} in \texttt{R}.
`MCM', `MMCM' are the nonparametric test statistics proposed by \citet{mukherjee2020distribution}.
`KMD' is our method with $k$-NN graph (see~\eqref{eq:Eta-Hat}).
`DISCO' \citep{DISCO2010} is a nonparametric generalization of ANOVA using the energy statistics between samples, implemented as a permutation test in the \texttt{R} package \texttt{energy} \citep{Rpackenergy} with default index parameter $\alpha =1$. 

It can be seen from the top left plot of Figure~\ref{tests} that DISCO achieves similar power as the parametric method `Pillai' in the normal location problem. For this setting, the power curve of our test `KMD' is sandwiched between these two methods and the nonparametric methods MCM and MMCM 
(which have the attractive property of being fully distribution-free in finite samples but also at a cost of a lower power compared to other methods). Roy's largest root test is known to be powerful in this normal location example~\cite[Section 11.11.1]{denis2021applied}.
When the dimension is larger than the sample size, the error variance matrix is singular and the parametric tests like \texttt{MANOVA()} cannot be applied.


For the normal scale problem in Figure~\ref{tests} we illustrate that the choice of an appropriate kernel for our method can lead to improved performance; see `KMD(K2)'.
For this setting, the observations coming from the distributions with the larger scale parameter lie on the outer layers of the data cloud. When the dimension $d$ is large, although the observations generated with the smallest scale parameter almost always have their nearest neighbors coming from the same distribution,
the points in the outer layers may easily find themselves `closer' to points in the inner layer (generated from the distribution with the smaller scales). 
This can lead to low power for the `KMD' method with the discrete kernel. However, if it is a priori known that $P_1$ may lie in the inner layer, then we can assign $K(1,1)$ a larger value, which in turn would imply that $\hat \eta$ would be large as it would be dominated by the contributions from the observations from $P_1$ (which have their nearest neighbors coming from the same distribution). Figure~\ref{tests} shows that when using the kernel $K_2$, defined as $K_2(1,1) = 10$, $K_2(2,2) = K_2(3,3) = 1$, and $K_2(i,j)=0$, for $i\neq j$, the power of the method is substantially increased.

Note that for the normal scale problem, both Roy's and Pillai's tests have low power. But our method still works due to its generality, and as it is consistent against any alternative.
For the other settings considered here, our KMD exhibits very good performance, achieving highest power  among the competing procedures.

\begin{remark}[Choice of $k$ for the $k$-NN graph]
When testing equality of $M$ distributions, we chose $k= 0.10n$ (in our simulations), for samples with up to a few hundred observations (as advocated in \citet{Petrie2016}). Note that when $\eta = 0$, $\hat \eta$ is unbiased and a larger $k$ reduces variance of $\hat \eta$. 
While, for estimating $\eta$ using its empirical version $\hat{\eta}$, a much smaller $k$ is recommended: often $k=1$ would work the best. A larger $k$ often produces a smaller $\hat{\eta}$ (e.g., $\hat{\eta}=0$ in the extreme case $k=n-1$), inducing a large `bias' especially when $\eta$ is large; see Appendix~\ref{sec:choose_k} for empirical evidence.
\end{remark}

\subsection{Measuring Multi-sample Dissimilarity with Real Data}
In the following we analyse a few real data examples. Here we use a 1-NN graph for constructing $\hat \eta$ and for testing. 
We first consider data sets from the UCI Machine Learning Repository~\citep{Dua:2019}. 
For each of these multi-sample data sets, every observation/instance has a label indicating which sample it came from. However, for these data sets, instances with different labels 
are logically or physically different from each other, and have already been shown to be different in \citet{Petrie2016}. 

To illustrate the usefulness of $\eta$ (and $\hat \eta$) as a measure of dissimilarity between distributions, we construct various $M$ sample scenarios by adapting these real data sets suitably, as described below. For each data set, we compare distributions in the three settings: (i) the distributions to be compared have the same label (so the null hypothesis~\eqref{eq:hypo} may hold here); (ii) two distributions, out of the 3 distributions considered,  have the same label; and (iii) all the three distributions to be compared have distinct labels. It is natural to expect that $\hat \eta$ would increase as we move from scenario (i) to (iii); see Table~\ref{tab:Real-Data} for our results.

\vspace{0.03in}
\noindent {\bf Amazon commerce reviews} \citep{liu2011application}:
Each observation/instance here is a 10000-dimensional vector representing usage of digits, punctuation, words, sentence length, word frequencies, etc., of reviews from one of 50 extremely active customers. The data set has in total $50$ labels --- corresponding to the 50 customers --- with 30 instances per label. For each instance, most variables equal zero while the remaining are integers ranging from one to a few dozen. As explained above we consider comparing: (i) the first 15 reviews from reviewer 4 with her last 15 reviews; (ii) the first 15 reviews from reviewer 4, the last 15 reviews from reviewer 4, and all 30 reviews from reviewer 8; (iii) the first 15 reviews from reviewer 4, the first 15 reviews from reviewer 12, and all 30 reviews from reviewer 8.

\vspace{0.03in}
\noindent {\bf Semeion Handwritten Digits} \citep{Tactile1994semeion}:
In this example we have 10 labels --- corresponding to the 10 digits --- and  each instance/observation is a 256-dimensional vector representing a written digit, with each coordinate being 0 or 1 depending on its underlying grayscale value. Consider comparing: (i) the first 81 instances of digit 6 with the last 80 instances of the same digit; (ii) the first 81 instances of digit 6, the last 80 instances of digit 6, and all instances of digit 8; and (iii) the first 81 instances of digit 6, the first 80 instances of digit 7, and all instances of digit 8.

\vspace{0.03in}
\noindent {\bf ISOLET} \citep{dietterich1994solving}:
Each instance is a 617-dimensional vector representing a spoken letter. 150 speakers spoke each letter in the English alphabet twice, so we have 52  observations from each speaker. Consider comparing: (i) the 7-th letter, i.e., `g', spoken by the first 75 speakers with `g' spoken by the last 75 speakers; (ii) `g' spoken by the first 75 speakers, `g' spoken by the last 75 speakers, and `t' spoken by all speakers; and (iii) `g' spoken by the first 75 speakers, `n' spoken by the first 75 speakers, and `t' spoken by all speakers.

\vspace{0.03in}
\noindent {\bf LRS} \citep{Dua:2019}:
Each instance in this data set is a 93-dimensional vector which describes the fluxes from astronomical objects. There are in total 10 labels. Consider comparing: (i) the first 96 instances with label 2 and the last 177 instances with label 2; (ii) the first 96 instances with label 2, the last 177 instances with label 2, and all the instances with label 1 (NA removed); and (iii) all 96 instances with label 4, the last 177 instances with label 2, and all the instances with label 1 (NA removed).

Table~\ref{tab:Real-Data} gives the value of $\hat \eta$ for the 3 different settings for each  data set discussed above (for simplicity we use the standard Euclidean distance to construct the $k$-NN graphs in these examples). 
It can be seen from the table that the test for~\eqref{eq:hypo} based on $\hat \eta$ (and implemented via a permutation test using 500 random permutations) is rejected (at level 0.05) for both settings (ii) and (iii) for all data sets. Moreover, our measure $\hat{\eta}$ suggests that, for each of the data sets, the distributions in setting (iii) are ``more different" than those in setting (ii), in the sense that $\hat{\eta}$ is closer to 1 for setting (iii). Note that for the data sets ISOLET and LRS, the permutation test based on $\hat \eta$ also reject setting (i) where the samples have the same label.
For the ISOLET data, this suggests that the way the first 75 speakers spoke the 7-th letter is different from the way the last 75 speakers spoke the same letter. A similar conclusion can be drawn for the LRS data set. If we randomly select half of the instances with label 2 in LRS data set to form the first sample, instead of the first half, then the $p$-value will typically not be significant. Note that in the ISOLET data set, although all the hypothesis tests yield the same $p$-value of $1/501$, $\hat \eta$ gives a more meaningful summary of the dissimilarities between the distributions and produces values that progressively increase from settings (i) to (iii).


\begin{table}
\centering
\caption{$\hat{\eta}$ and $p$-values (over 500 random permutations) for real data sets.}\label{tab:Real-Data}
\begin{tabular}{c|cc|cc|cc|}
        Settings & \multicolumn{2}{c}{(i) Same label}  &  \multicolumn{2}{c}{(ii) Mixture} &  \multicolumn{2}{c}{(iii) Different labels} \\
        (Illustration) & \multicolumn{2}{c}{ \colorbox{green!30}{$1$} \colorbox{green!30}{$2$}}  &  \multicolumn{2}{c}{ \colorbox{green!30}{$1$} \colorbox{green!30}{$2$} \colorbox{blue!35}{$3$}} &  \multicolumn{2}{c}{\colorbox{green!30}{$1$} \colorbox{yellow!100}{$2$} \colorbox{blue!35}{$3$}}\\
        \hline
Data set & $\hat{\eta}$ & $p$-value & $\hat{\eta}$ & $p$-value & $\hat{\eta}$ & $p$-value\\
Amazon & 0.033& 0.513& 0.423 &0.004 & 0.607 &  0.002\\
Semeion & 0.099 &  0.160 & 0.604 & 0.002 & 0.975 & 0.002\\
ISOLET & 0.382 & 0.002 & 0.638 & 0.002& 0.886 &  0.002\\
LRS & 0.168 & 0.014 & 0.508 &      0.002 & 0.868 &0.002\\
\end{tabular} 
\end{table}

Next we consider the two non-Euclidean examples introduced in Section~\ref{sec:introduction}.

\vspace{0.03in}
\noindent \textbf{Speech recognition}:
The ArabicDigits data set consists of a total of 8800 instances with 10 labels (numbers 0-9) from native Arabic speakers.
We use the popular DTW distance (as mentioned in the Introduction) between two multi-dimensional time series \citep{berndt1994using,ding2008querying, gorecki2015multivariate} to compute the 1-NN graph. 
The KMD between the 10 spoken digits has an incredibly high value of 0.9976, suggesting the 10 distributions have highly disjoint supports.
This may be the reason why a simple nearest neighbor classifier could achieve 99.8\% prediction accuracy \citep{gorecki2015multivariate}.
We also investigate, using $\hat \eta$, the validity of the following intuitive supposition:
how males speak the number `1' should be different from how they speak `2', but the difference should be smaller than that between how males speak `1' and how females speak `2'. To make the problem harder, we only work with the 13th MFCC (instead of the 13-dimensional time-series), and the corresponding two KMD estimates are 0.5687 and 0.6867 respectively, agreeing with our intuition. 

\vspace{0.05in}
\noindent \textbf{Sentiment analysis}:  The movie review data set from \citet{text2vec2020} consists of 5000 movie reviews, each with a binary sentiment label corresponding to whether the review is positive or negative.
We process the data by transferring texts to lower case, removing non-alphanumeric symbols, collapsing multiple spaces,
and using vocabulary-based vectorization with the vocabulary pruned to have minimum number of occurrences over all documents being 5,
and maximum proportion of documents containing a term in the vocabulary being 10\%. This results in a 5000 $\times$ 12644 sparse {\it document-term matrix} \citep{text2vec2020}, and {\it Jaccard distance} \citep{Jaccard1912} is used as the metric between documents. The data processing is implemented using the \texttt{R} package \texttt{text2vec} \citep{text2vec2020}. The estimated KMD between positive reviews and negative reviews is 0.445, suggesting some overlap in the supports of the two distributions.
Though a simple linear support vector machine classifier based on the document-term matrix could provide 82\% baseline accuracy (evaluated from 10-fold cross-validation),
many later refined approaches, some of which were specifically designed for sentiment analysis on such data sets, cannot improve this accuracy to 90\% \citep{maas2011learning}. This agrees with the observed value of KMD which is much lower than in the previous example.

\numberwithin{equation}{section}
\numberwithin{defn}{section}
\numberwithin{remark}{section}
\numberwithin{lemma}{section}
\numberwithin{proposition}{section}
\numberwithin{theorem}{section}
\appendix 

\section{Appendix}\label{sec:general_discussion}
This Appendix will be organized as follows:
In Appendix~\ref{sec:general_discussion}, we provide some general discussions that were deferred from the main paper.
An analysis of the asymptotic behavior of $\hat{\eta}$ under alternatives is given in Appendix~\ref{sec:normal_alternative}.
We show that both under a fixed alternative and under shrinking alternatives converging to the null,
$\hat{\eta}$ has an asymptotic normal distribution after proper centering (see Theorems~\ref{thm:normal_alternative} and \ref{thm:shrinking_alternative}).
Using the CLT under shrinking alternatives, we provide a complete characterization of the local power of our method for $\Z=\R^d$ (see Theorem~\ref{thm:detection_threshold}).
In particular, this provides the detection threshold of the test based on $\hat{\eta}$.
All proofs are given in Appendix~\ref{sec:all_proofs}. Further simulation results  that were mentioned in the main paper are given in Appendix~\ref{sec:further_simu}.

\subsection{Assumptions on the Geometric Graph}\label{sec:Assump-Graph}
Let $\tilde{Z}_1,\tilde{Z}_2\ldots$ be i.i.d.~from the mixture $\sum_{i=1}^M \pi_i P_i$ on $\Z$. Let $\tilde{\mathcal{G}}_n:= \mathcal{G}(\tilde{Z}_1,\ldots ,\tilde{Z}_n)$ be the geometric graph with vertex set $\{\tilde{Z}_1,\ldots,\tilde{Z}_n\}\subset\Z$. Let $\mathcal{E}(\mathcal{\tilde{G}}_n)$ denote the set of (directed/undirected) edges of $\tilde{\mathcal{G}}_n$, i.e., $(\tilde{Z}_i,\tilde{Z}_j)\in\mathcal{E}(\mathcal{\tilde{G}}_n)$ if and only if there is an edge from $\tilde{Z}_i$ to $\tilde{Z}_j$ in $\tilde{\mathcal{G}}_n$, and $\tilde{d}_i$ denotes the out-degree of $\tilde{Z}_i$ in $\tilde{\mathcal{G}}_n$. To be specific, $\tilde{d}_i:=\sum_{j:(\tilde{Z}_i,\tilde{Z}_j)\in\mathcal{E}(\mathcal{\tilde{G}}_n)} 1$. We assume the following conditions on $\tilde{\mathcal{G}}_n$ (as in \citet{deb2020kernel}):
\begin{assump}\label{assump:conv_nn}
Given the graph $\tilde{\mathcal{G}}_n$, let $N(1),\ldots, N(n)$ be independent random variables where $N(i)$ is a uniformly sampled index from among the (out-)neighbors of $\tilde{Z}_i$ in $\tilde{\mathcal{G}}_n$. Then
$$\rho_\Z(\tilde{Z}_1,\tilde{Z}_{N(1)})\overset{p}{\to}0\quad {\rm as\ }n\to\infty.$$
\end{assump}
\begin{assump}\label{assump:degree}
    Assume that there exists a deterministic positive sequence $r_n\geq 1$ (may or may not be bounded), such that almost surely:
    $$\min_{1\leq i\leq n} \tilde{d}_i\geq r_n.$$
    Let $\tilde{\mathcal{G}}_{n,i}$ denote the graph obtained from $\tilde{\mathcal{G}}_n$ by replacing $\tilde{Z}_i$ with an i.i.d.~random element $\tilde{Z}_i'$. Assume that there exists a deterministic positive sequence $q_n$ (may or may not be bounded), such that
    $$\max_{1\leq i\leq n}\max\{|\mathcal{E}(\tilde{\mathcal{G}}_n)\setminus \mathcal{E}(\tilde{\mathcal{G}}_{n,i})|,|\mathcal{E}(\tilde{\mathcal{G}}_{n,i})\setminus \mathcal{E}(\tilde{\mathcal{G}}_{n})|\}\leq q_n\ \ a.s.\qquad {\rm and}\qquad \frac{q_n}{r_n} = O(1).$$
\end{assump}
\begin{assump}\label{assump:degupbd}
There exists a deterministic sequence $\{t_n\}_{n\ge 1}$ (may or may not be bounded) such that the vertex degree (including both in- and out-degrees for directed graphs, i.e., $\#\{ j:(\tilde{Z}_i,\tilde{Z}_j)\ {\rm or}\ (\tilde{Z}_j,\tilde{Z}_i)\in\mathcal{E}(\tilde{\mathcal{G}}_n)\}$) of every point $\tilde{Z}_i$ (for $i=1,\ldots, n$) is bounded by $t_n$, and
		$\frac{t_n}{r_n}=O(1)$.
\end{assump}
Assumption \ref{assump:conv_nn} formalizes our intuition that the presence of an edge between two points implies that the two points are close.
Assumption \ref{assump:degree} states that the graph is `local' in the sense that replacing one random point will not change too many edges.
Assumption \ref{assump:degupbd} requires that the degree of each vertex should be of the same order.
See~\citet[Section 3]{deb2020kernel} for a detailed discussion on these assumptions.

\subsection{Computational Complexity and Implementation Schemes}\label{sec:implement_scheme}
When a Euclidean $k$-NN graph is used, the computation complexity of $\hat{\eta}$ and $\frac{\hat{\eta}}{\sqrt{{\rm Var}(\hat{\eta}|\mathcal{F}_n)}}$ is $O(kn\log n)$, which is {\it near linear}, when $k$ is bounded. This is due to the fact that 
the Euclidean $k$-NN graph can be computed in $O(kn\log n)$ time (for example, using the k-d tree; see~\cite{bentley1975multidimensional}).

The computation of the $k$-NN graph is well implemented in many computational softwares \citep{scikit-learn,RANN2019FNN}. The computation of $\tilde{a},\tilde{b},\tilde{c}$ (see \eqref{eq:tilde_a_b_c}) and $\tilde{g}_2$ is described below. Observe that
$$\begin{aligned}
\tilde{a}& = \frac{1}{n(n-1)}\left(\sum_{i=1}^M n_i(n_i-1)K^2(i,i) + \sum_{i\neq j}n_in_jK^2(i,j)\right)\\
&=\frac{1}{n(n-1)}\left( \sum_{i,j=1}^M n_in_jK^2(i,j) - \sum_{i=1}^M n_i K^2(i,i) \right)
\end{aligned}$$ which can be computed in $O(M^2) = O(1)$ time since $M$ is a constant. Further, note that,
$$\begin{aligned}
\tilde{b}&=\frac{1}{n(n-1)(n-2)}\Big(  \sum_{i=1}^M K(i,i)K(i,i)n_i(n_i-1)(n_i-2) \\
&\qquad +{\sum}' K(i,i)K(i,j)n_i(n_i-1)n_j +
{\sum}' K(i,j)K(i,i)n_in_j(n_i-1) \\
&\qquad + {\sum}' K(i,j)K(i,j)n_in_j(n_j-1) +
{\sum}' K(i,j)K(i,l)n_in_jn_l\Big)\\
&=\frac{1}{n(n-1)(n-2)}\Big(  
\sum_{i,j,l=1}^M K(i,j)K(i,l)n_in_jn_l  -  \sum_{i=1}^M K(i,i)^2 n_i(3n_i-2) \\
&\qquad - 2\ {\sum}' K(i,i)K(i,j)n_in_j - {\sum}' K(i,j)^2 n_in_j \Big)\\
&=\frac{1}{n(n-1)(n-2)}\Big(  
\sum_{i,j,l=1}^M K(i,j)K(i,l)n_in_jn_l  +  2\sum_{i=1}^M K(i,i)^2 n_i \\
&\qquad - 2\sum_{i,j=1}^M K(i,i)K(i,j)n_in_j - \sum_{i,j=1}^M K(i,j)^2 n_in_j \Big)\\
&=\frac{1}{n(n-1)(n-2)}\Big(  
\sum_{i,j,l=1}^M K(i,j)K(i,l)n_in_jn_l  +  \sum_{i=1}^M K(i,i)^2 n_i \\
&\qquad - 2\sum_{i,j=1}^M K(i,i)K(i,j)n_in_j - n(n-1)\tilde{a} \Big)\\
&=\frac{1}{n(n-1)(n-2)}\Big(  \vec{\textbf{n}}^\top [K(i,j)] {\rm diag}(\vec{\textbf{n}})[K(i,j)]  \vec{\textbf{n}} +  \sum_{i=1}^M K(i,i)^2 n_i\\
&\qquad- 2\sum_{i,j=1}^M K(i,i)K(i,j)n_in_j - n(n-1)\tilde{a} \Big),
\end{aligned}$$
where $ \vec{\textbf{n}} = (n_1,\ldots,n_M)^\top$, ${\rm diag}(\vec{\textbf{n}})$ is the diagonal matrix with $\vec{\textbf{n}}$ being in the diagonal, and $[K(i,j)]$ is the $M \times M$ matrix with the entry in the $i$-th row and $j$-th column being $K(i,j)$.
Hence, $\tilde{b}$ can be computed in $O(M^2)=O(1)$ time as well. Similarly,
$$\begin{aligned}
\tilde{c}&= \frac{\big( \sum_{i,j=1}^M n_in_jK(i,j) - \sum_{i=1}^M n_i K(i,i) \big)^2 - 4n(n-1)(n-2)\tilde{b} - 2n(n-1)\tilde{a}}{n(n-1)(n-2)(n-3)}
\end{aligned}$$
can be computed in $O(1)$ time. Recall that, 
$\tilde{g}_2=\frac{1}{n}\sum_{i,j=1}^n \frac{T^{\mathcal{G}_n}(i,j)}{d_id_j}=\frac{1}{n}\sum_{i=1}^n \frac{1}{d_i} + \frac{1}{n} \sum_{i\neq j}\frac{T^{\mathcal{G}_n}(i,j)}{d_id_j}$.
To compute $\sum_{i\neq j}\frac{T^{\mathcal{G}_n}(i,j)}{d_id_j}$, for each $Z_i$, we find its in-neighbors, i.e., $\{Z_j:(Z_j,Z_i)\in\emgn\}=\{Z_{i_1},\ldots,Z_{i_{k'}}\}$.
Let $S_i := \left(\frac{1}{d_{i_1}}+\ldots+\frac{1}{d_{i_{k'}}} \right)^2 - \sum_{j=1}^{k'} \frac{1}{d_{i_j}^2}$.
Then $\sum_{i\neq j}\frac{T^{\mathcal{G}_n}(i,j)}{d_id_j} = \sum_{i=1}^n S_i$.
The computational complexity for $\tilde{g}_2$ is $O(kn)$ since $k'\leq kC(d) $ for some constant $C(d)$ depending on the dimension $d$ of $\Z$ \citep[Lemma 8.4]{yukich1998Euclidean}. The computations of other terms are straight forward.

\subsection{Interpretation of $\hat{\eta}$ as $k$-NN Cross-Validated Accuracy}\label{sec:CVinterpretation}
Here we show that $\hat{\eta}$ is linearly related to the leave-one-out cross-validation accuracy of a $k$-NN classifier.

A classifier uses existing data $\{(\Delta_i,Z_i)\}_{i\in I}$, where $I\subset\{1,\ldots,n\}$ is an index set, to make prediction for the label $\Delta_j$ of a new observation $Z_j$, $j\notin I$.
The leave-one-out cross-validated accuracy (abbreviated as ``accuracy" in the following) of a possibly random classifier $f:\Z \to\{1,\ldots,M\}$ is defined as:
$$f\ {\rm Accuracy}:=\frac{1}{n}\sum_{j=1}^n \mathbb{P}_f \Big(\Delta_j =f\left(Z_j | \{(\Delta_i,Z_i)\}_{i\in \{1,\ldots,n\}\backslash \{j\}}\right)  \Big),$$
where $f\left(Z_j | \{(\Delta_i,Z_i)\}_{i\in \{1,\ldots,n\}\backslash \{j\}}\right)$ is the prediction for $\Delta_j$ using $Z_j$ given by $f$ learnt on data $\{(\Delta_i,Z_i)\}_{i\in \{1,\ldots,n\}\backslash \{j\}}$, and $ \mathbb{P}_f$ means averaging over possible randomness in $f$.

Consider the accuracy of:
(i) $k$-NN classifier --- the label of a data point is predicted by the label of a random $k$-NN of that data point;
(ii) random guess classifier --- the label of a data point is predicted by a random guess according to the proportion of different labels in the existing data.

When $K(x,y)=I(x=y)$ and a $k$-NN graph is used to define $\hat \eta$ via~\eqref{eq:Eta-Hat}, we have
$$\frac{1}{n} \sum \limits_{i=1}^n \frac{1}{d_i} \sum \limits_{j:(Z_i,Z_j) \in \emgn} K(\Delta_i,\Delta_j)
=\frac{1}{n}\sum_{i=1}^n \frac{1}{k} \sum_{j:(Z_i,Z_j) \in \emgn} I(\Delta_i=\Delta_j)=k\text{-}{\rm NN\ Accuracy}$$ and 
$$\frac{1}{n(n-1)}  \sum \limits_{i \ne j} K(\Delta_i,\Delta_j)=\frac{1}{n}\sum_{i=1}^n \frac{ \sum_{j:j\neq i} I(\Delta_i=\Delta_j)}{n-1} = {\rm Random\ Guess\ Accuracy}.$$
Hence $\eta$ in~\eqref{eq:Eta-Hat} reduces to

\begin{equation}\label{eq:inter_crossknn}
\hat{\eta}=\frac{(k\text{-}{\rm NN\ Accuracy}) - ({\rm Random\ Guess\ Accuracy})}{1-({\rm Random\ Guess\ Accuracy})}.
\end{equation}
Intuitively, if the $M$ distributions are concentrated on different regions of the space $\Z$, then the $k$-NN accuracy is close to 1 and thus $\hat{\eta}$ will be close to 1.
On the other hand, if the $M$ distributions are the same, then the $k$-NN accuracy will be similar to the random guess accuracy and thus $\hat{\eta}$ will be close to 0. Moreover, $\hat{\eta}$ grows linearly with $k$-NN accuracy, which provides a simple and intuitive interpretation of our measure $\hat{\eta}$.

\section{Asymptotic Behavior of $\hat{\eta}$ under Alternatives}\label{sec:normal_alternative}

In this section, we will first establish CLTs for $\hat{\eta}$ under a fixed alternative and under certain shrinking alternatives converging to the null. Using these CLTs, we characterize the asymptotic power of the test based on $\hat \eta$. In particular, this provides the detection threshold of the test based on $\hat{\eta}$. We will focus on $\Z =\R^d$ ($d \ge 1$) in this section, and assume that $P_i$ has a density $f_i$ w.r.t.~the Lebesgue measure, for all $i=1,\ldots,M$.

The asymptotic distribution of geometric graph-based statistics under alternatives and their detection thresholds were not available until the recent work of~\citet{BB20detection},
where the analysis was carried out under a Poissonized setting.
The spatial independence of the Poisson process facilitates the computation of the asymptotic variance of $\hat \eta$, and also helps establish the CLTs for $\hat{\eta}$ using Stein's method \citep{chen2004normal}.
We will also consider this Poissonized framework in this section.
Although de-Poissonization techniques are available in the literature \citep{Penrose2003graphSUPP,Penrose2007measures}, the de-Poissonized version of our theorem (Theorem \ref{thm:normal_alternative}) unfortunately does not reduce to our original setting in the main paper where the sample size $n_i$'s are nonrandom. 


In the Poissonized framework, instead of assuming $\frac{n_i}{n}\to\pi_i$, it is assumed that $n_i\sim {\rm Poisson}(N_i)$, for $i=1,\ldots,M$.
Write $N := \sum_{i=1}^M N_i$ and assume $\frac{N_i}{N}\to\pi_i>0$
as $N\to\infty$.
An equivalent characterization is that we first decide to draw in total $n\sim {\rm Poisson}(N)$ data points, and for each data point, with probability $\frac{N_i}{N}$, we draw an observation from the $i$-th population, for $i=1,\ldots, M$.
In the following, we provide CLTs for $\hat{\eta}$ under fixed and shrinking alternatives, under the Poissonized setting when we let $N\to\infty$.
\subsection{Asymptotic Normality under a General Fixed Alternative}\label{subsec:general_fixed_alt}

In the following, we briefly sketch how a CLT for $\hat \eta$, under a fixed alternative, can be obtained after proper centering; see Theorem~\ref{thm:normal_alternative} for the precise statement.
Since the Poisson distribution is tightly concentrated around its mean: ${\rm Var}\left(\frac{n_i}{N_i}\right) = \frac{1}{N_i}\to 0$ as $N_i \to \infty$. Thus, we have $\frac{n_i}{N_i}\overset{p}{\to}\E\left[\frac{n_i}{N_i}\right]=1$.
Hence the denominator of $\hat{\eta}$ in \eqref{eq:Eta-Hat} converges in probability to $\sum_{i=1}^M \pi_i K(i,i) - \sum_{i,j=1}^M \pi_i\pi_j K(i,j) >0$. So our analysis focuses on the asymptotic behavior of the numerator in \eqref{eq:Eta-Hat}:
\begin{equation*}
\tilde{H}_n:=\frac{1}{n} \sum \limits_{i=1}^n \frac{1}{d_i} \sum \limits_{j:(i,j) \in \emgn} K(\Delta_i,\Delta_j)-\frac{1}{n(n-1)}  \sum \limits_{i \ne j} K(\Delta_i,\Delta_j).
\end{equation*}
When the above quantity cannot be well-defined, i.e., $n=0$ or $1$, or $n\leq k$ when a $k$-NN graph is used, we set $\tilde{H}_n=\hat{\eta}=0$.
Using $U$-statistics projection theory \citep[Lemma D.4]{deb2020kernel}, it can be shown that $n\tilde{H}_n / \sqrt{N}$ has the same asymptotic distribution as $H_n$ defined as
$$H_n :=\frac{1}{\sqrt{N}} \sum \limits_{i=1}^n \frac{1}{d_i} \sum \limits_{j:(Z_i,Z_j) \in \emgn} K(\Delta_i,\Delta_j)-\frac{1}{\sqrt{N}}\sum_{i=1}^n  g_N(\Delta_i),$$
which has a non-degenerate asymptotic normal distribution after centering by its mean.
Here, $$g_N (\Delta_i) := 2\sum_{p=1}^M \frac{N_p}{N}K(\Delta_i,p) - \sum_{p,q=1}^M \frac{N_p N_q}{N^2} K(p,q).$$

Similar to the CLT result under ${\rm H}_0$ (see Theorem~\ref{thm:asympnull}), if we let $\mathcal{F}_n:=\sigma(n,Z_1,\ldots,Z_n)$, the $\sigma$-algebra generated by the unlabelled data and the number of total observations,
then $H_n - \mathbb{E}(H_n|\mathcal{F}_n)$ given $\mathcal{F}_n$ converges in distribution to a normal limit $N(0,\kappa_1^2)$ (see Theorem~\ref{thm:normal_alternative} for the exact expression for $\kappa_1^2$). We can also show that the unconditional distribution of $\mathbb{E}(H_n|\mathcal{F}_n) - \E(H_n)$ converges to another normal limit $N(0,\kappa_2^2)$; see Theorem~\ref{thm:normal_alternative} for the exact expression for $\kappa_2^2$. Finally, a simple argument using characteristic functions shows that $H_n-\E (H_n) \overset{d}{\to} N(0,\kappa_1^2+\kappa_2^2)$.

If ${\rm H}_0$ holds, then it can be shown that $\mathbb{E}(H_n|\mathcal{F}_n) = \mathbb{E}(H_n) = 0$, and $\kappa_1^2$ is exactly the distribution-free variance derived in Theorem~\ref{thm:DistFree}, and $\kappa_2^2 = 0$.
However, in a general situation, $\mathbb{E}(H_n|\mathcal{F}_n)$ is a function of $\{Z_1,\ldots,Z_n\}$, which follows a non-homogeneous Poisson process\footnote{For a non-homogeneous Poisson process with intensity function $f$ (denoted by $\mathcal{P}_f$), the number of points in a set $A\subset\R^d$ follows a
${\rm Poisson}(\int_A f(x) {\rm d}x)$ distribution, and the number of points in disjoint sets are independent.} $\mathcal{P}_{N \phi_N}$ under the Poissonized setting, with
\begin{equation}\label{eq:Phi_N-Phi}
\phi_N(z) :=  \sum_{i=1}^M \frac{N_i}{N} f_i(z) \qquad {\rm and}\qquad  \phi(z):=\sum_{i=1}^M \pi_i f_i(z), \quad \mbox{for }\; z \in \Z,
\end{equation}
being the marginal density of $Z$ given the total number of observations, and its limiting value, respectively.
To show the convergence of the variance of $\mathbb{E}(H_n|\mathcal{F}_n)$ and establish a CLT for $\mathbb{E}(H_n|\mathcal{F}_n) - \E(H_n)$ using Stein's method,
we will need to make another common assumption --- the {\it power-law stabilization} \citep{Penrose2005normal,Penrose2007measures,BB20detection} --- on the non-homogeneous Poisson process $\mathcal{P}_{N \phi_N}$ (in addition to the stabilization on the homogeneous Poisson process $\mathcal{P}_\lambda$ defined in Section~\ref{sec:asymp_dist_free}),
which will be introduced below.

Recall our setup: $\mathcal{G}$ is a geometric graph on $\Z$ and $\X$ is a set of points in $\Z$. $\mathcal{E}\left(x,\mathcal{G}(\X)\right)$ is the set of
edges in $\mathcal{G}(\X)$ that are incident to $x\in\X$.
Let $A$ be a set with $\phi$-probability 1, e.g., $A={\rm supp}(\phi)$, the support of $\phi$. 
Fix $x\in\Z$. A
{\it  radius of stabilization} $R\equiv R(N, x)$ at $x$ exists if the points in $A\backslash B(x,N^{-1/d} R)$ cannot impact the neighborhood of $x$ in $\mathcal{G}\left(\mathcal{P}_{N\phi_N}^x\right)$ where $\mathcal{P}_{N\phi_N}^x:=\mathcal{P}_{N\phi_N}\cup\{x\}$, i.e.,
\begin{equation}\label{eq:R}
\mathcal{E}\left(x,\mathcal{G}\left([\mathcal{P}_{N\phi_N}^x \cap B(x,N^{-1/d} R)]\cup\X \right) \right)=\mathcal{E}\left(x,\mathcal{G}\left(\mathcal{P}_{N\phi_N}^x \cap B(x,N^{-1/d} R) \right)\right),
\end{equation}
for all finite $\X\subset A\backslash B(x,N^{-1/d} R)$. Such a finite $R$ always exists when $A$ is bounded.
For certain graphs such as a $k$-NN graph, the {\it power-law stabilization} states that $R$ is bounded in probability under mild assumptions, and the tail probability decays sufficiently fast, uniformly over $N$ and $x$. To define it formally, let
\begin{equation}\label{eq:power_law_graph}\tau(t):=\sup_{N\geq 1,x\in A}\mathbb{P}[R(N,x) > t], \qquad \mbox{for}\;\;t >0.\end{equation}
Then, $\mathcal{G}$ is said to be:
\begin{enumerate}
\item {\it power-law stabilizing} of order $q$ with respect to $\phi_N$ if $\sup_{t\geq 1} t^q \tau(t) < \infty$,
\item {\it exponentially stabilizing} with respect to $\phi_N$ if $\limsup_{t\to\infty} t^{-1}\log \tau(t) <0$.
\end{enumerate}
Note that if $\mathcal{G}$ is exponentially stabilizing, then it is power-law stabilizing for all $q>0$.
It is known that a $k$-NN graph is exponentially stabilizing under suitable conditions (see Proposition~\ref{prop:knnExpStab} below and Appendix~\ref{pf:knnExpStab} for its proof).
Other exponentially stabilizing graphs include the
Voronoi and Delaunay graphs \citep{Penrose2007measures,BB20detection}.
However, there is no similar tail bounds known for MST, as far as we are aware.

\begin{proposition}[{\citet[Section 6.3]{Penrose2007measures}}]\label{prop:knnExpStab}
Assume $\phi_N$ is bounded away from 0 on its support which is assumed to be convex. Then the $k$-NN graph (with $k$ fixed, either directed or undirected) is exponentially stabilizing.
\end{proposition}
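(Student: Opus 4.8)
This is the standard exponential-stabilization argument for nearest-neighbour graphs (as in \citet{Penrose2007measures,Penrose2005normal}); the real content is checking that the hypothesis ``$\phi_N$ bounded below on a convex support'' supplies the two geometric inputs it needs. Write $A:=\operatorname{supp}(\phi_N)$ and $c_0:=\inf_N\inf_A\phi_N>0$. Since each $\phi_N$ integrates to $1$, $A$ is convex and bounded with $\operatorname{vol}(A)\le 1/c_0$, hence contains a fixed ball $B(z_0,\delta)$. Two consequences: (a) for every $x\in A$, convexity forces $A$ to contain the ``ice-cream cone'' $\operatorname{conv}(\{x\}\cup B(z_0,\delta))$, so $\operatorname{vol}\big(B(x,\rho)\cap A\big)\ge c_A\rho^d$ for all $\rho\le\operatorname{diam}(A)$, with $c_A>0$ \emph{uniform in} $x$; and (b) since $A$ is bounded, $R(N,x)\le\operatorname{diam}(A)\,N^{1/d}$ always satisfies \eqref{eq:R} (points outside $B(x,N^{-1/d}R)$ then lie outside $A$), so for a given $t$ only $N\ge(t/\operatorname{diam}(A))^d$ can contribute to $\mathbb{P}[R(N,x)>t]$, which keeps every scale $tN^{-1/d}$ that arises below $\operatorname{diam}(A)$, i.e.\ in the range where (a) applies.

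The core is then the deterministic cone lemma for $k$-NN graphs: there are $m=m(d)$ narrow cones with apex $x$ covering $\R^d$ such that, whenever every such cone meeting $A$ near $x$ contains at least $k+1$ configuration points within distance $tN^{-1/d}$ of $x$, (i) the $k$ out-neighbours of $x$ lie in $B(x,tN^{-1/d})$ and are unaffected by points outside that ball, and (ii) every $y$ having $x$ among its $k$ nearest neighbours lies in $B(x,3tN^{-1/d})$, and whether $x$ is such a $k$-NN of $y$ is decided inside $B(x,3tN^{-1/d})$. Thus $R(N,x):=3t^\star$, with $t^\star$ the least $t$ for which this cone-filling event holds, is a radius of stabilization: for the directed $k$-NN graph only out-edges matter, so (i) suffices; for the undirected graph (ii) is also needed, and this can alternatively be obtained from the Mecke formula together with a Poisson tail bound on the number of points closer to $y$ than $x$. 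It remains to bound $\tau(t)=\sup_{N\ge1,\,x\in A}\mathbb{P}[R(N,x)>t]$ in \eqref{eq:power_law_graph}: on $\{R(N,x)>t\}$ some cone meeting $A$ near $x$ has at most $k$ process points in a sector of $B(x,\tfrac t3 N^{-1/d})$, which by (a)--(b) carries $\mathcal{P}_{N\phi_N}$-intensity mass at least $c\,t^d$ for a constant $c>0$ depending only on $c_0,\delta,\operatorname{diam}(A),d,m$; a union bound over the $m$ cones together with $\mathbb{P}[\mathrm{Poisson}(\lambda)\le k]\le C\lambda^k e^{-\lambda}$ gives $\tau(t)\le C\,t^{dk}e^{-ct^d}$ for all $t$ large (for small $t$, $\tau(t)\le 1$ anyway). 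Hence $t^{-1}\log\tau(t)\le t^{-1}(\log C+dk\log t-ct^d)\to -c<0$ if $d=1$ and $\to-\infty$ if $d\ge2$, so $\limsup_{t\to\infty}t^{-1}\log\tau(t)<0$, i.e.\ the $k$-NN graph is exponentially stabilizing.

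The main obstacle is purely geometric and lives in the cone lemma: handling the in-neighbour relation (needed for the undirected graph) and the fact that $x$ may lie on $\partial A$, so that some cones from $x$ do not meet $A$ at all and cannot be ``filled''. Convexity of $A$ is exactly what rescues both — the ice-cream cone $\operatorname{conv}(\{x\}\cup B(z_0,\delta))$ shows that the directions in which $A$ comes close to $x$ form a cone of solid angle bounded below uniformly in $x$, so the cones that genuinely must be filled do receive a sector of $A$ of volume $\gtrsim\rho^d$, while the unfilled cones are harmless because $\mathcal{P}_{N\phi_N}$ places no points there. I would invoke the cone lemma from \citet{Penrose2007measures} rather than reprove it (its content being the two-line observation that two points in a common sufficiently narrow cone from $x$ are mutually closer than they are to $x$ once past the filling radius), so the only genuinely new bookkeeping is items (a)--(b) and the Poisson tail estimate above.
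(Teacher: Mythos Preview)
Your proposal is correct and follows essentially the same route as the paper: the standard cone-filling argument for $k$-NN stabilization from \citet{Penrose2007measures}, combined with a Poisson tail bound on the number of process points in each cone sector. The paper's write-up is slightly more concrete in its boundary handling --- it covers $\R^d$ by narrow cones $\mathcal{C}_i$ of angular radius $\pi/12$, fattens each to $\mathcal{C}_i^+$ of radius $\pi/6$, and uses the per-cone diameter cutoff $R_i(x)=N^{1/d}\operatorname{diam}(\mathcal{C}_i(x)\cap\mathcal{O})$ together with convexity to place a ball around the midpoint $\tfrac{x+y}{2}$ inside $\mathcal{C}_i^+(x)$ --- whereas you phrase the same bookkeeping via the global ice-cream cone $\operatorname{conv}(\{x\}\cup B(z_0,\delta))$ and defer the cone lemma itself to Penrose; both deliver the same bound $\tau(t)\lesssim e^{-ct^d}$.
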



Assuming that the graph $\mathcal{G}$ is power-law stabilizing of sufficient order w.r.t.~our data generating process $\phi_N$, the asymptotic normality of $\hat{\eta}$ can be established, as shown below.

\begin{theorem}[CLT under a fixed alternative]\label{thm:normal_alternative}
Suppose that we have the Poissonized setting where we have $n_i\sim {\rm Poisson}(N_i)$ samples from the $i$-th distribution, for $i=1,\ldots, M$. Suppose the following assumptions hold:
\begin{enumerate}
\item $\sqrt{N}\left(\frac{N_i}{N} - \pi_i\right)\to 0$ as $N\to\infty$, for $i=1,\ldots,M$, where $N = \sum_{i=1}^M N_i$.

\item The $i$-th distribution has a Lebesgue density $f_i$ on $\R^d$ which is Lebesgue almost everywhere continuous, for $i=1,\ldots,M$.

\item $\phi:=\sum_{i=1}^M \pi_i f_i$ is bounded above and has a bounded support.

\item $\mathcal{G}$ is translation and scale invariant, stabilizing on $\mathcal{P}_\lambda$ for some $\lambda>0$.

\item $\mathcal{G}$ is power-law stabilizing w.r.t.~$\phi_N$ (see~\eqref{eq:Phi_N-Phi}) with order $q_0>16d$,
with the corresponding full probability set $A$ and radius of stabilization $R$ as defined in \eqref{eq:R}. 
\item The degree bound (including in- and out-degrees) for $\mathcal{G}(\mathcal{P}_{N \phi_N})$ is $o_p(N^{1/40})$.
\end{enumerate}
Then with $n:=\sum_{i=1}^M n_i$,
\begin{equation}\label{eq:Fixed-Alt}
\frac{1}{\sqrt{n}}\left(  n\tilde{H}_n - \E[n\tilde{H}_n] \right)\overset{d}{\to}N(0,\kappa_1^2+\kappa_2^2),
\end{equation}
where
$$\begin{aligned}
\kappa_1^2&:=(g_1+g_3)\int {\rm Var}[K(\tilde{\Delta},\tilde{\Delta}')|\tilde{Z}=z]\phi(z){\rm d}z\\
&\qquad +(3-2g_1 - 2g_3 + g_2)\int {\rm Cov}(K(\tilde{\Delta},\tilde{\Delta}'),K(\tilde{\Delta},\tilde{\Delta}'')|\tilde{Z}=z) \phi(z){\rm d}z\\
&\qquad -4\int {\rm Cov}(K(\tilde{\Delta},\tilde{\Delta}'),g(\tilde{\Delta})|\tilde{Z}=z) \phi(z){\rm d}z +\E\left[{\rm Var}(g(\tilde{\Delta})|\tilde{Z}) \right],\\
\kappa_2^2&:=\int \left(\sum_{i,j=1}^M K(i,j)\frac{\pi_if_i(z)}{\phi(z)}\frac{\pi_jf_j(z)}{\phi(z)} -  \sum_{i=1}^M g(i)\frac{\pi_if_i(z)}{\phi(z)} \right)^2\phi(z){\rm d}z\\
&\;=\int \left(\E[K(\tilde{\Delta},\tilde{\Delta}')|\tilde{Z}=z] - \E [g(\tilde{\Delta})|\tilde{Z}=z] \right)^2\phi(z){\rm d}z.
\end{aligned}$$
Note that in the above displays, $\tilde{\Delta},\tilde{\Delta}',\tilde{\Delta}''$ are drawn independently from the conditional distribution $\tilde{\Delta} |\tilde{Z}=z$ (recall the definition of $\tilde{\Delta},\tilde{Z}$ in Section~\ref{subsec:def_eta}); $g (\Delta):=2\sum_{i=1}^M \pi_i K(\Delta,i) - \sum_{i,j=1}^M \pi_i \pi_j K(i,j)$, and $g_1,g_2,g_3$ are as defined in Theorem~\ref{thm:DistFree}.
\end{theorem}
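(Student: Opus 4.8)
The plan is to carry out the four-step program sketched just before the statement. First, since $n_i\sim\mathrm{Poisson}(N_i)$ concentrates around $N_i$, the denominator of $\hat\eta$ converges in probability to the positive constant $\sum_{i=1}^M\pi_iK(i,i)-\sum_{i,j=1}^M\pi_i\pi_jK(i,j)$, so by Slutsky it suffices to analyze the numerator $\tilde H_n$. A degree-two $U$-statistic (Hájek) projection, via \citet[Lemma~D.4]{deb2020kernel}, replaces the second term of $n\tilde H_n$ by $\sum_{i=1}^n g_N(\Delta_i)$ up to a remainder that is $o_p(\sqrt N)$ and has asymptotically negligible mean, the control of the remainder using the degree bound of Assumption~6; hence $n\tilde H_n/\sqrt N$ has the same limiting law as $H_n$, and because $n/N\overset{p}{\to}1$, $\frac1{\sqrt n}(n\tilde H_n-\E[n\tilde H_n])$ and $H_n-\E[H_n]$ share the same limit. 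It then remains to prove $H_n-\E[H_n]\overset{d}{\to}N(0,\kappa_1^2+\kappa_2^2)$, which I would obtain by separately treating the two pieces of
\[
H_n-\E[H_n]=\big(H_n-\E[H_n\mid\mathcal{F}_n]\big)+\big(\E[H_n\mid\mathcal{F}_n]-\E[H_n]\big),
\]
where $\mathcal{F}_n=\sigma(n,Z_1,\ldots,Z_n)$ is the $\sigma$-algebra generated by the unlabelled pooled process.

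Second, I would establish the conditional CLT $\big(H_n-\E[H_n\mid\mathcal{F}_n]\big)\mid\mathcal{F}_n\overset{d}{\to}N(0,\kappa_1^2)$ in probability. The key structural point under Poissonization is that, given $\mathcal{F}_n$, the graph $\mathcal{G}_n$ (hence the out-degrees $d_i$) is fixed while the labels $\Delta_1,\ldots,\Delta_n$ are \emph{independent}, with $\mathbb{P}(\Delta_i=p\mid\mathcal{F}_n)=(N_p/N)f_p(Z_i)/\phi_N(Z_i)$. Thus $H_n-\E[H_n\mid\mathcal{F}_n]$ is a centered $1/\sqrt N$-scaled sum of the bounded terms $\frac1{d_i}\sum_{j:(Z_i,Z_j)\in\emgn}K(\Delta_i,\Delta_j)-g_N(\Delta_i)$, whose dependency graph is the two-step neighborhood graph of $\mathcal{G}_n$, having maximum degree $o_p(N^{1/20})$ by Assumption~6. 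A dependency-graph normal approximation by Stein's method (\citet{chen2004normal}) then gives conditional asymptotic normality once the conditional variance is shown to converge; identifying the limit as $\kappa_1^2$ is a local computation using the Lebesgue-a.e.\ continuity of the $f_i$ (Assumption~2) together with translation/scale invariance and stabilization of $\mathcal{G}$ on $\mathcal{P}_\lambda$ (Assumption~4), which let one replace rescaled local neighborhoods by the homogeneous Poisson limit and produce the constants $g_1,g_2,g_3$ alongside the conditional variance/covariance integrals of $K(\tilde\Delta,\tilde\Delta')$ and $g(\tilde\Delta)$ against $\phi$.

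Third, I would prove the unconditional CLT $\E[H_n\mid\mathcal{F}_n]-\E[H_n]\overset{d}{\to}N(0,\kappa_2^2)$. Because the labels are conditionally independent, $\E[H_n\mid\mathcal{F}_n]$ is a sum over the edges of $\mathcal{G}_n$ of a deterministic score of the endpoint locations, i.e.\ a stabilizing functional of the non-homogeneous Poisson process $\mathcal{P}_{N\phi_N}$; invoking the CLT for power-law stabilizing functionals of Poisson processes via Stein's method (\citet{Penrose2005normal,Penrose2007measures}; cf.\ \citet{BB20detection}), applicable under Assumptions~3 and 5 (bounded support, bounded $\phi$, power-law stabilization of order $q_0>16d$), gives asymptotic normality. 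The variance converges---using $\phi_N\to\phi$, the tail bounds from Assumption~5 to control ``add-one-point'' increments, and Assumption~1 for the centering---to $\kappa_2^2=\int\big(\E[K(\tilde\Delta,\tilde\Delta')\mid\tilde Z=z]-\E[g(\tilde\Delta)\mid\tilde Z=z]\big)^2\phi(z)\,\mathrm{d}z$, the usual cross-covariance correction term for stabilizing functionals vanishing here because $\E[H_n\mid\mathcal{F}_n]$ is a conditional mean. Finally, I would splice the two steps via characteristic functions: with $A_n:=e^{it(\E[H_n\mid\mathcal{F}_n]-\E[H_n])}$ and $B_n:=\E[e^{it(H_n-\E[H_n\mid\mathcal{F}_n])}\mid\mathcal{F}_n]$, the conditional CLT gives $B_n\to e^{-t^2\kappa_1^2/2}$ in probability with $|B_n|\le 1$, so $\E[e^{it(H_n-\E[H_n])}]=\E[A_nB_n]=e^{-t^2\kappa_1^2/2}\E[A_n]+o(1)\to e^{-t^2(\kappa_1^2+\kappa_2^2)/2}$ by the third step, and uniqueness of characteristic functions concludes.

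The main obstacle, I expect, is the third step: establishing the CLT for the edge functional $\E[H_n\mid\mathcal{F}_n]$ of the non-homogeneous process \emph{and} pinning down the precise closed form of $\kappa_2^2$. This demands (i) verifying the exact hypotheses of the Penrose--Yukich-type stabilization CLT for our specific graph and score, including the power-law order $q_0>16d$ that makes the Poissonized moment estimates work; (ii) carefully tracking the discrepancies $\phi_N-\phi$ and $N_i/N-\pi_i$ (where Assumption~1 enters) so that both the limiting variance and the centering converge; and (iii) checking that the general covariance correction term genuinely cancels for a conditional-mean functional. By comparison, controlling the $U$-statistic projection remainder and the conditional dependency-graph variance is more routine, though it still requires care because the degree bound is allowed to be unbounded.
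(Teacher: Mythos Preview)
Your plan is correct and tracks the paper's proof almost step for step: the same $U$-statistic projection to pass from $\tilde H_n$ to $H_n$, the same two-piece decomposition $H_n-\E H_n=(H_n-\E[H_n\mid\mathcal F_n])+(\E[H_n\mid\mathcal F_n]-\E H_n)$, a dependency-graph Stein argument for the first piece and a stabilizing-functional CLT for the second, spliced via characteristic functions. Two small corrections on Step~3: (i) the paper does \emph{not} invoke an off-the-shelf Penrose--Yukich CLT---it remarks that those results do not directly apply to the score here (which includes the $g$-correction from the $U$-projection) and instead proves a tailored Stein/cube-partition dependency-graph CLT from scratch (this is where the order $q_0>16d$ is actually used); (ii) the cross-covariance correction term vanishes not ``because $\E[H_n\mid\mathcal F_n]$ is a conditional mean,'' but because after the change of variables the score $\xi(x,\mathcal P_{N\phi_N})$ converges, as neighbors collapse to $x$, to a \emph{deterministic} function of $x$, so the covariance integrand in the Palm-theory variance formula tends pointwise to zero.
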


Observe that $\kappa_1$ and $\kappa_2$ are not distribution-free in general, unless $\tilde{\Delta}$ is independent of $\tilde{Z}$, i.e., ${\rm H}_0$ holds.
The proof of the above result is given in Appendix~\ref{pf:normal_alternative}, applying similar techniques as in \citet{BB20detection}.
Note that \citet{BB20detection} has applied the results in \citet{Penrose2007measures}, but some of the assumptions in \citet{Penrose2007measures}
such as almost everywhere continuity and boundedness of densities are not explicitly stated in \citet{BB20detection}.
Here, due to the presence of a general kernel and the U-statistic term $\sum_{i\neq j} K(\Delta_i,\Delta_j)$ we cannot directly apply the results in \citet{Penrose2007measures}, and hence we provide a complete proof under the assumptions stated above.


\begin{remark}[Non-degeneracy with characteristic kernel]
Suppose $K$ is characteristic.
When $\tilde{\Delta}$ is independent of $\tilde{Z}$, then $\kappa_2^2=0$, and $\kappa_1^2$ reduces to the distribution-free null variance in Theorem~\ref{thm:DistFree}, which is strictly positive. When $\tilde{\Delta}$ is not independent of $\tilde{Z}$, $\kappa_2^2$ is strictly positive since the integrand is the fourth power of
the maximum mean discrepancy\footnote{
MMD is a distance (provided that the kernel is characteristic) on the space of probability measures defined through the kernel $K(\cdot,\cdot)$ \citep{Gretton12}. For two probabilities $P$ and $Q$, let independent samples $\Delta_1^P,\Delta_2^P$ i.i.d.~follow $P$, and $\Delta_1^Q,\Delta_2^Q$ i.i.d.~follow $Q$. A definition of
MMD between $P,Q$ is given by:
${\rm MMD}^2(P,Q) := \E K(\Delta_1^P,\Delta_2^P) + \E K(\Delta_1^Q,\Delta_2^Q) - 2\E K(\Delta_1^P,\Delta_1^Q).$
} (MMD) between the conditional distribution $\tilde{\Delta}|\tilde{Z}=z$ and the unconditional distribution of $\tilde{\Delta}$
(note that with a characteristic kernel, the MMD is positive whenever the two distributions are not equal).
Hence the asymptotic variance  in~\eqref{eq:Fixed-Alt} is always positive.
\end{remark}

\subsection{A CLT under Shrinking Alternatives}
In the previous subsection, a CLT was derived under a fixed alternative.
In order to study the detection threshold of our statistic $\hat{\eta}$, a CLT under shrinking alternatives is needed, which will be studied in this subsection. Note that \citet{BB20detection} only provided a CLT under a fixed alternative (cf., Theorem~\ref{thm:normal_alternative}); the CLT under shrinking alternatives was not explicitly formulated, but was assumed to hold instead. As will be stated in the following theorem, it turns out that a uniform version of the conditions for the CLT under fixed alternatives (Theorem~\ref{thm:normal_alternative}) can guarantee a CLT under shrinking alternatives.

We assume for each $N$, the $M$ distributions have densities $f_1^N,\ldots,f_M^N$ on $\Z\subset \R^d$ respectively.
We write a $N$ at the top right of each $f_i$ to emphasize that the densities change with the sample size $N$.

Assume $f_i^N(z) \overset{}{\longrightarrow}f(z)$ as $N\to\infty$ for a.e.~$z \in \Z$ (under the Lebesgue measure), for $i=1,\ldots,M$.
Let
$$\phi^N_N (z) := \sum_{i=1}^M \frac{N_i}{N} f_i^N(z)\qquad {\rm and} \qquad \phi^N (z) := \sum_{i=1}^M \pi_i f_i^N(z),\quad {\rm for }\;\; z \in \Z.$$
Then $\phi^N_N (z)$ is the marginal density of $Z$ given the total number of observations.
To extend the CLT result to shrinking alternatives, we need to also extend the notion of stabilization for the non-homogeneous Poisson process $\mathcal{P}_{N\phi^N_N}$ below. We note that the distribution of the set of points $\{Z_1,\ldots,Z_n\}$ is exactly a non-homogeneous Poisson process $\mathcal{P}_{N\phi^N_N}$ \citep[Proposition 1.5]{Penrose2003graph}.

Consider the notation introduced at the start of Appendix~\ref{sec:normal_alternative} and recall the Poissonized setting.
Let $A^N$ be a set with $\phi^N$-probability 1. Fix $x\in\Z$. 
{\it A radius of stabilization} $R\equiv R(N, x)$ at $x$ exists if the points in $A^N \backslash B(x,N^{-1/d} R)$ cannot impact the neighborhood of $x$ in $\mathcal{P}_{N\phi^N_N}^x:=\mathcal{P}_{N\phi^N_N}\cup\{x\}$, i.e.,
\begin{equation}\label{eq:R_extend}
\mathcal{E}\left(x,\mathcal{G}\left([\mathcal{P}_{N\phi^N_N}^x\cap B(x,N^{-1/d} R)]\cup\X  \right)\right)=\mathcal{E}\left(x,\mathcal{G}\left(\mathcal{P}_{N\phi^N_N}\cap B(x,N^{-1/d} R) \right)\right),
\end{equation}
for all finite $\X\subset A^N\backslash B(x,N^{-1/d} R)$.
With
$$\tau(t):=\sup_{N\geq 1,x\in A^N}\mathbb{P}[R(N,x) > t],\qquad \mbox{for }\;\;t>0,$$
$\mathcal{G}$ is said to be:
\begin{enumerate}
\item {\it power-law stabilizing} of order $q$ with respect to $\phi^N_N$ if $\sup_{t\geq 1} t^q \tau(t) < \infty$,
\item {\it exponentially stabilizing} with respect to $\phi^N_N$ if $\limsup_{t\to\infty} t^{-1}\log \tau(t) <0$.
\end{enumerate}
Almost the same proof of Proposition~\ref{prop:knnExpStab} shows that if $\phi^N_N$ has convex support and $\phi^N_N \geq c$ on ${\rm supp}(\phi^N_N)$ for some $c>0$ and all $N$, then $k$-NN graph is exponentially stabilizing with respect to $\phi^N_N$, and thus power-law stabilizing of any order $q>0$. The following theorem (proved in Appendix~\ref{pf:shrinking_alternative}) formally states the CLT under shrinking alternatives.
\begin{theorem}[CLT under shrinking alternatives]\label{thm:shrinking_alternative}
Assume the Poissonized setting where we have $n_i\sim {\rm Poisson}(N_i)$ samples from the $i$-th distribution, for $i=1,\ldots,M$. Suppose the following assumptions hold:
\begin{enumerate}
\item $\sqrt{N}\left(\frac{N_i}{N} - \pi_i\right)\to 0$, as $N\to \infty$, $i=1,\ldots,M$, where $N = \sum_{i=1}^M N_i$.

\item The $i$-th distribution has a Lebesgue density $f_i^N$ on $\R^d$ such that $f_i^N$ converges pointwise Lebesgue a.e.~to a density $f$ as $N\to\infty$, and $f_i^N$ is equicontinuous almost everywhere, i.e., for Lebesgue a.e.\ $x$, for any $\varepsilon > 0$, there exists $\delta>0$ such that $|f_i^N (y)-f_i^N (x)| < \varepsilon$ whenever $|y-x|<\delta$, for $i = 1,\ldots, M$ and all $N \ge 1$. 
\item $f_i^N$ is uniformly bounded above and has uniformly bounded support, i.e., there exists $C>0$ such that $\|f_i^N\|_\infty < C$ for all $N$ and $i=1,\ldots,M$, and there exists a bounded set that contains ${\rm supp}(f_i^N)$ for all $N$ and $i=1,\ldots,M$.

\item $\mathcal{G}$ is translation and scale invariant, stabilizing on $\mathcal{P}_\lambda$ for some $\lambda>0$.

\item $\mathcal{G}$ is power-law stabilizing with respect to $\phi_N^N $ with order $q_0>d$, with the corresponding full probability sets $A^N$ and radius of stabilization $R$ as defined in \eqref{eq:R_extend}. 

\item The degree bound (including in- and out-degrees) for $\mathcal{G}(\mathcal{P}_{N \phi_N^N})$ is $o_p(N^{1/40})$.
\end{enumerate}
Then with $n=\sum_{i=1}^M n_i$,
$$\frac{1}{\sqrt{n}}\left(n\tilde{H}_n - \E[n\tilde{H}_n]\right)\overset{d}{\to}N(0,\kappa_{1,{\rm null}}^2),$$
where
\begin{equation}\label{eq:kappa_null}
\kappa_{1,{\rm null}}^2 :={a} \left( {g}_1 + {g}_3 \right) + {b} \left( {g}_2 - 2{g}_1 -2{g}_3 - 1  \right) + {c}\left({g}_1-{g}_2+{g}_3 +1 \right)>0
\end{equation}
is equal to the numerator of the asymptotic null variance given in \eqref{eq:null_variance}, which is distribution-free, not depending on $\{f_i^N\}_{i=1}^M$ and $f$.
\end{theorem}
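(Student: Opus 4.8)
The plan is to mirror the proof of Theorem~\ref{thm:normal_alternative}, now with the triangular array $\{f_i^N\}$ in place of the fixed densities, upgrading every estimate so its constants depend only on the uniform quantities in the hypotheses. First I would apply the same $U$-statistic projection reduction (as in \citet{deb2020kernel}) to replace $\frac{1}{\sqrt n}(n\tilde H_n - \E[n\tilde H_n])$ by $H_n - \E[H_n]$, where $H_n := \frac{1}{\sqrt N}\sum_{i=1}^n\frac{1}{d_i}\sum_{j:(Z_i,Z_j)\in\emgn}K(\Delta_i,\Delta_j) - \frac{1}{\sqrt N}\sum_{i=1}^n g_N(\Delta_i)$ is the linearized statistic from Appendix~\ref{sec:normal_alternative}; the projection remainder would be shown to be $o_p(\sqrt n)$ uniformly in $N$ using the uniform density bound (assumption~3), the power-law stabilization of order $q_0>d$ (assumption~5), and the degree bound $o_p(N^{1/40})$ (assumption~6), and the difference between centering by $\E[n\tilde H_n]$ and by $\sqrt N\,\E[H_n]$ would be absorbed into an $o(\sqrt n)$ term via Poisson concentration of $n/N$ around $1$. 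Then I would split $H_n - \E H_n = A_n + B_n$ with $A_n := H_n - \E[H_n\mid\mathcal F_n]$, $B_n := \E[H_n\mid\mathcal F_n] - \E H_n$, and $\mathcal F_n := \sigma(n,Z_1,\ldots,Z_n)$.

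Next I would redo the variance computation from the proof of Theorem~\ref{thm:normal_alternative}: $\Var(A_n\mid\mathcal F_n)\overset{p}{\to}\kappa_1^2(\{f_i^N\})$ and $\Var(B_n)\to\kappa_2^2(\{f_i^N\})$, the formulas of Theorem~\ref{thm:normal_alternative} with $\phi,f_i$ replaced by $\phi^N,f_i^N$; here the graph functionals $\tilde g_1,\tilde g_2,\tilde g_3$ converge to the homogeneous-Poisson constants $g_1,g_2,g_3$ of Theorem~\ref{thm:DistFree} by translation/scale invariance and stabilization together with the equicontinuity of $f_i^N$ (assumption~2), which makes the local density near a typical point uniformly close to a constant, exactly as in the proof of Theorem~\ref{thm:DistFree}. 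Since $f_i^N\to f$ a.e.\ and hence $\phi^N\to f$ a.e., the conditional label probabilities $\pi_i f_i^N(z)/\phi^N(z)\to\pi_i$ a.e., so the conditional law of $\tilde\Delta$ given $\tilde Z=z$ tends to the marginal $(\pi_1,\ldots,\pi_M)$ --- independence in the limit. Dominated convergence (valid by assumption~3) then gives $\kappa_2^2(\{f_i^N\})\to 0$, since the integrand of $\kappa_2^2$ is the squared MMD between $\tilde\Delta\mid\tilde Z=z$ and the marginal of $\tilde\Delta$ and hence vanishes a.e.; and $\kappa_1^2(\{f_i^N\})\to\kappa_{1,\mathrm{null}}^2$, since in the limit $\Var[K(\tilde\Delta,\tilde\Delta')\mid\tilde Z]\to a-c$, $\Cov(K(\tilde\Delta,\tilde\Delta'),K(\tilde\Delta,\tilde\Delta'')\mid\tilde Z)\to b-c$, $\E[\Cov(K(\tilde\Delta,\tilde\Delta'),g(\tilde\Delta)\mid\tilde Z)]\to 2(b-c)$ and $\E[\Var(g(\tilde\Delta)\mid\tilde Z)]\to 4(b-c)$ (with $a,b,c$ as in \eqref{eq:null_variance} and $g$ as in Theorem~\ref{thm:normal_alternative}); plugging these into the expression for $\kappa_1^2$ collapses it to the numerator of \eqref{eq:null_variance}, which is $\kappa_{1,\mathrm{null}}^2$ in \eqref{eq:kappa_null}, positive since $K$ is characteristic.

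For the central limit statement I would argue as follows. Conditionally on $\mathcal F_n$ the labels $\Delta_1,\ldots,\Delta_n$ are independent with probabilities $(N_i/N)f_i^N(Z_j)/\phi^N_N(Z_j)$ uniformly close to $\pi_i$, and $A_n$ is a sum over the vertices of a stabilizing graph of a bounded, locally determined functional of these independent marks; the same Stein's-method / dependency-graph CLT used for Theorem~\ref{thm:normal_alternative} would then give $A_n\mid\mathcal F_n\overset{d}{\to}N(0,\kappa_{1,\mathrm{null}}^2)$, with a Berry--Esseen bound uniform in $N$ once all constants are tracked through the uniform stabilization order and density bounds. Since $\Var(B_n)\to 0$, $B_n\overset{p}{\to}0$, and as $B_n$ is $\mathcal F_n$-measurable, passing to conditional characteristic functions and then taking expectations would yield $H_n-\E H_n = A_n+B_n\overset{d}{\to}N(0,\kappa_{1,\mathrm{null}}^2)$, which is the desired conclusion.

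The hard part will be the uniformity. Penrose-type CLTs and the accompanying moment and add-one-point (``octopus'') variation bounds for stabilizing functionals are stated for a fixed intensity, whereas here the intensity $N\phi^N_N$ drifts with $N$; so these bounds --- and the $U$-statistic projection remainder estimate and the Stein coupling --- must be re-derived with explicit control showing that the constants depend only on the uniform density bound, the uniform stabilization order $q_0>d$, and the degree bound, with equicontinuity providing the uniform ``locally constant density'' approximation. This bookkeeping, rather than any genuinely new idea, is expected to be the bulk of the argument; the remainder reduces to the machinery of Theorem~\ref{thm:normal_alternative} together with the variance identities above.
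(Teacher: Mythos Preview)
Your proposal is essentially the paper's own approach: reduce via the $U$-statistic projection to $H_n$, split $H_n-\E H_n=A_n+B_n$ with $A_n=H_n-\E[H_n\mid\mathcal F_n]$ and $B_n=\E[H_n\mid\mathcal F_n]-\E H_n$, show $\Var(A_n\mid\mathcal F_n)\to\kappa_{1,\mathrm{null}}^2$ and $\Var(B_n)\to 0$ (because $\pi_i f_i^N/\phi^N\to\pi_i$ a.e.), get the conditional CLT for $A_n$ from the dependency-graph bound, and conclude. The paper's proof is a few lines saying exactly this, pointing back to the proof of Theorem~\ref{thm:normal_alternative} with $f_i\mapsto f_i^N$, $\phi_N\mapsto\phi_N^N$, $\phi\mapsto\phi^N$, and noting that equicontinuity replaces the Lebesgue-point argument in the coupling lemmas (Lemmas~\ref{lem:coupl}--\ref{lem:coupl2}).

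One remark on where you overestimate the difficulty. The ``hard part'' you flag---a Penrose-type CLT with drifting intensity---is not actually needed. For $A_n$ the CLT is \emph{conditional} on $\mathcal F_n$, so the points are fixed and the dependency-graph bound (Theorem~\ref{thm:depGraph}) applies directly; the only inputs are the degree bound and boundedness of $K$, both uniform in $N$ by hypothesis. For $B_n$ there is no CLT at all: since $\kappa_2^2=0$ you only need $\Var(B_n)\to 0$, and this is exactly why the theorem requires only $q_0>d$ rather than the $q_0>16d$ of Theorem~\ref{thm:normal_alternative} (the stronger order was used solely in Proposition~\ref{prop:second_normal_positive_var} to get a CLT for $B_n$ when $\kappa_2^2>0$). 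Also, the $U$-statistic projection step is a pure fact about the labels $\Delta_i$ and does not involve the graph, so stabilization and degree bounds play no role there. With these simplifications the ``uniformity bookkeeping'' largely evaporates: what remains is checking that the coupling of $\mathcal P_{N\phi_N^N}$ to $\mathcal P_{f(x_0)}$ in Lemmas~\ref{lem:coupl}--\ref{lem:coupl2} still works, which follows from equicontinuity plus $f_i^N(x_0)\to f(x_0)$, and then applying dominated convergence as in the fixed-alternative proof.
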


\begin{remark}[Conditions 2 and 3 for parametric models] Suppose that $f_i^N(\cdot) = f(\cdot|\theta_i^N)$ is parametrized by $\theta_i^N$ and the parametric family $\{f(\cdot| \theta): \theta \in \Theta \subset \R^p\}$ ($p \ge 1$) with a common compact support $A$. If  $\theta_i^N$ depends on $N$ and converges to $\theta_0$ in the interior of $\Theta$ as $N\to\infty$, $i=1,\ldots,M$, and $f(x|\theta)$ is continuous in $(x,\theta)$, then conditions 2 and 3 above hold, because $f(x|\theta)$ is uniformly continuous in $(x,\theta)\in A\times B(\theta_0,\varepsilon)$ (for some $\varepsilon >0$), a compact set. 
\end{remark}


\subsection{Asymptotic Power and Detection Threshold}\label{sec:detection_threshold_of_our_test}
In this subsection, we consider $M=2$ and study the power of the test based on $\hat \eta$ along a parametric sub-model \citep{BB20detection}:
suppose $P_i$ has Lebesgue density $f(\cdot |\theta_i)$ on $\R^d$ belonging to the parametric family $\{f(\cdot| \theta): \theta \in \Theta \subset \R^p\}$ ($p \ge 1$), for $i=1,2$.



Our analysis depends on Theorem~\ref{thm:shrinking_alternative}, the CLT under shrinking alternatives.
Under the null, $ \E[n\tilde{H}_n]=0$.
Hence, the local power of the test that rejects the null when
\begin{equation}\label{eq:test}\frac{\sqrt{n}\tilde{H}_n}{ \kappa_{1,{\rm null}}}\geq  z_{1-\alpha}\qquad {\rm or\ equivalently\ ( asymptotically})\qquad \frac{\sqrt{n}\hat{\eta}}{\sigma_{\mathcal{G},K,d,\pi}}\geq  z_{1-\alpha}
\end{equation}
is determined by the rate at which $\E[n\tilde{H}_n]/\sqrt{n}$ converges to 0, since
$\frac{n\tilde{H}_n - \E[n\tilde{H}_n]}{\sqrt{n}\kappa_{1,{\rm null}}}\overset{d}{\to}N(0,1)$
(recall that $\sigma_{\mathcal{G},K,d,\pi}$ has been defined in~\eqref{eq:null_variance}, and $\kappa_{1,{\rm null}}$ is defined in~\eqref{eq:kappa_null}). We next state the detection threshold of the test \eqref{eq:test}, which is of the same form as in \citet{BB20detection}.

\begin{theorem}[Detection threshold]\label{thm:detection_threshold}
Suppose $\{\mathbb{P}_\theta\}_{\theta\in \Theta}$ is a parametric family of distributions with an open convex parameter space $\Theta\subset \R^p$ (for $p\geq 1$) and Lebesgue densities $\{f(\cdot|\theta)\}_{\theta \in \Theta}$. Assume:
\begin{enumerate}
\item For all $\theta\in\Theta$,  $f(\cdot |\theta)$ has a compact and convex support $S\subset \R^d$, with a nonempty interior, not depending on $\theta$.
\item The conditions in Theorem~\ref{thm:shrinking_alternative}  (that guarantee the CLT under shrinking alternatives) hold.

\item For all $\theta\in\Theta$, $f(\cdot|\theta)$ and $\nabla_\theta f(\cdot|\theta)$ are three times continuously differentiable in the interior of $S$, and the Fisher information matrix
$\E_{X\sim f(\cdot|\theta)}\left[\left(\frac{ \nabla_\theta f(X|\theta)}{f(X|\theta)}\right)\left(\frac{ \nabla_\theta f(X|\theta)}{f(X|\theta)}\right)^\top \right]$ is positive definite for all $\theta \in \Theta$.
\item For all $x\in S$, $f(x|\cdot)$ is three times continuously differentiable in $\Theta$.
\end{enumerate}
Suppose that $M=2$ and the discrete kernel $K$ and the $k$-NN graph (for a fixed $k \ge 1$) are used in defining $\hat{\eta}$. Let $P_1$ and $P_2$ have densities $f(\cdot|\theta_1)$ and $f(\cdot|\theta_2)$ respectively, and $\theta_2 = \theta_1 + \varepsilon_N$, where $\varepsilon_N \to 0$ as $N \to \infty$. Let $\kappa_{1,{\rm null}}$ be as defined in Theorem~\ref{thm:shrinking_alternative}, and ${\rm H}_x f(x|\theta_1)$ be the Hessian of $f(x|\theta_1)$ taken w.r.t.\ $x$. Also, define, for $h \in \R^p$,
\begin{equation}\label{eq:a}
a_{k,\theta_1}(h) := -\frac{ \pi_1\pi_2(1-2\pi_2)C_{k,2}}{2kd\kappa_{1,{\rm null}}}\int_S h^\top \nabla_{\theta_1}\left(\frac{ {\rm tr}({\rm H}_x f(x|\theta_1))}{f(x|\theta_1)}\right)f^{\frac{d-2}{d}}(x|\theta_1) {\rm d}x,
\end{equation}
$$b_{k,\theta_1}(h):=\frac{\pi_1\pi_2}{\sigma_{\mathcal{G},K,d,\pi}}\E_{X\sim f(\cdot|\theta_1)} \Bigg[\frac{h^\top \nabla_{\theta_1}f(X|\theta_1)}{f(X|\theta_1)}\Bigg]^2,\ {\rm with} \ C_{k,2}:=\E \Bigg[ \sum_{x\in\mathcal{P}_1^{\mathbf{0}}: (\mathbf{0},x)\in\mathcal{E}(\mathcal{G}_{k{\rm NN}}(\mathcal{P}_1^{\mathbf{0}}))} \|x\|^2 \Bigg].$$
\begin{itemize}
\item[1.] If the dimension $d\leq 8$, then the following hold:
\begin{itemize}
\item[(a)] $\|N^{\frac{1}{4}}\varepsilon_N\|\to 0$: The limiting power of the test \eqref{eq:test} is $\alpha$.
\item[(b)] $N^{\frac{1}{4}}\varepsilon_N \to h$:

If $d\leq 7$, limiting power of the test \eqref{eq:test} is
$\Phi \left(z_\alpha + b_{k,\theta_1}(h)\right)$.

If $d=8$ and the limiting power is
$\Phi \left(z_\alpha + a_{k,\theta_1}(h) + b_{k,\theta_1}(h)\right)$.
\item[(c)] $\|N^{\frac{1}{4}}\varepsilon_N\|\to \infty$: The limiting power of the test \eqref{eq:test} is 1.
\end{itemize}
\item[2.] If the dimension $d \geq 9$, then the following hold:
\begin{itemize}
\item[(a)] $\|N^{\frac{1}{2}-\frac{2}{d}}\varepsilon_N\|\to 0$: The limiting power of the test \eqref{eq:test} is $\alpha$.
\item[(b)] $N^{\frac{1}{2}-\frac{2}{d}}\varepsilon_N \to h$: The limiting power of the test \eqref{eq:test} is
$\Phi \left(z_\alpha + a_{k,\theta_1}(h) \right)$.
\item[(c)] $\|N^{\frac{1}{2}-\frac{2}{d}}\varepsilon_N\|\to \infty$ such that $\|N^{\frac{2}{d}}\varepsilon_N\|\to 0$: Then depending on whether
\begin{equation}\label{eq:threshold}
N^{\frac{1}{2}-\frac{2}{d}}(1-2\pi_2)\int_S \varepsilon_N^\top\nabla_{\theta_1}\left(\frac{{\rm tr}({\rm H}_x f(x|\theta_1))}{f(x|\theta_1)}\right)f^{\frac{d-2}{d}}(x|\theta_1) {\rm d}x\to\left\{\begin{aligned}
&\infty,\\
&-\infty,\\
\end{aligned}\right.\end{equation}
the limiting power of the test \eqref{eq:test} is 0 or 1, respectively.
\item[(d)] $N^{\frac{2}{d}}\varepsilon_N \to h$: The limiting power of the test \eqref{eq:test} is 0 or 1, depending on whether $a_{k,\theta_1}(h)+ b_{k,\theta_1} (h)$ is negative or positive, respectively.
\item[(e)] $N^{\frac{2}{d}}\varepsilon_N\to\infty$: The limiting power of the test \eqref{eq:test} is 1.
\end{itemize}
\end{itemize}
\end{theorem}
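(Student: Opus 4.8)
The plan is to reduce the entire statement to one sharp asymptotic expansion of the bias $\mathbb{E}[n\tilde{H}_n]$ under the shrinking alternative $\theta_2=\theta_1+\varepsilon_N$, and then obtain the ten sub-cases by bookkeeping of exponents. Since $M=2$ and $K(i,j)=I(i=j)$, the denominator of $\hat\eta$ in \eqref{eq:Eta-Hat} converges in probability (under the Poissonized model) to $\sum_i\pi_iK(i,i)-\sum_{i,j}\pi_i\pi_jK(i,j)=2\pi_1\pi_2=\kappa_{1,\mathrm{null}}/\sigma_{\mathcal G,K,d,\pi}$ (using \eqref{eq:null_variance}), so the two forms of the test \eqref{eq:test} are asymptotically equivalent and it suffices to study $\sqrt{n}\tilde{H}_n/\kappa_{1,\mathrm{null}}$. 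Assumptions 1--4 of the theorem (together with the remark identifying the parametric-model conditions) make the hypotheses of Theorem~\ref{thm:shrinking_alternative} hold for $f_1^N=f(\cdot|\theta_1)$, $f_2^N=f(\cdot|\theta_1+\varepsilon_N)$, whence $\big(n\tilde{H}_n-\mathbb{E}[n\tilde{H}_n]\big)/(\sqrt{n}\,\kappa_{1,\mathrm{null}})\overset{d}{\to}N(0,1)$. Writing $\mu_N:=\mathbb{E}[n\tilde{H}_n]/(\sqrt{n}\,\kappa_{1,\mathrm{null}})$, the power of \eqref{eq:test} therefore converges to $\Phi(z_\alpha+\lim_N\mu_N)$ whenever that limit exists in $\mathbb R$ (equal to $\alpha$ precisely when the limit is $0$), to $1$ when $\mu_N\to+\infty$, and to $0$ when $\mu_N\to-\infty$. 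Everything now rests on expanding $\mathbb{E}[n\tilde{H}_n]$.

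For this, condition on the point locations, which form the non-homogeneous Poisson process $\mathcal{P}_{N\phi^N_N}$ with $\phi^N_N=\tfrac{N_1}{N}f(\cdot|\theta_1)+\tfrac{N_2}{N}f(\cdot|\theta_2)$; given the locations the labels are independent with $\mathbb{P}(\Delta=1\mid Z=z)=p(z):=(N_1/N)f(z|\theta_1)/\phi^N_N(z)$. Using $\mathbb{E}[K(\Delta_i,\Delta_j)\mid\text{loc.}]=p(Z_i)p(Z_j)+(1-p(Z_i))(1-p(Z_j))$ for $i\ne j$, a direct algebraic simplification shows that the constant-in-$p$ part and the part linear in $p(Z_i)$ cancel between the edge-average and the pair-average in $\mathbb{E}[n\tilde H_n\mid\text{loc.}]$, leaving (a) twice an empirical variance of $p$ over the sample, of order $N\,\mathbb{E}[(p(Z)-\pi_1)^2]$, plus (b) a ``displacement'' piece $(1-2\pi_2)\sum_i\big(\mathbb{E}[p(\text{$k$-NN of }Z_i)\mid Z_i]-p(Z_i)\big)+\text{l.o.t.}$ For (b), the location of the $k$-NN of a point at $x$ has density $\propto\phi^N_N$ locally and $p\phi^N_N=(N_1/N)f(\cdot|\theta_1)$, so $\mathbb{E}[p(\text{nbr})\mid Z_i=x]\approx(N_1/N)\!\int_{\mathrm{ball}}f(x+w|\theta_1)\,dw\big/\!\int_{\mathrm{ball}}\phi^N_N(x+w)\,dw$; Taylor-expanding numerator and denominator to second order and applying the $k$-NN rescaling (radius $\sim(N\phi(x))^{-1/d}$, with the quadratic moment producing the constant $C_{k,2}$ of the theorem), together with a first-order Taylor expansion of $f(\cdot|\theta_2)$ about $\theta_1$ and the identity $\mathrm{tr}(\mathrm{H}_xf/f)-\mathrm{tr}(\mathrm{H}_x\phi/\phi)=-\pi_2\varepsilon_N^\top\nabla_{\theta_1}\!\big(\mathrm{tr}(\mathrm{H}_xf(x|\theta_1))/f(x|\theta_1)\big)+O(\|\varepsilon_N\|^2)$, yields after multiplying by the factor $(2p(x)-1)\approx 1-2\pi_2$ and summing ($\sum_i\approx N\!\int\cdot\,\phi\,dx$, using $\phi^{1-2/d}\to f(\cdot|\theta_1)^{1-2/d}$) exactly the $A_N$ term below. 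For (a), the first-order Taylor expansion gives $p(z)-\pi_1\approx-\pi_1\pi_2\,\varepsilon_N^\top\nabla_{\theta_1}f(z|\theta_1)/f(z|\theta_1)$, hence it becomes $B_N$ below. Replacing $N_i/N$ by $\pi_i$ (harmless since $\sqrt N(N_i/N-\pi_i)\to0$) and dividing by $\sqrt{n}\,\kappa_{1,\mathrm{null}}$, one obtains
\[
\mu_N = A_N + B_N + o\big(1+|A_N|+B_N\big),\qquad
A_N = N^{\frac12-\frac2d}\,\frac{-\pi_1\pi_2(1-2\pi_2)C_{k,2}}{2kd\,\kappa_{1,\mathrm{null}}}\int_S \varepsilon_N^\top\nabla_{\theta_1}\!\Big(\tfrac{\mathrm{tr}(\mathrm{H}_xf(x|\theta_1))}{f(x|\theta_1)}\Big)f^{\frac{d-2}{d}}(x|\theta_1)\,dx,
\]
\[
B_N = \sqrt N\,\frac{\pi_1\pi_2}{\sigma_{\mathcal G,K,d,\pi}}\,\mathbb{E}_{f(\cdot|\theta_1)}\!\big[(\varepsilon_N^\top\nabla_{\theta_1}f/f)^2\big]\ \ge 0,
\]
so that $A_N\to a_{k,\theta_1}(h)$ when $N^{1/2-2/d}\varepsilon_N\to h$, $B_N\to b_{k,\theta_1}(h)$ when $N^{1/4}\varepsilon_N\to h$, $|A_N|\asymp N^{1/2-2/d}\|\varepsilon_N\|$ with sign tracked by \eqref{eq:threshold}, and $B_N\asymp N^{1/2}\|\varepsilon_N\|^2$.

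The ten sub-cases are now exponent arithmetic. For $d\le 7$ one has $\tfrac12-\tfrac2d<\tfrac14$, so at the critical rate $\|\varepsilon_N\|\asymp N^{-1/4}$ we get $B_N\to b_{k,\theta_1}(h)$ and $A_N\to0$, giving power $\Phi(z_\alpha+b_{k,\theta_1}(h))$; faster rates give $\alpha$, slower give $1$. For $d=8$, $\tfrac12-\tfrac2d=\tfrac14$, so at $\|\varepsilon_N\|\asymp N^{-1/4}$ both terms survive and add: $\Phi(z_\alpha+a_{k,\theta_1}(h)+b_{k,\theta_1}(h))$. For $d\ge 9$, $\tfrac12-\tfrac2d>\tfrac14$, so $A_N$ is the slower-vanishing term: at $\|\varepsilon_N\|\asymp N^{-(1/2-2/d)}$ we get $A_N\to a_{k,\theta_1}(h)$, $B_N\to0$, hence $\Phi(z_\alpha+a_{k,\theta_1}(h))$; for $N^{-(1/2-2/d)}\ll\|\varepsilon_N\|\ll N^{-2/d}$ we have $A_N\to\pm\infty$ (sign from \eqref{eq:threshold}) and $B_N=o(|A_N|)$, giving power $1$ or $0$; for $\|\varepsilon_N\|\asymp N^{-2/d}$ both $A_N$ and $B_N$ are of the common order $N^{1/2-4/d}\to\infty$, so power is $1$ or $0$ according to the sign of $a_{k,\theta_1}(h)+b_{k,\theta_1}(h)$; and for $N^{2/d}\varepsilon_N\to\infty$ we have $B_N\gg|A_N|$ with $B_N\to+\infty$, giving power $1$. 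These are precisely items 1(a)--(c) and 2(a)--(e).

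The main obstacle is the second step: establishing the expansion of $\mathbb{E}[n\tilde H_n]$ with an error that is genuinely $o(1+|A_N|+B_N)$ uniformly over the shrinking sequence of alternatives. One must (i) justify exchanging the Palm/Mecke calculus for $\mathcal{P}_{N\phi^N_N}$ with the along-edge Taylor expansions while typical neighbour distances are only $O_p(N^{-1/d})$ --- this is where the exponential (hence power-law) stabilization of the $k$-NN graph and the degree bound (assumptions 5--6 of Theorem~\ref{thm:shrinking_alternative}) are used, to control both the tail of the stabilization radius and the Taylor remainders; (ii) handle boundary effects in an $O(N^{-1/d})$-neighbourhood of $\partial S$, where the isotropy that cancels the odd-order displacement terms fails, and show this boundary layer contributes only lower order; and (iii) carry out the Taylor/integration-by-parts manipulation leading to the $\nabla_{\theta_1}\!\big(\mathrm{tr}(\mathrm{H}_xf)/f\big)$ form of \eqref{eq:a}, which needs the joint three-times differentiability of $f(x|\theta)$ in $(x,\theta)$ (assumptions 3--4). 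This is exactly the expansion that was left implicit in \citet{BB20detection}, whose CLT was only stated under a fixed alternative; once it is in hand, matching constants with \eqref{eq:a}--\eqref{eq:threshold} and enumerating the regimes in the third step is routine.
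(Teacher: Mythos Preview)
Your proposal is correct and follows essentially the same route as the paper: invoke Theorem~\ref{thm:shrinking_alternative} to reduce the power to $\Phi(z_\alpha+\lim_N \mathbb{E}[n\tilde H_n]/(\sqrt n\,\kappa_{1,\mathrm{null}}))$, expand that bias into an $A_N+B_N$ form, and then read off the ten sub-cases by comparing the exponents $\tfrac12-\tfrac2d$ and $\tfrac14$.

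The only organizational difference is in how the bias expansion is obtained. The paper uses Palm theory to write the expectation as $-\frac{N_1N_2}{kN^2}\big(\mu_N^{(S)}(\theta_1,\theta_2)-\mu_N^{(S)}(\theta_1,\theta_1)\big)/\sqrt N$ for an explicit integral functional $\mu_N^{(S)}$, Taylor-expands this in $\theta_2$ around $\theta_1$, and then defers the evaluation of $\nabla_{\theta_2}\mu_N^{(S)}$ and $\mathrm{H}_{\theta_2}\mu_N^{(S)}$ at $\theta_2=\theta_1$ (which give $A_N$ and $B_N$ respectively) to Appendices~E and~B of \citet{BB20detection}. You instead condition on the point locations, rewrite $\mathbb{E}[K(\Delta_i,\Delta_j)\mid\text{loc.}]$ via $p(z)=(N_1/N)f(z|\theta_1)/\phi_N^N(z)$, and split algebraically into the ``empirical variance of $p$'' piece (your $B_N$) and the ``neighbour displacement of $p$'' piece (your $A_N$) before Taylor-expanding each. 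Your route makes the mechanics of the $C_{k,2}$ and Laplacian constants more transparent, at the price of having to justify the uniform remainder control and boundary contribution yourself (your obstacles (i)--(iii)); the paper's route is shorter on the page but relies on the reader consulting \citet{BB20detection}. The resulting $A_N$, $B_N$ and the case enumeration coincide.
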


A proof of the above result is given in Appendix~\ref{sec:detection_threshold}.
In particular,
the above result shows that the detection threshold of $\hat{\eta}$ exhibits a ``$d=8$" phenomenon (see~\citet{BB20detection}):
When $d\leq 8$, the detection threshold is $N^{-\frac{1}{4}}$;
while when $d\geq 9$, the detection threshold is somewhere between $N^{-\frac{2}{d}}$ and $N^{-\frac{1}{2}+\frac{2}{d}}$,
depending on the direction of $\varepsilon_N$ and the sign of $a_{k,\theta_1}$; see \eqref{eq:a} and \eqref{eq:threshold}.
If $\varepsilon_N=\delta_N h$ for $\delta_N>0$ and some fixed $h\in\R^d$, and
$a_{k,\theta_1}(h) > 0$, then the detection threshold is $\delta_N \sim N^{-\frac{1}{2}+\frac{2}{d}}$;
on the other hand, if $a_{k,\theta_1}(h) < 0$, then the detection threshold is $\delta_N \sim N^{-\frac{2}{d}}$.
When $a_{k,\theta_1}(h) = 0$, the precise location of the detection threshold has to be determined on a case by case basis (see \citep{BB20detection}).
In Appendix~\ref{sec:check_detection_threshold}, we empirically show that the same detection threshold also holds for the non-Poissonized setting, i.e., the original setting in Section~\ref{sec:def_eta} of our paper where the $n_i$'s are nonrandom constants (instead of $n_i\sim {\rm Poisson}(N_i)$).

\section{Proofs of the Main Results}\label{sec:all_proofs}

\subsection{Proof of Lemma~\ref{lem:obs}}\label{pf_lem:obs}
If $P_{1} =\ldots= P_{M}=:P$, then $\tilde{Z}$ follows $P$ regardless of $\Delta$, and so $\tilde{\Delta} \indep \tilde{Z}$.
Conversely, if $\tilde{\Delta} \indep \tilde{Z}$, then the conditional distribution $\tilde{Z}|(\tilde{\Delta} = i)$ should be the same for all $i=1,\ldots, M$, which implies $P_{1} =\ldots= P_{M}$. Hence the $M$ distributions are the same if and only if $\tilde{\Delta}\indep \tilde{Z}$.

If there exist disjoint measurable sets $\{A_i\}_{i=1}^M$ such that $P_i(A_i) = 1$,
then we can almost surely determine the label of $\tilde{Z}$, by finding the $A_i$ that $\tilde{Z}$ belongs to, i.e., $\tilde{\Delta} = \sum_{i=1}^M iI(\tilde{Z}\in A_i)$ can be written as a measurable function of $\tilde{Z}$.
Conversely, if $\tilde{\Delta}$ is a measurable function of $\tilde{Z}$,
then the $\mathcal{E}(\tilde{Z})$-measurable\footnote{$\mathcal{E}(\tilde{Z})$ is the smallest $\sigma$-algebra such that $\tilde{Z}$ is measurable.} set $A_i$ defined as $\tilde{\Delta}^{-1}(i)$ satisfies $P_i(A_i) = \p(\tilde{Z}\in A_i|\tilde{\Delta} = i)= 1$.
Hence there exist disjoint measurable sets $\{A_i\}_{i=1}^M$ such that $P_i(A_i) = 1$ if and only $\tilde{\Delta}$ is a measurable function of $\tilde{Z}$. \qed

\subsection{Proof of Theorem~\ref{thm:3prop}}\label{sec:pf3prop}
Note that $\eta$ is the kernel measure of association between $\tilde{Z}$ and $\tilde{\Delta}$ proposed in \cite[Theorem 2.1]{deb2020kernel}. Given the kernel $K(\cdot,\cdot)$ is characteristic, it satisfies:
\begin{itemize}
	\item[(i)] $\eta$ is a deterministic number between $[0,1]$; 
	
	\item[(ii)] $\eta = 0$ if and only if $\tilde{\Delta} \indep \tilde{Z}$, and 
	
	\item[(iii)] $\eta = 1$ if and only if $\tilde{\Delta}$ is a measurable function of $\tilde{Z}$.
\end{itemize}
Hence the results follow from Lemma~\ref{lem:obs}.
\qed

\subsection{Proof of Proposition~\ref{prop:Inv}}\label{sec:pf_Inv}
Define $(\tilde{\Delta},\tilde{\Delta}{'},\tilde{Z}, \tilde{Z}')$ as follows:
\begin{enumerate}
    \item Draw $(\tilde{\Delta},\tilde{\Delta}',\tilde{Z})$ according to the distribution mentioned in Section~\ref{subsec:def_eta} (see~\eqref{eq:def_eta}), i.e.,
    \begin{enumerate}
        \item Draw $\tilde{\Delta} \sim \sum_{i=1}^M \pi_i \delta_i$, where $\delta_i$ is the Dirac measure at $i$.
        \item Draw $\tilde{Z}\mid (\tilde{\Delta}=i) \sim P_i$.
        \item Draw $\tilde{\Delta}'$ independently from the conditional distribution of $\tilde{\Delta}\mid \tilde{Z}$, given the value of $\tilde{Z}$ from (b).
    \end{enumerate}

    \item Draw $\tilde{Z}'$ from the distribution $\kappa(\tilde{Z},\cdot)$. 
\end{enumerate}
Then $\tilde{Z}' \sim \sum_{i=1}^M \pi_i Q_i$, and  $\eta(Q_1,\ldots,Q_M)$ can be defined from $(\tilde{\Delta},\tilde{\Delta}{'}, \tilde{Z}')$, as $\eta(P_1,\ldots,P_M)$ can be defined from $(\tilde{\Delta},\tilde{\Delta}{'}, \tilde{Z})$. 

The denominator and the second term of the numerator of $\eta$ in \eqref{eq:eta} are unchanged when $P_1,\ldots,P_M$ are transitioned to $Q_1,\ldots,Q_M$ as these only depend on the mixture proportion $\{\pi_i\}_{i=1}^M$ which is unchanged.
Hence it suffices to consider the first term in the numerator and show:
\begin{equation}\label{eq:transform_prop}
\E \left[ \E \big[K(\tilde{\Delta},\tilde{\Delta}{'})|\tilde{Z}\big] \right] \geq \E \left[\E \big[K(\tilde{\Delta},\tilde{\Delta}{'})|\tilde{Z}'\big] \right],
\end{equation}
where on the left-hand side, $\tilde{\Delta}$ and $\tilde{\Delta}{'}$ are i.i.d.~drawn from the conditional distribution of  $\tilde{\Delta}\mid \tilde{Z}$,
and on the right-hand side, $\tilde{\Delta}$ and $\tilde{\Delta}{'}$ are i.i.d.~drawn from the conditional distribution of $\tilde{\Delta}\mid\tilde{Z}'$.

We first consider the case where $\tilde{\Delta}$ is not a deterministic function of $\tilde{Z}'$.
Consider the kernel partial correlation \citep{Huang2022KPC} between $\tilde{\Delta}$ and $\tilde{Z}$ given $\tilde{Z}'$, defined by
$$\rho^2 (\tilde{\Delta},\tilde{Z}|\tilde{Z}') = \frac{ \E \left[ \E \big[K(\tilde{\Delta},\tilde{\Delta}{'})|(\tilde{Z},\tilde{Z}')\big] \right] -  \E \left[ \big[K(\tilde{\Delta},\tilde{\Delta}{'})|\tilde{Z}'\big] \right]}{\E \left[K(\tilde{\Delta},\tilde{\Delta})\right] -  \E \left[ \big[K(\tilde{\Delta},\tilde{\Delta}{'})|\tilde{Z}'\big] \right]},$$
where in $\E \left[ \E \big[K(\tilde{\Delta},\tilde{\Delta}{'})|(\tilde{Z},\tilde{Z}')\big] \right]$, $\tilde{\Delta}$ and $\tilde{\Delta}{'}$ are i.i.d.~drawn from the conditional distribution of $\tilde{\Delta}\mid (\tilde{Z},\tilde{Z}')$ and then averaged over $(\tilde{Z},\tilde{Z}')$.
Since $\tilde{Z}'$ is drawn from $\kappa(\tilde{Z},\cdot)$,
$ \E \big[K(\tilde{\Delta},\tilde{\Delta}{'})|(\tilde{Z},\tilde{Z}')\big]  =  \E \big[K(\tilde{\Delta},\tilde{\Delta}{'})|\tilde{Z}\big]$, and therefore
$$\rho^2 (\tilde{\Delta},\tilde{Z}|\tilde{Z}') = \frac{ \E \left[ \E \big[K(\tilde{\Delta},\tilde{\Delta}{'})|\tilde{Z}\big] \right] -  \E \left[ \big[K(\tilde{\Delta},\tilde{\Delta}{'})|\tilde{Z}'\big] \right]}{\E \left[K(\tilde{\Delta},\tilde{\Delta})\right] -  \E \left[ \big[K(\tilde{\Delta},\tilde{\Delta}{'})|\tilde{Z}'\big] \right]}.$$
It is shown in \citet[Lemma 2]{Huang2022KPC} that the
denominator of $\rho^2 (\tilde{\Delta},\tilde{Z}|\tilde{Z}')$ is positive, and the
numerator of $\rho^2 (\tilde{\Delta},\tilde{Z}|\tilde{Z}')$ is nonnegative.
Hence inequality \eqref{eq:transform_prop} holds.
By \citet[Theorem 1]{Huang2022KPC}, $\rho^2 (\tilde{\Delta},\tilde{Z}|\tilde{Z}') =0$ if and only if $\tilde{\Delta}$ is conditionally independent of $\tilde{Z}$ given $\tilde{Z}'$, which is the same as saying $\tilde{\Delta}\mid (\tilde{Z},\tilde{Z}') \stackrel{d}{=}\tilde{\Delta}\mid \tilde{Z}'$.
Since $\tilde{Z}'$ is obtained by passing $\tilde{Z}$ through $\kappa$, this is further equivalent to $\tilde{\Delta}\mid \tilde{Z} \stackrel{d}{=}\tilde{\Delta}\mid \tilde{Z}'$.

If $\tilde{\Delta}$ is a deterministic function of $\tilde{Z}'$, then $\tilde{\Delta}$ must also be a deterministic function of $\tilde{Z}$, and the inequality \eqref{eq:transform_prop} holds with equality, and $\tilde{\Delta}\mid \tilde{Z} \stackrel{d}{=}\tilde{\Delta}\mid \tilde{Z}'$ holds as well.


In the case of a deterministic data processing, a function $T:\Z\to \Z'$ is applied to all the data points from the $M$ distributions.
In such a case, $\kappa(z,\cdot)=\delta_{T(z)}$, the Dirac measure at $T(z)$.
If $T$ is a bijection, then $\tilde{\Delta}\mid\tilde{Z}\stackrel{d}{=}\tilde{\Delta}\mid T(\tilde{Z}) =\tilde{\Delta}\mid\tilde{Z}'$, so the equality in \eqref{eq:transform_prop} is attained, showing that a bijective transformation of the $M$ distributions does not change our measure of dissimilarity $\eta$. \qed

\subsection{Proof of Proposition~\ref{prop:convex}}\label{sec:pf_prop_convex}
From \eqref{eq:eta_alternative_Radon}, it suffices to show the convexity for the first term in the numerator.
Let $\mu = \sum_{i=1}^M (P_i+Q_i)$ be a dominating measure. The first term in the numerator of \eqref{eq:eta_alternative_Radon}
can be re-written as:
$$\int \sum_{i,j=1}^MK(i,j)\frac{\frac{{\rm d} (\pi_i P_i)}{{\rm d}\mu}}{\frac{{\rm d} \bar{P}}{{\rm d}\mu}}  \frac{\frac{{\rm d} (\pi_j P_j)}{{\rm d}\mu}}{\frac{{\rm d} \bar{P}}{{\rm d}\mu}} \frac{{\rm d} \bar{P}}{{\rm d}\mu} {\rm d}\mu(z) = 
\int \sum_{i,j=1}^MK(i,j)\frac{\frac{{\rm d} (\pi_i P_i)}{{\rm d}\mu} \frac{{\rm d} (\pi_j P_j)}{{\rm d}\mu}}{\frac{{\rm d}(\pi_1 P_1)  }{{\rm d}\mu} + \ldots + \frac{{\rm d}( \pi_M P_M)  }{{\rm d}\mu}}    {\rm d}\mu(z).$$

Note that $f:\R^M \to \R$,
$x\mapsto \frac{x^\top [K(i,j)]_{i,j=1}^M x}{1^\top x}$ is a convex function on $1^\top x > 0$ \citep[Exercise 3.23]{Boyd2004convex}.
Hence,
$$\begin{aligned}
&\quad \int \sum_{i,j=1}^MK(i,j)\frac{\frac{{\rm d} (\pi_i (\lambda P_i + (1-\lambda)Q_i))}{{\rm d}\mu} \frac{{\rm d} (\pi_j (\lambda P_j + (1-\lambda)Q_j))}{{\rm d}\mu}}{\frac{{\rm d}(\pi_1 (\lambda P_1 + (1-\lambda)Q_1)) }{{\rm d}\mu} + \ldots + \frac{{\rm d} (\pi_M (\lambda P_M + (1-\lambda)Q_M))  }{{\rm d}\mu}} {\rm d}\mu(z)\\
&=\int f\left(\frac{{\rm d}(\pi_1 (\lambda P_1 + (1-\lambda)Q_1)) }{{\rm d}\mu}(z),\ldots,\frac{{\rm d}(\pi_M (\lambda P_M + (1-\lambda)Q_M)) }{{\rm d}\mu} (z)\right) {\rm d}\mu(z)\\
&\leq \int \left[\lambda f\left(\frac{{\rm d}(\pi_1  P_1) }{{\rm d}\mu},\ldots,\frac{{\rm d}(\pi_M P_M)  }{{\rm d}\mu}\right)
+ (1-\lambda)f\left(\frac{{\rm d}(\pi_1  Q_1) }{{\rm d}\mu},\ldots,\frac{{\rm d}(\pi_M Q_M)  }{{\rm d}\mu}\right)\right] {\rm d}\mu(z)\\
&=\lambda \int \sum_{i,j=1}^MK(i,j)\frac{\frac{{\rm d} (\pi_i P_i)}{{\rm d}\mu} \frac{{\rm d} (\pi_j P_j)}{{\rm d}\mu}}{\frac{{\rm d}(\pi_1 P_1)  }{{\rm d}\mu} + \ldots + \frac{{\rm d}( \pi_M P_M)  }{{\rm d}\mu}}    {\rm d}\mu(z) \\
&\quad + (1-\lambda)\int \sum_{i,j=1}^MK(i,j)\frac{\frac{{\rm d} (\pi_i Q_i)}{{\rm d}\mu} \frac{{\rm d} (\pi_j Q_j)}{{\rm d}\mu}}{\frac{{\rm d}(\pi_1 Q_1)  }{{\rm d}\mu} + \ldots + \frac{{\rm d}( \pi_M Q_M)  }{{\rm d}\mu}}    {\rm d}\mu(z),
\end{aligned}
$$
and the joint convexity of $\eta$ follows. \qed

\subsection{Proof of a Special Case of $\eta$}\label{sec:pf_special_eta}
Suppose $\Z=\R^d$, $M=2$, and that $P_1$ and $P_2$ have densities  $f$ and $g$ w.r.t.~the Lebesgue measure on $\R^d$. For $z \in \Z$, $$\mathbb{P}(\tilde{\Delta} = 2|Z = z) = \frac{\pi_2 g(z)}{\pi_1 f(z) + \pi_2 g(z)}.$$
Further, $\eta$ can be written as $\eta = 1-\frac{\E [\rho(\tilde{\Delta},\tilde{\Delta}')]}{\E [\rho(\tilde{\Delta}_1,\tilde{\Delta}_2)]}$  (see~\citep[Equation 2.1]{deb2020kernel}), where $\rho(s,s') := \| K(s,\cdot)-K(s',\cdot)\|_{\h}^2$,\footnote{$\h$ is the RKHS induced by kernel $K$ on $\s := \{1,\ldots, M\}$ equipped with inner product $\langle K(s,\cdot),K(s',\cdot)\rangle = K(s,s')$, for $s,s' \in \s$.} and
\begin{eqnarray*}
\E [\rho(\tilde{\Delta} ,\tilde{\Delta}')] & = & \E\left[ \E [\rho(\tilde{\Delta} ,\tilde{\Delta}')|Z] \right] \\ 
& = & \int \left\{2 \rho(1,2)\frac{\pi_1 \pi_2 f(x) g(x)}{[\pi_1 f(x) + \pi_2 g(x)]^2}  \right\} [\pi_1 f(x) + \pi_2 g(x)] \mathrm{d}x \\
& = & 2 \rho(1,2) \int \frac{\pi_1 \pi_2 f(x) g(x)}{\pi_1 f(x) + \pi_2 g(x)}   \mathrm{d}x,
\end{eqnarray*}
and $$\E \big[ \rho(\tilde{\Delta}_1, \tilde{\Delta}_2) \big] = 2 \rho(1,2)\, \pi_1 \pi_2.$$ Therefore,
$\eta =  1 - \int \frac{f(x) g(x)}{\pi_1 f(x) +\pi_2 g(x)}   \mathrm{d}x. $
This completes the argument. \qed

\subsection{Proof of Proposition~\ref{prop:monotonicity}}\label{pf:monotonicity}
We will use the expression in Appendix~\ref{sec:pf_special_eta} of $\eta$, i.e., $\eta =  1 - \int \frac{f(x) g(x)}{\pi_1 f(x) +\pi_2 g(x)}   \mathrm{d}x. $. First consider the location family. Here, we have:
$$\eta =  1 - \int \frac{1}{\pi_1 \frac{1}{f(x)} +\pi_2 \frac{1}{f(x-\theta)}  } \mathrm{d}x.$$
Since $f(x)$ is log-concave, 
$ \frac{1}{f(x)}$ is convex as $\exp(\cdot)$ is convex and increasing.
Write $g := \frac{1}{f}$. Then
$$\begin{aligned}
\eta &= 1 - \int \frac{1}{\pi_1 g(x) +\pi_2 g(x-\theta)} \mathrm{d}x\\
&= 1 - \int \frac{1}{\pi_1 g(x+\pi_2 \theta) +\pi_2 g(x-\pi_1\theta)} \mathrm{d}x\\
&= 1 - \int \frac{1}{\pi_1 g(x+\pi_2\lambda h) +\pi_2 g(x-\pi_1\lambda h)} \mathrm{d}x.
\end{aligned}$$
It suffices to show that $\pi_1 g(x+\pi_2\lambda h) +\pi_2 g(x-\pi_1\lambda h)$ is increasing in $\lambda$, i.e.,
for $0\leq \lambda\leq \Lambda$,
\begin{equation}\label{eq:monotone_location}
\pi_1 g(x+\pi_2\lambda h) +\pi_2 g(x-\pi_1\lambda h)\leq \pi_1 g(x+\pi_2\Lambda h) +\pi_2 g(x-\pi_1\Lambda h).
\end{equation}
Write $x-\pi_1\lambda h$ and $x+\pi_2\lambda h$ as convex combinations of $x-\pi_1\Lambda h$ and $x+\pi_2\Lambda h$, i.e.,
$$\begin{aligned}
x-\pi_1\lambda h &=\frac{\pi_1(\Lambda-\lambda)}{\Lambda} (x+\pi_2\Lambda h)  + \frac{\pi_1\lambda+\pi_2\Lambda}{\Lambda}(x-\pi_1\Lambda h),\\
x+\pi_2\lambda h&= \frac{\pi_1\Lambda+\pi_2\lambda}{\Lambda} (x+\pi_2\Lambda h)  + \frac{\pi_2(\Lambda-\lambda)}{\Lambda} (x-\pi_1\Lambda h).
\end{aligned}$$
By the convexity of $g$, we have:
$$\begin{aligned}
\pi_1 g(x+\pi_2\lambda h) +\pi_2 g(x-\pi_1\lambda h)&\leq \pi_1\left(\frac{\pi_1\Lambda+\pi_2\lambda}{\Lambda} g(x+\pi_2\Lambda h)  + \frac{\pi_2(\Lambda-\lambda)}{\Lambda} g(x-\pi_1\Lambda h)\right)\\
&\qquad +\pi_2 \left(\frac{\pi_1(\Lambda-\lambda)}{\Lambda} g(x+\pi_2\Lambda h)  + \frac{\pi_1\lambda+\pi_2\Lambda}{\Lambda}g(x-\pi_1\Lambda h)\right)\\
&=\pi_1 g(x+\pi_2\Lambda h) +\pi_2 g(x-\pi_1\Lambda h).
\end{aligned}$$
Hence the monotonicity is proved.
When $\lambda=0$, $P_1$ and $P_2$ are equal, so $\eta = 0$. Since $f(\cdot)$ is a density,
$\int_\R f(x-\lambda h)\mathrm{d}\lambda < +\infty$ for Lebesgue almost every $x$,
which implies the convex function $\pi_2 g(x-\lambda h)=\pi_2 \frac{1}{f(x-\lambda h)}\to +\infty$ as $\lambda \to + \infty$
for Lebesgue almost every $x$. Hence, by dominated convergence,
$$\eta = 1 -\int \frac{1}{\pi_1 g(x) +\pi_2 g(x-\lambda h)} \mathrm{d}x \to 1\quad \mathrm{as}\quad \lambda \to + \infty.$$
To show that the monotonicity is strict, we can first suppose that $f(x)=0$ on the boundary of its support since the boundary of a convex set has 0 Lebesgue measure, so it will not change the value of $\eta$.
Since $\eta < 1$, $\pi_1 g(x+\pi_2\lambda h) +\pi_2 g(x-\pi_1\lambda h)$ is not $+\infty$ for Lebesgue almost every $x$.
Hence we can find an $x$ such that the left-hand side of \eqref{eq:monotone_location} is finite.
We want to further find an $x$ such that the inequality in \eqref{eq:monotone_location} is strict.
Note that if equality holds in~\eqref{eq:monotone_location}, then by the convexity, $g(\cdot)$ must be a linear function on the line segment $[x-\pi_1\Lambda h,x+\pi_2\Lambda h]$.
Since $g$ is log-convex, it must be a constant on $[x-\pi_1\Lambda h,x+\pi_2\Lambda h]$.
Note that $\lim_{x\to\infty} g(x) = \lim_{x\to\infty} \frac{1}{f(x)} = \infty$.
Hence we can move $x$ along the line passing through $x-\pi_1\Lambda h$ and $x+\pi_2\Lambda h$ until inequality \eqref{eq:monotone_location} becomes strict (and the left-hand side is still finite).
Now, since a convex function is continuous in the interior of its domain, \eqref{eq:monotone_location} with strict inequality can still hold in a neighborhood of $x$, so that after integration we obtain $\eta(P_1,P_2(\lambda)) < \eta(P_1,P_2(\Lambda))$.

Now consider the case of a scale family. Note that
$$\eta =  1 - \int \frac{1}{\pi_1 \frac{1}{f(x)} +\pi_2 \frac{1}{\lambda f(\lambda x)}  } \mathrm{d}x.$$
By a change of variable $y=\lambda^{\pi_2} x$ we get
$$\eta =  1 - \int \frac{1}{\pi_1 \lambda^{\pi_2} g\left(\frac{y}{\lambda^{\pi_2}}\right) +\pi_2 \frac{1}{\lambda^{\pi_1}} g(\lambda^{\pi_1} y)  } \mathrm{d}y,$$
where $g(x) = \frac{1}{f(x)}$ is still a convex function as before.

We will show that $\phi(\lambda):=\pi_1 \lambda^{\pi_2} g\left(\frac{y}{\lambda^{\pi_2}}\right) +\pi_2 \frac{1}{\lambda^{\pi_1}} g(\lambda^{\pi_1} y)$
is decreasing on $(0,1]$, and increasing on $[1,\infty)$. We can suppose both $\frac{y}{\lambda^{\pi_2}}$ and $\lambda^{\pi_1} y$ lie in the interior of ${\rm supp}(f)$,
because if any of them is not in the interior of ${\rm supp}(f)$, then if $\lambda \geq 1$, $\phi(\Lambda) = + \infty$ for any $\Lambda > \lambda$ by the convexity of $g$, and similarly
if $\lambda < 1$, then $\phi(\delta) = +\infty$ for any $0 < \delta < \lambda$. Given that both the points are in the interior of ${\rm supp}(f)$, we can take the derivative of $\phi$ to obtain
$$\begin{aligned}\phi'(\lambda) &= \pi_1\left(\pi_2 \lambda^{\pi_2-1}g\left(\frac{y}{\lambda^{\pi_2}}\right)  -  \lambda^{\pi_2} \nabla g\left(\frac{y}{\lambda^{\pi_2}}\right)^\top  \frac{\pi_2 y}{\lambda^{\pi_2+1}} \right)\\
&\qquad \qquad+\pi_2\left(-\frac{\pi_1}{\lambda^{\pi_1+1}} g(\lambda^{\pi_1} y) +  \frac{1}{\lambda^{\pi_1}} \nabla g(\lambda^{\pi_1} y)^\top \pi_1\lambda^{\pi_1-1}y  \right)\\
&=\frac{\pi_1\pi_2}{\lambda} \left( \lambda^{\pi_2 }g\left(\frac{y}{\lambda^{\pi_2}}\right)  -  \nabla g\left(\frac{y}{\lambda^{\pi_2}}\right)^\top  y -\frac{1}{\lambda^{\pi_1}} g(\lambda^{\pi_1} y) +   \nabla g(\lambda^{\pi_1} y)^\top y  \right).
\end{aligned}$$
Note that $\phi'(1) = 0$. Let $\psi(\lambda):= \lambda^{\pi_2 }g\left(\frac{y}{\lambda^{\pi_2}}\right)  -  \nabla g\left(\frac{y}{\lambda^{\pi_2}}\right)^\top  y -\frac{1}{\lambda^{\pi_1}} g(\lambda^{\pi_1} y) +   \nabla g(\lambda^{\pi_1} y)^\top y$. It suffices to show that $\psi'(\lambda)> 0$. Observe that
$$\begin{aligned}
\psi'(\lambda)&=\pi_2\lambda^{\pi_2 -1}g\left(\frac{y}{\lambda^{\pi_2}}\right) - \lambda^{\pi_2 } \nabla g\left(\frac{y}{\lambda^{\pi_2}}\right)^\top \frac{\pi_2 y}{\lambda^{\pi_2 + 1}}
+ y^\top \nabla^2 g\left(\frac{y}{\lambda^{\pi_2}}\right) y \cdot \frac{\pi_2}{\lambda^{\pi_2+1}}\\
&\qquad \qquad +\frac{\pi_1}{\lambda^{\pi_1+1}} g(\lambda^{\pi_1} y) - \frac{1}{\lambda^{\pi_1}} \nabla g(\lambda^{\pi_1} y)^\top \pi_1\lambda^{\pi_1-1} y
+ \pi_1\lambda^{\pi_1-1}y^\top  \nabla^2 g(\lambda^{\pi_1} y) y\\
&= \pi_2\lambda^{\pi_2 -1} L\left( \frac{y}{\lambda^{\pi_2}} \right) + \frac{\pi_1}{\lambda^{\pi_1+1}}L\left( \lambda^{\pi_1} y\right),
\end{aligned}$$
where $L(x) := g(x) - \nabla g(x)^\top x + x^\top \nabla^2 g(x) x$.
Since $f(x)$ is log-concave, $g(x) = \frac{1}{f(x)}=\exp(h(x))$ for some convex function $h(x)$. Hence
$$\begin{aligned}
L(x)&=e^{h(x)} - e^{h(x)}\nabla h(x)^\top x + x^\top \left(e^{h(x)}\nabla h(x)\nabla h(x)^\top +  e^{h(x)}\nabla^2 h(x) \right) x\\
&=e^{h(x)}\left(1 - \nabla h(x)^\top x + (\nabla h(x)^\top x)^2 \right) + e^{h(x)} x^\top \nabla^2 h(x) x\\
& = \frac{3}{4}e^{h(x)} + e^{h(x)}\left(\frac{1}{2} - \nabla h(x)^\top x \right)^2 + e^{h(x)} x^\top \nabla^2 h(x) x >0.
\end{aligned}$$
Hence $\psi'(\lambda)> 0$ and the monotonicity is proved.

When $\lambda = 1$, $P_1=P_2$, and so $\eta =0$.
As $\lambda \to 0^+$, no matter $f(0)=0$ or $f(0)>0$, we have $\pi_1 \frac{1}{f(x)} +\pi_2 \frac{1}{\lambda f(\lambda x)}\to +\infty$;
so dominated convergence implies
$$\eta =  1 - \int \frac{1}{\pi_1 \frac{1}{f(x)} +\pi_2 \frac{1}{\lambda f(\lambda x)}  } \mathrm{d}x \to 1.$$
As $\lambda \to +\infty$, by a change of variable $y=\lambda x$ and again using dominated convergence theorem, we get
$$\eta =  1 - \int \frac{1}{\pi_1 \frac{\lambda}{f\left(\frac{y}{\lambda}\right)} +\pi_2 \frac{1}{f(y)}  } \mathrm{d}y \to 1.$$ This completes the proof of the result.
\qed

\subsection{Proof of Theorem~\ref{thm:consist}}\label{sec:pfconsist}
Here we provide a convenient proof of Theorem~\ref{thm:consist} using the consistency result in \citet{deb2020kernel} and a coupling of our data to a process where $\Delta_i$'s  are i.i.d.~Multinoulli($\pi$) with $\pi = (\pi_1,\ldots, \pi_M)$.

Suppose $X^{(m)}_1,X^{(m)}_2 \ldots \overset{i.i.d.}{\sim} P_m$, $m=1,\ldots,M$.
Let $\tilde{\Delta}_i\overset{i.i.d.}{\sim} {\rm Multinoulli}(\pi)$.
If $\tilde{\Delta}_i = m$, we draw $\tilde{Z}_i$ according to $P_m$. More specifically, for $i=1,\ldots, n$,
$$\tilde{Z}_i := \sum_{m=1}^M I (\tilde{\Delta}_i = m) X^{(m)}_{\sum_{j=1}^i I(\tilde{\Delta}_j=m)}.$$
Let $\tilde{n}_m := \sum_{i=1}^n I(\tilde{\Delta}_i=m)$, and  $\tilde{\mathcal{G}}_n$ be the graph constructed on the pooled sample
$(\tilde{Z}_1,\ldots,\tilde{Z}_n)$, which is 
$X^{(1)}_1,\ldots,X^{(1)}_{\tilde{n}_1},X^{(2)}_1,\ldots,X^{(2)}_{\tilde{n}_2},\ldots,X_1^{(M)},\ldots,X^{(M)}_{\tilde{n}_M}$,
after proper permutation.
Define:
$$\tilde{\eta} := \frac{\frac{1}{n} \sum \limits_{i=1}^n \frac{1}{\tilde{d}_i} \sum \limits_{j:(\tilde{Z}_i,\tilde{Z}_j) \in \tilde{\mathcal{G}}_n} K(\tilde{\Delta}_i,\tilde{\Delta}_j)-\frac{1}{n(n-1)}  \sum \limits_{i \ne j} K(\tilde{\Delta}_i,\tilde{\Delta}_j)}{\frac{1}{n}\sum_{i=1}^n K(\tilde{\Delta}_i,\tilde{\Delta}_i)-\frac{1}{n(n-1)}  \sum \limits_{i \ne j} K(\tilde{\Delta}_i,\tilde{\Delta}_j)},$$
where $\tilde{d}_i$ is the out-degree of vertex $i$ in the graph $\tilde{\mathcal{G}}_n$.

From the proof of Theorem~\ref{thm:3prop} we see that $\eta$ is the kernel measure of association \cite{deb2020kernel} between $\tilde{Z}$ and $\tilde{\Delta}$. Here $\tilde{\eta}$ is the empirical estimator of $\eta$ proposed in \cite{deb2020kernel}. We can check the conditions required for the consistency of $\tilde{\eta}$ (i.e., $\tilde{\eta} \stackrel{p}{\to} \eta$) as follows:
Note that any function $f\in \h$\footnote{$\h$ is the RKHS induced by kernel $K$ on $\s := \{1,\ldots, M\}$ equipped with inner product $\langle K(s,\cdot),K(s',\cdot)\rangle = K(s,s')$, for $s,s' \in \s$.} can be written as $f(s) = f(1)I(s=1)+\ldots+f(M)I(s=M)$, for $s \in \s$,
which implies $\h$ is finite-dimensional and hence separable.
As any kernel on the finite set $\s$ is bounded, we have
$\E_{\tilde{Z}\sim Q} [K^{4+\varepsilon} (\tilde{Z},\tilde{Z})]<+\infty$ for any probability distribution $Q$.
Now from \cite[Theorem 3.1]{deb2020kernel} it follows that $\tilde{\eta}\to \eta$ almost surely.

Recall that ${\mathcal{G}}_n$ is the graph constructed on the pooled samples $$(Z_1,\ldots,Z_n)=\left(X^{(1)}_1,\ldots,X^{(1)}_{{n}_1},X^{(2)}_1,\ldots,X^{(2)}_{{n}_2},\ldots,X_1^{(M)},\ldots,X^{(M)}_{{n}_M}\right),$$
with $n_1,\ldots,n_M$ being nonrandom, and
$$\hat{\eta} = \frac{\frac{1}{n} \sum \limits_{i=1}^n \frac{1}{d_i} \sum \limits_{j:(Z_i,Z_j) \in {\mathcal{G}}_n} K({\Delta}_i,{\Delta}_j)-\frac{1}{n(n-1)}  \sum \limits_{i \ne j} K({\Delta}_i,{\Delta}_j)}{\frac{1}{n}\sum_{i=1}^n K({\Delta}_i,{\Delta}_i)-\frac{1}{n(n-1)}  \sum \limits_{i \ne j} K({\Delta}_i,{\Delta}_j)},$$
with $(\Delta_1,\ldots,\Delta_{n})= (1,\ldots,1,\ldots,M,\ldots,M)$ being the class labels such that $n_m$ of them is $m$, i.e., $\sum_{i=1}^n I(\Delta_i = m) = n_m$, for $m=1,\ldots,M$.
The goal is to show that $\hat{\eta}-\tilde{\eta}\overset{a.s.}{\longrightarrow} 0$.

By the strong law of large numbers,
$$\frac{1}{n}\sum_{i=1}^n K(\tilde{\Delta}_i,\tilde{\Delta}_i) \overset{a.s.}{\longrightarrow}\E [K(\tilde{\Delta}_1,\tilde{\Delta}_1)]=
\sum_{i=1}^M \pi_i K(i,i)=\lim_{\frac{n_i}{n}\to\pi_i} \frac{1}{n}\sum_{i=1}^n K({\Delta}_i,{\Delta}_i).$$

By the strong law of large numbers for U-statistics, we have
$$\frac{1}{n(n-1)}  \sum \limits_{i \ne j} K(\tilde{\Delta}_i,\tilde{\Delta}_j)\overset{a.s.}{\longrightarrow}\E [K(\tilde{\Delta}_1,\tilde{\Delta}_2)]=\sum_{i,j=1}^M\pi_i\pi_j K(i,j)=\lim_{\frac{n_i}{n}\to\pi_i} \frac{1}{n(n-1)}  \sum \limits_{i \ne j} K({\Delta}_i,{\Delta}_j).$$

Since $K$ is characteristic,
whenever $\tilde{\Delta}_1\neq \tilde{\Delta}_2$,
$K(\tilde{\Delta}_1,\tilde{\Delta}_1) + K(\tilde{\Delta}_2,\tilde{\Delta}_2) - 2K(\tilde{\Delta}_1,\tilde{\Delta}_2) = \|K(\tilde{\Delta}_1,\cdot) - K(\tilde{\Delta}_2,\cdot)\|_\h^2 > 0$.
This implies $\mathbb{E}K(\tilde{\Delta}_1,\tilde{\Delta}_1) - \mathbb{E}K(\tilde{\Delta}_1,\tilde{\Delta}_2)>0$, so the denominator of $\hat \eta$ has a nonzero deterministic limit. Hence we only need to show
$$\frac{1}{n} \sum \limits_{i=1}^n \frac{1}{d_i} \sum \limits_{j:(Z_i,Z_j) \in {\mathcal{G}}_n} K({\Delta}_i,{\Delta}_j)-
\frac{1}{n} \sum \limits_{i=1}^n \frac{1}{\tilde{d}_i} \sum \limits_{j:(\tilde{Z}_i,\tilde{Z}_j) \in \tilde{\mathcal{G}}_n} K(\tilde{\Delta}_i,\tilde{\Delta}_j)
\overset{a.s.}{\longrightarrow}0.$$
Recall that ${\mathcal{G}}_n$ is the graph constructed on $X^{(1)}_1,\ldots,X^{(1)}_{{n}_1},X^{(2)}_1,\ldots,X^{(2)}_{{n}_2},\ldots,X_1^{(M)},\ldots,X^{(M)}_{{n}_M}$, and $\tilde{\mathcal{G}}_n$ is the graph constructed on
$X^{(1)}_1,\ldots,X^{(1)}_{\tilde{n}_1},X^{(2)}_1,\ldots,X^{(2)}_{\tilde{n}_2},\ldots,X_1^{(M)},\ldots,X^{(M)}_{\tilde{n}_M}$.
Hence $\tilde{\mathcal{G}}_n$ can be obtained from ${\mathcal{G}}_n$ be replacing $\sum_{i=1}^M |n_i - \tilde{n}_i|/2$ points.
Note that replacing one point changes at most $q_n +t_n$ edges.
More specifically, if $\tilde{\mathcal{G}}_{n}$ can be obtained from $\mathcal{G}_n$ by replacing one point with another point (possibly elsewhere), then
$$\#\{(j,k):(j,k)\in \left(\mathcal{E}(\mathcal{G}_n)\setminus \mathcal{E}(\tilde{\mathcal{G}}_n)\right)\cup\left( \mathcal{E}(\tilde{\mathcal{G}}_n)\setminus \mathcal{E}(\mathcal{G}_{n})\right) {\rm\ or\ } i\in \{j,k\} \} \leq q_n + t_n.$$
Hence:
$$\left|\frac{1}{n} \sum \limits_{i=1}^n \frac{1}{d_i} \sum \limits_{j:(Z_i,Z_j) \in \emgn} K(\Delta_i,\Delta_j) - \frac{1}{n} \sum \limits_{i=1}^n \frac{1}{\tilde{d}_i} \sum \limits_{j:(\tilde{Z}_i,\tilde{Z}_j) \in \tilde{\mathcal{G}}_n} K(\tilde{\Delta}_i,\tilde{\Delta}_j) \right|\leq \frac{2(q_n+t_n)\|K\|_{\infty}}{nr_n}\leq \frac{C}{n},$$
for some $C>0$. Therefore, for general $\tilde{\mathcal{G}}_{n}$, by replacing $\sum_{i=1}^M |n_i - \tilde{n}_i|/2$ points, 
$$\left|\frac{1}{n} \sum \limits_{i=1}^n \frac{1}{d_i} \sum \limits_{j:(Z_i,Z_j) \in \emgn} K(\Delta_i,\Delta_j) -  \frac{1}{n} \sum \limits_{i=1}^n \frac{1}{\tilde{d}_i} \sum \limits_{j:(\tilde{Z}_i,\tilde{Z}_j) \in \tilde{\mathcal{G}}_n} K(\tilde{\Delta}_i,\tilde{\Delta}_j)\right|\leq \frac{C\sum_{i=1}^M |n_i - \tilde{n}_i|}{2n}\overset{a.s.}{\longrightarrow}0.$$ This completes the proof of the theorem. \qed

\subsection{Proof of Theorem~\ref{thm:asympnull}}\label{sec:pfNormal}
We will use the following result (proved in Appendix~\ref{sec:pfextPham}) which is an extension of \citet[Theorem 3.2]{Pham1989permu}.
\begin{theorem}\label{thm:extPham}
Suppose we have real numbers $\{a_{ij}\}_{i,j=1}^n$, $\{b_{ij}\}_{i,j=1}^n$ ($a_{ij}$, $b_{ij}$ depend on $n$) satisfying $a_{ii}=b_{ii}=0$, $a_{ij}=a_{ji}$, $b_{ij}=b_{ji}$ for all $i,j$, and $\sum a_{ij}=\sum b_{ij} = 0$.
Let $(R_1,\ldots,R_n)$ be a random permutation of $(1,\ldots,n)$, with all permutations being equally likely.
Set $B_{ij} := b_{R_i R_j}$.
Suppose $t_n$ is a deterministic sequence such that $\frac{t_n^r}{n}\to 0$, as $n \to \infty$, for all $r\in \mathbb{N}$, and
\begin{itemize}
\item[(A1)] $\max_i\sum_j |a_{ij}| = O(\max|a_{ij}|\cdot t_n)$,

\item[(A2)] $\liminf_n \sum a_{ij}^2/(n\max a_{ij}^2 )>0$,

\item[(B1)] $\limsup_n \left(\frac{2\sum b_{i+}^2}{n}\right) / \sum b_{ij}^2 < 1$, $\;\;$ where $b_{i+} :=\sum_{j=1}^n b_{ij}$,

\item[(B2)] $\sum |b_{ij}|^r /n^2 = O\left((\sum b_{ij}^2/n^2)^{\frac{r}{2}} \right)$, \;\; for  $r=3,4,\ldots$.
\end{itemize}
Then $$\frac{\sum a_{ij}B_{ij}}{w_n}\overset{d}{\to} N(0,1)$$ where $w_n^2 := 4\sum' a_{ij}a_{ik} \sum' b_{ij}b_{ik}/n^3 + 2\sum a_{ij}^2 \sum b_{ij}^2 /n^2$, and $\sum'$ means summing over all possible distinct indices (so here it means summing over all distinct triplets $(i,j,k)$).
\end{theorem}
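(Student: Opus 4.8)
The plan is to prove the limit law by the \emph{method of moments}. Since $N(0,1)$ is determined by its moments, it suffices to show that for every fixed $m\ge 1$ and $Z\sim N(0,1)$,
\[
\E\left[\left(\frac{\sum_{i,j}a_{ij}B_{ij}}{w_n}\right)^{m}\right]\longrightarrow \E[Z^{m}]=\begin{cases}0,& m\text{ odd},\\ (m-1)!!,& m\text{ even}.\end{cases}
\]
Expanding the $m$-th power gives $\E\big[(\sum a_{ij}B_{ij})^m\big]=\sum \big(\prod_{s=1}^m a_{i_sj_s}\big)\,\E\big[\prod_{s=1}^m b_{R_{i_s}R_{j_s}}\big]$, where the outer sum is over $m$-tuples of ordered edge-slots $((i_1,j_1),\dots,(i_m,j_m))$ with $i_s\ne j_s$, and the permutation expectation depends on a tuple only through the coincidence pattern of the $2m$ indices $\{i_1,j_1,\dots,i_m,j_m\}$. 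The entire argument is organized by this pattern.

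First I would dispose of the two base cases. Because $\sum_{i,j}a_{ij}=\sum_{i,j}b_{ij}=0$ and the diagonals vanish, $\E[\sum a_{ij}B_{ij}]=\big(\sum_{i,j}a_{ij}\big)\cdot\frac{1}{n(n-1)}\sum_{k\ne l}b_{kl}=0$, so the statistic is already centered. For the variance I would compute $\E[B_{ij}B_{kl}]$ (for $i\ne j$, $k\ne l$) by the number of coincidences among $i,j,k,l$ — all four distinct, exactly one coincidence (a shared vertex), or $\{i,j\}=\{k,l\}$ (a doubled edge) — each permutation expectation being $(n-p)!/n!$ times a matching sum of $b$'s with $p\in\{2,3,4\}$; collecting the terms and again using the zero-sum conditions to cancel the lower-order pieces yields exactly $w_n^2=4{\sum}'a_{ij}a_{ik}{\sum}'b_{ij}b_{ik}/n^3+2\sum a_{ij}^2\sum b_{ij}^2/n^2$. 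Conditions (A2) and (B1) then ensure $w_n^2$ does not degenerate and is of the same order as its doubled-edge component $\sum a_{ij}^2\sum b_{ij}^2/n^2$, so the normalization is legitimate.

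Next comes the general moment. A coincidence pattern of the $2m$ index-slots is encoded by a multigraph $H$ on $p$ vertices with $m$ labelled edges, edge $s$ joining the vertices carrying $i_s$ and $j_s$; summing $\prod_s a_{i_sj_s}$ over the index assignments realizing $H$ factorizes over the connected components of $H$, while the permutation expectation contributes a factor of order $n^{-p}$ times a matching sum of $b$-weights that likewise factorizes. An order count shows that a pattern contributes at the scale $w_n^{2k}$ (for $m=2k$) only when $H$ is a disjoint union of $k$ blocks, each block being either a ``doubled edge'' or a ``cherry'' (two $a$-edges sharing a vertex), matched to the corresponding $b$-configuration — precisely the two building blocks visible in $w_n^2$ — and the number of such matchings is $(2k-1)!!$. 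If $m$ is odd, no such pairing of the $m$ edge-slots exists, so every pattern is strictly subleading and the moment tends to $0$. Hence the even moments converge to $(2k-1)!!$ and the odd ones to $0$.

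The hard part, and the only place this goes beyond \cite{Pham1989permu}, is bounding the contributions of the ``non-paired'' patterns when the degree bound $t_n$ is allowed to diverge. For a pattern $H$ with $p$ vertices and $m$ edges that is not a union of doubled edges and cherries, one must show its total contribution is $o(w_n^{2k})$. This is where (A1) enters: $\max_i\sum_j|a_{ij}|=O(\max|a_{ij}|\,t_n)$ lets one peel off, for each ``excess'' vertex of $H$ not lying in a clean block, a factor $\max|a_{ij}|\,t_n$ at the cost of deleting one edge, while (A2) turns surviving square-summable factors into $\sum a_{ij}^2$ and (B2), combined with the $n^{-p}$ permutation weight, controls the $b$-side via $\sum|b_{ij}|^r/n^2=O\big((\sum b_{ij}^2/n^2)^{r/2}\big)$. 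Carrying this bookkeeping through, each subleading pattern turns out to be smaller than $w_n^{2k}$ by a factor of the form $t_n^{c}/n$ for some constant $c=c(k)$, and the hypothesis $t_n^r/n\to 0$ for all $r\in\mathbb N$ kills all of them at once. Assembling the moment limits gives $\sum a_{ij}B_{ij}/w_n\overset{d}{\to}N(0,1)$.
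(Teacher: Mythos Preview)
Your proposal is correct and follows essentially the same route as the paper: method of moments, decomposition of $(\sum a_{ij}B_{ij})^r$ by the multigraph $G$ recording coincidences among indices, identification of the leading contributions as disjoint unions of two-edge components (your ``doubled edges'' and ``cherries'' are the paper's ``loop'' and ``tree'' configurations), and bounding of all other patterns by powers of $t_n/n^{1/r}$. One small correction: $w_n^2$ is not the exact permutation variance of $\sum a_{ij}B_{ij}$ but only asymptotically equivalent to it (the paper gives the exact variance separately and checks $w_n^2/\mathrm{Var}(\sum a_{ij}B_{ij})\to 1$); also, the paper's treatment of subleading graphs makes explicit a step you only allude to, namely that isolated edges in $G$ are eliminated via the zero-sum conditions $\sum a_{ij}=\sum b_{ij}=0$, which is what produces the $\lceil s/2\rceil$ saving in both the $a$- and $b$-bounds.
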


\begin{remark}[On the normalizing constant $w_n$] Note that
$w_n^2$ is not the variance of $\sum a_{ij}B_{ij}$ but they are asymptotically equivalent in the sense that
$w_n^2/{\rm Var}(\sum a_{ij}B_{ij}) \to 1$. We write $w_n$ here because it arises naturally in the proof, and it can be replaced by ${\rm Var}(\sum a_{ij}B_{ij})$ in the statement of Theorem~\ref{thm:extPham}.
\end{remark}

We now show how Theorem~\ref{thm:extPham} can be used to prove Theorem~\ref{thm:asympnull}.
Note that given $\mathcal{F}_n$, the denominator of $\hat{\eta}$ is deterministic, so we only need to focus on the numerator of $\hat{\eta}$. For $i\neq j$, let
\begin{equation}\label{eq:a_ijb_ij}
\begin{aligned}
a_{ij} &:= \frac{1}{d_i}I\{(Z_i,Z_j)\in\emgn\} + \frac{1}{d_j}I\{(Z_j,Z_i)\in\emgn\} - \frac{2}{n-1},\\
b_{ij} &= K(\Delta_i,\Delta_j) - \frac{1}{n(n-1)} \sum_{p\neq q} K(\Delta_p,\Delta_q).
\end{aligned}\end{equation}
Then, $a_{ii}=b_{ii}=0$ for all $i$, and $\sum a_{ij}=\sum b_{ij} = 0$. Note that $\sum a_{ij}B_{ij}$ has the same distribution as the permutation distribution of (the numerator of) $\hat{\eta}$, up to scaling by a constant. Recall Assumption~\ref{assump:degupbd} on the boundedness of $\frac{t_n}{r_n}$, where $t_n$ and $r_n$ are the upper bound and lower bound on vertex degrees in $\mathcal{G}_n$. To verify (A1), note that
$$\begin{aligned}
\sum_j |a_{ij}| &= \sum_{j:(Z_i,Z_j)\ {\rm or\ }(Z_j,Z_i) \in\emgn}|a_{ij}| +  \sum_{j:(Z_i,Z_j), (Z_j,Z_i)\notin\emgn}|a_{ij}|\\
&\leq 2+ \sum_{j:(Z_i,Z_j)\ {\rm or\ }(Z_j,Z_i) \in\emgn}\frac{2}{r_n} \\
&\leq 2+\frac{2t_n}{r_n} \;=  \; O(1)\\
&=O(\max|a_{ij}|\cdot t_n),
\end{aligned}$$
where in the last line we have used the fact that $\max |a_{ij}|\cdot t_n \geq \left( \frac{1}{r_n} - \frac{2}{n-1}\right)\cdot t_n\geq 1-\frac{2t_n}{n-1}\to 1$ as $n\to\infty$.
To check (A2), observe that
\begin{equation}\label{eq:suma_ij}
\begin{aligned}
\sum a_{ij}^2 &\geq \sum_i \sum_{j:(Z_i,Z_j)\in\emgn} a_{ij}^2\\
&\geq \sum_i \sum_{j:(Z_i,Z_j)\in\emgn} \left(\frac{1}{d_i} - \frac{2}{n-1}\right)^2\; = \;\sum_i  \left(\frac{1}{d_i} - \frac{2}{n-1}\right)^2 d_i\\
&\gtrsim \sum_i \frac{1}{d_i^2} \cdot d_i\; \geq \;\frac{n}{t_n} \; \geq \;  \frac{n}{t_n} \left(\frac{\max |a_{ij}|}{\frac{2}{r_n}} \right)^2\\
&\gtrsim \frac{r_n}{4}n \,\max a_{ij}^2 \; \geq \;\frac{1}{4}n \max a_{ij}^2,
\end{aligned}
\end{equation}
where $x \gtrsim y $ means $x \geq Cy$ for some fixed $C>0$.
The third line follows as $\max |a_{ij}| \leq \frac{2}{r_n}$, and the second $\gtrsim$ follows from $\frac{t_n}{r_n}$ being bounded (by Assumption \ref{assump:degupbd}).

To check (B1), we first show that $\frac{1}{n^2}\sum b_{ij}^2 \overset{a.s.}{\longrightarrow} {\rm Var}\left[K(\tilde{\Delta}_1,\tilde{\Delta}_2)\right]$, where $\tilde{\Delta}_i\overset{i.i.d.}{\sim}{\rm Multinoulli}(\pi)$.
$$\begin{aligned}
\frac{1}{n^2}\sum b_{ij}^2=&\sum_{i,j=1,i\neq j}^M \frac{n_in_j}{n^2}\left(K(i,j)- \frac{1}{n(n-1)} \sum_{p\neq q} K(\Delta_p,\Delta_q) \right)^2\\
&+\sum_{i=1}^M \frac{n_i(n_i-1)}{n^2} \left(K(i,i) - \frac{1}{n(n-1)} \sum_{p\neq q} K(\Delta_p,\Delta_q)\right)^2.
\end{aligned}$$
Since $\frac{n_i}{n}\to \pi_i$ and $\frac{1}{n(n-1)} \sum_{p\neq q} K(\Delta_p,\Delta_q) \overset{a.s.}{\longrightarrow} \E [K(\tilde{\Delta}_1,\tilde{\Delta}_2)]$, we have:
\begin{equation}\label{eq:sumb_ij}
\frac{1}{n^2}\sum b_{ij}^2 \overset{a.s.}{\longrightarrow} \sum_{i,j=1}^M \pi_i\pi_j \left(K(i,j) - \E [K(\tilde{\Delta}_1,\tilde{\Delta}_2)]\right)^2 =   {\rm Var}\left[K(\tilde{\Delta}_1,\tilde{\Delta}_2)\right].
\end{equation}
Next, we show 
$\frac{\sum b_{i+}^2}{n^3}\overset{a.s.}{\longrightarrow} \E K(\tilde{\Delta}_1,\tilde{\Delta}_2)K(\tilde{\Delta}_1,\tilde{\Delta}_3) - \left(\mathbb{E}K(\tilde{\Delta}_1,\tilde{\Delta}_2)\right)^2$. Since $b_{ii}=0$,
$$\begin{aligned}
\frac{\sum b_{i+}^2}{n^3} &=\frac{\sum' b_{ij}b_{ik} + \sum b_{ij}^2}{n^3}.
\end{aligned}$$
Similar to the previous argument, we have
$$
\begin{aligned}
\frac{\sum' b_{ij}b_{ik}}{n^3}&\overset{a.s.}{\longrightarrow} \sum_{i,j,k=1}^M \pi_i\pi_j\pi_k \left(K(i,j) - \E [K(\tilde{\Delta}_1,\tilde{\Delta}_2)]\right)\left(K(i,k) - \E [K(\tilde{\Delta}_1,\tilde{\Delta}_2)]\right)\\
&=\E\left[\left(K(\tilde{\Delta}_3,\tilde{\Delta}_4) - \E [K(\tilde{\Delta}_1,\tilde{\Delta}_2)]\right)\left(K(\tilde{\Delta}_3,\tilde{\Delta}_5) - \E [K(\tilde{\Delta}_1,\tilde{\Delta}_2)]\right) \right]\\
&=\E K(\tilde{\Delta}_1,\tilde{\Delta}_2)K(\tilde{\Delta}_1,\tilde{\Delta}_3) - \left(\mathbb{E}K(\tilde{\Delta}_1,\tilde{\Delta}_2)\right)^2.
\end{aligned}$$
Together with $\frac{1}{n^2}\sum b_{ij}^2 \overset{a.s.}{\longrightarrow} {\rm Var}\left[K(\tilde{\Delta}_1,\tilde{\Delta}_2)\right]$ which implies $\frac{1}{n^3}\sum b_{ij}^2 \overset{a.s.}{\longrightarrow}0$, we have:
\begin{equation}\label{eq:convb_iplus}
\frac{\sum b_{i+}^2}{n^3}\overset{a.s.}{\longrightarrow} \E K(\tilde{\Delta}_1,\tilde{\Delta}_2)K(\tilde{\Delta}_1,\tilde{\Delta}_3) - \left(\mathbb{E}K(\tilde{\Delta}_1,\tilde{\Delta}_2)\right)^2.
\end{equation}
To verify (B1), it remains to check that
\begin{equation}\label{eq:a_2b_c}
2\cdot \frac{\E K(\tilde{\Delta}_1,\tilde{\Delta}_2)K(\tilde{\Delta}_1,\tilde{\Delta}_3) - \left(\mathbb{E}K(\tilde{\Delta}_1,\tilde{\Delta}_2)\right)^2}{{\rm Var}\left[K(\tilde{\Delta}_1,\tilde{\Delta}_2)\right]} < 1.
\end{equation}
This follows from the fact
$$\E\left(K(\tilde{\Delta}_1,\tilde{\Delta}_2) - \E_3 K(\tilde{\Delta}_1,\tilde{\Delta}_3) - \E_3 K(\tilde{\Delta}_3,\tilde{\Delta}_2) + \E_{3,4} K(\tilde{\Delta}_3,\tilde{\Delta}_4)\right)^2 >0.$$
The inequality is strict because otherwise
$K(\tilde{\Delta}_1,\tilde{\Delta}_2) = f(\tilde{\Delta}_1) + g(\tilde{\Delta}_2)$ almost surely, which implies
$K(p,q)=f(p)+g(q)$ for all $p,q\in\{1,\ldots,M\}$, and consequently $\sum_{p,q=1}^M K(p,q)x_px_q = 0$ as long as $x_1+\ldots+x_M =0$, contradicting the fact that $K(\cdot,\cdot)$ is characteristic.

To verify (B2), for any $r = 3,4,\ldots$,
since the kernel is bounded, $|b_{ij}|^r$ is also bounded. Together with the fact that
$\frac{1}{n^2}\sum b_{ij}^2 \overset{a.s.}{\longrightarrow} {\rm Var}\left[K(\tilde{\Delta}_1,\tilde{\Delta}_2)\right]>0$, we have
$$\sum |b_{ij}|^r /n^2 = O(1) = O\left((\sum b_{ij}^2/n^2)^{\frac{r}{2}} \right).$$
Hence all the conditions needed for Theorem~\ref{thm:extPham} are satisfied and we have the asymptotic normality of the numerator of $\hat{\eta}$.

Next, we simplify the permutation variance ${\rm Var}(\hat{\eta}|\mathcal{F}_n)$.
Since if the pooled sample is given, the graph $\mathcal{G}_n$ is known up to a permutation.
We can arbitrarily fix a labeling of the vertices as $1,\ldots,n$ with out-degree $d_i$ (which is a slight abuse of notation, since $d_i$ was originally defined as the degree of $Z_i$ whose label is $\Delta_i$).
Let $(\check{\Delta}_1,\ldots,\check{\Delta}_n)$ be a uniformly random permutation of $({\Delta}_1,\ldots,{\Delta}_n)$.
Since the denominator of $\hat{\eta}$ is constant, it suffices to compute the variance of 
$$N_n = \sqrt{n}\left(\frac{1}{n} \sum \limits_{i=1}^n \frac{1}{{d}_i} \sum \limits_{j:(Z_i,Z_j) \in \emgn} K(\check{\Delta}_i,\check{\Delta}_j) - \frac{1}{n(n-1)}  \sum \limits_{i \ne j} K(\Delta_i,\Delta_j)\right).$$
Since $\mathcal{G}_n$ does not contain any self-loop, for any $i\neq j\in\{1,\ldots,n\}$, $\E [K(\check{\Delta}_i,\check{\Delta}_j)] = \frac{1}{n(n-1)}  \sum \limits_{l \ne m} K(\Delta_l,\Delta_m)$. This implies $\E [N_n]=0$ and
$${\rm Var}(N_n)=\frac{1}{n} \E\left[\left(\sum \limits_{i=1}^n \frac{1}{{d}_i} \sum \limits_{j:(Z_i,Z_j) \in \emgn} K(\check{\Delta}_i,\check{\Delta}_j)\right)^2 \right]- \frac{1}{n(n-1)^2}\left( \sum \limits_{i \ne j} K(\Delta_i,\Delta_j)\right)^2.$$
Let
$$\begin{aligned}
\tilde{a} &:= \mathbb{E}[K^2(\check{\Delta}_1,\check{\Delta}_2)]=\frac{1}{n(n-1)}{\sum}' K^2(\Delta_i,\Delta_j),\\
\tilde{b} &:=\mathbb{E}[K(\check{\Delta}_1,\check{\Delta}_2)K(\check{\Delta}_1,\check{\Delta}_3)]=\frac{1}{n(n-1)(n-2)}{\sum}'  K(\Delta_i,\Delta_j) K(\Delta_i,\Delta_l),\\
\tilde{c} &:=\mathbb{E}[K(\check{\Delta}_1,\check{\Delta}_2)K(\check{\Delta}_3,\check{\Delta}_4)]=\frac{1}{n(n-1)(n-2)(n-3)}{\sum}'  K(\Delta_i,\Delta_j) K(\Delta_l,\Delta_m),
\end{aligned}$$
where $\sum'$ means the summation indices are required to be distinct. Then
$$\begin{aligned}
&\quad  \E\left[\left(\sum \limits_{i=1}^n \frac{1}{{d}_i} \sum \limits_{j:(Z_i,Z_j) \in \emgn} K(\check{\Delta}_i,\check{\Delta}_j)\right)^2 \right]\\
 &=\tilde{a} \left(\sum_{i=1}^n \frac{d_i}{d_i^2} + \dsum_{(Z_i,Z_j),(Z_j,Z_i)\in \emgn}\frac{1}{d_i d_j} \right)\\
 &+\tilde{b} \left(\sum_{i=1}^n \frac{d_i(d_i-1)}{d_i^2} + \dsum_{(Z_i,Z_j),(Z_s,Z_i)\in \emgn} \frac{1}{d_id_s}+ \dsum_{(Z_i,Z_s),(Z_s,Z_j)\in \emgn} \frac{1}{d_id_s}+ \dsum_{(Z_i,Z_j),(Z_s,Z_j)\in \emgn} \frac{1}{d_id_s}  \right)\\
 &+\tilde{c} \cdot {\sum}' \frac{\left(d_i-1_{(Z_i,Z_j)\in\emgn}\right)\left(d_j-1_{(Z_j,Z_i)\in\emgn } \right)-T^{\mathcal{G}_n}(i,j)}{d_id_j},
\end{aligned}$$
where $T^{\mathcal{G}_n}(Z_i,Z_j)=\sum_{k=1}^n I\{(Z_i,Z_k),(Z_j,Z_k)\in\emgn \}$ is the number of common out-neighbors of $Z_i$ and $Z_j$.
Note that
$$\begin{aligned}
\dsum_{(Z_i,Z_j),(Z_s,Z_i)\in \emgn} \frac{1}{d_id_s} &= \dsum_{(Z_s,Z_i)\in \emgn} \frac{d_i-1_{(Z_i,Z_s)\in\emgn}}{d_id_s} ,\\
\dsum_{(Z_i,Z_s),(Z_s,Z_j)\in \emgn} \frac{1}{d_id_s}&=\dsum_{(Z_i,Z_s)\in \emgn} \frac{d_s-1_{(Z_s,Z_i)\in\emgn}}{d_id_s},\\
\dsum_{(Z_i,Z_j),(Z_s,Z_j)\in \emgn} \frac{1}{d_id_s} &= {\sum}' \frac{T^{\mathcal{G}_n}(Z_i,Z_s)}{d_id_s}.
\end{aligned}$$
Recall that $\tilde{g}_1 = \frac{1}{n}\sum_{i=1}^n \frac{1}{d_i}$, $\tilde{g}_2 = \frac{1}{n}\sum_{i,j=1}^n \frac{T^{\mathcal{G}_n}(i,j)}{d_id_j}= \frac{1}{n}\sum' \frac{T^{\mathcal{G}_n}(i,j)}{d_id_j} +  \frac{1}{n} \sum_{i=1}^n \frac{1}{d_i}$, and 
$\tilde{g}_3=\frac{1}{n}\dsum_{(Z_i,Z_j),(Z_j,Z_i)\in\emgn}\frac{1}{d_id_j}$. Hence we have:
$$\frac{1}{n} \E\Bigg[\Bigg(\sum \limits_{i=1}^n \frac{1}{{d}_i} \sum \limits_{j:(Z_i,Z_j) \in \emgn} K(\check{\Delta}_i,\check{\Delta}_j)\Bigg)^2 \Bigg] =\tilde{a}(\tilde{g}_1 + \tilde{g}_3)+\tilde{b}(3 - 2\tilde{g}_1 + \tilde{g}_2 -2\tilde{g}_3) + \tilde{c}(n-3+\tilde{g}_1-\tilde{g}_2+\tilde{g}_3).$$
The second term of ${\rm Var}(N_n)$ is easier to handle:
$$\left( \sum \limits_{i \ne j} K(\Delta_i,\Delta_j)\right)^2 = n(n-1)(n-2)(n-3)\cdot \tilde{c} + 4n(n-1)(n-2)\cdot \tilde{b} + 2n(n-1)\cdot \tilde{a}.$$
Combining the two, we get
\begin{equation}\label{eq:cond_var}
{\rm Var}(N_n) = \tilde{a} \Bigg( \tilde{g}_1 + \tilde{g}_3 - \frac{2}{n-1}\Bigg) + \tilde{b} \Bigg( \tilde{g}_2 - 2\tilde{g}_1 -2\tilde{g}_3 - 1 + \frac{4}{n-1} \Bigg) + \tilde{c}\Bigg(\tilde{g}_1-\tilde{g}_2+\tilde{g}_3 +\frac{n-3}{n-1} \Bigg).
\end{equation}
Hence
$$\frac{\hat{\eta}}{{\rm Var}(\hat{\eta}|\mathcal{F}_n)} =\frac{\sqrt{n}\left(\frac{1}{n} \sum \limits_{i=1}^n \frac{1}{{d}_i} \sum \limits_{j:(Z_i,Z_j) \in \emgn} K({\Delta}_i,{\Delta}_j) - \frac{1}{n(n-1)}  \sum \limits_{i \ne j} K(\Delta_i,\Delta_j)\right)}{\tilde{a} \left( \tilde{g}_1 + \tilde{g}_3 - \frac{2}{n-1}\right) + \tilde{b} \left( \tilde{g}_2 - 2\tilde{g}_1 -2\tilde{g}_3 - 1 + \frac{4}{n-1} \right) + \tilde{c}\left(\tilde{g}_1-\tilde{g}_2+\tilde{g}_3 +\frac{n-3}{n-1} \right)}.$$

From the conditional CLT, the unconditional CLT also follows:
$$\p\left(\frac{\hat{\eta}}{{\rm Var}(\hat{\eta}|\mathcal{F}_n)} \leq x \right)=\E\left[\p\left(\frac{\hat{\eta}}{{\rm Var}(\hat{\eta}|\mathcal{F}_n)} \leq x| \mathcal{F}_n\right) \right]\to \Phi(x),$$
by the dominated convergence, where $\Phi(x)$ is the cumulative distribution function of $N(0,1)$. \qed

\subsection{Proof of Corollary~\ref{cor:universal_consist}}\label{sec:universal_consist}
Note that $\tilde{a},\tilde{b},\tilde{c}$ are bounded as the kernel is bounded.
Further,
$\tilde{g}_1\leq \frac{1}{r_n}\leq 1$,
$$
\tilde{g}_2 =  \frac{1}{n}\dsum \frac{T^{\mathcal{G}_n}(i,j)}{d_id_j} +  \frac{1}{n} \sum_{i=1}^n \frac{1}{d_i} \leq \frac{t_n^2}{r_n^2} + \frac{1}{r_n} = O(1)$$ since $t_n$ is an upper bound for the in-degree, and
$$\tilde{g}_3\leq \frac{1}{n}\sum_{i\neq j:(Z_i,Z_j)\in\emgn}\frac{1}{d_id_j}\leq \frac{2t_n}{r_n^2}=O(1).$$
Hence ${\rm Var}(N_n) =O(1)$ and 
therefore ${\rm Var}(\hat{\eta}|\mathcal{F}_n)=O\left(\frac{1}{n}\right)$.
If the $M$ distributions are not equal, then $\eta >0$, and $\hat{\eta}\overset{a.s.}{\longrightarrow}\eta$ since Assumption \ref{assump:conv_nn} holds. Hence:
$$\frac{\hat{\eta}}{\sqrt{{\rm Var}(\hat{\eta}|\mathcal{F}_n)}}\overset{a.s.}{\longrightarrow}+\infty,$$
which implies the universal consistency of the test based on $\hat{\eta}$.

\subsection{Proof of Theorem~\ref{thm:DistFree}}\label{sec:pfDistFree}
The convergence of the denominator of $\hat{\eta}$ (in \eqref{eq:Eta-Hat}) is easy to see, as $n\to\infty$,
$$\frac{1}{n}\sum_{i=1}^n K(\Delta_i,\Delta_i)-\frac{1}{n(n-1)}  \sum \limits_{i \ne j} K(\Delta_i,\Delta_j) \overset{a.s.}{\longrightarrow} \sum_{i=1}^M \pi_i K(i,i)-\sum_{i,j=1}^M \pi_i\pi_j K(i,j)>0.$$
To see why the limit is positive, we write $\tilde{\Delta}_i\overset{i.i.d.}{\sim}{\rm Multinoulli}(\pi)$, for $i=1,\ldots,n$, and it is equivalent to showing that
$\mathbb{E}K(\tilde{\Delta}_1,\tilde{\Delta}_1) - \mathbb{E}K(\tilde{\Delta}_1,\tilde{\Delta}_2)>0$.
Since $K(\cdot,\cdot)$ is characteristic, the distance between Dirac measures
${\rm MMD}(\delta_i,\delta_j)>0$ whenever $i\neq j\in \{1,\ldots,M\}$, so equivalently
$K(\tilde{\Delta}_1,\tilde{\Delta}_1) + K(\tilde{\Delta}_2,\tilde{\Delta}_2) - 2K(\tilde{\Delta}_1,\tilde{\Delta}_2) > 0$ whenever $\tilde{\Delta}_1\neq \tilde{\Delta}_2$. After taking the expectation, we have $\mathbb{E}K(\tilde{\Delta}_1,\tilde{\Delta}_1) - \mathbb{E}K(\tilde{\Delta}_1,\tilde{\Delta}_2)>0$.

The (conditional) variance of $N_n$ (in Equation \ref{eq:cond_var}), i.e., the numerator of $\hat{\eta}$, involves $\tilde{a},\tilde{b},\tilde{c}$ and $\tilde{g}_1,\tilde{g}_2,\tilde{g}_3$. It is clear that:
$$\begin{aligned}
\tilde{a} &=\frac{1}{n(n-1)}{\sum}' K^2(\Delta_i,\Delta_j)\overset{a.s.}{\longrightarrow} \sum_{i,j=1}^M \pi_i\pi_j K^2(i,j)=a,\\
\tilde{b} &=\frac{1}{n(n-1)(n-2)}{\sum}'  K(\Delta_i,\Delta_j) K(\Delta_i,\Delta_l)\overset{a.s.}{\longrightarrow} \sum_{i,j,l=1}^M \pi_i\pi_j\pi_l K(i,j)K(i,l)=b,\\
\tilde{c} &=\frac{1}{n(n-1)(n-2)(n-3)}{\sum}'  K(\Delta_i,\Delta_j) K(\Delta_l,\Delta_m)\\
	&\overset{a.s.}{\longrightarrow} \sum_{i,j,l,m=1}^M \pi_i\pi_j\pi_l\pi_m K(i,j)K(l,m) = \left(\sum_{i,j=1}^M \pi_i\pi_j K(i,j)\right)^2=c,\ {\rm as\ }n\to\infty.
\end{aligned}$$
To show the convergence of $\tilde{g}_1,\tilde{g}_2,\tilde{g}_3$,
note that they are functions of $\mathcal{G}(Z_1,\ldots,Z_n)$, with $\{Z_1,\ldots,Z_n\}$ being the pooled sample, and $\mathcal{G}$ being a stabilizing graph as defined in Section \ref{sec:asymp_dist_free}. Further, they can all be expressed in terms of $\sum_{i=1}^n\xi(Z_i,\{Z_1,\ldots,Z_n\})$, and thus can be analyzed using the tool of {\it stabilizing functions} to be introduced next.

\begin{defn}[Stabilizing functions \citep{penrose2003weak}]\label{def:stablefunc}
Let $\xi(x,\X)$ be a measurable $\R^+$-valued function defined for all finite set $\X\subset\R^d$ and
$x\in\X$. If $x\notin \X$, define $\xi(x,\X):=\xi(x,\X\cup\{x\})$.
$\xi$ is said to be {\it translation invariant}, if
$\xi(x+y,\X+y) = \xi(x,\X)$ for
all finite set $\X\subset\R^d$ and $x,y\in\R^d$.
For a locally finite\footnote{$\X\subset\R^d$ is said to be locally finite if its intersection with any compact set is finite.} set $\X$, if
$$\begin{aligned}
&\liminf_{m\to\infty} \underbrace{ \inf_{n\in\mathbb{N}}\left(\mathop{\rm ess\ inf}_{\substack{\mathcal{A}\subset \R^d\backslash B(\mathbf{0},m) \\ |\mathcal{A}|=n}}\xi\left(\mathbf{0},\big(\X\cap B(\mathbf{0},m)\big) \cup\mathcal{A}\right) \right)}_{{\rm denoted\ by\ }\underline{\xi}(\X,m)}\\
=&\limsup_{m\to\infty} \underbrace{  \sup_{n\in\mathbb{N}}\left(\mathop{\rm ess\ sup}_{\substack{\mathcal{A}\subset \R^d\backslash B(\mathbf{0},m) \\ |\mathcal{A}|=n}}\xi\left(\mathbf{0},\big(\X\cap B(\mathbf{0},m)\big) \cup\mathcal{A}\right)\right)}_{{\rm denoted\ by\ }\bar{\xi}(\X,m)},
\end{aligned}$$
where the essential supremum/infimum is taken with respect to the Lebesgue measure on $\R^{dn}$,
then $\xi$ is said to {\it stabilize} on $\X$.
In such a case, we define $\xi(\mathbf{0},\X)$ as the limit of the above quantity.
\end{defn}

We will be interested in functions that stabilize on the homogeneous Poisson process $\mathcal{P}_\lambda$, as the pooled sample $\{Z_1,\ldots,Z_n\}$ is locally close to a homogeneous Poisson process.
These functions arise naturally from stabilizing graphs. For example, if $\mathcal{G}$ is stabilizing on $\mathcal{P}_\lambda$, and $\xi$ satisfies, for any finite set $\X$ containing $\mathbf{0}$, $\xi(\mathbf{0},\X)$ only depends on the edge set $\mathcal{E}\left(\mathbf{0},\mathcal{G}(\X) \right)$, then $\xi$ stabilizes on $\mathcal{P}_{\lambda}$.

\begin{theorem}[{\citep[Theorem 2.1]{penrose2003weak}}]
Suppose $q=1$ or $q=2$. Let $X_1,X_2,\ldots$ be i.i.d.~$d$-dimensional random variables with common density $f$ and $\X_n:=\{X_1,\ldots,X_n\}$. Suppose $\xi$ is translation invariant and almost surely stabilizing on the homogeneous Poisson process $\mathcal{P}_\lambda$ for all $\lambda>0$.
If $\xi$ satisfies the moment condition
$$\sup_{n\in\mathbb{N}}\mathbb{E}\left[\xi(n^{1/d}X_1,n^{1/d}\X_n)^p\right]<\infty$$
for some $p>q$, then as $n\to\infty$,
$$\frac{1}{n}\sum_{x\in\X_n}\xi(n^{1/d}x,n^{1/d}\X_n)\to\int_{\R^d}\mathbb{E}[\xi(\mathbf{0},\mathcal{P}_{f(x)})]f(x){\rm d}x,\quad in\ L^q$$
and the right-hand side above is finite.
Here $\mathcal{P}_{f(x)}$ is a homogeneous Poisson process with intensity $f(x)$.

In particular, if $\xi(x,\X)$ is also scale invariant (i.e., $\xi(ax,a\X)=\xi(x,\X)$ for all $a>0$), then
$$\frac{1}{n}\sum_{x\in\X_n}\xi(x,\X_n)\to\mathbb{E}[\xi(\mathbf{0},\mathcal{P}_{1})],\quad in\ L^q.$$
\end{theorem}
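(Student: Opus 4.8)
The statement is the classical strong-stabilization law of large numbers of Penrose and Yukich, and I would follow their route. Write $S_n := \sum_{x\in\X_n}\xi(n^{1/d}x,\,n^{1/d}\X_n)$, so that the claim is $n^{-1}S_n \to \int_{\R^d}\E[\xi(\mathbf{0},\mathcal{P}_{f(x)})]f(x)\,{\rm d}x$ in $L^q$. \textbf{Step 1 (convergence of the mean).} By exchangeability $\E[n^{-1}S_n] = \E[\xi(n^{1/d}X_1,n^{1/d}\X_n)]$, so I condition on $X_1 = x$ and analyze the rescaled configuration $n^{1/d}(\X_n - x)$ near the origin. Poissonizing the sample size (replacing the binomial process by $\mathcal{P}_{nf}$ and de-Poissonizing afterwards) and using that $x$ is a Lebesgue point of the a.e.-continuous density $f$, the intensity $nf$ is locally $\approx nf(x)$, which after the $n^{1/d}$ rescaling becomes the constant intensity $f(x)$; hence on every fixed ball $B(\mathbf{0},R)$ the rescaled process converges in total variation to $\mathcal{P}_{f(x)}$. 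Since $\xi$ stabilizes, $\xi(n^{1/d}x,n^{1/d}\X_n)$ is (asymptotically) a function of this local picture only, so it converges in distribution to $\xi(\mathbf{0},\mathcal{P}_{f(x)})$ for a.e.\ $x$; the hypothesis $\sup_n\E[\xi(n^{1/d}X_1,n^{1/d}\X_n)^p]<\infty$ with $p>q\ge1$ supplies uniform integrability, upgrading this to convergence of the conditional means, and then dominated convergence in $x$ yields $\E[n^{-1}S_n]\to\int_{\R^d}\E[\xi(\mathbf{0},\mathcal{P}_{f(x)})]f(x)\,{\rm d}x$, which is finite.

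\textbf{Step 2 (control of the fluctuations).} For $q=2$ (so $p>2$) it suffices to show $\Var(n^{-1}S_n)\to0$. Writing $\xi_i := \xi(n^{1/d}X_i,n^{1/d}\X_n)$, expand $\Var(S_n)=\sum_i\Var(\xi_i)+\sum_{i\ne j}\Cov(\xi_i,\xi_j)$. The $n$ diagonal terms are each $O(1)$ by the $p$-th-moment bound, giving $O(n)$. For the off-diagonal terms I use a two-point version of stabilization: $\Cov(\xi_i,\xi_j)$ is negligible unless $n^{1/d}X_i$ and $n^{1/d}X_j$ each lie within the other's radius of stabilization, and the expected number of such pairs is $O(n)$, since in the rescaled picture each point has only $O(R^d)=O(1)$ neighbours within a ball of fixed radius; a Cauchy--Schwarz estimate with the moment hypothesis controls $|\Cov(\xi_i,\xi_j)|$ on these pairs. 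Hence $\Var(S_n)=O(n)$ and $\Var(n^{-1}S_n)=O(1/n)\to0$, which combined with Step 1 gives $L^2$ convergence. For $q=1$ I reduce to convergence in probability by truncating $\xi$ at a large level $L$ (the truncated functional still stabilizes and has all moments, so the $L^2$ argument applies), controlling the discarded mass uniformly via the $p$-th moment, and then uniform integrability (from $p>1$) upgrades convergence in probability to $L^1$.

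\textbf{Step 3 (scale-invariant case) and the main difficulty.} When $\xi$ is additionally scale invariant, $\xi(n^{1/d}x,n^{1/d}\X_n)=\xi(x,\X_n)$, so the left-hand side becomes $n^{-1}\sum_{x\in\X_n}\xi(x,\X_n)$; moreover $\mathcal{P}_{f(x)} \stackrel{d}{=} f(x)^{-1/d}\mathcal{P}_1$, so scale invariance gives $\E[\xi(\mathbf{0},\mathcal{P}_{f(x)})]=\E[\xi(\mathbf{0},\mathcal{P}_1)]$, a constant, and since $\int f=1$ the limit collapses to $\E[\xi(\mathbf{0},\mathcal{P}_1)]$. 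The technical heart is the local coupling in Step 1: making precise, with uniformity in $n$ and over a full-measure set of $x$, that the rescaled point process looks locally like $\mathcal{P}_{f(x)}$, and combining this with the radius-of-stabilization bound and the $p$-th-moment uniform integrability so that the distributional convergence of $\xi$ survives passage to the expectation. The boundary points $x\in\partial({\rm supp}\,f)$, where the local limit is a half-space Poisson process rather than $\mathcal{P}_{f(x)}$, and the possible unboundedness of $\xi$ are the delicate points --- but the moment hypothesis $p>q$ is precisely what is designed to absorb them.
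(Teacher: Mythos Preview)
The paper does not prove this statement; it is quoted as \cite[Theorem 2.1]{penrose2003weak} and used as a black box in the proof of Theorem~\ref{thm:DistFree}. So there is no ``paper's own proof'' to compare against. Your sketch is a faithful outline of the Penrose--Yukich argument and is essentially correct.

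One remark on Step~2: your route bounds $\Var(S_n)$ directly by counting pairs $(i,j)$ that fall within each other's stabilization radius and controlling the remaining covariances. This is valid, but the original Penrose--Yukich proof (and the analogous arguments the present paper carries out in its own setting, e.g.\ in the proof of Theorem~\ref{thm:normal_alternative} where they show $\E f(Z)\to\mu$ and $\E f^2(Z)\to\mu^2$ via Palm theory and two-point couplings such as Lemmas~\ref{lem:coupl2}--\ref{lem:second_order_conv}) instead computes the second moment $\E[(n^{-1}S_n)^2]$ directly and shows it converges to the square of the limiting mean. That route avoids having to make the ``far pairs have small covariance'' step quantitative and sidesteps the issue that the stabilization radius here is only assumed to be a.s.\ finite, with no tail bound, so your ``$O(R^d)=O(1)$ neighbours'' heuristic needs an additional truncation-of-$R$ argument to be made rigorous. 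Either approach works, but the second-moment route is cleaner under the minimal hypotheses stated.
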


Let us now get back to the proof of our Theorem \ref{thm:DistFree}. Since the graph $\mathcal{G}$ is stabilizing, $\xi(x,\X) := \frac{1}{d(x,\mathcal{G}(\X))}$ is a bounded stabilizing function.
Using the above theorem, by the same coupling used in Appendix~\ref{sec:pfconsist} (the proof of the consistency theorem for $\hat{\eta}$), the limit of $\tilde{g}_1$ is unchanged when we replace $Z_1,\ldots,Z_n$ by i.i.d.~observations from $\sum_{i=1}^M \pi_i P_i$:
$$\tilde{g}_1 = \frac{1}{n}\sum_{i=1}^n \frac{1}{d_i}\overset{L^2}{\longrightarrow}\mathbb{E}\left[\frac{1}{d\big(\mathbf{0},\mathcal{G}(\mathcal{P}_1^\mathbf{0})\big)}\right]=g_1.$$
By \citet[Lemma 3.3]{penrose2003weak}, $\xi(x,\X) := \sum_{y\neq z:(y,x),(z,x)\in\mathcal{E}(\mathcal{G}(\X))}\frac{1}{d\left(y,\mathcal{G}(\X)\right) d\left(z,\mathcal{G}(\X)\right)}$ is also bounded and stabilizing on $\mathcal{P}_\lambda$ for any $\lambda>0$. Hence
$$\begin{aligned}
\tilde{g}_2-\tilde{g}_1&= \frac{1}{n}\sum_{i\neq j}\frac{T^{\mathcal{G}_n}(i,j)}{d_id_j} = \frac{1}{n}\sum_{i=1}^n\sum_{j\neq k:(Z_j,Z_i),(Z_k,Z_i)\in\emgn} \frac{1}{d_jd_k}\\
&\overset{L^2}{\longrightarrow}\mathbb{E}\left[\sum_{y\neq z:(y,\mathbf{0}),(z,\mathbf{0})\in\mathcal{E}(\mathcal{G}(\mathcal{P}_1^\mathbf{0}))}\frac{1}{d\left(y,\mathcal{G}(\mathcal{P}_1^\mathbf{0})\right) d\left(z,\mathcal{G}(\mathcal{P}_1^\mathbf{0})\right)}\right]=g_2-g_1.
\end{aligned}$$
Similarly, by considering $\xi(x,\X) := \sum_{y:(y,x),(x,y)\in\mathcal{E}(\mathcal{G}(\X))}\frac{1}{d\left(x,\mathcal{G}(\X)\right) d\left(y,\mathcal{G}(\X)\right)}$, we have
$$\tilde{g}_3 = \frac{1}{n}{\sum_{(Z_i,Z_j),(Z_j,Z_i)\in\emgn}}\frac{1}{d_id_j}\overset{L^2}{\longrightarrow}\mathbb{E}\left[ \sum_{y:(y,\mathbf{0}),(\mathbf{0},y)\in\mathcal{E}(\mathcal{G}(\mathcal{P}_1^\mathbf{0}))}\frac{1}{d\left(\mathbf{0},\mathcal{G}(\mathcal{P}_1^\mathbf{0})\right) d\left(y,\mathcal{G}(\mathcal{P}_1^\mathbf{0})\right)} \right]=g_3.$$
Hence the (conditional) variance of the numerator of $\hat{\eta}$, scaled by $\sqrt{n}$, converges in $L^2$ to
$$\begin{aligned}
&\quad {a} \left( {g}_1 + {g}_3 \right) + {b} \left( {g}_2 - 2{g}_1 -2{g}_3 - 1  \right) + {c}\left({g}_1-{g}_2+{g}_3 +1 \right)\\
&= (a-2b+c)(g_1+g_3)+(g_2-1)(b-c).
\end{aligned}$$

To show the limiting variance is positive, note that from \eqref{eq:a_2b_c}, $a-2b+c>0$, and from \eqref{eq:convb_iplus}, $b\geq c$.
Hence it remains to show $g_2\geq 1$. Note that
$$\begin{aligned}
\tilde{g}_2 &= \frac{1}{n}\sum_{i\neq j}\frac{T^{\mathcal{G}_n}(i,j)}{d_id_j}  + \frac{1}{n}\sum_{i=1}^n\frac{1}{d_i}\\
&=\frac{1}{n}\sum_{i=1}^n \sum_{j\neq l:(Z_j,Z_i),(Z_l,Z_i)\in\emgn} \frac{1}{d_jd_l}+ \frac{1}{n}\sum_{i=1}^n\frac{1}{d_i}\\
&=\frac{1}{n}\sum_{i=1}^n \left(\sum_{j:(Z_j,Z_i)\in\emgn} \frac{1}{d_j}\right)^2 - \frac{1}{n}\sum_{i=1}^n \sum_{j:(Z_j,Z_i)\in\emgn} \frac{1}{d_j^2}+ \frac{1}{n}\sum_{i=1}^n\frac{1}{d_i}\\
&=\frac{1}{n}\sum_{i=1}^n \left(\sum_{j:(Z_j,Z_i)\in\emgn} \frac{1}{d_j}\right)^2\\
&\geq \frac{1}{n^2}\left(\sum_{i=1}^n \sum_{j:(Z_j,Z_i)\in\emgn} \frac{1}{d_j}\right)^2=1,
\end{aligned}$$
where in the last step we have used the Cauchy-Schwarz inequality.
Hence $g_2$, as the $L^2$ limit of $\tilde{g}_2$, is greater than or equal to 1. \qed

\subsection{Proof of Proposition~\ref{prop:knnExpStab}}\label{pf:knnExpStab}
Recall $\phi=\sum_{i=1}^M\pi_i f_i$. Let $\mathcal{O}$ be the interior of ${\rm supp}(\phi)$ --- a set with probability 1, under $\phi$. Let $\mathcal{C}_i$, $1\leq i\leq I$, be a finite collection of infinite open cones covering $\R^d$ with angular radius $\pi/12$ and apex at $\mathbf{0}$.
For $x\in\mathcal{O} $ and $1\leq i\leq I$, let $\mathcal{C}_i(x)$ be the translate of $\mathcal{C}_i$ with apex at $x$. Let $\mathcal{C}_i^+(x)$ be the open cone concentric to $\mathcal{C}_i$ with apex $x$ and angular radius $\pi/6$.
For $1\leq i\leq I$, let $N^{-1/d}R_i(x)$ be the distance from $x$ to its $k$-th nearest neighbor in $\mathcal{P}_{N\phi_N}\cap \mathcal{C}_i^+(x)$, if this $k$-th nearest neighbor exists at a distance less than ${\rm diam}(\mathcal{C}_i(x)\cap \mathcal{O})$, and otherwise set $N^{-1/d}R_i(x) = {\rm diam}(\mathcal{C}_i(x)\cap \mathcal{O})$.
Then $\max_{1\leq i\leq I}R_i(x)$ is a radius of stabilization.
Note that $\mathbb{P}\left[ R_i(x) > t\right]$ is 0 unless ${\rm diam}(\mathcal{C}_i(x)\cap \mathcal{O})> N^{-1/d}t$.
Now, consider ${\rm diam}(\mathcal{C}_i(x)\cap \mathcal{O})\geq N^{-1/d}t$. Then there exists a $y\in\mathcal{C}_i(x)\cap \mathcal{O}$ such that $|y-x|=N^{-1/d}t$. By convexity, $\frac{x+y}{2}\in \mathcal{C}_i(x)\cap \mathcal{O}$.
With $\gamma:=\frac{1}{2}\sin(\pi/12)$, we have
$B(\frac{x+y}{2},\gamma N^{-1/d}t)\subset \mathcal{C}_i^+(x)\cap \mathcal{O}$.
$R_i(x) > t$ implies that $B(\frac{x+y}{2},\gamma N^{-1/d}t)$ contains less than $k$ points.
Since $\phi_N$ is bounded below,
the number of points in $B(\frac{x+y}{2},\gamma N^{-1/d}t)$ is a Poisson random variable with mean at least
$N \int_{B(\frac{x+y}{2},\gamma N^{-1/d}t)} \inf \{\phi_N\}{\rm d}z =  \delta t^d$ for some $\delta>0$ independent of $i,x,t,N$. Hence
$$\begin{aligned}
\mathbb{P}[R_i>t]&\leq \mathbb{P}\left[{\rm Poisson}\left(\delta t^d \right)< k\right]\\
&=\sum_{i=0}^{k-1} \frac{(\delta t^d)^i}{i!}e^{-\delta t^d}\leq C_1e^{-C_2 t^d},
\end{aligned}$$
where $C_1,C_2>0$ are independent of $i,x,t,N$.
Hence,
$$\begin{aligned}
\tau(t) &= \sup_{N\geq 1,x\in \mathcal{O}}\mathbb{P}\left[\max_{1\leq i\leq I} R_i > t \right] \leq  IC_1e^{-C_2 t^d}.
\end{aligned}$$ which yields the desired result. \qed

\subsection{Proof of Theorem~\ref{thm:normal_alternative}} \label{pf:normal_alternative}

We first provide a roadmap for the proof. Recall the definitions of $H_n$, $\mathcal{F}_n$, $\{\kappa_i\}_{i=1}^2$ in Appendix~\ref{subsec:general_fixed_alt}.
In the following, we first provide some lemmas that describe the local behavior of $\mathcal{P}_{N\phi_N}$.
Next, in Step 1 of the main proof, we show $H_n - \mathbb{E}(H_n|\mathcal{F}_n)\mid\mathcal{F}_n \overset{d}{\to}N(0,\kappa_1^2)$, where the provided lemmas are crucial in establishing the convergence of the variance, and the CLT is established using Stein's method.
Next, in Step 2,  we show that
$\mathbb{E}(H_n|\mathcal{F}_n) - \mathbb{E}(H_n)\overset{d}{\to}N(0,\kappa_2^2)$.
This is the step where we use $\sqrt{N}\left(\frac{N_p}{N} - \pi_p\right)\to 0$ and the power-law stabilization. To establish the convergence of variance, we need power-law stabilization of order $q>d$ (recall that $\Z \subset \R^d$), and to establish the CLT using Stein's method, power-law stabilization of order $q>16d$ is needed.
Finally, combining Step 1 and Step 2, we can show that $H_n - \mathbb{E}(H_n) \overset{d}{\to}N(0,\kappa_1^2+\kappa_2^2)$.

\subsubsection{Preliminaries}
The following four lemmas are modifications of Lemma 3.1, Lemma 3.2, and Proposition 3.1 from \citet{penrose2003weak}, which describe that the local behavior of $\mathcal{P}_{N\phi_N}$ at $z$ is similar to a homogeneous Poisson process $\mathcal{P}_{\phi(z)}$, where $\phi_N(z) := \sum_{i=1}^M \frac{N_i}{N} f_i(z)$, for $z\in\Z\subset\R^d$, with $\frac{N_i}{N}\to\pi_i$ as $N\to\infty$, and $\phi(z):=\sum_{i=1}^M \pi_i f_i(z)$.

We say that $x\in\R^d$ is a {\it Lebesgue point} of a function $f:\R^d\to \R$, if
$$\lim_{r\to 0^+}\frac{1}{|B(x,r)|}\int_{B(x,r)} |f(y)-f(x)|{\rm d}y=0.$$ If $x$ is a continuity point of $f$, then $x$ is a Lebesgue point of $f$. More generally, the {\it Lebesgue differentiation theorem} states that given $f\in L^1(\R^d)$, a.e.~$x$ is a Lebesgue point of $f$.

\begin{lemma}\label{lem:coupl}
Recall that $\phi(z)=\sum_{i=1}^M \pi_i f_i(z)$, for $z \in \Z \subset \R^d$.
Suppose $x_0$ is a Lebesgue point of $f_1,\ldots,f_M$. Then there exists a homogeneous Poisson process $\mathcal{P}_{\phi(x_0)}$, coupled with $\mathcal{P}_{N\phi_N}$, such that for all $K>0$,
$$\mathbb{P}\left[ N^{1/d}(\mathcal{P}_{N\phi_N} - x_0)\cap B(\mathbf{0},K) = \mathcal{P}_{\phi(x_0)} \cap B(\mathbf{0},K)\right]\to 1,\quad {\rm as\ }N \to\infty.$$
\end{lemma}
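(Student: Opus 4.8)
The plan is to build the coupling by a standard "Poisson process on a product space" thinning/marking construction. First I would fix $K>0$ and set $B_N := x_0 + N^{-1/d} B(\mathbf{0},K)$, the small Euclidean ball whose rescaled copy is $B(\mathbf{0},K)$. On the ball $B(\mathbf{0},K)$ I want to realize simultaneously a restriction of $\mathcal{P}_{N\phi_N}$ (after the map $y \mapsto N^{1/d}(y-x_0)$) and a restriction of the homogeneous process $\mathcal{P}_{\phi(x_0)}$ on the same probability space. The natural device: let $\Lambda := N\!\int_{B_N}\phi_N + |B(\mathbf{0},K)|\,\phi(x_0)$ be an upper bound-type quantity — more precisely, put down a Poisson process $\Pi$ on $B(\mathbf{0},K)\times[0,1]$ with intensity $\big(\max\{N\phi_N(x_0 + N^{-1/d}u),\ \phi(x_0)\}\big)\,du\,dt$, and then for each point $(u,t)\in\Pi$ assign it to "$\mathcal{P}_{N\phi_N}$" if $t \le N\phi_N(x_0+N^{-1/d}u)/\max(\cdots)$ and to "$\mathcal{P}_{\phi(x_0)}$" if $t \le \phi(x_0)/\max(\cdots)$. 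Restricted to $B(\mathbf{0},K)$, the first marginal is a Poisson process with intensity $N\phi_N(x_0+N^{-1/d}u)\,du$, which is exactly the law of $N^{1/d}(\mathcal{P}_{N\phi_N}-x_0)$ restricted to $B(\mathbf{0},K)$; the second marginal is $\mathcal{P}_{\phi(x_0)}$ restricted to $B(\mathbf{0},K)$. (One must check the change-of-variables: $N^{1/d}(\mathcal{P}_{N\phi_N}-x_0)$ has intensity $N\phi_N(x_0+N^{-1/d}u)$ on the rescaled region, which is elementary.) The two point configurations on $B(\mathbf{0},K)$ agree exactly unless some point of $\Pi$ gets assigned to one process but not the other, which happens only for points with $t$ in the "gap" between the two thresholds.

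The second step is to estimate the probability of a mismatch. The expected number of mismatched points in $B(\mathbf{0},K)$ is
$$
\int_{B(\mathbf{0},K)} \big| N\phi_N(x_0 + N^{-1/d}u) - \phi(x_0)\big|\,du \;=\; N\!\int_{B_N} |\phi_N(y) - N^{-1}\,\phi(x_0)\cdot N|\;dy \;\cdot\;(\text{rescaling}),
$$
which after the change of variables $y = x_0 + N^{-1/d}u$ equals $\displaystyle \frac{1}{|B_N|}\int_{B_N}\big|\phi_N(y) - \phi(x_0)\big|\,dy \cdot |B(\mathbf{0},K)|$ up to the constant $|B(\mathbf{0},K)|$ (since $|B_N| = N^{-1}|B(\mathbf{0},K)|$). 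Now write $\phi_N = \sum_i \tfrac{N_i}{N} f_i$ and $\phi = \sum_i \pi_i f_i$, so
$$
\big|\phi_N(y) - \phi(x_0)\big| \le \sum_{i=1}^M \tfrac{N_i}{N}\,|f_i(y) - f_i(x_0)| + \sum_{i=1}^M \big|\tfrac{N_i}{N} - \pi_i\big|\,f_i(x_0) + \big|\phi(x_0) - \tfrac{}{}\textstyle\sum_i \pi_i f_i(x_0)\big|,
$$
where the last term is zero. Averaging over $B_N$ and using that $x_0$ is a Lebesgue point of each $f_i$ (so $\frac{1}{|B_N|}\int_{B_N}|f_i(y)-f_i(x_0)|\,dy \to 0$ as $N\to\infty$, since $\mathrm{diam}(B_N)\to 0$) together with $\tfrac{N_i}{N}\to\pi_i$, the whole average tends to $0$. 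Hence the expected number of mismatches in $B(\mathbf{0},K)$ tends to $0$, so by Markov's inequality the probability of at least one mismatch tends to $0$, which gives $\mathbb{P}\big[N^{1/d}(\mathcal{P}_{N\phi_N}-x_0)\cap B(\mathbf{0},K) = \mathcal{P}_{\phi(x_0)}\cap B(\mathbf{0},K)\big]\to 1$.

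I do not expect a serious obstacle here; the construction is routine and the main "content" is the elementary observation that the Lebesgue-point hypothesis is exactly what makes the total-variation-type discrepancy $\frac{1}{|B_N|}\int_{B_N}|f_i(y)-f_i(x_0)|\,dy$ vanish. The only point requiring a little care is the bookkeeping in the change of variables $y \mapsto N^{1/d}(y-x_0)$ (tracking the Jacobian $N$ against the shrinking volume $|B_N| = N^{-1}|B(\mathbf{0},K)|$) and making sure the marking/thinning construction genuinely reproduces the correct marginal laws on the ball; both are standard facts about Poisson processes (mapping theorem plus marking theorem). One should also note the coupling can be chosen not to depend on $K$ by taking $K\to\infty$ along a sequence, or simply stating the result for each fixed $K$ with its own coupling, which is all that is needed downstream.
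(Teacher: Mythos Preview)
Your approach is essentially the same as the paper's: both realize $\mathcal{P}_{N\phi_N}$ and $\mathcal{P}_{\phi(x_0)}$ as thinnings of a common parent Poisson process and then bound the expected number of mismatched points by $N\int_{B(x_0,N^{-1/d}K)}|\phi_N(y)-\phi(x_0)|\,dy$, which tends to $0$ by the Lebesgue-point hypothesis and $N_i/N\to\pi_i$. The one place the paper is slightly cleaner is that it takes the parent to be a rate-$1$ Poisson process on all of $\R^d\times[0,\infty)$ (projecting $\{(x,t):t\le N\phi_N(x)\}$ and $\{(x,t):t\le N\phi(x_0)\}$), which yields a \emph{single} coupling valid simultaneously for every $K>0$, as the lemma's ``for all $K$'' quantifier requires; your construction on $B(\mathbf{0},K)\times[0,1]$ is $K$-dependent, and neither of your suggested fixes quite recovers a single global coupling---but replacing your local parent by a global one on $\R^d\times[0,\infty)$ resolves this immediately.
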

\begin{proof}
Let $\mathcal{P}$ be a homogeneous Poisson process of rate 1 on $\R^d \times [0,\infty)$.
Let $\mathcal{P}_{N\phi_N}$ be the projection of the set
$$\left\{ (x,t)\in \mathcal{P}:t \leq N \phi_N(x)\right\}$$
onto the $x$ space, i.e., $(x,t)\mapsto x$.
Then $\mathcal{P}_{N\phi_N}$ is a non-homogeneous Poisson process with intensity function $N\phi_N$.
Let $\mathcal{P}_{\phi(x_0)}$ be the image of the point set
$$\{(x,t)\in \mathcal{P}:t\leq N\phi(x_0)\}$$
under the mapping
$$(x,t)\mapsto N^{1/d}(x-x_0).$$
Note that $\mathcal{P}_{\phi(x_0)}$ is a homogeneous Poisson process on $\R^d$ with intensity $\phi(x_0)$ for all $N$.
The number of points in 
$\left( N^{1/d}(\mathcal{P}_{N\phi_N} - x_0) \triangle \mathcal{P}_{\phi(x_0)}\right)\cap B(\mathbf{0},K)$
equals the number of points $(x,t)\in \mathcal{P}$ such that $x\in B(x_0,N^{-1/d}K)$ and $t$ is between $\phi_N(x)$ and $\phi(x_0)$, and follows a Poisson distribution with mean
\begin{equation}\label{eq:coupl}
N \int_{B(x_0,N^{-1/d}K)}|\phi_N(x)-\phi(x_0)|{\rm d}x\leq N \int_{B(x_0,N^{-1/d}K)}(|\phi_N(x)-\phi(x)|+|\phi(x)-\phi(x_0)|){\rm d}x,
\end{equation}
which converges to 0 as $x_0$ is also a Lebesgue point of $\phi$. 
\end{proof}

\begin{lemma}\label{lem:local_prop}
Suppose $x_0$ is a Lebesgue point of $f_1,\ldots,f_M$ and $\phi(x_0)>0$. Let $\xi(x,\X)$ be a translation invariant function that almost surely stabilizes on $\mathcal{P}_{\phi(x_0)}$ (see Definition~\ref{def:stablefunc}). Suppose the moment condition
$$\sup_{N \geq 1}\mathbb{E}\left[\left|\xi\left(\mathbf{0},N^{1/d}(\mathcal{P}_{N\phi_N} - x_0)\right)\right|^p\right]<\infty$$
is satisfied for some $p>1$. Then
$$\xi\left(\mathbf{0},N^{1/d}(\mathcal{P}_{N\phi_N} - x_0)\right) \to \xi\left(\mathbf{0},\mathcal{P}_{\phi(x_0)}\right)$$
in distribution and in expectation.
\end{lemma}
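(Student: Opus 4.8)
The plan is to deduce Lemma~\ref{lem:local_prop} from the coupling of Lemma~\ref{lem:coupl} together with the stabilization of $\xi$, via the standard two-scale argument (cf.~\citet[Lemma~3.2]{penrose2003weak}). First I would pass to the probability space on which $\mathcal{P}_{\phi(x_0)}$ and $\mathcal{P}_{N\phi_N}$ are coupled as in Lemma~\ref{lem:coupl}; since both the distributional and the expectational conclusions depend only on the marginal law of $\xi(\mathbf{0},N^{1/d}(\mathcal{P}_{N\phi_N}-x_0))$, it suffices to prove on this coupled space that $\xi(\mathbf{0},N^{1/d}(\mathcal{P}_{N\phi_N}-x_0))\overset{p}{\to}\xi(\mathbf{0},\mathcal{P}_{\phi(x_0)})$. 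Convergence in distribution is then immediate, and convergence in expectation follows by combining convergence in probability with uniform integrability, which is supplied by the hypothesis $\sup_{N\ge 1}\E[|\xi(\mathbf{0},N^{1/d}(\mathcal{P}_{N\phi_N}-x_0))|^p]<\infty$ for some $p>1$ (an $L^p$-bounded family, $p>1$, is uniformly integrable); the limit inherits an $L^p$ bound by Fatou, so both sides are finite.

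For the convergence in probability, fix $\varepsilon>0$. Because $\phi(x_0)>0$, $\mathcal{P}_{\phi(x_0)}$ is a genuine rate-$\phi(x_0)$ homogeneous Poisson process, and by hypothesis $\xi$ stabilizes on it almost surely. Unwinding Definition~\ref{def:stablefunc}, the equality $\liminf_m\underline{\xi}(\mathcal{P}_{\phi(x_0)},m)=\limsup_m\bar{\xi}(\mathcal{P}_{\phi(x_0)},m)$ forces both $\underline{\xi}(\mathcal{P}_{\phi(x_0)},m)$ and $\bar{\xi}(\mathcal{P}_{\phi(x_0)},m)$ to converge to the common value $\xi(\mathbf{0},\mathcal{P}_{\phi(x_0)})$; hence there is an a.s.-finite (random) $m_0$ such that for every $m\ge m_0$, any locally finite configuration agreeing with $\mathcal{P}_{\phi(x_0)}$ inside $B(\mathbf{0},m)$ has $\xi(\mathbf{0},\cdot)$ within $\varepsilon$ of $\xi(\mathbf{0},\mathcal{P}_{\phi(x_0)})$ (this uses that the point set outside $B(\mathbf{0},m)$ is, with probability one, Lebesgue-typical, so the essential inf/sup bounds defining $\underline{\xi},\bar{\xi}$ actually apply). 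Now choose a deterministic $K=K(\varepsilon)$ with $\p(m_0>K)<\varepsilon$. By Lemma~\ref{lem:coupl} there is $N_0$ such that for all $N\ge N_0$ the event $E_N:=\{\,N^{1/d}(\mathcal{P}_{N\phi_N}-x_0)\cap B(\mathbf{0},K)=\mathcal{P}_{\phi(x_0)}\cap B(\mathbf{0},K)\,\}$ has $\p(E_N)\ge 1-\varepsilon$. On $E_N\cap\{m_0\le K\}$ the set $N^{1/d}(\mathcal{P}_{N\phi_N}-x_0)$ agrees with $\mathcal{P}_{\phi(x_0)}$ inside $B(\mathbf{0},m_0)$, so $|\xi(\mathbf{0},N^{1/d}(\mathcal{P}_{N\phi_N}-x_0))-\xi(\mathbf{0},\mathcal{P}_{\phi(x_0)})|\le\varepsilon$; since $\p(E_N\cap\{m_0\le K\})\ge 1-2\varepsilon$ for $N\ge N_0$ and $\varepsilon>0$ was arbitrary, the convergence in probability follows.

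Finally, convergence in probability upgrades to $L^1$-convergence (hence convergence of expectations, $\E[\xi(\mathbf{0},N^{1/d}(\mathcal{P}_{N\phi_N}-x_0))]\to\E[\xi(\mathbf{0},\mathcal{P}_{\phi(x_0)})]$) using the uniform integrability noted above. The step I expect to be the main obstacle is the bookkeeping around the two stabilization scales: controlling the random stabilization radius $m_0$ of the limit process against the deterministic truncation radius $K$ and the coupling window, and making the ``agreement inside a ball determines $\xi(\mathbf{0},\cdot)$'' passage rigorous in terms of the $\liminf/\limsup$ formulation of Definition~\ref{def:stablefunc} (including the almost-sure Lebesgue-typicality of the Poisson configurations outside the ball), rather than assuming an a priori radius of stabilization. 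Everything else is routine measure theory.
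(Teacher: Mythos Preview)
Your proposal is correct and follows essentially the same approach as the paper: couple via Lemma~\ref{lem:coupl}, use stabilization of $\xi$ on $\mathcal{P}_{\phi(x_0)}$ (through $\bar\xi-\underline\xi$) to control the contribution outside a large ball, and then invoke uniform integrability from the $L^p$ bound to pass from convergence in distribution to convergence in expectation. The paper's version is simply more terse, writing the single inequality
\[
\mathbb{P}\big[|\xi(\mathbf{0},N^{1/d}(\mathcal{P}_{N\phi_N}-x_0))-\xi(\mathbf{0},\mathcal{P}_{\phi(x_0)})|>\varepsilon\big]\le \mathbb{P}[\text{coupling fails in }B(\mathbf{0},K)]+\mathbb{P}[\bar\xi-\underline\xi\ge\varepsilon],
\]
whereas you unpack the second term via a random stabilization radius $m_0$; the content is the same.
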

\begin{proof}
Consider the coupling of $\mathcal{P}_{N\phi_N}$ and $\mathcal{P}_{\phi(x_0)}$ in the previous lemma. We have
$$\begin{aligned}
&\quad \mathbb{P}\left[\left|\xi\left(\mathbf{0},N^{1/d}(\mathcal{P}_{N\phi_N} - x_0)\right) - \xi\left(\mathbf{0},\mathcal{P}_{\phi(x_0)}\right) \right| > \varepsilon\right]\\
&\leq \mathbb{P}\big[ N^{1/d}(\mathcal{P}_{N\phi_N} - x_0)\cap B(\mathbf{0},K) \neq \mathcal{P}_{\phi(x_0)}\cap B(\mathbf{0},K)  \big] + \mathbb{P}\big[\bar{\xi}(\mathcal{P}_{\phi(x_0)},K) - \underline{\xi}(\mathcal{P}_{\phi(x_0)},K)\geq \varepsilon \big],
\end{aligned}$$
where $\bar{\xi}$ and $\underline{\xi}$ are defined in Definition \ref{def:stablefunc}.
By the stabilization assumption, we can choose $K > 0$ so that second term is less than $\delta$, for any $\delta > 0$, by taking $K$ large. By the previous lemma, the first term converges to 0 as $N\to\infty$. Since $\delta>0$ is arbitrary, it follows that
$$\xi\left(\mathbf{0},N^{1/d}(\mathcal{P}_{N\phi_N} - x_0)\right)\overset{d}{\longrightarrow} \xi\left(\mathbf{0},\mathcal{P}_{\phi(x_0)}\right).$$
The assumption on the boundedness of the $p$-th moment implies that $\{\xi\left(\mathbf{0},N^{1/d}(\mathcal{P}_{N\phi_N} - x_0)\right)\}_{N \ge 1}$ is uniformly integrable; hence its expectation converges to the expectation of $ \xi\left(\mathbf{0},\mathcal{P}_{\phi(x_0)}\right)$.
\end{proof}

\begin{lemma}\label{lem:coupl2}
Suppose $x_0\neq y_0$ are Lebesgue points of $f_1,\ldots,f_M$.
There exist homogeneous Poisson processes $\mathcal{P}_{\phi(x_0)}$ and $\mathcal{P}_{\phi(y_0)}$ coupled with $\mathcal{P}_{N\phi_N}$ such that $\mathcal{P}_{\phi(x_0)}$ is independent of $\mathcal{P}_{\phi(y_0)}$ and for all $K>0$:
$$\lim_{N \to\infty }\mathbb{P}\left[ N^{1/d}(\mathcal{P}_{N\phi_N}^{y_0} - x_0)\cap B(\mathbf{0},K) = \mathcal{P}_{\phi(x_0)} \cap B(\mathbf{0},K)\right]\ = 1,$$
$$\lim_{N \to\infty }\mathbb{P}\left[ N^{1/d}(\mathcal{P}_{N\phi_N}^{x_0} - y_0)\cap B(\mathbf{0},K) = \mathcal{P}_{\phi(y_0)} \cap B(\mathbf{0},K)\right]\ = 1.$$
\end{lemma}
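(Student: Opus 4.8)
The plan is to adapt the construction in the proof of Lemma~\ref{lem:coupl} to two points simultaneously, exploiting the fact that since $x_0\neq y_0$ the shrinking neighbourhoods of $x_0$ and $y_0$ are eventually disjoint, so the relevant slabs of the driving Poisson process are independent; the only real work is to glue in the ``far away'' parts so that each $\mathcal{P}_{\phi(x_0)}$, $\mathcal{P}_{\phi(y_0)}$ is a genuine homogeneous Poisson process on all of $\mathbb{R}^d$ while keeping them independent.

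Concretely, I would set $\rho:=\tfrac13\|x_0-y_0\|>0$, so that $B(x_0,\rho)$ and $B(y_0,\rho)$ are disjoint, and realize $\mathcal{P}_{N\phi_N}$ as in Lemma~\ref{lem:coupl}: it is the projection onto $\mathbb{R}^d$ of $\{(x,t)\in\mathcal{P}:t\le N\phi_N(x)\}$, where $\mathcal{P}$ is a rate-$1$ homogeneous Poisson process on $\mathbb{R}^d\times[0,\infty)$. Let $\mathcal{P}'$ and $\mathcal{P}''$ be independent copies of $\mathcal{P}$, mutually independent and independent of $\mathcal{P}$. I define $\mathcal{P}_{\phi(x_0)}$ by taking, \emph{inside} $B(\mathbf{0},N^{1/d}\rho)$, the image under $(x,t)\mapsto N^{1/d}(x-x_0)$ of $\{(x,t)\in\mathcal{P}:x\in B(x_0,\rho),\ t\le N\phi(x_0)\}$, and \emph{outside} that ball the image under the same map of $\{(x,t)\in\mathcal{P}':t\le N\phi(x_0)\}$; gluing two independent homogeneous Poisson processes over complementary regions yields a homogeneous Poisson process of intensity $\phi(x_0)$ on $\mathbb{R}^d$. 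I define $\mathcal{P}_{\phi(y_0)}$ analogously, using the restriction of $\mathcal{P}$ to $B(y_0,\rho)\times[0,\infty)$ for the near part and $\mathcal{P}''$ for the far part. Since $B(x_0,\rho)\times[0,\infty)$ and $B(y_0,\rho)\times[0,\infty)$ are disjoint, the restrictions of $\mathcal{P}$ to them are independent, and combined with the independence of $\mathcal{P}',\mathcal{P}''$ this gives $\mathcal{P}_{\phi(x_0)}\indep\mathcal{P}_{\phi(y_0)}$.

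For the two agreement statements it suffices, by symmetry, to treat the first. I would fix $K>0$ and take $N$ large enough that $N^{-1/d}K<\rho$, so $B(\mathbf{0},K)\subset B(\mathbf{0},N^{1/d}\rho)$ and $\mathcal{P}_{\phi(x_0)}\cap B(\mathbf{0},K)$ depends only on the ``near'' piece of the construction; moreover the extra point $y_0$ of $\mathcal{P}_{N\phi_N}^{y_0}$ is mapped to $N^{1/d}(y_0-x_0)$, of norm $3N^{1/d}\rho\to\infty$, hence lies outside $B(\mathbf{0},K)$ for $N$ large and can be ignored. Then $\bigl(N^{1/d}(\mathcal{P}_{N\phi_N}^{y_0}-x_0)\bigr)\triangle\mathcal{P}_{\phi(x_0)}$, intersected with $B(\mathbf{0},K)$, corresponds exactly to the points $(x,t)\in\mathcal{P}$ with $x\in B(x_0,N^{-1/d}K)$ and $t$ between $\phi_N(x)$ and $\phi(x_0)$, whose number is Poisson with mean at most $N\int_{B(x_0,N^{-1/d}K)}\bigl(|\phi_N(x)-\phi(x)|+|\phi(x)-\phi(x_0)|\bigr)\,\mathrm{d}x$. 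A Lebesgue point of $f_1,\dots,f_M$ is a Lebesgue point of $\phi=\sum_i\pi_i f_i$, so the second term tends to $0$; and since $\tfrac{N_i}{N}\to\pi_i$ while $N\int_{B(x_0,N^{-1/d}K)}f_i(x)\,\mathrm{d}x\to|B(\mathbf{0},K)|f_i(x_0)<\infty$, the first term tends to $0$ as well, exactly as in the proof of Lemma~\ref{lem:coupl}. Hence the probability that the symmetric difference meets $B(\mathbf{0},K)$ tends to $0$, which is the assertion.

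The routine parts here are the Poisson bookkeeping and the Lebesgue-point estimate reused verbatim from Lemma~\ref{lem:coupl}; the one step that needs genuine care — and the main (mild) obstacle — is arranging the independence of $\mathcal{P}_{\phi(x_0)}$ and $\mathcal{P}_{\phi(y_0)}$ at the same time as making each a bona fide homogeneous Poisson process on all of $\mathbb{R}^d$, which is what dictates the splitting into a ``near'' part read off from disjoint slabs of the common driving process and a ``far'' part filled in from independent auxiliary processes, together with the check that this gluing has the correct law.
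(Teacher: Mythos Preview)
Your argument is correct and follows the same overall strategy as the paper: realize $\mathcal{P}_{N\phi_N}$ via a rate-$1$ Poisson process on $\mathbb{R}^d\times[0,\infty)$, build the two target homogeneous processes from disjoint spatial slabs so that independence is automatic, and reuse the Lebesgue-point estimate from Lemma~\ref{lem:coupl} to control the symmetric difference.

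The only difference is in the geometric decomposition. You split into the balls $B(x_0,\rho)$, $B(y_0,\rho)$ and fill in the complements with two independent auxiliary processes $\mathcal{P}',\mathcal{P}''$. The paper instead partitions $\mathbb{R}^d$ by the perpendicular-bisector half-spaces $F_{x_0}=\{z:\|z-x_0\|<\|z-y_0\|\}$ and $F_{y_0}$, and uses a single auxiliary process $\mathcal{Q}$: $\mathcal{P}_{\phi(x_0)}$ is built from $\mathcal{P}|_{F_{x_0}}$ together with $\mathcal{Q}|_{F_{y_0}}$, while $\mathcal{P}_{\phi(y_0)}$ uses $\mathcal{P}|_{F_{y_0}}$ together with $\mathcal{Q}|_{F_{x_0}}$. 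Because these four restrictions live over pairwise disjoint regions, independence follows just as in your argument. The half-space version is a touch more economical (one auxiliary process, no ``gap'' region), while your ball version is perhaps more transparent; both yield exactly the same conclusion with the same estimate on the mean of the symmetric difference.
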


\begin{proof}
Let $\mathcal{P}$ be a homogeneous Poisson processes of rate 1 on $\R^d \times [0,\infty)$.
Let $\mathcal{P}_{N\phi_N}$ be the projection of the set
$$\left\{ (x,t)\in \mathcal{P}:t \leq N \phi_N(x)\right\}$$
onto the $x$ space, i.e., $(x,t)\mapsto x$.
Let $\mathcal{Q}$ be an independent copy of $\mathcal{P}$.
Let $F_{x_0} :=\{z\in\R^d:\|x_0-z\|< \|y_0-z\|\}$ be the half-space of points in $\R^d$ closer to $x_0$ than to $y_0$ and let $F_{y_0} :=\{z\in\R^d:\|y_0-z\|< \|x_0-z\|\}$ be the half-space of points in $\R^d$ closer to $y_0$ than to $x_0$.
Let $\mathcal{P}_{\phi(x_0)}$ be the image of the point set
$$\{(x,t)\in \mathcal{P}\cap F_{x_0}\times [0, N\phi(x_0)]\}\cup \{(x,t)\in \mathcal{Q}\cap F_{y_0}\times [0,N\phi(x_0)]\}$$
under the mapping
$$(x,t)\mapsto N^{1/d}(x-x_0).$$
Note that $\mathcal{P}_{\phi(x_0)}$ is a homogeneous Poisson process on $\R^d$ with intensity $\phi(x_0)$ for all $N$.
Note that when $N$ is large, $B(x_0,N^{-1/d}K)\subset F_{x_0}$.
Hence, the number of points in 
$\left( N^{1/d}(\mathcal{P}_{N\phi_N}^{y_0} - x_0) \triangle \mathcal{P}_{\phi(x_0)}\right)\cap B(\mathbf{0},K)$
equals the number of points $(x,t)\in \mathcal{P}$ such that $x\in B(x_0,N^{-1/d}K)$ and $t$ is between $\phi_N(x)$ and $\phi(x_0)$, and follows a Poisson distribution with mean
$$N \int_{B(x_0,N^{-1/d}K)}|\phi_N(x)-\phi(x_0)|{\rm d}x\leq N \int_{B(x_0,N^{-1/d}K)}(|\phi_N(x)-\phi(x)|+|\phi(x)-\phi(x_0)|){\rm d}x,$$
which converges to 0 as $x_0$ is also a Lebesgue point of $\phi$. 

Let $\mathcal{P}_{\phi(y_0)}$ be the image of the point set
$$\{(x,t)\in \mathcal{P}\cap F_{y_0}\times [0, N\phi(y_0)]\}\cup \{(x,t)\in \mathcal{Q}\cap F_{x_0}\times [0,N\phi(y_0)]\}$$
under the mapping
$$(x,t)\mapsto N^{1/d}(x-y_0).$$
$\mathcal{P}_{\phi(x_0)}$ and $\mathcal{P}_{\phi(y_0)}$ are independent as they are constructed from Poisson processes on disjoint regions of space. The number of points in 
$\left( N^{1/d}(\mathcal{P}_{N\phi_N}^{x_0} - y_0) \triangle \mathcal{P}_{\phi(y_0)}\right)\cap B(\mathbf{0},K)$ also converges in mean to 0 since it follows a Poisson distribution with mean
$$N \int_{B(y_0,N^{-1/d}K)}|\phi_N(x)-\phi(y_0)|{\rm d}x\leq N \int_{B(y_0,N^{-1/d}K)}(|\phi_N(x)-\phi(x)|+|\phi(x)-\phi(y_0)|){\rm d}x,$$
which converges to 0 as $y_0$ is also a Lebesgue point of $\phi$.
\end{proof}

\begin{lemma}\label{lem:second_order_conv}
Suppose $x_0\neq y_0$ are Lebesgue points of $f_1,\ldots,f_M$ and $\phi(x_0),\phi(y_0)>0$. Let $\mathcal{P}_{\phi(x_0)}$ and $\mathcal{P}_{\phi(y_0)}$ be independent homogeneous Poisson processes with intensity $\phi(x_0)$ and $\phi(y_0)$ respectively.
Suppose $\xi$ is translation invariant, almost surely stabilizing on $\mathcal{P}_{\phi(x_0)}$ and $\mathcal{P}_{\phi(y_0)}$.
Suppose the moment condition:
$$\sup_{N \geq 1}\mathbb{E}\left[\left|\xi\left(x_0,x_0+ N^{1/d}(\mathcal{P}_{N\phi_N}^{y_0} - x_0)\right)\xi\left(y_0,y_0+ N^{1/d}(\mathcal{P}_{N\phi_N}^{x_0} - y_0)\right)\right|^p\right]<\infty$$
is satisfied for some $p>1$. Then:
$$\xi\left(x_0,x_0+ N^{1/d}(\mathcal{P}_{N\phi_N}^{y_0} - x_0)\right)\xi\left(y_0,y_0+ N^{1/d}(\mathcal{P}_{N\phi_N}^{x_0} - y_0)\right) \to \xi\left(\mathbf{0},\mathcal{P}_{\phi(x_0)}\right)\xi\left(\mathbf{0},\mathcal{P}_{\phi(y_0)}\right)$$
in distribution and in expectation.
\end{lemma}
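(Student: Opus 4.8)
The plan is to replay the proof of Lemma~\ref{lem:local_prop} with two centres instead of one, using the \emph{joint} coupling of Lemma~\ref{lem:coupl2} in place of the single coupling of Lemma~\ref{lem:coupl}. Write $U_N := N^{1/d}(\mathcal{P}_{N\phi_N}^{y_0} - x_0)$ and $V_N := N^{1/d}(\mathcal{P}_{N\phi_N}^{x_0} - y_0)$. First I would recentre each factor at the origin: translation invariance of $\xi$ gives $\xi(x_0, x_0 + U_N) = \xi(\mathbf{0}, U_N)$ and $\xi(y_0, y_0 + V_N) = \xi(\mathbf{0}, V_N)$, so the claim reduces to showing $\xi(\mathbf{0}, U_N)\,\xi(\mathbf{0}, V_N) \overset{d}{\to} \xi(\mathbf{0}, \mathcal{P}_{\phi(x_0)})\,\xi(\mathbf{0}, \mathcal{P}_{\phi(y_0)})$ together with convergence of the corresponding expectations.

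Next I would work on the probability space furnished by Lemma~\ref{lem:coupl2}, which carries $\mathcal{P}_{N\phi_N}$ coupled with \emph{independent} homogeneous Poisson processes $\mathcal{P}_{\phi(x_0)}$ and $\mathcal{P}_{\phi(y_0)}$ such that, for every fixed $K>0$, the agreement events $\{U_N \cap B(\mathbf{0},K) = \mathcal{P}_{\phi(x_0)} \cap B(\mathbf{0},K)\}$ and $\{V_N \cap B(\mathbf{0},K) = \mathcal{P}_{\phi(y_0)} \cap B(\mathbf{0},K)\}$ both have probability tending to $1$. Fixing $\varepsilon>0$ and arguing exactly as in Lemma~\ref{lem:local_prop}: on the first agreement event both $\xi(\mathbf{0}, U_N)$ and $\xi(\mathbf{0}, \mathcal{P}_{\phi(x_0)})$ lie in the interval $[\underline{\xi}(\mathcal{P}_{\phi(x_0)},K),\, \bar{\xi}(\mathcal{P}_{\phi(x_0)},K)]$ (notation of Definition~\ref{def:stablefunc}), whose length is $<\varepsilon$ with probability arbitrarily close to $1$ once $K$ is large, by the almost-sure stabilization of $\xi$ on $\mathcal{P}_{\phi(x_0)}$; combining this with the agreement probability $\to 1$ gives $\xi(\mathbf{0}, U_N) \overset{p}{\to} \xi(\mathbf{0}, \mathcal{P}_{\phi(x_0)})$ on the coupled space, and symmetrically $\xi(\mathbf{0}, V_N) \overset{p}{\to} \xi(\mathbf{0}, \mathcal{P}_{\phi(y_0)})$. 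A product of two sequences each converging in probability again converges in probability (tightness of each factor handles the cross term $\xi(\mathbf{0},U_N)\,[\xi(\mathbf{0},V_N) - \xi(\mathbf{0},\mathcal{P}_{\phi(y_0)})]$), so $\xi(\mathbf{0},U_N)\,\xi(\mathbf{0},V_N) \overset{p}{\to} \xi(\mathbf{0}, \mathcal{P}_{\phi(x_0)})\,\xi(\mathbf{0}, \mathcal{P}_{\phi(y_0)})$, which gives the asserted convergence in distribution, the two limit factors being independent by construction as the hypothesis requires.

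Finally, for convergence in expectation I would invoke the assumed moment bound $\sup_{N\ge 1}\mathbb{E}[|\xi(x_0, x_0+U_N)\,\xi(y_0, y_0+V_N)|^p] < \infty$ for some $p>1$, which makes the family $\{\xi(\mathbf{0},U_N)\,\xi(\mathbf{0},V_N)\}_{N\ge 1}$ uniformly integrable; together with the convergence in distribution just established this upgrades to $\mathbb{E}[\xi(\mathbf{0},U_N)\,\xi(\mathbf{0},V_N)] \to \mathbb{E}[\xi(\mathbf{0}, \mathcal{P}_{\phi(x_0)})\,\xi(\mathbf{0}, \mathcal{P}_{\phi(y_0)})]$ and, incidentally, shows the limit product is integrable, hence a.s.\ finite. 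I do not expect a genuine obstacle here: the only step deserving a word of care is that a \emph{single} radius $K$ can be chosen to make both stabilization gaps $\bar{\xi}(\mathcal{P}_{\phi(x_0)},K) - \underline{\xi}(\mathcal{P}_{\phi(x_0)},K)$ and $\bar{\xi}(\mathcal{P}_{\phi(y_0)},K) - \underline{\xi}(\mathcal{P}_{\phi(y_0)},K)$ small simultaneously, which is immediate since there are only two processes and each stabilizes almost surely, so the argument is routine two-centre bookkeeping layered on Lemma~\ref{lem:coupl2} and the scheme of Lemma~\ref{lem:local_prop}.
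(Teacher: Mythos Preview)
Your proposal is correct and follows essentially the same approach as the paper: invoke the joint coupling of Lemma~\ref{lem:coupl2}, repeat the stabilization argument of Lemma~\ref{lem:local_prop} at each centre to get convergence in probability of each factor on the coupled space, multiply, and upgrade to convergence in expectation via the $p>1$ moment bound. Your write-up is in fact more explicit than the paper's (you spell out the translation-invariance recentring, the handling of the cross term in the product, and the choice of a single radius $K$), but these are exactly the details the paper suppresses behind the phrase ``the same argument in Lemma~\ref{lem:local_prop} shows\ldots''.
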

\begin{proof}
Consider the coupling of $\mathcal{P}_{N\phi_N}$ and $\mathcal{P}_{\phi(x_0)},\mathcal{P}_{\phi(y_0)}$ in the previous lemma. The same argument in Lemma~\ref{lem:local_prop} shows
$$\begin{aligned}
\mathbb{P}\left[\left|\xi\left(x_0,x_0+ N^{1/d}(\mathcal{P}_{N\phi_N}^{y_0} - x_0)\right) - \xi\left(\mathbf{0},\mathcal{P}_{\phi(x_0)}\right) \right| > \varepsilon\right]\to 0,
\end{aligned}$$
$$\begin{aligned}
\mathbb{P}\left[\left|\xi\left(y_0,y_0+ N^{1/d}(\mathcal{P}_{N\phi_N}^{x_0} - y_0)\right) - \xi\left(\mathbf{0},\mathcal{P}_{\phi(y_0)}\right) \right| > \varepsilon\right]\to 0.
\end{aligned}$$
Hence
$$\xi\left(x_0,x_0+ N^{1/d}(\mathcal{P}_{N\phi_N}^{y_0} - x_0)\right)\xi\left(y_0,y_0+ N^{1/d}(\mathcal{P}_{N\phi_N}^{x_0} - y_0)\right) \overset{d}{\longrightarrow}
\xi\left(\mathbf{0},\mathcal{P}_{\phi(x_0)}\right)\xi\left(\mathbf{0},\mathcal{P}_{\phi(y_0)}\right).$$
Convergence in expectation again follows from uniform integrability.
\end{proof}

\begin{lemma}[{Palm theory for Poisson processes \citep[Theorem 1.6]{Penrose2003graph}}]\label{lem:palm}
Suppose $N >0$, $j\in\mathbb{N}$, and $h(\Y,\X)$ is a bounded measurable function defined on all pairs of the form $(\Y,\X)$ where $\X\subset \R^d$ is finite, and $\Y\subset\X$, satisfying $h(\Y,\X)=0$ when $\Y$ does not contain $j$ elements. Then
$$\E \sum_{\Y\subset \mathcal{P}_{N f}} h(\Y,\mathcal{P}_{N f}) = \frac{N^j}{j!}\E h(x_j',x_j'\cup \mathcal{P}_{N f}),$$
where the sum on the left-hand side is over all subsets $\Y\subset \mathcal{P}_{N f}$, and on the right-hand side $x_j'$ is a set of $j$ i.i.d.~observations from $f$, independent of $\mathcal{P}_{N f}$.
\end{lemma}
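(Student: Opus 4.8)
The plan is to obtain this identity directly from the defining properties of a Poisson process of finite total mass. Since $f$ is a probability density, $\int_{\R^d} f = 1$, so $\mathcal{P}_{Nf}$ a.s.\ consists of a $\mathrm{Poisson}(N)$ number of points which, conditionally on their number, are i.i.d.\ with density $f$; in particular no spatial truncation is required. I would therefore write $\mathcal{P}_{Nf}=\{X_1,\dots,X_n\}$ with $n\sim\mathrm{Poisson}(N)$ and $X_1,X_2,\dots$ i.i.d.\ with density $f$, independent of $n$, and compute the left-hand side by conditioning on $n$.

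Since $h(\Y,\X)=0$ unless $\Y$ has exactly $j$ elements, the sum $\sum_{\Y\subset\mathcal{P}_{Nf}}h(\Y,\mathcal{P}_{Nf})$ ranges over the $\binom{n}{j}$ $j$-element subsets of $\{X_1,\dots,X_n\}$, and by exchangeability of the $X_i$ each subset contributes the same expectation. Hence
$$\E\sum_{\Y\subset\mathcal{P}_{Nf}} h(\Y,\mathcal{P}_{Nf}) = \sum_{n\ge j} e^{-N}\frac{N^n}{n!}\binom{n}{j}\,\E\big[h(\{X_1,\dots,X_j\},\{X_1,\dots,X_n\})\big].$$
Using $\frac{1}{n!}\binom{n}{j}=\frac{1}{j!\,(n-j)!}$ and substituting $m=n-j$ turns this into
$$\E\sum_{\Y\subset\mathcal{P}_{Nf}} h(\Y,\mathcal{P}_{Nf}) = \frac{N^j}{j!}\sum_{m\ge 0} e^{-N}\frac{N^m}{m!}\,\E\big[h(\{X_1,\dots,X_j\},\{X_1,\dots,X_j\}\cup\{X_{j+1},\dots,X_{j+m}\})\big],$$
and the inner sum over $m$ is, by the same representation of a Poisson process, exactly $\E[h(x_j',x_j'\cup\mathcal{P}_{Nf})]$ with $x_j':=\{X_1,\dots,X_j\}$ an independent set of $j$ i.i.d.\ draws from $f$ and $\mathcal{P}_{Nf}$ an independent realization. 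This is the asserted formula. Boundedness of $h$ justifies every interchange of expectation and summation above (Tonelli/dominated convergence).

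I do not expect a serious obstacle here: the argument is elementary bookkeeping once the conditioning is set up. The only points needing care are the set-versus-tuple distinction --- the representative subset $\{X_1,\dots,X_j\}$ genuinely has $j$ elements only up to a Lebesgue-null event, which is harmless because $f$ is absolutely continuous so the $X_i$ are a.s.\ distinct --- and, had $f$ been a general locally finite intensity rather than a probability density, the need to first restrict to a bounded window $A$ with $\int_A f<\infty$ and then let $A\uparrow\R^d$ by monotone convergence. Since here $f$ is a density this extra step is unnecessary. Alternatively, the statement is precisely the (multivariate) Mecke equation and may simply be quoted from \citep[Theorem 1.6]{Penrose2003graph}.
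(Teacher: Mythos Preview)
Your proof is correct and is essentially the standard argument for the Mecke/Palm identity in the finite-intensity case. The paper does not give its own proof of this lemma; it simply quotes the result from \citep[Theorem 1.6]{Penrose2003graph}, so there is nothing to compare.
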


\subsubsection{Proof of Theorem \ref{thm:normal_alternative}}

Write $g_N (\Delta_i) :=2\sum_{p=1}^M \frac{N_p}{N}K(\Delta_i,p) - \sum_{p,q=1}^M \frac{N_pN_q}{N^2} K(p,q)$. By standard U-statistics projection theory \citep[Lemma D.4]{deb2020kernel},
there exists a constant $C$ such that for $n\geq 2$,
$$\E\left[\left(\frac{1}{\sqrt{n}(n-1)}\sum_{i\neq j }K(\Delta_i,\Delta_j) - \frac{1}{\sqrt{n}}\sum_{i=1}^n g_N(\Delta_i) \right)^2\Big| n\right] \leq \frac{C}{n}.$$
Since $n\sim {\rm Poisson}(N)$, $\mathbb{P}(n<2)$ decays exponentially as $N\to\infty$, and recall that we set $\frac{1}{\sqrt{n}(n-1)}\sum_{i\neq j }K(\Delta_i,\Delta_j)$ to 0 when $n<2$. This implies
$$\E\left[\left(\frac{1}{\sqrt{n}(n-1)}\sum_{i\neq j }K(\Delta_i,\Delta_j) - \frac{1}{\sqrt{n}}\sum_{i=1}^n  g_N(\Delta_i) \right)^2\right] \to0,\ {\rm as\ }N\to\infty.$$
Hence $\frac{1}{n-1}\sum_{i\neq j }K(\Delta_i,\Delta_j) =\sum_{i=1}^n  g_N(\Delta_i)  + o_p(\sqrt{n})$.
It suffices to derive the asymptotic distribution of 
\begin{equation}\label{eq:Uproj}
H_n :=\frac{1}{\sqrt{N}} \sum \limits_{i=1}^n \frac{1}{d_i} \sum \limits_{j:(Z_i,Z_j) \in \emgn} K(\Delta_i,\Delta_j)-\frac{1}{\sqrt{N}}\sum_{i=1}^n  g_N(\Delta_i).
\end{equation}

Let $\mathcal{F}_n=\sigma(n,Z_1,\ldots,Z_n)$ be the $\sigma$-algebra generated by the unlabelled data and the number of total observations. The strategy of the proof goes as follows:
First, we show that given $\mathcal{F}_n$, $H_n$ --- centered by its conditional mean --- converges conditionally to a normal distribution with constant variance, i.e., $H_n - \mathbb{E}(H_n|\mathcal{F}_n)\mid\mathcal{F}_n \overset{d}{\to}N(0,\kappa_1^2)$.
Second, we show that $\mathbb{E}(H_n|\mathcal{F}_n) - \mathbb{E}(H_n)\overset{d}{\to}N(0,\kappa_2^2)$.
Finally, a simple argument using the characteristic function yields that $H_n - \mathbb{E}(H_n) \overset{d}{\to}N(0,\kappa_1^2+\kappa_2^2)$. Observe that, for $t \in \R$,
$$\begin{aligned}
\mathbb{E}\left[e^{it\left(H_n - \mathbb{E}(H_n)\right)} \right] &=\mathbb{E}\left[e^{it\left(H_n - \mathbb{E}(H_n|\mathcal{F}_n)\right)} e^{it\left( \mathbb{E}(H_n|\mathcal{F}_n) - \mathbb{E}(H_n)\right)} \right]\\
&=\mathbb{E} \left[\E \left[ e^{it\left(H_n - \mathbb{E}(H_n|\mathcal{F}_n)\right)} \big| \mathcal{F}_n\right] e^{it\left( \mathbb{E}(H_n|\mathcal{F}_n) - \mathbb{E}(H_n)\right)}\right].
\end{aligned}$$
Note that for any $t\in\R$, by continuous mapping,
$$\E \left[ e^{it\left(H_n - \mathbb{E}(H_n|\mathcal{F}_n)\right)} \big| \mathcal{F}_n\right] e^{it\left( \mathbb{E}(H_n|\mathcal{F}_n) - \mathbb{E}(H_n)\right)}\overset{d}{\longrightarrow}e^{-\frac{1}{2}\kappa_1^2t }\cdot  e^{it \cdot N(0,\kappa_2^2)}.$$
Hence, by choosing an almost sure representative and applying the dominated convergence theorem, we have
$$\mathbb{E} \left[\E \left[ e^{it\left(H_n - \mathbb{E}(H_n|\mathcal{F}_n)\right)} \big| \mathcal{F}_n\right] e^{it\left( \mathbb{E}(H_n|\mathcal{F}_n) - \mathbb{E}(H_n)\right)}\right]\to
\mathbb{E}\left[e^{-\frac{1}{2}\kappa_1^2t }\cdot  e^{it \cdot N(0,\kappa_2^2)} \right]=e^{-\frac{1}{2}(\kappa_1^2+\kappa_2^2)t}.$$
This implies $H_n - \mathbb{E}(H_n) \overset{d}{\to}N(0,\kappa_1^2+\kappa_2^2)$.

\noindent \textbf{Step 1.} We wil first show that $H_n - \mathbb{E}(H_n|\mathcal{F}_n)\mid \mathcal{F}_n \overset{d}{\to}N(0,\kappa_1^2)$.

We first compute the limiting variance of $H_n$ given $\mathcal{F}_n$. Observe that

\begin{subequations}
\begin{align}
{\rm Var}(H_n|\mathcal{F}_n)&=\frac{1}{N}\sum_{i\neq j:(Z_i,Z_j)\in\emgn}\frac{1}{d_i^2}{\rm Var}(K(\Delta_i,\Delta_j)|\mathcal{F}_n)\label{eq:subeq1}\\
&\quad +\frac{1}{N}\sum_{i\neq j:(Z_i,Z_j),(Z_j,Z_i)\in\emgn}\frac{1}{d_id_j} {\rm Var}(K(\Delta_i,\Delta_j)|\mathcal{F}_n)\label{eq:subeq2}\\
&\quad +\frac{1}{N} \sum_{\substack{i,j,l{\rm\ distinct:}\\ (Z_i,Z_j),(Z_i,Z_l)\in\emgn}} \frac{1}{d_i^2}{\rm Cov}(K(\Delta_i,\Delta_j),K(\Delta_i,\Delta_l)|\mathcal{F}_n\label{eq:subeq3})\\
&\quad +\frac{1}{N} \sum_{\substack{i,j,l{\rm\ distinct:}\\ (Z_i,Z_j),(Z_j,Z_l)\in\emgn}} \frac{1}{d_id_j}{\rm Cov}(K(\Delta_i,\Delta_j),K(\Delta_j,\Delta_l)|\mathcal{F}_n)\label{eq:subeq4}\\
&\quad +\frac{1}{N} \sum_{\substack{i,j,l{\rm\ distinct:}\\ (Z_i,Z_j),(Z_l,Z_i)\in\emgn}} \frac{1}{d_id_l}{\rm Cov}(K(\Delta_i,\Delta_j),K(\Delta_l,\Delta_i)|\mathcal{F}_n)\label{eq:subeq5}\\
&\quad +\frac{1}{N} \sum_{\substack{i,j,l{\rm\ distinct:}\\ (Z_i,Z_j),(Z_l,Z_j)\in\emgn}} \frac{1}{d_id_l}{\rm Cov}(K(\Delta_i,\Delta_j),K(\Delta_l,\Delta_j)|\mathcal{F}_n)\label{eq:subeq6}\\
&\quad -\frac{2}{N}\sum_{i\neq j:(Z_i,Z_j)\in\emgn} \frac{1}{d_i}{\rm Cov}(K(\Delta_i,\Delta_j),g_N(\Delta_i)|\mathcal{F}_n)\label{eq:subeq7}\\
&\quad -\frac{2}{N}\sum_{i\neq j:(Z_i,Z_j)\in\emgn} \frac{1}{d_i}{\rm Cov}(K(\Delta_i,\Delta_j),g_N(\Delta_j)|\mathcal{F}_n)\label{eq:subeq8}\\
&\quad +\frac{1}{N}\sum_{i=1}^n {\rm Var}(g_N(\Delta_i)|\mathcal{F}_n).\label{eq:subeq9}
\end{align}
\label{eq:decom_var}
\end{subequations}
We will show the convergence of each term above.
Let $\phi_N(z)=\sum_{i=1}^M \frac{N_i}{N} f_i(z)$ be the marginal density of $Z$, and
$\phi(z):=\sum_{i=1}^M \pi_i f_i(z)$ be its limit.
Then, for $p\in\{1,\ldots,M\}$, 
$$\begin{aligned}
\left|\frac{\frac{N_p}{N}f_p(z)}{\phi_N(z)} - \frac{\pi_p f_p(z)}{\phi(z)}\right| &=\left| \frac{\frac{N_p}{N}f_p(z) \phi(z) - \pi_p f_p(z) \phi_N(z) }{\phi(z)\phi_N(z)} \right|\\
&=\left|\sum_{i=1}^M \frac{\frac{N_p}{N}f_p(z) \pi_i f_i(z) - \pi_p f_p(z) \frac{N_i}{N}f_i(z) }{\phi(z)\phi_N(z)} \right| \\
& \leq \sum_{i=1}^M \frac{ |\frac{N_p}{N} - \pi_p|f_p(z) \pi_i f_i(z) + \pi_pf_p(z)|\pi_i - \frac{N_i}{N}| f_i(z) }{\phi(z)\phi_N(z)} \\
& = \sum_{i=1}^M  \left( \frac{|\frac{N_p}{N} - \pi_p|}{\frac{N_p}{N}} \frac{\frac{N_p}{N}f_p(z)}{\phi_N(z)}
\frac{\pi_i f_i(z)}{\phi(z)} +
\frac{|\pi_i-\frac{N_i}{N}|}{\frac{N_i}{N}}\frac{\pi_pf_p(z)}{\phi(z)}\frac{\frac{N_i}{N} f_i(z)}{\phi_N(z)}
\right)\\
&\leq \sum_{i=1}^M \left(\frac{|\frac{N_p}{N}-\pi_p|}{\frac{N_p}{N}} + \frac{|\pi_i-\frac{N_i}{N}|}{\frac{N_i}{N}} \right)\to 0
\end{aligned}$$
uniformly over $z\in\{z:\phi(z) > 0\}$. Observe that
$${\rm Var}(K(\Delta_i,\Delta_j)|\mathcal{F}_n)=\underbrace{\sum_{p,q=1}^M K^2(p,q)\frac{\frac{N_p}{N}f_p(Z_i)}{\phi_N(Z_i)}\frac{\frac{N_q}{N}f_q(Z_j)}{\phi_N(Z_j)} - \left(\sum_{p,q=1}^M K(p,q)\frac{\frac{N_p}{N}f_p(Z_i)}{\phi_N(Z_i)}\frac{\frac{N_q}{N}f_q(Z_j)}{\phi_N(Z_j)}\right)^2}_{{\rm denoted\ by\ } h_N(Z_i,Z_j)}.$$
By the uniform convergence of $\frac{\frac{N_i}{N}f_i(z)}{\phi_N(z)}$ to $\frac{\pi_i f_i(z)}{\phi(z)}$, we know that $h_N$ converges uniformly to $h$, a bounded function defined by: 
$$h(Z_i,Z_j):= \sum_{p,q=1}^M K^2(p,q)\frac{\pi_pf_p(Z_i)}{\phi(Z_i)}\frac{\pi_qf_q(Z_j)}{\phi(Z_j)} - \left(\sum_{p,q=1}^M K(p,q)\frac{\pi_pf_p(Z_i)}{\phi(Z_i)}\frac{\pi_qf_q(Z_j)}{\phi(Z_j)}\right)^2.$$
To show the convergence of the first term in the expansion of ${\rm Var}(H_n|\mathcal{F}_n)$, i.e.,  $$\frac{1}{N}\sum_{i\neq j:(Z_i,Z_j)\in\emgn}\frac{1}{d_i^2} \cdot h_N(Z_i,Z_j)=: f(Z),$$
where $Z:=(Z_1,\ldots,Z_n)$.
We will show that there exists a constant $\mu$ such that $\E f(Z) \to\mu$ and $\E f^2(Z) \to \mu^2$, which implies $f(Z)\overset{L^2}{\longrightarrow} \mu$.


By Palm theory (Lemma~\ref{lem:palm}),
\begin{equation}\label{eq:exp_first_term}
\E f(Z)=\int \E\left[\frac{1}{d^2(z,\mathcal{G}(\mathcal{P}_{N \phi_N}^z))}\sum_{y:(z,y)\in\mathcal{E}(\mathcal{G}(\mathcal{P}_{N \phi_N}^z))} h_N(z,y) \right]\phi_N(z){\rm d}z.
\end{equation}
To show the convergence of the integrand, we can first replace $h_N$ by its uniform limit $h$.
$$\begin{aligned}
&\quad \lim_{N\to\infty} \E\left[\frac{1}{d^2(z,\mathcal{G}(\mathcal{P}_{N \phi_N}^z))}\sum_{y:(z,y)\in\mathcal{E}(\mathcal{G}(\mathcal{P}_{N \phi_N}^z))} h_N(z,y) \right]\\
&=\lim_{N\to\infty} \E\left[\frac{1}{d^2(z,\mathcal{G}(\mathcal{P}_{N \phi_N}^z))}\sum_{y:(z,y)\in\mathcal{E}(\mathcal{G}(\mathcal{P}_{N \phi_N}^z))} h(z,y) \right].\\
\end{aligned}$$
Note that $(z,y)\in\mathcal{E}(\mathcal{G}(\mathcal{P}_{N \phi_N}^z))$ implies that $y$ is a neighbor of $z$, and Lemma~\ref{lem:coupl} together with the graph being stabilizing on $\mathcal{P}_{\phi(z)}$ implies  $\sup_{y:(z,y)\in\mathcal{E}(\mathcal{G}(\mathcal{P}_{N\phi_N}^z))}\|y-z\|=O_p\left( N^{-1/d} \right)$. 
If $z$ is a continuity point of $f_1,\cdots,f_M$, then it is also a continuity point of $h(z,\cdot )$. Hence we can further replace $y$ by $z$ to get
$$\begin{aligned}
&\quad\lim_{N\to\infty} \E\left[\frac{1}{d^2(z,\mathcal{G}(\mathcal{P}_{N \phi_N}^z))}\sum_{y:(z,y)\in\mathcal{E}(\mathcal{G}(\mathcal{P}_{N \phi_N}^z))} h(z,y) \right]\\
&=\lim_{N\to\infty} \E\left[\frac{1}{d^2(z,\mathcal{G}(\mathcal{P}_{N \phi_N}^z))}\sum_{y:(z,y)\in\mathcal{E}(\mathcal{G}(\mathcal{P}_{N \phi_N}^z))} h(z,z) \right]\\
&=\lim_{N\to\infty} \E\left[\frac{1}{d(z,\mathcal{G}(\mathcal{P}_{N \phi_N}^z))} \right]h(z,z) \\
&=\E\left[\frac{1}{d(\mathbf{0},\mathcal{G}(\mathcal{P}_{\phi(z)}^\mathbf{0}))} \right]h(z,z) = \E\left[\frac{1}{d(\mathbf{0},\mathcal{G}(\mathcal{P}_{1}^\mathbf{0}))} \right]h(z,z).
\end{aligned}$$
The last line follows from Lemma~\ref{lem:local_prop} and $\mathcal{G}$ being translation and scale invariant.
Now, by dominated convergence,
$$\E f(Z)\to\int \E\left[\frac{1}{d(\mathbf{0},\mathcal{G}(\mathcal{P}_{1}^\mathbf{0}))} \right]h(z,z) \phi(z){\rm d}z.$$

Now we consider $\E f^2(Z)$. Write $\xi(x,\X) := \sum_{y:(x,y)\in \mathcal{E}(\mathcal{G}(\X\cup\{x\}))}\frac{1}{d_x^2}h_N (x,y)$.
Then $f(Z) = \frac{1}{N}\sum_{x\in \mathcal{P}_{N \phi_N}}\xi(x,\mathcal{P}_{N \phi_N})$, and
$$\begin{aligned}
\E f^2(Z)&=\frac{1}{N^2} \E \left[ \sum_{x\in \mathcal{P}_{N \phi_N}}\xi^2 (x,\mathcal{P}_{N \phi_N}) \right]+\frac{2}{N^2} \E \left[ \sum_{ \{x,y\}\subset \mathcal{P}_{N \phi_N} } \xi(x,\mathcal{P}_{N \phi_N}) \xi(y,\mathcal{P}_{N \phi_N})\right].
\end{aligned}$$ 
Here $\sum_{\{x,y\}\subset\mathcal{P}_{N \phi_N}}$ means summing over all $\frac{n(n-1)}{2}$ size-2 subsets of $\mathcal{P}_{N \phi_N}$. The first term converges to 0 since $\xi$ is bounded and the number of points in $\mathcal{P}_{N \phi_N}$ follows ${\rm Poisson}(N)$. The second term, by Palm theory, equals
$$\int \E\left[ \xi(x,\mathcal{P}_{N \phi_N}^y)\xi(y,\mathcal{P}_{N \phi_N}^x)\right]\phi_N(x)\phi_N(y){\rm d}x{\rm d}y.$$
By the uniform convergence of $h_N$ to $h$, assuming $x,y$ are continuity point of $f_1,\ldots,f_M$, the integrand has a limit given by
$$\begin{aligned}
&\quad \lim_{N\to\infty}\E\left[ \xi(x,\mathcal{P}_{N \phi_N}^y)\xi(y,\mathcal{P}_{N \phi_N}^x)\right]\\
&=\lim_{N\to\infty}\E\left[  \sum_{w:(x,w)\in \mathcal{E}(\mathcal{G}(\mathcal{P}_{N \phi_N}^{x,y}))}\frac{1}{d_x^2}h_N (x,w)  \sum_{w:(y,w)\in \mathcal{E}(\mathcal{G}(\mathcal{P}_{N \phi_N}^{x,y}))}\frac{1}{d_y^2}h_N (y,w)\right]\\
&=\lim_{N\to\infty}\E\left[  \sum_{w:(x,w)\in \mathcal{E}(\mathcal{G}(\mathcal{P}_{N \phi_N}^{x,y}))}\frac{1}{d_x^2}h (x,x)  \sum_{w:(y,w)\in \mathcal{E}(\mathcal{G}(\mathcal{P}_{N \phi_N}^{x,y}))}\frac{1}{d_y^2}h (y,y)\right]\\
&=\lim_{N\to\infty}\E\left[ \frac{1}{d(x,\mathcal{G}(\mathcal{P}_{N\phi_N}^{x,y}))}  \frac{1}{d(y,\mathcal{G}(\mathcal{P}_{N\phi_N}^{x,y}))} \right]h (x,x)h (y,y) \\
&=\E\left[\frac{1}{d(\mathbf{0},\mathcal{G}(\mathcal{P}_{\phi(x)}^\mathbf{0}))} \frac{1}{d(\mathbf{0},\mathcal{G}(\mathcal{P}_{\phi(y)}^\mathbf{0}))}\right] h (x,x)h (y,y) = \E\left[\frac{1}{d(\mathbf{0},\mathcal{G}(\mathcal{P}_{1}^\mathbf{0}))}\right]^2h(x,x)h(y,y).\\
\end{aligned}$$
The last line follows from Lemma~\ref{lem:second_order_conv} and $\mathcal{G}$ being translation and scale invariant.
Now, by dominated convergence,
$$\E f^2(Z)\to \int  \E\left[\frac{1}{d(\mathbf{0},\mathcal{G}(\mathcal{P}_{1}^\mathbf{0}))}\right]^2h(x,x)h(y,y) \phi(x)\phi(y){\rm d}x{\rm d}y.$$
Hence $f(Z)$ converges in $L^2$ to $\int \E\left[\frac{1}{d(\mathbf{0},\mathcal{G}(\mathcal{P}_{1}^\mathbf{0}))} \right]h(z,z) \phi(z){\rm d}z$.

With the distribution of $\tilde{\Delta},\tilde{Z}$ defined in Section~\ref{subsec:def_eta},
if $\tilde{\Delta},\tilde{\Delta}'$ are independently drawn from the conditional distribution of $\tilde{\Delta} \mid\tilde{Z}=z$, then $h(z,z) = {\rm Var}[K(\tilde{\Delta},\tilde{\Delta}')|\tilde{Z}=z]$. Hence the first term in ${\rm Var}(H_n|\mathcal{F}_n)$ converges to:
$$\begin{aligned}
\frac{1}{N}\sum_{i\neq j:(Z_i,Z_j)\in\emgn}\frac{1}{d_i^2}{\rm Var}(K(\Delta_i,\Delta_j)|\mathcal{F}_n) \overset{L^2}{\longrightarrow}\E\left[\frac{1}{d(\mathbf{0},\mathcal{G}(\mathcal{P}_{1}^\mathbf{0}))} \right]\int {\rm Var}[K(\tilde{\Delta},\tilde{\Delta}')|\tilde{Z}=z]\phi(z){\rm d}z.
\end{aligned}$$
The same strategy applied to the second term \eqref{eq:subeq2} yields:
$$\begin{aligned}
&\quad \frac{1}{N}\sum_{i\neq j:(Z_i,Z_j),(Z_j,Z_i)\in\emgn}\frac{1}{d_id_j} {\rm Var}(K(\Delta_i,\Delta_j)|\mathcal{F}_n)\\
&\overset{L^2}{\longrightarrow}\mathbb{E}\left[ \sum_{y:(y,\mathbf{0}),(\mathbf{0},y)\in\mathcal{E}(\mathcal{G}(\mathcal{P}_1^\mathbf{0}))}\frac{1}{d\left(\mathbf{0},\mathcal{G}(\mathcal{P}_1^\mathbf{0})\right) d\left(y,\mathcal{G}(\mathcal{P}_1^\mathbf{0})\right)} \right]\int {\rm Var}[K(\tilde{\Delta},\tilde{\Delta}')|\tilde{Z}=z]\phi(z){\rm d}z.
\end{aligned}$$
The convergence of the third term \eqref{eq:subeq3} can be handled similarly. Note that
$$\begin{aligned}
\quad &{\rm Cov}(K(\Delta_i,\Delta_j),K(\Delta_i,\Delta_l)|\mathcal{F}_n)=\sum_{p,q,r=1}^M K(p,q)K(p,r)\frac{\frac{N_p}{N}f_p(Z_i)}{\phi_N(Z_i)}\frac{\frac{N_q}{N}f_q(Z_j)}{\phi_N(Z_j)}\frac{\frac{N_r}{N}f_r(Z_l)}{\phi_N(Z_l)}\\
&-\sum_{p,q=1}^M K(p,q)\frac{\frac{N_p}{N}f_p(Z_i)}{\phi_N(Z_i)}\frac{\frac{N_q}{N}f_q(Z_j)}{\phi_N(Z_j)}\sum_{p,r=1}^M K(p,r)\frac{\frac{N_p}{N}f_p(Z_i)}{\phi_N(Z_i)}\frac{\frac{N_r}{N}f_r(Z_l)}{\phi_N(Z_l)}.
\end{aligned}$$
Denote the right-hand side of the above display by $h_N(Z_i,Z_j,Z_l)$.
Then $h_N$ converges uniformly to $h$ defined as:
$$\begin{aligned}
\quad &h(Z_i,Z_j,Z_l)=\sum_{p,q,r=1}^M K(p,q)K(p,r)\frac{\pi_pf_p(Z_i)}{\phi(Z_i)}\frac{\pi_qf_q(Z_j)}{\phi(Z_j)}\frac{\pi_rf_r(Z_l)}{\phi(Z_l)}\\
&-\sum_{p,q=1}^M K(p,q)\frac{\pi_pf_p(Z_i)}{\phi(Z_i)}\frac{\pi_qf_q(Z_j)}{\phi(Z_j)}\sum_{p,r=1}^M K(p,r)\frac{\pi_pf_p(Z_i)}{\phi(Z_i)}\frac{\pi_rf_r(Z_l)}{\phi(Z_l)}.
\end{aligned}$$
With $\xi(z,\mathcal{P}_{N \phi_N}) :=  \frac{1}{d_x^2} \sum_{x\neq y:(z,x),(z,y)\in \mathcal{E}(\mathcal{G}(\mathcal{P}_{N \phi_N}))}h_N(z,x,y)$, \eqref{eq:subeq3} can be written as $$f(Z) = \frac{1}{N} \sum_{z\in \mathcal{P}_{N \phi_N}}\xi(z,\mathcal{P}_{N \phi_N}).$$
By Palm theory,
\begin{equation}\label{eq:third_term}
\E f(Z) = \int \E\left[\frac{1}{d^2(z,\mathcal{G}(\mathcal{P}_{N \phi_N}^z))}\sum_{x\neq y:(z,x),(z,y)\in\mathcal{E}(\mathcal{G}(\mathcal{P}_{N \phi_N}^z))} h_N(z,x,y) \right]\phi_N(z){\rm d}z.
\end{equation}
Since $h_N$ converges uniformly to $h$, using the same argument as before, if $z$ is a continuity point of $f_1,\ldots,f_M$, then
$$\begin{aligned}
&\quad \lim_{N\to\infty }\E\left[\frac{1}{d^2(z,\mathcal{G}(\mathcal{P}_{N \phi_N}^z))}\sum_{x\neq y:(z,x),(z,y)\in\mathcal{E}(\mathcal{G}(\mathcal{P}_{N \phi_N}^z))} h_N(z,x,y) \right]\\
&=\lim_{N\to\infty }\E\left[\frac{1}{d^2(z,\mathcal{G}(\mathcal{P}_{N \phi_N}^z))}\sum_{x\neq y:(z,x),(z,y)\in\mathcal{E}(\mathcal{G}(\mathcal{P}_{N \phi_N}^z))} h(z,x,y) \right]\\
&=\lim_{N\to\infty }\E\left[\frac{1}{d^2(z,\mathcal{G}(\mathcal{P}_{N \phi_N}^z))}\sum_{x\neq y:(z,x),(z,y)\in\mathcal{E}(\mathcal{G}(\mathcal{P}_{N \phi_N}^z))} h(z,z,z) \right]\\
&=\lim_{N\to\infty }\E\left[\frac{d(z,\mathcal{G}(\mathcal{P}_{N \phi_N}^z))(d(z,\mathcal{G}(\mathcal{P}_{N \phi_N}^z))-1)}{d^2(z,\mathcal{G}(\mathcal{P}_{N \phi_N}^z))}\right] h(z,z,z) \\
&=\E\left[1-\frac{1}{d(\mathbf{0},\mathcal{G}(\mathcal{P}_{\phi(z)}^\mathbf{0}))} \right]h(z,z,z)=\E\left[1-\frac{1}{d(\mathbf{0},\mathcal{G}(\mathcal{P}_{1}^\mathbf{0}))} \right]h(z,z,z).
\end{aligned}$$
Hence by dominated convergence theorem applied to \eqref{eq:third_term},
$$\E f(Z) \to \int \E\left[1-\frac{1}{d(\mathbf{0},\mathcal{G}(\mathcal{P}_{1}^\mathbf{0}))} \right]h(z,z,z)\phi(z){\rm d}z.$$ Note that
$$\begin{aligned}
\E f^2(Z)&=\frac{1}{N^2} \E \left[ \sum_{x\in \mathcal{P}_{N \phi_N}}\xi^2 (x,\mathcal{P}_{N \phi_N}) \right]+\frac{2}{N^2} \E \left[ \sum_{\{x,y\}\subset \mathcal{P}_{N \phi_N} } \xi(x,\mathcal{P}_{N \phi_N}) \xi(y,\mathcal{P}_{N \phi_N})\right].
\end{aligned}$$
The first term converges to 0 again as $\xi$ is bounded and the number of points in $\mathcal{P}_{N \phi_N}$ follows ${\rm Poisson}(N)$. The second term equals
$\int \E\left[ \xi(x,\mathcal{P}_{N \phi_N}^y)\xi(y,\mathcal{P}_{N \phi_N}^x)\right]\phi_N(x)\phi_N(y){\rm d}x{\rm d}y$ by Palm theory.
By the uniform convergence of $h_N$ to $h$, assuming $x,y$ are continuity point of $f_1,\ldots,f_M$, the integrand has a limit given by
$$\begin{aligned}
&\quad \lim_{N\to\infty}\E\left[ \xi(x,\mathcal{P}_{N \phi_N}^y)\xi(y,\mathcal{P}_{N \phi_N}^x)\right]\\
&=\lim_{N\to\infty}\E\left[  \sum_{u\neq v:(x,u),(x,v)\in \mathcal{E}(\mathcal{G}(\mathcal{P}_{N \phi_N}^{x,y}))}\frac{1}{d_x^2}h_N (x,u,v)  \sum_{u\neq v:(y,u),(y,v)\in \mathcal{E}(\mathcal{G}(\mathcal{P}_{N \phi_N}^{x,y}))}\frac{1}{d_y^2}h_N (y,u,v)\right]\\
&=\lim_{N\to\infty}\E\left[  \sum_{u\neq v:(x,u),(x,v)\in \mathcal{E}(\mathcal{G}(\mathcal{P}_{N \phi_N}^{x,y}))}\frac{1}{d_x^2}h (x,u,v)  \sum_{u\neq v:(y,u),(y,v)\in \mathcal{E}(\mathcal{G}(\mathcal{P}_{N \phi_N}^{x,y}))}\frac{1}{d_y^2}h (y,u,v)\right]\\
&=\lim_{N\to\infty}\E\left[  \sum_{u\neq v:(x,u),(x,v)\in \mathcal{E}(\mathcal{G}(\mathcal{P}_{N \phi_N}^{x,y}))}\frac{1}{d_x^2}h (x,x,x)  \sum_{u\neq v:(y,u),(y,v)\in \mathcal{E}(\mathcal{G}(\mathcal{P}_{N \phi_N}^{x,y}))}\frac{1}{d_y^2}h (y,y,y)\right]\\
&=\lim_{N\to\infty}\E\left[ \frac{d(x,\mathcal{G}(\mathcal{P}_{N\phi_N}^{x,y}))-1}{d(x,\mathcal{G}(\mathcal{P}_{N\phi_N}^{x,y}))}  \frac{d(y,\mathcal{G}(\mathcal{P}_{N\phi_N}^{x,y}))-1}{d(y,\mathcal{G}(\mathcal{P}_{N\phi_N}^{x,y}))} \right]h (x,x,x)h (y,y,y) \\
&=\E\left[\frac{d(\mathbf{0},\mathcal{G}(\mathcal{P}_{\phi(x)}^\mathbf{0}))-1}{d(\mathbf{0},\mathcal{G}(\mathcal{P}_{\phi(x)}^\mathbf{0}))} \frac{d(\mathbf{0},\mathcal{G}(\mathcal{P}_{\phi(y)}^\mathbf{0}))-1}{d(\mathbf{0},\mathcal{G}(\mathcal{P}_{\phi(y)}^\mathbf{0}))}\right] h (x,x,x)h (y,y,y) \\
&= \E\left[\frac{d(\mathbf{0},\mathcal{G}(\mathcal{P}_{1}^\mathbf{0}))-1}{d(\mathbf{0},\mathcal{G}(\mathcal{P}_{1}^\mathbf{0}))}\right]^2h(x,x,x)h(y,y,y).\\
\end{aligned}$$
The last line again follows from Lemma~\ref{lem:second_order_conv} and $\mathcal{G}$ being translation and scale invariant.
Now, by dominated convergence,
$$\E f^2(Z)\to \int  \E\left[\frac{d(\mathbf{0},\mathcal{G}(\mathcal{P}_{1}^\mathbf{0}))-1}{d(\mathbf{0},\mathcal{G}(\mathcal{P}_{1}^\mathbf{0}))}\right]^2h(x,x,x)h(y,y,y)\phi(x)\phi(y){\rm d}x{\rm d}y.$$
Hence $f(Z)$ converges in $L^2$ to $\int \E\left[\frac{d(\mathbf{0},\mathcal{G}(\mathcal{P}_{1}^\mathbf{0}))-1}{d(\mathbf{0},\mathcal{G}(\mathcal{P}_{1}^\mathbf{0}))}\right]h(x,x,x)\phi(x){\rm d}x$.

Therefore, \eqref{eq:subeq3} converges in $L^2$ to
$$\begin{aligned}
&\quad \frac{1}{N} \sum_{\substack{i,j,l{\rm\ distinct:}\\ (Z_i,Z_j),(Z_i,Z_l)\in\emgn}} \frac{1}{d_i^2}{\rm Cov}(K(\Delta_i,\Delta_j),K(\Delta_i,\Delta_l)|\mathcal{F}_n)\\
&\overset{L^2}{\longrightarrow} \E\left[1-\frac{1}{d(\mathbf{0},\mathcal{G}(\mathcal{P}_{1}^\mathbf{0}))} \right]\int h(z,z,z)\phi(z){\rm d}z\\
&=\E\left[1-\frac{1}{d(\mathbf{0},\mathcal{G}(\mathcal{P}_{1}^\mathbf{0}))} \right]\int {\rm Cov}(K(\tilde{\Delta},\tilde{\Delta}'),K(\tilde{\Delta},\tilde{\Delta}'')|\tilde{Z}=z) \phi(z){\rm d}z,
\end{aligned}$$
where $\tilde{\Delta},\tilde{\Delta}',\tilde{\Delta}''$ are independently drawn from the conditional distribution of $\tilde{\Delta} \mid \tilde{Z}=z$.

The other terms in the right side of~\eqref{eq:decom_var} converge similarly:
{\footnotesize $$\begin{aligned}
&\quad \eqref{eq:subeq4}\overset{L^2}{\longrightarrow} \mathbb{E}\left[ \sum_{x\neq y:(x,\mathbf{0}),(\mathbf{0},y)\in\mathcal{E}(\mathcal{G}(\mathcal{P}_1^\mathbf{0}))}\frac{1}{d\left(x,\mathcal{G}(\mathcal{P}_1^\mathbf{0})\right) d\left(\mathbf{0},\mathcal{G}(\mathcal{P}_1^\mathbf{0})\right)} \right]\int {\rm Cov}(K(\tilde{\Delta},\tilde{\Delta}'),K(\tilde{\Delta}',\tilde{\Delta}'')|\tilde{Z}=z) \phi(z){\rm d}z,
\end{aligned}$$
$$\begin{aligned}
&\quad\eqref{eq:subeq5}\overset{L^2}{\longrightarrow}  \mathbb{E}\left[ \sum_{x\neq y:(\mathbf{0},x),(y,\mathbf{0})\in\mathcal{E}(\mathcal{G}(\mathcal{P}_1^\mathbf{0}))}\frac{1}{d\left(\mathbf{0},\mathcal{G}(\mathcal{P}_1^\mathbf{0})\right) d\left(y,\mathcal{G}(\mathcal{P}_1^\mathbf{0})\right)} \right]\int {\rm Cov}(K(\tilde{\Delta},\tilde{\Delta}'),K(\tilde{\Delta}'',\tilde{\Delta})|\tilde{Z}=z) \phi(z){\rm d}z,
\end{aligned}$$
$$\begin{aligned}
&\quad\eqref{eq:subeq6}\overset{L^2}{\longrightarrow} \mathbb{E}\left[ \sum_{x\neq y:(x,\mathbf{0}),(y,\mathbf{0})\in\mathcal{E}(\mathcal{G}(\mathcal{P}_1^\mathbf{0}))}\frac{1}{d\left(x,\mathcal{G}(\mathcal{P}_1^\mathbf{0})\right) d\left(y,\mathcal{G}(\mathcal{P}_1^\mathbf{0})\right)} \right]\int {\rm Cov}(K(\tilde{\Delta},\tilde{\Delta}'),K(\tilde{\Delta}'',\tilde{\Delta}')|\tilde{Z}=z) \phi(z){\rm d}z,
\end{aligned}$$
$$\eqref{eq:subeq7}\overset{L^2}{\longrightarrow}-2\int {\rm Cov}(K(\tilde{\Delta},\tilde{\Delta}'),g(\tilde{\Delta})|\tilde{Z}=z) \phi(z){\rm d}z,$$
$$\eqref{eq:subeq8}\overset{L^2}{\longrightarrow}
-2\int {\rm Cov}(K(\tilde{\Delta},\tilde{\Delta}'),g(\tilde{\Delta}')|\tilde{Z}=z) \phi(z){\rm d}z,$$
$$\eqref{eq:subeq9} \overset{L^2}{\longrightarrow} \E\left[{\rm Var}(g(\tilde{\Delta})|\tilde{Z}) \right].$$}
where $g (\Delta)=2\sum_{p=1}^M \pi_p K(\Delta,p) - \sum_{p,q=1}^M \pi_p \pi_qK(p,q)$ is the uniform limit in $\Delta$ of $g_N(\Delta)$.

Combining the results above, we have ${\rm Var}(H_n|\mathcal{F}_n)\overset{L^2}{\longrightarrow}\kappa_1^2$, where
$$\begin{aligned}
\kappa_1^2&=(g_1+g_3)\int {\rm Var}[K(\tilde{\Delta},\tilde{\Delta}')|\tilde{Z}=z]\phi(z){\rm d}z\\
&\quad +(3-2g_1 - 2g_3 + g_2)\int {\rm Cov}(K(\tilde{\Delta},\tilde{\Delta}'),K(\tilde{\Delta},\tilde{\Delta}'')|\tilde{Z}=z) \phi(z){\rm d}z\\
&\quad -4\int {\rm Cov}(K(\tilde{\Delta},\tilde{\Delta}'),g(\tilde{\Delta})|\tilde{Z}=z) \phi(z){\rm d}z \; + \; \E\left[{\rm Var}(g(\tilde{\Delta})|\tilde{Z}) \right],
\end{aligned}$$
where when simplifying the coefficient of $\int {\rm Cov}(K(\tilde{\Delta},\tilde{\Delta}'),K(\tilde{\Delta},\tilde{\Delta}'')|\tilde{Z}=z) \phi(z){\rm d}z$, we have used that
$$\begin{aligned}
&\quad \mathbb{E}\left[ \sum_{x\neq y:(x,\mathbf{0}),(\mathbf{0},y)\in\mathcal{E}(\mathcal{G}(\mathcal{P}_1^\mathbf{0}))}\frac{1}{d\left(x,\mathcal{G}(\mathcal{P}_1^\mathbf{0})\right) d\left(\mathbf{0},\mathcal{G}(\mathcal{P}_1^\mathbf{0})\right)} \right]\\
&=\mathbb{E}\left[ \sum_{x:(x,\mathbf{0})\in\mathcal{E}(\mathcal{G}(\mathcal{P}_1^\mathbf{0}))}\frac{d\left(\mathbf{0},\mathcal{G}(\mathcal{P}_1^\mathbf{0})\right) - 1_{(\mathbf{0},x),(x,\mathbf{0})\in \mathcal{E}(\mathcal{G}(\mathcal{P}_1^\mathbf{0}))}}{d\left(x,\mathcal{G}(\mathcal{P}_1^\mathbf{0})\right) d\left(\mathbf{0},\mathcal{G}(\mathcal{P}_1^\mathbf{0})\right)} \right],
\end{aligned}$$
and
$$\mathbb{E}\left[ \sum_{x:(x,\mathbf{0})\in\mathcal{E}(\mathcal{G}(\mathcal{P}_1^\mathbf{0}))} \frac{1}{d\left(x,\mathcal{G}(\mathcal{P}_1^\mathbf{0})\right) } \right] = 1$$
since it is the limit of the expectation of $\frac{1}{n}\sum_{x\in\X_n} \sum_{y:(y,x)\in \emgn}\frac{1}{d_y}\equiv1$ \citep[Lemma 3.2]{penrose2003weak}. In particular, if the null hypothesis holds true (in which case $\tilde{\Delta}\indep \tilde{Z}$), then $\kappa_1^2$ reduces exactly to the asymptotic variance of the numerator derived in Theorem~\ref{thm:DistFree}, which is distribution-free.

If $\kappa_1^2 = 0$, then $H_n - \mathbb{E}(H_n|\mathcal{F}_n)\mid \mathcal{F}_n \overset{d}{\to}N(0,\kappa_1^2)$ follows trivially from the convergence of  the conditional variance of $H_n - \mathbb{E}(H_n|\mathcal{F}_n)$ to 0. We will assume $\kappa_1^2 >0$ and prove the convergence using Stein's
method based on dependency graphs (see Theorem~\ref{thm:depGraph} below). First note that $H_n - \mathbb{E}(H_n|\mathcal{F}_n) = \sum_{i=1}^n V_i$, where
$$\begin{aligned}
V_i &= \frac{1}{\sqrt{N}} \frac{1}{d_i} \sum \limits_{j:(Z_i,Z_j) \in \emgn} K(\Delta_i,\Delta_j)-\frac{1}{\sqrt{N}} g_N(\Delta_i) \\
&- \E\left[\frac{1}{\sqrt{N}} \frac{1}{d_i} \sum \limits_{j:(Z_i,Z_j) \in \emgn} K(\Delta_i,\Delta_j)-\frac{1}{\sqrt{N}} g_N(\Delta_i)\Big|\mathcal{F}_n \right].
\end{aligned}$$
Construct a graph $\mathcal{D}(\mathcal{G}_n)$ on $\{V_1,\ldots, V_n\}$ as follows: for $i\neq j$, there is an edge between $V_i$ and $V_j$ in $\mathcal{D}(\mathcal{G}_n)$ if and only if there is a path of length $\leq 2$ joining $Z_i$ and $Z_j$ in $\mathcal{G}_n$ (ignoring the direction of edges in $\mathcal{G}_n$).
Then for any pair of disjoint sets $\Gamma_1,\Gamma_2\subset\{V_1,\ldots,V_n\}$ such that no edge in $\mathcal{D}(\mathcal{G}_n)$ has one endpoint in $\Gamma_1$ and the other in $\Gamma_2$, $\{V_i\}_{i\in \Gamma_1}$ is independent of $\{V_i\}_{i\in \Gamma_2}$ conditioned on $\mathcal{F}_n$. This implies that $\mathcal{D}(\mathcal{G}_n)$ is a dependency graph \citep{chen2004normal}.
Suppose $t_n$ is the maximum degree of $\mathcal{G}_n$.
Then the maximal degree in $\mathcal{D}(\mathcal{G}_n)$ has an upper bound of $t_n^2$.
We will use the following CLT for dependency graph (with $p=3$).

\begin{theorem}[{\citealp[Theorem 2.7]{chen2004normal}}]\label{thm:depGraph}
Suppose $\{X_i\}_{i\in\mathcal{V}}$ are random variables indexed by vertices of a dependency graph, whose maximal degree is $D$. Set $W=\sum_{i\in\mathcal{V}}X_i$. If ${\rm Var}(W)=1$, $\E X_i=0$, and $\E |X_i|^p\leq \theta^p$ for $i\in\mathcal{V}$, then we have
$$\sup_{z\in\R}|\mathbb{P}(W\leq z) - \Phi(z)|\leq 75D^{5(p-1)}|\mathcal{V}|\theta^p.$$
\end{theorem}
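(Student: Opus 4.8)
The plan is to prove this Berry--Esseen inequality by Stein's method adapted to dependency graphs; since it is quoted as Theorem~2.7 of \citet{chen2004normal} I would simply invoke it, but were one to reprove it the argument would proceed as follows. Fix $z\in\R$ and let $f\equiv f_z$ be the bounded solution of the Stein equation $f'(w)-wf(w)=\mathbf{1}\{w\le z\}-\Phi(z)$, which satisfies $\|f\|_\infty\le\sqrt{2\pi}/4$, $\|f'\|_\infty\le 1$, and a Lipschitz-type bound $|wf(w)-vf(v)|\le(|w|+|v|+1)\,|w-v|$. Since $\mathbb{P}(W\le z)-\Phi(z)=\E[f'(W)-Wf(W)]$, it suffices to bound $|\E[f'(W)-Wf(W)]|$ by the stated quantity, uniformly over $z$.

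The key step is localization along the dependency graph. For $i\in\mathcal{V}$ let $N_i$ be $\{i\}$ together with its neighbours in the dependency graph and put $W_i:=\sum_{j\in N_i}X_j$, so that $X_i\perp(W-W_i)$ and $|N_i|\le D+1$. Writing $Wf(W)=\sum_i X_i f(W)$ and using $f(W)=f(W-W_i)+\int_0^{W_i}f'(W-W_i+t)\,\mathrm{d}t$, independence kills $\E[X_i f(W-W_i)]=0$. One then splits the integral as $W_i f'(W-W_i)$ plus an integral remainder, sums $\sum_i\E[X_iW_if'(W-W_i)]$, and compares with $\E[f'(W)]$; because $X_i$ and $X_j$ are uncorrelated whenever $j\notin N_i$, $\sum_i\E[X_iW_i]=\Var(W)=1$, so the leading terms cancel up to a further error from replacing $f'(W-W_i)$ by $f'(W)$, itself handled by another integral remainder.

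What is left is a sum of error terms, each an expectation of a product of at most $p$ (here three) of the variables $X_j$ with indices lying in neighbourhoods of size $O(D^2)$ (two steps in the graph), times a factor bounded by $\|f'\|_\infty$ or controlled by a concentration estimate for $W$ minus a localized sum, which is what tames the non-smooth indicator appearing in $f'$. By H\"older's inequality (plus, for $2<p\le 3$, a truncation of $W_i$ at level one) each term is $O(\theta^p)$, and the number of admissible index tuples is $O(|\mathcal{V}|\,D^{5(p-1)})$; assembling these bounds gives $|\mathbb{P}(W\le z)-\Phi(z)|\lesssim D^{5(p-1)}|\mathcal{V}|\,\theta^p$, with the explicit constant $75$ after careful bookkeeping. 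The main obstacle is precisely this bookkeeping: organizing the expansion so that every residual term lives on an $O(D^2)$-neighbourhood (which pins down the power of $D$) and, more subtly, passing from a smooth-test-function bound to a Kolmogorov-distance bound, where the concentration inequality for the localized sum $W-W_i-W_{i'}$ must be inserted and its dependence on $D$ and $\theta$ tracked.
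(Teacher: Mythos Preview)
The paper does not prove this theorem at all; it is simply quoted verbatim as \citet[Theorem 2.7]{chen2004normal} and then applied as a black box in the proofs of Theorem~\ref{thm:normal_alternative} and Proposition~\ref{prop:second_normal_positive_var}. You correctly identify this in your first sentence, and that is precisely what the paper does---there is nothing further to compare. Your subsequent sketch of the Stein-equation argument is a reasonable outline of how the original Chen--Shao proof proceeds, but it goes beyond anything the present paper attempts.
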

Note that
$$\begin{aligned}
&\quad \left|\frac{1}{\sqrt{N}} \frac{1}{d_i} \sum \limits_{j:(Z_i,Z_j) \in \emgn} K(\Delta_i,\Delta_j)-\frac{1}{\sqrt{N}} g_N(\Delta_i)\right|^3\\
&\leq \left( \left|\frac{1}{\sqrt{N}} \frac{1}{d_i} \sum \limits_{j:(Z_i,Z_j) \in \emgn} K(\Delta_i,\Delta_j)\right|^3 + \left|\frac{1}{\sqrt{N}} g_N(\Delta_i)\right|^3 \right)(1+1)(1+1)\\
&\leq 4 \left( \left|\frac{1}{\sqrt{N}}  \|K\|_\infty \right|^3 + \left|\frac{1}{\sqrt{N}} \|g_N\|_\infty\right|^3 \right)\;\lesssim\; \frac{1}{N^{3/2}}.
\end{aligned}$$
Hence
$$\begin{aligned}
\sup_{z\in\R}\left|\mathbb{P}\left(\frac{H_n - \mathbb{E}(H_n|\mathcal{F}_n)}{{\rm Var}(H_n|\mathcal{F}_n)}\leq z\Big|\mathcal{F}_n\right) - \Phi(z)\right| \;& =\; \sup_{z\in\R}\left|\mathbb{P}\left(\sum_{i=1}^n \frac{V_i}{{\rm Var}(H_n|\mathcal{F}_n)} \leq z\Big|\mathcal{F}_n\right) - \Phi(z)\right| \\
&\lesssim 75 D^{5(3-1)} \cdot n\cdot \frac{1}{N^{3/2}({\rm Var}(H_n|\mathcal{F}_n))^3}\\
&\leq \frac{75 t_n^{20}}{n^{1/2}({\rm Var}(H_n|\mathcal{F}_n))^3}\cdot \frac{n^{3/2}}{N^{3/2}}\overset{p}{\to}0.
\end{aligned}$$

\noindent \textbf{Step 2.} We will now show that  $\mathbb{E}(H_n|\mathcal{F}_n) - \mathbb{E}(H_n)\overset{d}{\to}N(0,\kappa_2^2)$.

Note that $\mathbb{E}(H_n|\mathcal{F}_n)$ can be written as:
$$\begin{aligned}
\frac{1}{\sqrt{N}}\sum_{i=1}^n \frac{1}{d_i}\sum_{j:(Z_i,Z_j)\in\emgn}\sum_{p,q=1}^M K(p,q)\frac{\frac{N_p}{N}f_p(Z_i)}{\phi_N(Z_i)}\frac{\frac{N_q}{N}f_q(Z_j)}{\phi_N(Z_j)} - \frac{1}{\sqrt{N}} \sum_{i=1}^n\sum_{p=1}^M g_N(p)\frac{\frac{N_p}{N}f_p(Z_i)}{\phi_N(Z_i)}.
\end{aligned}$$
Since $\sqrt{N}\left(\frac{N_p}{N} - \pi_p\right)\to 0$, $p=1,\ldots,M$, the difference between 
$\frac{\frac{N_p}{N}f_p(z)}{\phi_N(z)}$ and $\frac{\pi_pf_p(z)}{\phi(z)}$, and the difference between $$g_N (z) :=2\sum_{p=1}^M \frac{N_p}{N}K(z,p) - \sum_{p,q=1}^M \frac{N_pN_q}{N^2} K(p,q)$$ and $$g (z) :=2\sum_{p=1}^M \pi_p K(z,p) - \sum_{p,q=1}^M \pi_p \pi_qK(p,q)$$ are $o(\frac{1}{\sqrt{N}})$, uniformly in $z$.
Hence the above statistic has the same limit in probability as:
$$\begin{aligned}
\frac{1}{\sqrt{N}}\sum_{i=1}^n \frac{1}{d_i}\sum_{j:(Z_i,Z_j)\in\emgn}\sum_{p,q=1}^M K(p,q)\frac{\pi_pf_p(Z_i)}{\phi(Z_i)}\frac{\pi_qf_q(Z_j)}{\phi(Z_j)} - \frac{1}{\sqrt{N}}  \sum_{i=1}^n\sum_{p=1}^M g(p)\frac{\pi_pf_p(Z_i)}{\phi(Z_i)},
\end{aligned}$$
We can write the above statistic as $\frac{1}{\sqrt{N}}\sum_{x\in\mathcal{P}_{N \phi_N} }\xi(x,\mathcal{P}_{N \phi_N})$, where
\begin{equation}\label{eq:xi}
\xi(x,\mathcal{P}_{N \phi_N}) :=   \frac{1}{d_x}\sum_{y:(x,y)\in\mathcal{E}(\mathcal{P}_{N \phi_N})}\sum_{p,q=1}^M K(p,q)\frac{\pi_pf_p(x)}{\phi(x)}\frac{\pi_qf_q(y)}{\phi(y)} -  \sum_{p=1}^M g(p)\frac{\pi_pf_p(x)}{\phi(x)}.
\end{equation}

From the radius of stabilization for $\mathcal{G}$, we also have the same {\it radius of stabilization} \citep{Penrose2005normal,Penrose2007measures} $R=R(N, x)$ for $\xi(x,\mathcal{P}_{N\phi_N})$ for which the points outside $B(x,N^{-1/d} R)$ cannot impact the value of $\xi(x,\mathcal{P}_{N\phi_N})$, i.e., 
\begin{equation}\label{eq:Xi-RNx}\xi\left(x,[\mathcal{P}_{N\phi_N}\cap B(x,N^{-1/d} R)]\cup\X  \right)=\xi\left(x,\mathcal{P}_{N\phi_N}\cap B(x,N^{-1/d} R) \right),
\end{equation}
for all finite $\X\subset A\backslash B(x,N^{-1/d} R)$.

The power-law stabilization of a function $\xi: (x,\X) \mapsto \R$, where $x \in \R^d$ and $\X \subset \R^d$ is a finite set, can be defined in a similar fashion as the power-law stabilization of a graph (see \eqref{eq:power_law_graph}):
\begin{defn}[Power-law stabilizing functions \citep{Penrose2005normal}]\label{defn:stabilizing_function}
Let $\xi(x,\X)$ be a real-valued function defined for any $x\in\R^d$ and any finite set $\X\subset\R^d$ such that $R \equiv R(N,x)$ is a radius of stabilization as defined in~\eqref{eq:Xi-RNx}. Let
$$\tau(t):=\sup_{N\geq 1,x\in A}\mathbb{P}[R(N,x) > t], \qquad \mbox{for}\;\;t >0.$$
Then, $\xi$ is said to be:
\begin{enumerate}
\item {\it power-law stabilizing} of order $q$ with respect to $\phi_N$ if $\sup_{t\geq 1} t^q \tau(t) < \infty$,
\item {\it exponentially stabilizing} with respect to $\phi_N$ if $\limsup_{t\to\infty} t^{-1}\log \tau(t) <0$.
\end{enumerate}
\end{defn}

Let $$T:=\frac{1}{\sqrt{N}}\sum_{x\in \mathcal{P}_{N\phi_N}} \xi(x,\mathcal{P}_{N\phi_N})$$ with $ \xi(x,\mathcal{P}_{N\phi_N})$ defined in \eqref{eq:xi}.
The goal is to show the variance of $T$ converges. By Palm theory,
$$\E [T] =\sqrt{N} \int \E\xi(x,\mathcal{P}_{N\phi_N})\phi(x){\rm d}x,$$
$$\begin{aligned}
\E[T^2]&=\frac{1}{N}\E\left(\sum_{x\in \mathcal{P}_{N\phi_N}}\xi^2(x,\mathcal{P}_{N\phi_N})\right) + \frac{2}{N}\E\left[\sum_{\{x,y\}\subset \mathcal{P}_{N\phi_N}}\xi(x,\mathcal{P}_{N\phi_N})\xi(y,\mathcal{P}_{N\phi_N}) \right]\\
&=\int \E \xi^2(x,\mathcal{P}_{N\phi_N})\phi_N(x){\rm d}x +N \int \E\left[\xi(x,\mathcal{P}_{N\phi_N}^y)\xi(y,\mathcal{P}_{N\phi_N}^x) \right]\phi_N(x)\phi_N(y){\rm d}x{\rm d}y
\end{aligned}$$
Hence,
\begin{equation}\label{eq:var_T}\begin{aligned}
{\rm Var}[T]&=\int \E\xi^2(x,\mathcal{P}_{N\phi_N})\phi_N(x){\rm d}x \\
& + N \int \left( \E\left[\xi(x,\mathcal{P}_{N\phi_N}^y)\xi(y,\mathcal{P}_{N\phi_N}^x) \right] - \E \xi(x,\mathcal{P}_{N\phi_N}) \E\xi(y,\mathcal{P}_{N\phi_N}) \right)\phi_N(x)\phi_N(y){\rm d}x{\rm d}y.
\end{aligned}\end{equation}
The convergence of the first term above is easy to see.
Similar to previous arguments, by the coupling lemma \ref{lem:coupl} and the $\mathcal{P}_\lambda$ stabilization assumption, $(x,y)\in\mathcal{E}(\mathcal{G}(\mathcal{P}_{N \phi_N}^z))$ implies that $y$ is a neighbor of $x$, and lies within $O_p\left(N^{-1/d}\right)$ distance to $x$.
If $x$ is a continuity point of $f_q$, for $q=1,\ldots,M$, then together with boundedness of
$$\xi^2(x,\mathcal{P}_{N\phi_N}) = \left(\frac{1}{d_x}\sum_{y:(x,y)\in\mathcal{E}(\mathcal{P}_{N \phi_N}^x)}\sum_{p,q=1}^M K(p,q)\frac{\pi_pf_p(x)}{\phi(x)}\frac{\pi_qf_q(y)}{\phi(y)} -  \sum_{p=1}^M g(p)\frac{\pi_pf_p(x)}{\phi(x)} \right)^2,$$
we can 
replace $f_q(y)$ and $\phi(y)$ in the expression above by $f_q(x)$ and $\phi(x)$.
Thus,
{\small $$\begin{aligned}
\lim_{N\to\infty}\E\xi^2(x,\mathcal{P}_{N\phi_N})&=\lim_{N\to\infty}\E\left[\left(\frac{1}{d_x}\sum_{y:(x,y)\in\mathcal{E}(\mathcal{P}_{N \phi_N}^x)}\sum_{p,q=1}^M K(p,q)\frac{\pi_pf_p(x)}{\phi(x)}\frac{\pi_qf_q(y)}{\phi(y)} -  \sum_{p=1}^M g(p)\frac{\pi_pf_p(x)}{\phi(x)} \right)^2\right]\\
&=\lim_{N\to\infty}\E\left[ \left(\frac{1}{d_x}\sum_{y:(x,y)\in\mathcal{E}(\mathcal{P}_{N \phi_N}^x)}\sum_{p,q=1}^M K(p,q)\frac{\pi_pf_p(x)}{\phi(x)}\frac{\pi_qf_q(x)}{\phi(x)} -  \sum_{p=1}^M g(p)\frac{\pi_pf_p(x)}{\phi(x)} \right)^2\right] \\
&=\left(\sum_{p,q=1}^M K(p,q)\frac{\pi_pf_p(x)}{\phi(x)}\frac{\pi_qf_q(x)}{\phi(x)} -  \sum_{p=1}^M g(p)\frac{\pi_pf_p(x)}{\phi(x)} \right)^2.
\end{aligned}$$}
To show the convergence of the second term in \eqref{eq:var_T}, we first perform a change of variable $y=x+N^{-1/d}z$:
{\footnotesize \begin{equation}\label{eq:second_term}\begin{aligned}
&\quad N \int \left( \E\Big[\xi(x,\mathcal{P}_{N\phi_N}^y)\xi(y,\mathcal{P}_{N\phi_N}^x) \right] - \E \xi(x,\mathcal{P}_{N\phi_N}) \E\xi(y,\mathcal{P}_{N\phi_N}) \Big)\phi_N(x)\phi_N(y){\rm d}x{\rm d}y=\\
&\int \Big( \E\big[\underbrace{\xi(x,\mathcal{P}_{N\phi_N}^{x+N^{-\frac{1}{d}}z})}_{{\rm denoted\ by\ }X} \underbrace{\xi(x+N^{-\frac{1}{d}}z,\mathcal{P}_{N\phi_N}^x)}_{Z} \big] - \E \underbrace{\xi(x,\mathcal{P}_{N\phi_N})}_{X'} \E\underbrace{\xi(x+N^{-\frac{1}{d}}z,\mathcal{P}_{N\phi_N})}_{Z'} \Big)\phi_N(x)\phi_N(x+N^{-\frac{1}{d}}z){\rm d}x{\rm d}z.
\end{aligned}\end{equation}}
When taking the limit as $N\to\infty$, we want to pass the limit inside the integral. We will show that the integrand is small for large $|z|$, uniformly in $N$. More specifically, there exists $C_1>0$ free of $N,x,z$ such that
\begin{equation}\label{eq:XZ_XZ}
|\E[XZ]-\E[X']\E[Z']|\leq C_1(|z|^{-d-1/C_1}\wedge 1),
\end{equation}
for all $N\geq 1,x\in\R^d,z\in\R^d$, where $X,Z,X',Z'$ are defined in \eqref{eq:second_term}.
Let $\tilde{X} :=X1_{R(x,N)\leq |z|/3}$, $\tilde{Z} := Z1_{R(x+N^{-1/d}z,N)\leq |z|/3}$.
Then $\tilde{X}$ and $\tilde{Z}$ are independent because they are determined by the points of $\mathcal{P}_{N\phi_N}$
in $B(x,N^{-\frac{1}{d}}|z|/3)$, $B(x+N^{-\frac{1}{d}}z,N^{-\frac{1}{d}}|z|/3)$ respectively.
So $\E[\tilde{X}\tilde{Z}]=\E[\tilde{X}]\E[\tilde{Z}]$, and
$$\E[XZ] = \E[\tilde{X}]\E[\tilde{Z}] + \E[\tilde{X}(Z-\tilde{Z})] + \E[(X-\tilde{X})Z]$$
while
$$\E[X']\E[Z'] = \E[\tilde{X}]\E[\tilde{Z}] + \E[\tilde{X}]\E[Z'-\tilde{Z}] + \E[X'-\tilde{X}]\E[Z'].$$
Note that the absolute values of $X,Z,\tilde{X},\tilde{Z},X',Z'$ are all bounded by a constant $C>0$ depending only on the kernel $K$, $\pi$, and $\sup_n  \{\frac{t_n}{r_n}\}$. Hence
$$|\E[(X-\tilde{X})Z]|\leq C\E |X-\tilde{X}| =C\E|X|1_{R(x,N)>|z|/3}\leq C^2 \mathbb{P}(R(x,N)>|z|/3).$$
If we have power-law stabilization of order $q>d$, then there exists $C_2>0$ such that $C^2 \mathbb{P}(R(x,N)>|z|/3)\leq C_2(|z|^{-d+1/C_2}\wedge 1)$. Similarly, $\E[\tilde{X}(Z-\tilde{Z})]$, $ \E[\tilde{X}]\E[Z'-\tilde{Z}] $, and $\E[(X-\tilde{X})Z]$ are all controlled by $C_2(|z|^{-d+1/C_2}\wedge 1)$. Hence \eqref{eq:XZ_XZ} is proved.
This implies that the contribution of $|z|>K$ to the integral in \eqref{eq:second_term} is sufficiently small for large $K$, and hence we can pass the limit into the integral by dominated convergence. It remains to compute the limit of the integrand:
$$ \Big( \E\big[\xi(x,\mathcal{P}_{N\phi_N}^{x+N^{-\frac{1}{d}}z})\xi(x+N^{-\frac{1}{d}}z,\mathcal{P}_{N\phi_N}^x)\big] - \E \xi(x,\mathcal{P}_{N\phi_N}) \E\xi(x+N^{-\frac{1}{d}}z,\mathcal{P}_{N\phi_N})\Big)\phi_N(x)\phi_N(x+N^{-\frac{1}{d}}z)$$
as $N\to\infty$.
We show each term above converges. Suppose $x$ is a continuity point of $f_p$, $p=1,\ldots,M$.
With the same coupling technique used previously, we have:
$$\begin{aligned}
\lim_{N\to\infty}\E \xi(x,\mathcal{P}_{N\phi_N}) &= \lim_{N\to\infty}\E \left[\frac{1}{d_x}\sum_{y:(x,y)\in\mathcal{E}(\mathcal{P}_{N \phi_N}^x)}\sum_{p,q=1}^M K(p,q)\frac{\pi_pf_p(x)}{\phi(x)}\frac{\pi_qf_q(y)}{\phi(y)} -  \sum_{p=1}^M g(p)\frac{\pi_pf_p(x)}{\phi(x)} \right]\\
&=\lim_{N\to\infty}\E \left[\frac{1}{d_x}\sum_{y:(x,y)\in\mathcal{E}(\mathcal{P}_{N \phi_N}^x)}\sum_{p,q=1}^M K(p,q)\frac{\pi_pf_p(x)}{\phi(x)}\frac{\pi_qf_q(x)}{\phi(x)} -  \sum_{p=1}^M g(p)\frac{\pi_pf_p(x)}{\phi(x)} \right] \\
&= \sum_{p,q=1}^M K(p,q)\frac{\pi_pf_p(x)}{\phi(x)}\frac{\pi_qf_q(x)}{\phi(x)} -  \sum_{p=1}^M g(p)\frac{\pi_pf_p(x)}{\phi(x)}.
\end{aligned}$$
Lemma~\ref{lem:coupl} shows that $N^{1/d}(\mathcal{P}_{N\phi_N}-x)$ can be locally approximated by $\mathcal{P}_{\phi(x)}$,
and thus $N^{1/d}(\mathcal{P}_{N \phi_N}^{x+N^{-1/d}z}-x)$ can be approximated by $\mathcal{P}_{\phi(x)}\cup\{z\}$.
Since the graph is translation invariant and stabilizing on $\mathcal{P}_{\phi(x)}^\mathbf{0}$, it is also stabilizing on $\mathcal{P}_{\phi(x)}^z$ \citep[Lemma 3.3]{penrose2003weak}.
Therefore,
$$\begin{aligned}
&\quad \lim_{N\to\infty}\E \xi(x+N^{-1/d}z,\mathcal{P}_{N\phi_N}) \\
&= \lim_{N\to\infty}\E \Bigg[\frac{1}{d_{x+N^{-1/d}z}}\sum_{y:(x+N^{-1/d}z,y)\in\mathcal{E}(\mathcal{P}_{N \phi_N}^{x+N^{-1/d}z})}\sum_{p,q=1}^M K(p,q)\frac{\pi_pf_p(x+N^{-1/d}z)}{\phi(x+N^{-1/d}z)}\frac{\pi_qf_q(y)}{\phi(y)} \\
&\quad -  \sum_{p=1}^M g(p)\frac{\pi_pf_p(x+N^{-1/d}z)}{\phi(x+N^{-1/d}z)} \Bigg]\\
&=\lim_{N\to\infty}\E \left[\frac{1}{d_{x+N^{-1/d}z}}\sum_{y:(x+N^{-1/d}z,y)\in\mathcal{E}(\mathcal{P}_{N \phi_N}^{x+N^{-1/d}z})}\sum_{p,q=1}^M K(p,q)\frac{\pi_pf_p(x)}{\phi(x)}\frac{\pi_qf_q(x)}{\phi(x)} -  \sum_{p=1}^M g(p)\frac{\pi_pf_p(x)}{\phi(x)} \right] \\
&= \sum_{p,q=1}^M K(p,q)\frac{\pi_pf_p(x)}{\phi(x)}\frac{\pi_qf_q(x)}{\phi(x)} -  \sum_{p=1}^M g(p)\frac{\pi_pf_p(x)}{\phi(x)}.
\end{aligned}$$
Further, as shown in the proof of \citet[Lemma 3.3]{penrose2003weak}, if a geometric graph is translation invariant and stabilizing on $\mathcal{P}_1^\mathbf{0}$, then it is also stabilizing on $\mathcal{P}_1\cup\{\mathbf{0},z\}$. Hence,
\
$$\begin{aligned}
&\quad \lim_{N\to\infty}\E\big[\xi(x,\mathcal{P}_{N\phi_N}^{x+N^{-1/d}z})\xi(x+N^{-1/d}z,\mathcal{P}_{N\phi_N}^x)\big] \\
&= \lim_{N\to\infty}\E \Bigg[ \Big(\frac{1}{d_x}\sum_{y:(x,y)\in\mathcal{E}(\mathcal{P}_{N \phi_N}^{x,x+N^{-1/d}z})}\sum_{p,q=1}^M K(p,q)\frac{\pi_pf_p(x)}{\phi(x)}\frac{\pi_qf_q(x)}{\phi(x)} -  \sum_{p=1}^M g(p)\frac{\pi_pf_p(x)}{\phi(x)} \Big)\times\\
&\Big( \frac{1}{d_{x+N^{-1/d}z}}\sum_{y:(x+N^{-1/d}z,y)\in\mathcal{E}(\mathcal{P}_{N \phi_N}^{x,x+N^{-1/d}z})}\sum_{p,q=1}^M K(p,q)\frac{\pi_pf_p(x)}{\phi(x)}\frac{\pi_qf_q(x)}{\phi(x)} -  \sum_{p=1}^M g(p)\frac{\pi_pf_p(x)}{\phi(x)}\Big)\Bigg]\\
&=\left(\sum_{p,q=1}^M K(p,q)\frac{\pi_pf_p(x)}{\phi(x)}\frac{\pi_qf_q(x)}{\phi(x)} -  \sum_{p=1}^M g(p)\frac{\pi_pf_p(x)}{\phi(x)}\right)^2.
\end{aligned}$$
Hence the integrand in \eqref{eq:second_term} converges pointwise to 0. Combining the above convergence results we have:
$${\rm Var}[T]\to \int \left(\sum_{p,q=1}^M K(p,q)\frac{\pi_pf_p(x)}{\phi(x)}\frac{\pi_qf_q(x)}{\phi(x)} -  \sum_{p=1}^M g(p)\frac{\pi_pf_p(x)}{\phi(x)} \right)^2\phi(x){\rm d}x=:\kappa_2^2.$$
If $\kappa_2^2=0$, then $T-\E T\overset{d}{\to} N(0,\kappa_2^2)$ is trivial. Suppose $\kappa_2^2 > 0$. We will use the following proposition.

\begin{proposition}\label{prop:second_normal_positive_var}
Suppose that $\xi(x,\X)$ is a bounded function, defined for any $x\in\R^d$ and any finite set $\X\subset\R^d$, which is power-law stabilizing with respect to $\phi_N$ with order $q_0>16d$ (see Definition~\ref{defn:stabilizing_function}).
Suppose $\phi(\cdot)$ is bounded and has bounded support.
Let $T=\frac{1}{\sqrt{N}}\sum_{x\in \mathcal{P}_{N\phi_N}} \xi(x,\mathcal{P}_{N\phi_N})$.
Suppose ${\rm Var}(T)\to \sigma^2>0$. Then $T-\E T\overset{d}{\to} N(0,\sigma^2)$.
\end{proposition}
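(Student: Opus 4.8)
The plan is to deduce the central limit theorem for $T$ from Stein's method via dependency graphs (Theorem~\ref{thm:depGraph}), after replacing $\xi$ by a spatially localized surrogate. Fix a threshold $t_N\to\infty$ (a small power of $N$, to be pinned down at the end) and set
\[
\xi_{t_N}(x,\mathcal{P}_{N\phi_N}) := \xi\big(x,\mathcal{P}_{N\phi_N}\cap B(x,N^{-1/d}t_N)\big),\qquad
T_{t_N} := \frac{1}{\sqrt{N}}\sum_{x\in\mathcal{P}_{N\phi_N}}\xi_{t_N}(x,\mathcal{P}_{N\phi_N}).
\]
Since $\xi$ is bounded and, by the definition of the radius of stabilization in \eqref{eq:R}, $\xi$ and $\xi_{t_N}$ can differ at $x$ only on the event $\{R(N,x)>t_N\}$, power-law stabilization of order $q_0$ gives $\E\big|\xi(x,\mathcal{P}_{N\phi_N})-\xi_{t_N}(x,\mathcal{P}_{N\phi_N})\big|\le 2\|\xi\|_\infty\,\tau(t_N)\lesssim t_N^{-q_0}$, uniformly in $x$. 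Applying the Palm formula (Lemma~\ref{lem:palm} with $j=1$) and using $\int\phi_N=1$, this yields $\E|T-T_{t_N}|\lesssim \sqrt{N}\,t_N^{-q_0}$, and a similar second–moment computation (using that the \emph{full} $\xi$ is stabilizing, so far-apart increments $\xi-\xi_{t_N}$ decouple) gives $\E(T-T_{t_N})^2\to 0$ provided $t_N\gg N^{1/(2q_0)}$. Hence $T-T_{t_N}\to 0$ in $L^2$, $\Var(T_{t_N})\to\sigma^2$, and it suffices to prove $T_{t_N}-\E T_{t_N}\overset{d}{\to}N(0,\sigma^2)$.

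Next I would set up the dependency graph for $T_{t_N}$. Because $\mathcal{P}_{N\phi_N}$ restricted to disjoint Borel sets is a family of independent Poisson processes, and $\xi_{t_N}(x,\mathcal{P}_{N\phi_N})$ is a measurable function of $\mathcal{P}_{N\phi_N}\cap B(x,N^{-1/d}t_N)$, the graph on the vertex set $\mathcal{P}_{N\phi_N}$ with an edge between $x\neq y$ iff $\|x-y\|\le 2N^{-1/d}t_N$ is a dependency graph for the centered summands $V_x:=\xi_{t_N}(x,\mathcal{P}_{N\phi_N})-\E\xi_{t_N}(x,\mathcal{P}_{N\phi_N})$; for the Poissonized sum this is made precise by conditioning first on $n=\#\mathcal{P}_{N\phi_N}$ and then on the point locations, or by invoking the Poissonized form of Theorem~\ref{thm:depGraph}. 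Its maximal degree $D$ is at most the largest number of points of $\mathcal{P}_{N\phi_N}$ in any ball of radius $2N^{-1/d}t_N$; since $\phi$ is bounded with bounded support, each such count is Poisson with mean $O(t_N^d)$, and a union bound over an $N^{-1/d}$–net together with Poisson tail estimates shows $D=O\!\big(t_N^d\log N\big)$ with probability $1-o(1)$, the complementary event being negligible since $\xi$ is bounded and the scaled sum is $o_p(1)$ there.

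I would then apply Theorem~\ref{thm:depGraph} with $p=3$ to $W:=(T_{t_N}-\E T_{t_N})/\sqrt{\Var(T_{t_N})}$, whose summands are bounded in $L^3$ by $\theta\asymp N^{-1/2}$ (using $\|\xi\|_\infty<\infty$ and $\Var(T_{t_N})\to\sigma^2>0$), with $|\mathcal{V}|=n=O(N)$ and $D=O(t_N^d\log N)$: the Chen--Shao bound gives $\sup_z|\p(W\le z)-\Phi(z)|\lesssim D^{10}|\mathcal{V}|\theta^3\asymp (t_N^d\log N)^{10}N^{-1/2}$. Writing $t_N=N^{\beta}$, this vanishes when $10d\beta<\tfrac12$, while the localization step needs $\beta>\tfrac1{2q_0}$; the two constraints are jointly satisfiable precisely for $q_0>10d$, and the extra room up to $q_0>16d$ in the hypothesis comfortably absorbs the routine verifications of $\Var(T_{t_N})\to\sigma^2$ and of the conditioning on the good-degree event. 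Combining the $L^2$ convergence $T-T_{t_N}\to 0$, the variance convergence, and Slutsky's theorem then gives $T-\E T\overset{d}{\to}N(0,\sigma^2)$.

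The main obstacle will be the interplay between the two error budgets: the localization error decays like $t_N^{-q_0}$ (forcing $t_N$ not too small), whereas the dependency-graph error grows like a high power of the maximal degree $D\approx t_N^d$ (forcing $t_N$ not too large), and one must in addition control $D$ itself — a random quantity depending on the Poisson process — uniformly over all ball centers. Reconciling these, together with handling the Poissonized (random vertex set) version of the dependency-graph CLT cleanly, is the delicate part; the assumption $q_0>16d$ is exactly what makes the admissible window for $t_N$ nonempty with room to spare.
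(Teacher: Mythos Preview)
Your overall strategy — localize, then apply Stein's method via a dependency graph — is the right idea and matches the paper's. The gap is in how you build the dependency graph. You put the vertices at the Poisson points themselves and let $V_x=\xi_{t_N}(x,\mathcal{P}_{N\phi_N})-\E\xi_{t_N}(x,\mathcal{P}_{N\phi_N})$. But Theorem~\ref{thm:depGraph} requires a \emph{deterministic} index set carrying random variables with a fixed dependency structure; here both the vertex set and the edge set are functions of the very randomness (the Poisson point locations) you are trying to average over. ``Conditioning on $n$ and then on the point locations'' does not help: once you condition on the locations, each $\xi_{t_N}(x,\mathcal{P}_{N\phi_N})$ is deterministic, so there is nothing left to CLT. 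There is no off-the-shelf ``Poissonized form'' of Theorem~\ref{thm:depGraph} in the paper, and your argument leaves this step unsubstantiated. A secondary issue: your $L^2$ bound on $T-T_{t_N}$ is too optimistic. Because $(\xi-\xi_{t_N})(x,\cdot)$ is supported on $\{R(N,x)>t_N\}$, the ``decoupling'' you invoke is not available (on that event the full $\xi$ may depend on arbitrarily distant points), and the off-diagonal Palm term gives only $N\tau(t_N)$, forcing $\beta>1/q_0$ rather than $1/(2q_0)$; this would push your threshold to $q_0>20d$.

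The paper resolves the first problem by \emph{blocking}: it tiles $\mathrm{supp}(\phi)$ by cubes $Q_1,\dots,Q_V$ of side $s_N=N^{-1/d}\rho_N$ with $\rho_N=N^a$, and sets $S_i$ to be the (centered) sum of $\xi(X_{i,j},\mathcal{P}_{N\phi_N})\,1_{\{R_{i,j}\le\rho_N\}}$ over the random points $X_{i,j}$ falling in $Q_i$. The dependency graph is then on the \emph{deterministic} cube indices, with an edge between $Q_i$ and $Q_j$ when their distance is at most $2N^{-1/d}\rho_N$; this gives a constant maximal degree $D\le 5^d$, and the spatial independence of the Poisson process makes it a genuine dependency graph for the $S_i$'s. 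The truncation is done via the indicator $1_{\{R_{i,j}\le\rho_N\}}$ (so that $T'=T$ on the global event $E=\bigcap_{i,j}\{R_{i,j}\le\rho_N\}$), and $|\Var T-\Var T'|\to 0$ is obtained not by decoupling but by H\"older: $\|T-T'\|_2\le \|T-T'\|_q\,\mathbb{P}(E^c)^{(1-2/q)/2}$ with the crude bound $\|T\|_q,\|T'\|_q\lesssim\sqrt{N}$ and $\mathbb{P}(E^c)\le N\tau(\rho_N)$. The Chen--Shao bound then reads $D^{10}\,V\,\theta^3\asymp N\rho_N^{-d}\cdot \rho_N^{3d}N^{-3/2}$, and the two constraints $a<1/(8d)$ (for the Stein bound) and $aq_0>2$ (for the truncation) are compatible precisely when $q_0>16d$. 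If you rewrite your argument with this cube-blocking in place of the point-based graph, it goes through.
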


\begin{proof}
We first cover $ {\rm supp}(\phi)$ by  $V=V(N)$ cubes of the form $Q=\prod_{i=1}^d [j_i s_N,(j_i +1) s_N]$,
where $s_N := N^{-1/d}\rho_N$. Here $\rho_N$ will be taken as $N^{a}$ such that
$a<\frac{1}{8d}$ and $(aq_0-1)>1$.
Since $\phi$ has bounded support, $V(N) = O\left( \frac{1}{s_N^d}\right)=O\left(N \rho_N^{-d}\right)$.
The points in $\mathcal{P}_{N\phi_N}$ can be labelled as:
$$X_{i,j},\ 1\leq i\leq V,\ 1\leq j\leq M_i,\ M_i\sim {\rm Poisson}\left( N \int_{Q_i}\phi_N(x){\rm d}x\right)$$ where $Q_1,\ldots,Q_V$ is an enumeration of the cubes that cover ${\rm supp}(\phi)$ as described above. As $\phi$ is bounded, each $f_i\ (i=1,\ldots,M)$ is also bounded, and consequently $\|\phi_N\|_\infty$ is bounded by a constant for all $N$, and $N \int_{Q_i}\phi_N (x){\rm d}x\leq N \|\phi_N\|_\infty \int_{Q_i}{\rm d}x = N s_N^d \|\phi_N\|_\infty=  \rho_N^d \|\phi_N\|_\infty$. If we write $\xi_{i,j}:=\xi(X_{i,j},\mathcal{P}_{N\phi_N})$, then we have the following bound on the $q$-th moment (i.e., for any random variable $X$, $ \|X\|_q := (\E |X|^q)^{1/q}$):
$$\left\| \sum_{j=1}^{M_i}|\xi_{i,j}| \right\|_q\leq \|\xi\|_\infty \|M_i\|_q\leq  \|\xi\|_\infty \left\|{\rm Poisson}\left( \rho_N^d \|\phi_N\|_\infty\right)\right\|_q.$$
Using the fact that $\|{\rm Poisson}(\lambda)\|_q \leq \lambda \exp(q/(2\lambda))$ (see~\cite{ahle2021sharp}),
and $ \|\phi_N\|_\infty$ being bounded,
we have:
\begin{equation}\label{eq:q-moment-bound}
\left\| \sum_{j=1}^{M_i}|\xi_{i,j}| \right\|_q\lesssim  \rho_N^d.
\end{equation}
Let $R_{i,j}$ denote the radius of stabilization of $\xi$ at $X_{i,j}$, $E_{i,j}:=\{R_{i,j}\leq \rho_N\}$, $E:=\cap_{i=1}^V \cap_{j=1}^{M_i} E_{i.j}$. Then by Palm theory \cite[Theorem 1.6]{Penrose2003graph},
$$\mathbb{P}[E^c] \leq \E\left[\sum_{i=1}^V \sum_{j=1}^{M_i} 1_{E_{i,j}^c}\right] = N \int \mathbb{P}[R(x,N)>\rho_N]\phi_N(x){\rm d}x\leq N \tau(\rho_N).$$
Since $\xi$ is power law stabilizing of order $q_0$, $\tau(t) \leq \frac{C}{t^{q_0}}$ for some $C>0$.
Since $\rho_N = N^a$, we have
$\mathbb{P}[E^c]\leq \frac{C}{N^{aq_0-1}}$.
Consider
$$T' := \frac{1}{\sqrt{N}}\sum_{i=1}^{V(N)}\sum_{j=1}^{M_i}\xi(X_{i,j},\mathcal{P}_{N\phi_N})\cdot 1_{E_{i,j}}$$
which equals $T$ on $E$, and is a sum of $V(N)$ ``near independent" random variables. We show that ${\rm Var}[T']$ is close to ${\rm Var}[T]$. By H\"older's inequality, for $q>2$:
$$\begin{aligned}
\|T-T'\|_2 &\leq \|T-T'\|_q\mathbb{P}[E^c]^{1-\frac{2}{q}}\\
&\leq \left(\|T\|_q + \|T'\|_q \right) \left( \frac{C}{N^{aq_0 - 1}} \right)^{1-\frac{2}{q}}.
\end{aligned}$$ 
To bound $\|T\|_q$ and $\|T'\|_q$, we use \eqref{eq:q-moment-bound} and $V(N) =O\left( N \rho_N^{-d}\right)$ to get, for $q\geq 2$,
$$\|T'\|_q = \frac{1}{\sqrt{N}}\left\| \sum_{i=1}^{V(N)}\sum_{j=1}^{M_i}\xi(X_{i,j},\mathcal{P}_{N\phi_N})\cdot 1_{E_{i,j}}\right\|_q\lesssim 
\frac{1}{\sqrt{N}}\cdot N \rho_N^{-d}\cdot \rho_N^d=\sqrt{N}.$$
Similarly $\|T\|_q\lesssim \sqrt{N}$. Hence
$$\|T-T'\|_2\lesssim \sqrt{N} \cdot \left( \frac{C}{N^{aq_0 - 1}} \right)^{1-\frac{2}{q}}.$$
Hence,
$$\begin{aligned}
|{\rm Var}(T) - {\rm Var}(T')|&=|{\rm Var}(T-T') + 2{\rm Cov}(T-T',T')|\\
&\leq \|T-T'\|_2^2 + 2\|T-T'\|_2\|T'\|_2\\
&\lesssim N   \left( \frac{C}{N^{ap - 1}} \right)^{2-\frac{4}{q}} + 2 \cdot \sqrt{N}  \left( \frac{C}{N^{ap - 1}} \right)^{1-\frac{2}{q}}\cdot \sqrt{N}.
\end{aligned}$$
We can choose $q$ large enough such that $(aq_0-1)(1-2/q)-1>0$. Then we have $|{\rm Var}(T) - {\rm Var}(T')|\to 0$.
In particular, ${\rm Var}(T')\to\sigma^2 > 0$. Also, $|\E T - \E T'|\leq \E|T-T'|\leq \|T-T'\|_2 \to 0$.

Write $\left( T' - \E[T']\right)/\sqrt{{\rm Var}(T')}=\sum_{i=1}^V S_i$, where
$$S_i = \frac{1}{\sqrt{N {\rm Var}(T')}}\left(\sum_{j=1}^{M_i}\xi(X_{i,j},\mathcal{P}_{N\phi_N})\cdot 1_{E_{i,j}} - \E\left[\sum_{j=1}^{M_i}\xi(X_{i,j},\mathcal{P}_{N\phi_N})\cdot 1_{E_{i,j}} \right] \right).$$
Construct a dependency graph on $\{S_1,\ldots, S_V\}$ as follows: for $i\neq j$, there is an edge between $S_i$ and $S_j$ if $d(Q_i,Q_j)\leq 2N^{-1/d}\rho_N$, where $d(Q_i,Q_j):=\inf\{|x-y|:x\in Q_i,y\in Q_j\}$.
By definition of the radius of stabilization $R(x,N)$, the value of $S_i$ is determined by the restriction of $\mathcal{P}_{N\phi_N}$ to the $N^{-1/d}\rho_N$-neighborhood of the cube $Q_i$, i.e., $\{x\in \R^d:|x-y|\leq N^{-1/d}\rho_N\textrm{ for some }y\in Q_i\}$.
By the independence property of the Poisson process, 
for any pair of disjoint sets  $\Gamma_1,\Gamma_2\subset\{S_1,\ldots,S_V\}$ such that no edge has one endpoint in $\Gamma_1$ and the other in $\Gamma_2$, $\Gamma_1$ is independent of $\Gamma_2$. Since the number of cubes in $Q_1,\ldots,Q_V$ that are at most $2N^{-1/d}\rho_N$ distant from a given cube is bounded by $5^d$, it follows that the maximal degree of the dependency graph is bounded by $D\leq 5^d$, a constant.

By \eqref{eq:q-moment-bound} and ${\rm Var}(T')\to\sigma^2 > 0$,
$$\E[|S_i|^3]\lesssim \frac{1}{N^{3/2}}\rho_N^{3d} = \frac{1}{N^{3(0.5-ad)}}.$$
Hence we can take $p=3$ and $\theta=\frac{C'}{N^{0.5-ad}}$ for some constant $C'$ in Theorem~\ref{thm:depGraph} to yield
$$\begin{aligned}
\sup_{z\in\R}\left|\mathbb{P}\left(\frac{ T' - \E[T']}{\sqrt{{\rm Var}(T')}} \leq z\right) - \Phi(z)\right| &\leq 75D^{5(p-1)}|\mathcal{V}|\theta^p\\
&\lesssim  N \rho_N^{-d} \cdot  \frac{1}{N^{3(0.5-ad)}}  = \frac{1}{N^{0.5-4ad}}\to 0,
\end{aligned}$$
as $N\to\infty$, which implies $\frac{ T' - \E[T']}{\sqrt{{\rm Var}(T')}} \overset{d}{\to} N(0,1)$.
Since $|{\rm Var}(T) - {\rm Var}(T')|\to 0$, $|\E T - \E T'|\to 0$, and $T,T'$ coincide on the set $E$ whose probability converges to 1, we have $\frac{ T - \E[T]}{\sqrt{{\rm Var}(T)}} \overset{d}{\to} N(0,1)$.
\end{proof}

\subsection{Proof of Theorem~\ref{thm:shrinking_alternative}}\label{pf:shrinking_alternative}
Note that Lemmas~\ref{lem:coupl} and \ref{lem:coupl2} still hold if $\mathcal{P}_{N\phi_N}$ is replaced by $\mathcal{P}_{N\phi_N^N}$ and $\phi(x_0)$ is replaced by $f(x_0)$, assuming $f_1^N,\ldots,f_M^N$ are equicontinuous at $x_0$ and $f_i^N(x_0)\to f(x_0)$ as $N\to\infty$.
The slight change in the proof is that \eqref{eq:coupl} now becomes:
{\small
$$
N \int_{B(x_0,N^{-1/d}K)}|\phi_N^N (x)- f(x_0)|{\rm d}x\leq N \int_{B(x_0,N^{-1/d}K)}(|\phi_N^N(x)-\phi_N^N(x_0)|+|\phi_N^N(x_0)-f(x_0)|){\rm d}x\to 0,
$$}
since $\phi_N^N$ is equicontinuous at $x_0$ and $\phi_N^N(x_0)\to f(x_0)$.
The rest of the proof follows almost verbatim from the proof of Theorem~\ref{thm:normal_alternative},
by replacing every $f_i$ by $f_i^N$, every $\phi_N$ by $\phi_N^N$, every $\phi$ by $\phi^N:= \sum_{i=1}^M \pi_i f_i^N$, and applying an extra limit (as $N \to \infty$):
$$\left\{\begin{aligned}
&\int |\phi^N(z)-f(z)|{\rm d}z \to 0,\\
&\frac{\pi_if_i^N(z)}{\phi^N(z)} \to \pi_i,\qquad \mbox{for a.e.}\;  z \in {\rm supp}(f), \qquad \mbox{for } \; i=1,\ldots,M.
\end{aligned}\right.$$
The limiting variance $\kappa_1^2$ reduces to the numerator of the asymptotic null variance in \eqref{eq:null_variance}, which is distribution-free. The limiting variance
$$\kappa_2^2=\int \left(\sum_{p,q=1}^M K(p,q)\pi_p \pi_q -  \sum_{p=1}^M g(p)\pi_p \right)^2 f(x){\rm d}x=0.$$ Recall that $g (\Delta)=2\sum_{i=1}^M \pi_i K(\Delta,i) - \sum_{i,j=1}^M \pi_i \pi_j K(i,j)$ was defined in Theorem~\ref{thm:normal_alternative}.
Hence, $\mathbb{E}(H_n|\mathcal{F}_n) - \mathbb{E}(H_n)\overset{d}{\to}N(0,\kappa_2^2) \equiv \delta_0$ as the variance of $\mathbb{E}(H_n|\mathcal{F}_n)$ converges to 0. Note that we need $q_0>d$ here to establish the convergence of ${\rm Var}\left( \E[H_n|\mathcal{F}_n]\right)$; see \eqref{eq:XZ_XZ}. Recall that the power-law stabilization with order $q_0>16d$ was used for proving Proposition~\ref{prop:second_normal_positive_var} which establishes the CLT assuming the variance converges to a non-zero quantity, and is not needed in this proof.

\subsection{Proof of Theorem~\ref{thm:detection_threshold}}\label{sec:detection_threshold}

Since
$$\frac{n\tilde{H}_n - \E[n\tilde{H}_n]}{\sqrt{n}\kappa_{1,{\rm null}}}\overset{d}{\to}N(0,1),$$
the local power of the test in \eqref{eq:test} is
$$\Phi\left( z_\alpha + \lim \frac{ \E[n\tilde{H}_n]}{\kappa_{1,{\rm null}}\sqrt{n}} \right).$$
As $\frac{n}{N} \overset{p}{\to} 1$, we know that $\frac{ \E[n\tilde{H}_n]}{\sqrt{n}}$ has the same limit in probability as the expectation of
$$\frac{1}{\sqrt{N}} \sum \limits_{i=1}^n \frac{1}{d_i} \sum \limits_{j:(Z_i,Z_j) \in \emgn} K(\Delta_i,\Delta_j)-\frac{1}{\sqrt{N}}\sum_{i=1}^n  g_N(\Delta_i).$$
Note that the expectation of the second term $\frac{1}{\sqrt{N}}\sum_{i=1}^n  g_N(\Delta_i)$ equals the expectation of the first term $\frac{1}{\sqrt{N}} \sum \limits_{i=1}^n \frac{1}{d_i} \sum \limits_{j:(Z_i,Z_j) \in \emgn} K(\Delta_i,\Delta_j)$ under the null, i.e., $\theta_1 = \theta_2$. Hence we study the first term as follows. With a $k$-NN graph and the discrete kernel, the expectation of the first term can be written as
$$\begin{aligned}
&\quad \frac{1}{k\sqrt{N}}\E \left[ \sum_{i,j:(Z_i,Z_j) \in \emgn} \left(1 - I(\Delta_i\neq \Delta_j)\right)  \right]\\
&=\sqrt{N} -  \frac{1}{k\sqrt{N}}\E \left[ \sum_{i,j:(Z_i,Z_j) \in \emgn} I(\Delta_i\neq \Delta_j)  \right]\\
&=\sqrt{N} -  \frac{1}{k\sqrt{N}}\E \left[ \sum_{x,y:(x,y) \in \mathcal{E}(\mathcal{P}_{N\phi_N^N})}
\left( \frac{\frac{N_1}{N}f(x|\theta_1)}{\phi_N^N(x)}\frac{\frac{N_2}{N}f(y|\theta_2)}{\phi_N^N(y)}
+ \frac{\frac{N_2}{N}f(x|\theta_2)}{\phi_N^N(x)}\frac{\frac{N_1}{N}f(y|\theta_1)}{\phi_N^N(y)} \right)  \right]\\
&=\sqrt{N} -  \frac{1}{k\sqrt{N}}N_1N_2\int_{S\times S} (f(x|\theta_1)f(y|\theta_2) + f(x|\theta_2)f(y|\theta_1))\rho_K^{\theta_1,\theta_2}(x,y){\rm d}x{\rm d}y,
\end{aligned}$$
where $\phi_N^N(x) = \frac{N_1}{N}f(x|\theta_1) + \frac{N_2}{N}f(x|\theta_2) $,
and $\rho_K^{\theta_1,\theta_2}(x,y) = \mathbb{P}\left((x,y)\in \mathcal{E}(\mathcal{P}_{N\phi_N^N}^{x,y})\right)$.
The last equality above follows from Palm theory.
Define $$\mu_N^{(S)}(\theta_1,\theta_2) := N^2 \int_{S\times S}(f(x|\theta_1)f(y|\theta_2) + f(x|\theta_2)f(y|\theta_1))\rho_K^{\theta_1,\theta_2}(x,y){\rm d}x{\rm d}y.$$
Then,
$$\begin{aligned}
\lim_{N \to \infty} \frac{ \E[n\tilde{H}_n]}{\sqrt{n}} &= \lim_{N \to \infty}  -\frac{N_1N_2}{kN^2} \frac{\mu_N^{(S)}(\theta_1,\theta_2) - \mu_N^{(S)}(\theta_1,\theta_1)}{\sqrt{N}}\\
&=\lim_{N \to \infty}  -\frac{N_1N_2}{kN^2}
\left( \frac{\varepsilon_N^\top \nabla \mu_N^{(S)}(\theta_1,\theta_1)}{\sqrt{N}} + \frac{1}{2}\frac{\varepsilon_N^\top {\rm H}\mu_N^{(S)}(\theta_1,\theta_1)\varepsilon_N}{\sqrt{N}} + \mathcal{R}_N \right),\\
\end{aligned}$$
where the gradient and Hessian are taken with respect to $\theta_2$.
The limit of the gradient and Hessian follows the argument in Appendix E in the supplementary file of \citet{BB20detection}.
Note that there is a typo in the limit of the Hessian term in \citet[Appendix E]{BB20detection}. In fact, with $\varepsilon_N = h N^{-1/4}$, we can show that
$$\frac{\varepsilon_N^\top {\rm H}\mu_N^{(S)}(\theta_1,\theta_1)\varepsilon_N}{\sqrt{N}}\to -2rk\E  \left[\frac{h^\top \nabla_{\theta_1}f(X|\theta_1)}{f(X|\theta_1)}\right]^2,$$
where $r=2\pi_1\pi_2$. The coefficient above is 2 instead of $3/2$ in the original version of \citet{BB20detection}.
The rest of the proof follows from the arguments at the beginning of \citet[Appendix B]{BB20detection}.
\qed

\subsection{Proof of Theorem~\ref{thm:extPham}}\label{sec:pfextPham}
The proof is similar to that in \citet{Pham1989permu}. 
But for completeness, we provide the entire proof here. The proof applies the method of moments, an idea that dates back to  Pafnutii Lvovich Chebyshev (1821–1894) \citep{Fisher2011HistoryCLT}. More specifically, we will show that for all $r\in\mathbb{N}$,
$$\E\left[ \left(\frac{\sum a_{ij}B_{ij}}{{\rm Var}(\sum a_{ij}B_{ij})} \right)^r \right]\to \E\left[ Z^r\right],$$
where $Z\sim N(0,1)$. This implies that for any subsequence that converges in distribution, the limiting distribution has the same moments as $N(0,1)$,
and therefore must be $N(0,1)$ as these moments uniquely determine the distribution $N(0,1)$ (see e.g.,~\cite[Theorem 30.1]{Billingsley2012Prob}).
Since weak convergence to $N(0,1)$ holds for any subsequence, it further implies that the entire sequence converges in distribution as weak convergence is metrizable.

In this proof, we will decompose $\left(\sum a_{ij}B_{ij} \right)^r$ into ``sums corresponding to different graphs" as in \citet{Bloemena1964graph}. The sums corresponding to most of the graphs will be negligible when compared to $\left[{\rm Var}(\sum a_{ij}B_{ij})\right]^r$. The remaining dominating terms will lead to the moments of $N(0,1)$.

We first define equivalent graphs. In this proof, a graph will be considered as a collection of edges (so it does not contain isolated vertices), and we will consider the edges in a graph to be different, each having a label (from $1,\ldots,r$) and a direction.
Two graphs are equivalent if they can be mapped to each other while keeping the direction and the labeling of the edges.
Multiple edges are allowed to exist between 2 vertices. For a graph $G$, denote by $|G|$ the number of its vertices, and arbitrarily label the vertices of $G$ as $1,\ldots,|G|$.
Then an edge in $G$ can be given by $(\mu,\nu)$, where $\mu,\nu\in\{1,\ldots,|G|\}$ are integers. Define
\begin{equation}\label{eq:def_Sigma_a_G}\Sigma (a,G) := {{\sum}'_{i_1,\ldots, i_{|G|}}} \prod_{(\mu,\nu)\in G} a_{i_\mu i_\nu},\end{equation}
where $\sum'$ means the summation is over distinct $i_1,\ldots,i_{|G|} \in \{1,\ldots, n\}$.

Denote by $G(r)$ the set of all graphs with $r$ edges (equivalent graphs will only be counted once). Then
\begin{equation}\label{eq:decomp_r_moment}(\sum a_{ij})^r = \sum_{G\in G(r)} \Sigma(a,G).\end{equation}
It can be seen from \eqref{eq:def_Sigma_a_G} that if $G$ has a self-loop, then $\Sigma (a,G)=0$ as $a_{ii}=b_{ii}=0$, $\forall i$.
Hence, we can ignore graphs in $G(r)$ that contain self-loops.
To understand \eqref{eq:decomp_r_moment},
for example, when $r=2$,
$$
\begin{aligned}
\left(\sum a_{ij}\right)^2 &= {\sum}' a_{ij} a_{kl}\\
& + {\sum}' a_{ij}a_{ik} + {\sum}'a_{ij}a_{ki} + {\sum}' a_{ij}a_{jk} + {\sum}' a_{ij}a_{kj}\\
&+ {\sum}' a_{ij}^2 + {\sum}' a_{ij}a_{ji}.
\end{aligned}$$
The sum in the first line corresponds to a graph with two isolated edges.
The sums in the second line correspond to graphs with two edges sharing exactly one vertex, but the edges can have different orientations.
The sums in the third line correspond to graphs with two edges sharing two vertices (which forms a cycle), with two possible orientations.

Now,
\begin{equation}\label{eq:expmoment}\begin{aligned}
\E\left(\sum a_{ij}B_{ij} \right)^r &=\sum_{G\in G(r)} \dsum_{i_1,\ldots, i_{|G|}} \E \prod_{(\mu,\nu)\in G} a_{i_\mu i_\nu} B_{i_\mu i_\nu}\\
&=\sum_{G\in G(r)} \E \left[\prod_{(\mu,\nu)\in G} B_{i_\mu i_\nu} \right] \Sigma(a,G)\\
&=\sum_{G\in G(r)}\frac{1}{n(n-1)\cdots(n-|G|+1)} \Sigma(b,G)\Sigma(a,G).
\end{aligned}\end{equation}
The second equality follows from the fact that each term $\prod_{(\mu,\nu)\in G}  B_{i_\mu i_\nu}$ has the same expectation, which is $ \frac{1}{n(n-1)\cdots(n-|G|+1)} \Sigma(b,G)$ as there are $n(n-1)\cdots(n-|G|+1)$ many terms in $\Sigma(b,G)$.

We first estimate the order of $\Sigma(b,G)$. Suppose $G$ has an isolated edge, say $(1,2)$.
By definition, $\Sigma (b,G) = {{\sum}'_{i_1,\ldots, i_{|G|}}} \prod_{(\mu,\nu)\in G} b_{i_\mu i_\nu}$ with the summation indices $i_1,\ldots,i_{|G|}$ required to be distinct.
If we relax the constraint for $i_1,i_2$, i.e., $i_1,i_2$ can freely take values from $1,\ldots,n$ while $i_3,\ldots,i_{|G|}$ are still required to be distinct, then the sum will be 0 as $\sum b_{ij} = 0$. But then we need to subtract back the terms which were originally not in $\Sigma (b,G)$ and came in because of relaxing the constraints on $i_1,i_2$. These terms are sum of a number of $\Sigma(b,G')$, where $G'$ is the graph obtained from $G$ by identifying vertex 1 or 2 or both with some vertex in $3,\ldots,|G|$.
In general, if $G$ has multiple isolated edges, $\Sigma (b,G)$ can be written as a linear combination of $\Sigma(b,G')$, where $G'$ can be obtained by sequentially identifying the vertices of isolated edges with other vertices in the graph, and $G'$ no longer has any isolated edge. Note that
$$|\Sigma(b,G')| = \left|  \dsum_{ i_1,\ldots, i_{|G'|}} \prod_{(\mu,\nu)\in G'} b_{i_\mu i_\nu} \right|\leq 
\prod_{(\mu,\nu)\in G'} \left(\sum_{i_1,\ldots, i_{|G'|}} |b_{i_\mu i_\nu}|^r \right)^{\frac{1}{r}},$$
where we have used the fact that for $C_{ij}\geq 0$, $1\leq i\leq I$, $1\leq j\leq J$,
$$\prod_{i=1}^I \left( \sum_{j=1}^J C_{ij}^r \right) \geq \left(\sum_{j=1}^J\prod_{i=1}^I C_{ij}\right)^r.$$
Since $\sum_{i_1,\ldots, i_{|G'|}} |b_{i_\mu i_\nu}|^r = n^{|G'|-2}\sum |b_{ij}|^r$ and by (B2), it is $O\left( n^{|G'|-r}\|b\|^r\right)$,
where $\|b\|$ is defined as $(\sum b_{ij}^2)^{1/2}$,
we have $|\Sigma(b,G')| = O\left(n^{|G'|-r}\|b\|^r\right)$.

Let $s$ be the number of isolated edges in $G$. Note that if $(1,2)$ is an isolated edge, identifying $1$ or $2$ with some vertex in $3,\ldots,|G|$ reduces the number of vertices by 1, and the number of isolated edges by at most 2. Identifying both $1$ and $2$ with some vertices in $3,\ldots,|G|$ reduces the number of vertices by 2, and the number of isolated edges by at most 3. Hence in either way, when the number of vertices is reduced by 1, the number of isolated edges is reduced by at most 2. Since $G'$ has no isolated edge, $|G'|\leq |G| - \lceil\frac{s}{2}\rceil$.
Combining the discussions above:
\begin{equation}\label{eq:orderb}
\Sigma (b,G) = O\left(n^{|G|- \lceil\frac{s}{2}\rceil-r}\cdot \|b\|^r\right).
\end{equation}

Next, we estimate the order of $\Sigma(a,G)$. Without loss of generality, we can suppose $\max |a_{ij}| = 1$.
Then, if $G$ is connected, by suppressing redundant edges in $G$ to a spanning tree $T$, we have:
$$\Sigma (|a|,G) := \dsum_{i_1,\ldots, i_{|G|}} \prod_{(\mu,\nu)\in G} |a_{i_\mu i_\nu}|\leq \dsum_{i_1,\ldots, i_{|G|}} \prod_{(\mu,\nu)\in T} |a_{i_\mu i_\nu}|.$$
Here $|a|$ means taking the absolute value of each $a_{ij}$.
Recall assumption (A1): $\max_i\sum_j |a_{ij}| = O(\max|a_{ij}|\cdot t_n)$.
Hence by sequentially summing over the index corresponding to a leaf,
$$\Sigma (|a|,G) \leq \dsum_{i_1,\ldots, i_{|G|}} \prod_{(\mu,\nu)\in T} |a_{i_\mu i_\nu}|\leq n t_n^{|G|-1}.$$
If $G$ has $c$ connected components,
we can relax the summation constraint ``$i_1,\ldots,i_{|G|}$ are distinct" to ``the indices within each component are distinct", which implies
$$\Sigma (|a|,G)\leq n^c t_n^{|G|-c}\leq  n^c t_n^{|G|}.$$
If among these $c$ connected components, $s$ of them are isolated edges,
then similar to the previous argument, using $\sum a_{ij}=0$, $\Sigma (a,G)$ can be written
as a linear combination of $\Sigma(a,G')$, where $G'$ can be obtained by sequentially identifying the vertices of isolated edges with other vertices in the graph, and $G'$ no longer has any isolated edge.
Note that if $(1,2)$ is an isolated edge, identifying $1$ or $2$ with some vertex in $3,\ldots,|G|$
reduces the number of connected components by 1 and the number of isolated edges by 1 or 2. Identifying both $1$ and $2$ with some vertices in $3,\ldots,|G|$ may reduce the number of isolated edges by 3, but in such a case the number of connected components is reduced by 2. Hence in either case, when the number of connected components is reduced by 1, the number of isolated edges is reduced by at most 2.
Because $G'$ no longer has any isolated edge, it has at most $c-\lceil\frac{s}{2}\rceil$ connected components.
 Therefore, 
$$\Sigma (a,G) = O\left( n^{c-\lceil\frac{s}{2}\rceil} t_n^{|G|} \right).$$
Recall assumption (A2): $\liminf \sum a_{ij}^2/(n\max a_{ij}^2 )>0$, which implies
\begin{equation}\label{eq:ordera}
\Sigma (a,G) = O\left(  n^{c-\lceil\frac{s}{2}\rceil-\frac{r}{2}} t_n^{ |G|}\cdot \|a\|^r \right).
\end{equation}
By considering the number of edges: $2(c-s)+s\leq r\Leftrightarrow c\leq \frac{r}{2} + \frac{s}{2}$,
we know
$$\Sigma (a,G)\leq n^{-1/2} t_n^{ |G|}\cdot \|a\|^r = o\left( \|a\|^r\right),$$
unless every connected component has at most 2 edges and $\lceil\frac{s}{2}\rceil =\frac{s}{2}$.
Together with $\Sigma (b,G) = O\left(n^{|G|- \lceil\frac{s}{2}\rceil-r}\cdot \|b\|^r\right)$ deducted previously, we have
$$ \frac{1}{n(n-1)\cdots(n-|G|+1)} \Sigma(b,G)\Sigma(a,G) = o\left( \frac{\|a\|^r  \|b\|^r}{n^r} \right),$$
unless $s=0$ and every connected component has at most 2 edges, which is then equivalent to all connected components having exactly 2 edges.

We now claim that $w_n \gtrsim \frac{\|a\| \|b\|}{n}$. Hence in the expansion \eqref{eq:expmoment} of $\E\left(\sum a_{ij}B_{ij}\right)^r$, the sum corresponding to these graphs are negligible compared to $w_n^r$.
To see the claim, note that $\sum a_{ij} = 0$ implies $\sum' a_{ij}a_{ik} = \sum a_{i+}^2 - \sum a_{ij}^2$, and similarly $\sum' b_{ij}b_{ik} = \sum b_{i+}^2 - \sum b_{ij}^2$. Hence,
\begin{equation}\label{eq:w_n}\begin{aligned}
w_n^2 &= \frac{4(n-2)}{n^4}\left(\sum a_{i+}^2\right)\left(\sum b_{i+}^2\right)\\
&\quad + \frac{2}{n^2} \left( \sum a_{ij}^2 - \frac{2}{n} \sum a_{i+}^2\right)\left( \sum b_{ij}^2 - \frac{2}{n} \sum b_{i+}^2\right)
+ \frac{4}{n^3} \left(\sum a_{ij}^2\right)\left(\sum b_{ij}^2\right).
\end{aligned}\end{equation}
From (A1), (A2), and (B1), we know that the second term in the right-hand side of \eqref{eq:w_n} is $ \gtrsim \frac{\|a\|^2 \|b\|^2}{n^2}$.
Hence the third term in \eqref{eq:w_n} is negligible compared to the second term.
Moreover, since ${\rm Var}\left(\sum a_{ij}B_{ij}\right)$ has the following expression \cite[Equation (2.6)]{Pham1989permu}:
\begin{equation*}\label{eq:Vara_ijB_ij}\begin{aligned}
{\rm Var}\left(\sum a_{ij}B_{ij}\right) &= \frac{4}{(n-1)(n-2)^2}\left(\sum a_{i+}^2\right)\left(\sum b_{i+}^2\right)\\
&\quad +\frac{2}{n(n-3)}\left( \sum a_{ij}^2 - \frac{2n-5}{(n-2)^2} \sum a_{i+}^2\right)\left( \sum b_{ij}^2 - \frac{2n-5}{(n-2)^2} \sum b_{i+}^2\right),
\end{aligned}\end{equation*}
we have $w_n^2/{\rm Var}(\sum a_{ij}B_{ij}) \to 1$.

Now consider the remaining graphs $G$ consisting of connected components $G_1,\ldots,G_p$, each of which has exactly 2 edges.
Hence necessarily, $r=2p$ is even in order that $\E\left[ \left(\frac{\sum a_{ij}B_{ij}}{{\rm Var}(\sum a_{ij}B_{ij})} \right)^r \right]$ has a non-zero limit.
Each $G_q$ has two possible configurations\footnote{the configuration of a graph is the blank graph obtained by disregarding the direction of all edges}, either having 2 vertices and 2 edges forming a loop, or having 3 vertices and 2 edges forming a tree.
For the loop configuration, $\Sigma(a,G_q) = \sum'a_{ij}^2 = \sum a_{ij}^2$.
For the tree configuration, $\Sigma(a,G_q) = \sum' a_{ij}a_{ik} =  \sum a_{ij}a_{ik}$.
Since $\Sigma(a,G) = \sum_{i_1,\ldots, i_{|G|}}' \prod_{(\mu,\nu)\in G} a_{i_\mu i_\nu}$,
if we relax the constraint that $i_1,\ldots, i_{|G|}$ are distinct,
then we get $\Sigma(a,G_1\cup \cdots\cup G_p) = \prod_{q=1}^p \Sigma(a,G_q) + R_1$.
The difference $R_1$ is the sum of a number of $\Sigma(a,G')$, where $G'$ is obtained from identifying vertices of $G_1\cup \cdots\cup G_p$.
Since $G'$ has at most $p-1$ connected components, by \eqref{eq:ordera}, $\Sigma(a,G')=O\left(n^{p-1-\frac{r}{2}}t^{|G|}_n \cdot \|a\|^r\right)=O\left(\frac{t^{|G|}_n}{n} \cdot \|a\|^r\right)$.
Similarly, we can relax the constraint that $i_1,\ldots, i_{|G|}$ are distinct in $\Sigma(b,G_1\cup \cdots\cup G_p)$,
which leads to $\Sigma(b,G) = \prod_{q=1}^p \Sigma(b,G_q) + R_2$ and the difference $R_2$ is a sum of a number of $\Sigma(b,G')$.
Since $|G'|\leq |G|-1$, by \eqref{eq:orderb}, $\Sigma(b,G')=O\left(n^{|G|-r-1}\cdot \|b\|^r\right)$.
Note that \eqref{eq:orderb} and \eqref{eq:ordera} also give
$\Sigma(a,G)=O\left(t^{|G|}_n \cdot \|a\|^r\right)$, $\Sigma(b,G)=O\left(n^{|G|-r}\cdot \|b\|^r\right)$.
Hence together with $w_n\gtrsim \frac{\|a\|\cdot  \|b\|}{n}$,
$$\begin{aligned}
\frac{1}{n(n-1)\cdots(n-|G|+1)} \Sigma(b,G)\Sigma(a,G) &= \frac{(n-|G|)!}{n!}\prod_{q=1}^p \Sigma(a,G_q)\Sigma(b,G_q) + o(w_n^r)\\
&=\frac{1}{n^{|G|}}\prod_{q=1}^p \Sigma(a,G_q)\Sigma(b,G_q) + o(w_n^r).
\end{aligned}$$
The second equality in the above display follows as $\frac{(n-|G|)!}{n!} - \frac{1}{n^{|G|}} = O\left(\frac{1}{n^{|G|+1}}\right)$ and \\ $\prod_{q=1}^p \Sigma(a,G_q)\Sigma(b,G_q) = O\left(n^{|G|-r}t^{|G|}_n \cdot \|a\|^r\|b\|^r \right)$.
Therefore, \eqref{eq:expmoment} reduces to
\begin{equation*}\begin{aligned}
\E\left(\frac{\sum a_{ij}B_{ij}}{w_n} \right)^r &=\frac{1}{w_n^r}\sum_{G\in \tilde{G}(r)}\frac{1}{n^{|G|}}\prod_{q=1}^p \Sigma(a,G_q)\Sigma(b,G_q) + o(1),
\end{aligned}\end{equation*}
where $\tilde{G}(r)$ contains the graphs that consists of connected components $G_1,\ldots,G_{p}$, each of which has exactly 2 edges.

Suppose $i$ of $G_1,\ldots,G_{p}$ have a loop configuration and $p-i$ of them have a tree configuration. Then $|G| = 2i+3(p-i)$ and
$$\begin{aligned}
\frac{1}{n^{|G|}}\prod_{q=1}^p \Sigma(a,G_q)\Sigma(b,G_q)&= \frac{1}{n^{3p-i}} \left(\sum a_{ij}^2 \right)^i\left(\sum b_{ij}^2 \right)^i
\left( {\sum}'a_{ij} a_{ik}\right)^{p-i}\left( {\sum}'b_{ij} b_{ik}\right)^{p-i}\\
&=\frac{1}{n^{3p-i}} \xi^i \eta^{p-i},
\end{aligned}$$
where $\xi :=  \left(\sum a_{ij}^2 \right) \left(\sum b_{ij}^2 \right)$ and $\eta := \left({\sum}'a_{ij} a_{ik}\right)\left({\sum}'b_{ij} b_{ik}\right)$.

The number of graphs having $i$ loops and $p-i$ trees is given by:
$$\frac{\tbinom{2p}{2}\tbinom{2p-2}{2}\cdots \tbinom{2}{2}}{i!(p-i)!}\cdot 2^i \cdot 4^{p-i} = \frac{(2p)!2^{p-i}}{i!(p-i)!},$$
where we first divide the $2p$ different edges into $i$ loops and $p-i$ trees, and then determine the orientation of each loop and tree.
Therefore,
$$\begin{aligned}
\sum_{G\in \tilde{G}(r)}\frac{1}{n^{|G|}}\prod_{q=1}^p \Sigma(a,G_q)\Sigma(b,G_q)&=\sum_{i=0}^p \frac{(2p)!2^{p-i}}{i!(p-i)!} \cdot \frac{1}{n^{3p-i}} \xi^i \eta^{p-i}\\
&=\frac{(2p)!(2\eta)^p}{n^{3p}p!}\sum_{i=0}^p \frac{p!}{i!(p-i)!} \left(\frac{\xi}{2\eta n}\right)^i\\
&=\frac{(2p)!(2\eta)^p}{n^{3p}p!} \left( 1+\frac{\xi}{2\eta n}\right)^p\\
&=\frac{(2p)!}{p!} \left( \frac{2\eta}{n^3}+\frac{\xi}{n^4}\right)^p\\
&=\frac{(2p)!}{2^p p!} \left( \frac{4\left({\sum}'a_{ij} a_{ik}\right)\left({\sum}'b_{ij} b_{ik}\right)}{n^3}+\frac{2\left(\sum a_{ij}^2 \right) \left(\sum b_{ij}^2 \right)}{n^4}\right)^p\\
&=\frac{(2p)!}{2^p p!} w_n^{2p}.
\end{aligned}$$
Note that $\frac{(2p)!}{2^p p!} $ is the $2p$-th moment of $N(0,1)$. Hence the proof is completed.\qed

\section{Further Simulations}\label{sec:further_simu}
In this section we provide further simulation studies to support the major results provided in the main paper.
\subsection{Validity of Theorem~\ref{thm:asympnull}}
Figure~\ref{nullclt} shows the histogram of $\frac{\hat{\eta}}{\sqrt{{\rm Var}(\hat{\eta}|\mathcal{F}_n)}}$ from 20000 independent replications constructed using the directed 1-NN graph and the discrete kernel.
Here we take $M=3$ and the three distributions are $P_1=P_2=P_3\equiv N(\mathbf{0},I_d)$ with equal sample sizes $n_i$ and $d=2$.
The red curve is the standard normal density function.
It can be seen from the plots that the empirical distribution is already close to the standard normal distribution for $n_i=100$. The approximation gets even better when $n_i=1000$, and when $n_i=10000$, the empirical distribution is almost identical to the standard normal.

\begin{figure}
    \centering
    \includegraphics[width = 1\textwidth]{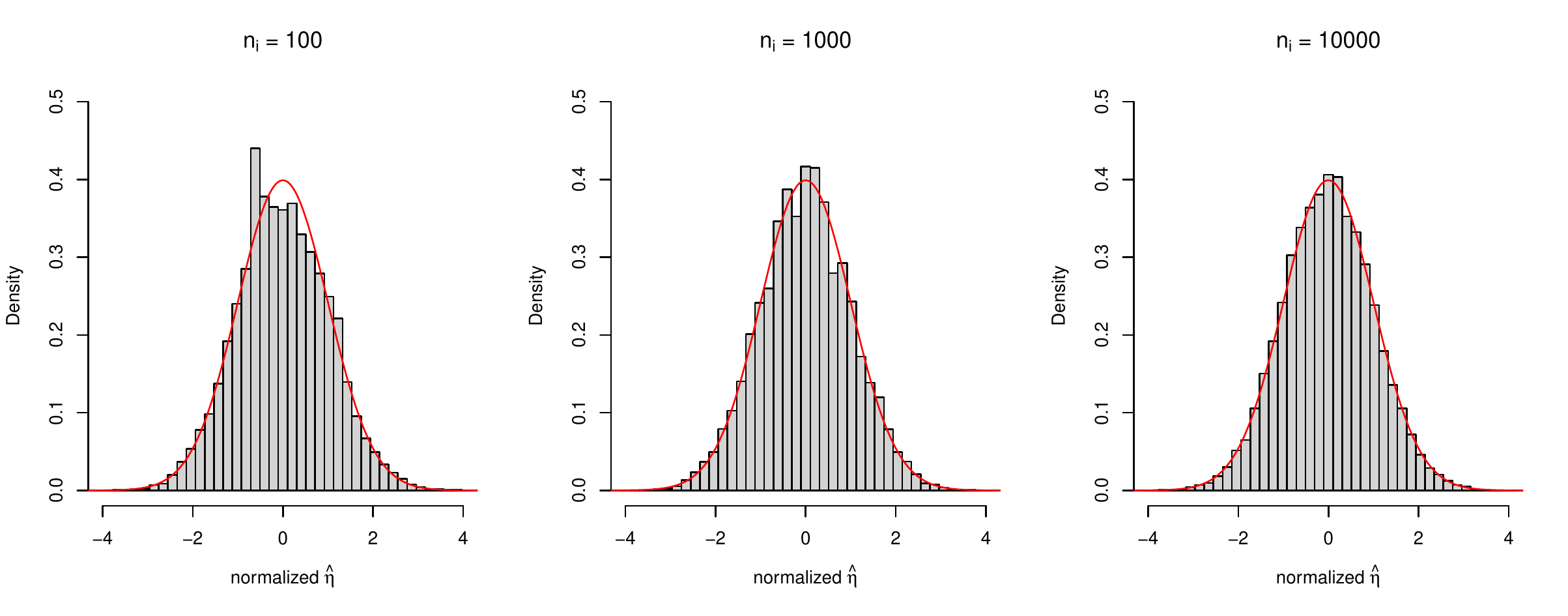}
    \caption{Histograms of $\frac{\hat{\eta}}{\sqrt{{\rm Var}(\hat{\eta}|\mathcal{F}_n)}}$ as $n_1 = n_2 = n_3$ vary. The red curves are the standard normal density.}
    \label{nullclt}
\end{figure}

\subsection{Validation of Theorem~\ref{thm:detection_threshold}}\label{sec:check_detection_threshold}
Here we empirically check the validity of Theorem~\ref{thm:detection_threshold} on the detection threshold of our method.
\begin{figure}
    \centering
    \includegraphics[width = 1\textwidth]{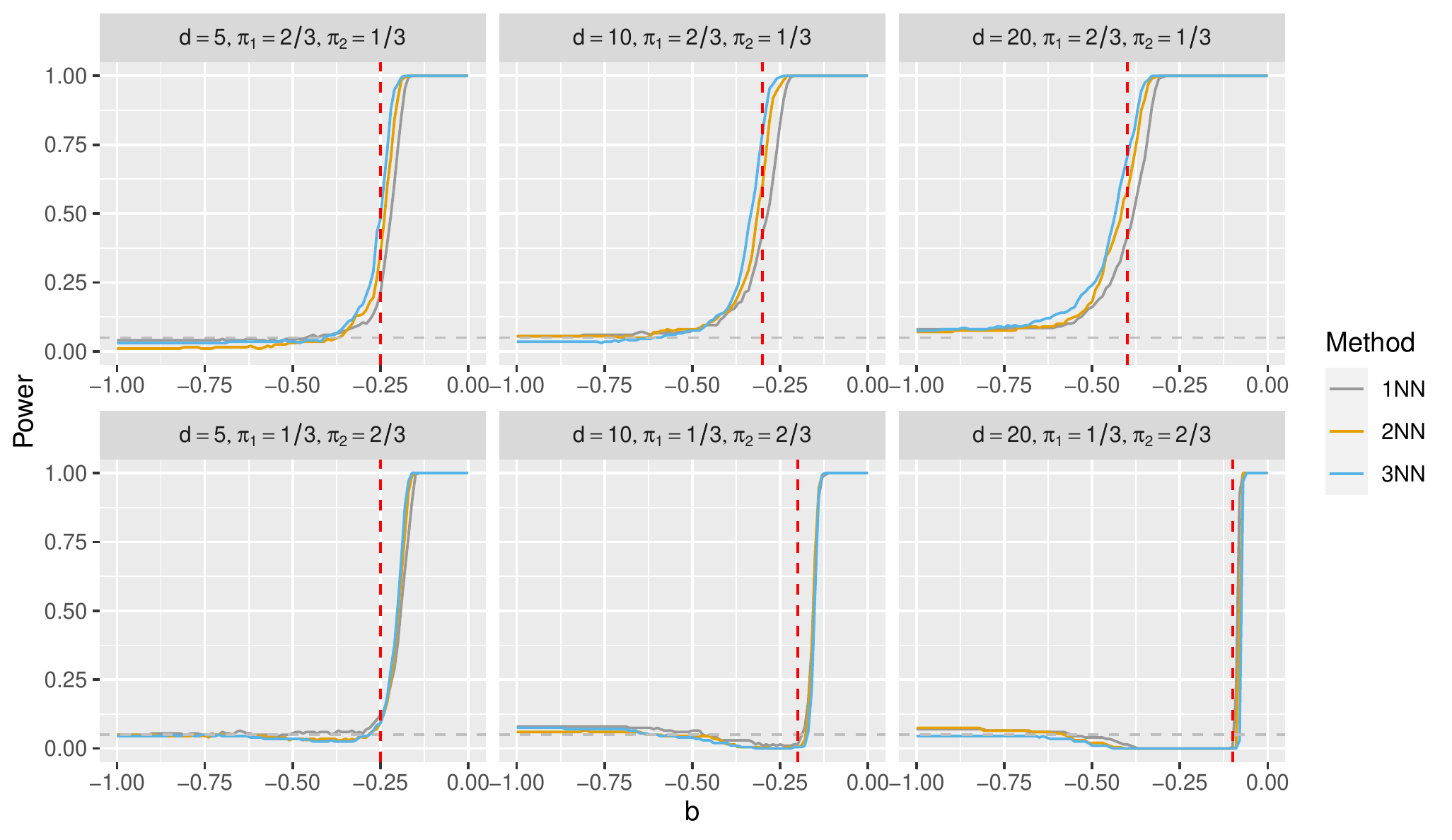}
    \caption{Empirical power of our method with $n_1$ samples from $P_1=N\left(\mathbf{0},3^2\cdot I_d\right)$ and $n_2$ samples from $P_2=N\left(\mathbf{0},(3+2 n^b)^2\cdot I_d\right)$. $(n_1,n_2)$ is taken as either $(2\cdot 10^4,10^4)$ or $(10^4,2\cdot 10^4)$, corresponding to $(\pi_1,\pi_2)=(\frac{2}{3},\frac{1}{3})$ or $(\frac{1}{3},\frac{2}{3})$. The red vertical line shows the detection threshold predicted by Theorem~\ref{thm:detection_threshold}.}
    \label{fig:threshold}
\end{figure}
Although Theorem~\ref{thm:detection_threshold} was proved in the Poissonized setting, our simulation results suggest that the conclusions also hold true in the usual non-Poissonized regime. We consider $M=2$ with $n_1$ samples from $P_1=N\left(\mathbf{0},3^2\cdot I_d\right)$ and $n_2$ samples from
$P_2=N\left(\mathbf{0},(3+2 n^b)^2\cdot I_d\right)$ for $b\in (-1,0)$, $n=n_1+n_2$. Here
$(n_1,n_2)$ is taken as either $(2\cdot 10^4,10^4)$ or $(10^4,2\cdot 10^4)$, corresponding to $(\pi_1,\pi_2)=(\frac{2}{3},\frac{1}{3})$ or $(\frac{1}{3},\frac{2}{3})$.
Figure~\ref{fig:threshold} shows the empirical power of the asymptotic test \eqref{eq:rej_region} averaged over 200 replications.
The level of the test is set as $\alpha=0.05$ and  $k$-NN graphs are used with $k=1,2,3$.
The red vertical line shows the detection threshold predicted by Theorem~\ref{thm:detection_threshold}.
When $d=5$, the detection threshold is at $n^{-\frac{1}{4}}$. It can be seen that for either choice of $(\pi_1,\pi_2)$,
the power increases from 0.05 to 1 around $b=-\frac{1}{4}$.
For $d=10$ and $d=20$, the detection threshold depends on the sign of $a_{k,\theta_1}$ (see \eqref{eq:a}).
When $(\pi_1,\pi_2)=(\frac{2}{3},\frac{1}{3})$, $a_{k,\theta_1} > 0$, and the detection threshold is $n^{-\frac{1}{2}+\frac{2}{d}}$,
so the power increases from 0.05 to 1 around $b=-\frac{1}{2}+\frac{2}{d}$ (see the middle and right plots in the first row of Figure~\ref{fig:threshold}).
When $d$ is large, this threshold gets closer and closer to the parametric threshold $-\frac{1}{2}$ (see the top right plot in Figure~\ref{fig:threshold}).
When $(\pi_1,\pi_2)=(\frac{1}{3},\frac{2}{3})$, $a_{k,\theta_1} < 0$, and the detection threshold is $n^{-\frac{2}{d}}$,
so there is a rapid increase of the power from 0 to 1 around $b=-\frac{2}{d}$. In such a case, the limiting power for $b\in(-\frac{1}{2}+\frac{2}{d}, -\frac{2}{d})$ is 0 as predicted by Theorem~\ref{thm:detection_threshold}, and this is also supported by the last two plots in the second row of Figure~\ref{fig:threshold}, where the empirical power for $b\in(-\frac{1}{2}+\frac{2}{d}, -\frac{2}{d})$ is close to 0.

\subsection{Choice of $k$ for the $k$-NN Graph}\label{sec:choose_k}
The choice of $k$ for the $k$-NN graph may depend on the task at hand. From the following experiments we see that
for testing the equality of the $M$ distributions, the empirical criterion $k= 0.10n$, for samples with up to a few hundred observations~\citep{Petrie2016}, seems to provide a good choice. However, for estimating $\eta$ using its empirical version $\hat{\eta}$, a much smaller $k$ is recommended --- often $k=1$ works best. 

\noindent \textbf{Choice of $k$ in testing}: In Figure~\ref{diffK} we show the empirical power (over 1000 replications) of our test statistics, when the level is set at 0.05, using different $k$-NN graphs.

\noindent For $M=3$, the three distributions we consider are:
\begin{enumerate}
\item Normal location problem: $P_1 = N(\mathbf{0},I_d)$, $P_2 = N(0.1\cdot \mathbf{1},I_d)$, $P_3=N(0.2\cdot \mathbf{1},I_d)$.
\item Normal scale problem: $P_1=N(\mathbf{0},I_d)$, $P_2=N(\mathbf{0},1.5\cdot I_d)$, $N(\mathbf{0},2\cdot I_d)$.
\item Non-Gaussian problem with $t$-distribution: $P_1$ has each entry following independent $t(1)$ with noncentrality parameter $\delta$; $P_2$ and $P_3$ have each entry following independent $t(1)$. The dimension $d$ is set to be 16.
\end{enumerate}
For $M=5$, the distributions we consider are:
\begin{enumerate}
\item Normal location problem: $P_1 = N(\mathbf{0},I_d)$, $P_2 = N(0.05\cdot \mathbf{1},I_d)$, $P_3=N(0.1\cdot \mathbf{1},I_d)$,
$P_4=N(0.15\cdot \mathbf{1},I_d)$, $P_5=N(0.2\cdot \mathbf{1},I_d)$.
\item Normal scale problem: $P_1=N(\mathbf{0},I_d)$, $P_2=N(\mathbf{0},1.25\cdot I_d)$, $P_3=N(\mathbf{0},1.5\cdot I_d)$, $P_4=N(\mathbf{0},1.75\cdot I_d)$, $P_5=N(\mathbf{0},2\cdot I_d)$.
\item Non-Gaussian problem with $t$-distribution: $P_1$ has each entry following independent $t(1)$ with noncentrality parameter $\delta$; $P_i$, $i=2,3,4,5$ have each entry following $t(1)$. The dimension is set to be 16.
\end{enumerate}
Here $n_i$, for $i=1,\ldots,M$, are set to be equal. Different combinations of $n$ and $d$ are considered in our simulation experiments.
It can be seen from Figure~\ref{diffK} that the empirical criterion $k=0.1n$ indeed provides reasonably good performance: For the normal scale problem with discrete kernel $k=0.1n$ is close to the optimal $k$,
while for the problem with $t$-distribution the power continues to increase until $k\approx 0.2n$. For the normal location problem and the normal scale problem with kernel $K_2$ (recall that $K_2(1,1) = 10$, $K_2(2,2) = K_2(3,3) = 1$, $K_2(i,j)=0$,for $i\neq j$), the power still increases as $k$ increases even after $k\approx 0.5n$ (see the plots corresponding to $n=300$ in Figure~\ref{diffK}). \newline

\begin{figure}
    \centering
    \includegraphics[width = 1\textwidth]{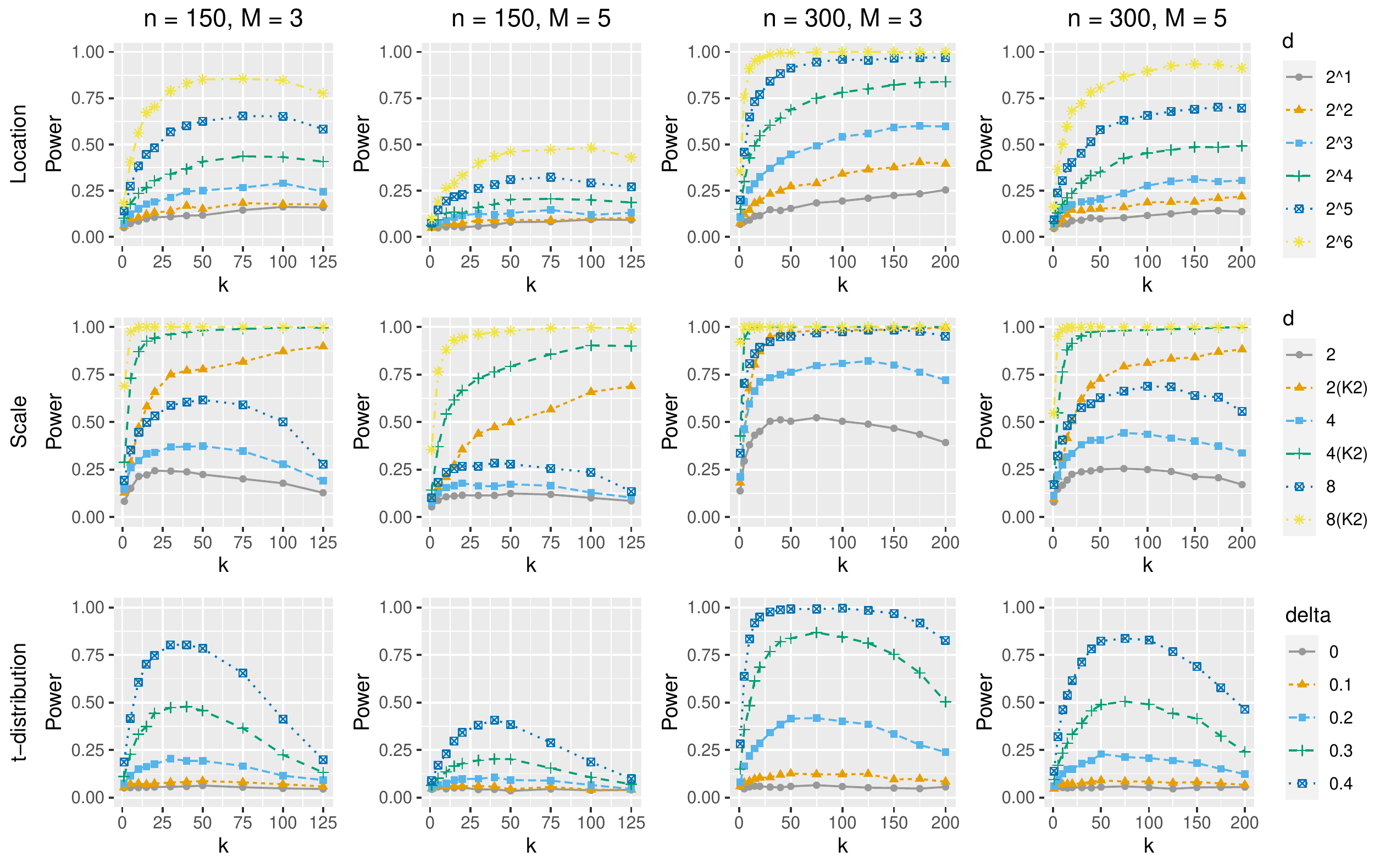}
    \caption{Power of the tests using different $k$-NN graphs.}
    \label{diffK}
\end{figure}

\noindent \textbf{Choice of $k$ in estimating $\eta$}: Although power increases as $k$ increases until or even after $k=0.1n$, the estimate of $\eta$ may no longer be accurate with a large $k$, since the $k$-NNs of a point may no longer be close to that point.
When estimating $\eta$, we face a bias-variance trade-off. When $\eta=0$, $\hat{\eta}$ is unbiased, so only the variance needs to be controlled.
When $\eta$ is large, the variance is of the order $n^{-1}$, but the bias can be of the order $n^{-1/d}$, a typical distance between a point and its nearest neighbor, and is the dominating term when the dimension gets large. This issue of the bias could be alleviated by choosing a small $k$ --- as in other $k$-NN applications, a smaller $k$ usually has less bias (see Figure~\ref{eta_diffK}).

Another observation is that, in both simulated and real data, as $k$ increases, the actual value of $\hat{\eta}$ often decreases. In the most extreme case where $k=n-1$, $\hat{\eta}$ is exactly 0.
For the above reasons, we would suggest that when $\eta$ or the dimension $d$ are suitably large, $k$ should be chosen to be as small as possible; often $k=1$ would be the best choice. However, when $\eta$ is close to 0 and the dimension $d$ is small, $k$ can be suitably increased to reduce the variance of $\hat \eta$.

Figure~\ref{eta_diffK} shows the mean of $\hat{\eta}$ under different settings, where we consider
\begin{enumerate}
\item $\eta = 1$:
\begin{enumerate}
\item $M =2$, $P_1 = {\rm uniform}[0,1]^d$, $P_2 = {\rm uniform}[0,1]^d + e_1$,
\item $M=3$, $P_1 = {\rm uniform}[0,1]^d$, $P_2 = {\rm uniform}[0,1]^d + e_1$, $P_3 = {\rm uniform}[0,1]^d + e_2$,
\end{enumerate}
\item $\eta = 0.5$:
\begin{enumerate}
\item $M =2$, $P_1 = {\rm uniform}[0,1]^d$, $P_2 = {\rm uniform}[0,1]^d + \frac{1}{2}e_1$,
\item $M=3$, $P_1 =P_2= {\rm uniform}[0,1]^d$, $P_3 = {\rm uniform}[0,1]^d + e_1$,
\end{enumerate}
\item $\eta = 0.1$:
\begin{enumerate}
\item $M=2$, $P_1 = {\rm uniform}[0,1]^d$, $P_2 = {\rm uniform}[0,1]^d + 0.1e_1$,
\item $M=3$, $P_1 =P_2= {\rm uniform}[0,1]^d$, $P_3 = {\rm uniform}[0,1]^d + 0.2e_1$.
\end{enumerate}
\end{enumerate}
Here $e_i$ denotes an all-zero vector except a 1 at the $i$-th entry.
\begin{figure}
    \centering
    \includegraphics[width = 1\textwidth]{}
    \caption{The mean of $\hat{\eta}$ for different values of $\eta$ as $k$ (in the $k$-NN graph) varies (for different choices of $M,n$, and $d$), computed using $10^4$ independent replications.}
    \label{eta_diffK}
\end{figure}
For $M=2$, $\pi_1=\pi_2=1/2$, and for $M=3$, $\pi_1=\pi_2=\pi_3=1/3$.
We consider $(n,d)\in\{60,300\}\times\{2,200\}$. A larger $d$ indicates a higher noise level. In each of our $10^4$ replications, we have $n_i=n\pi_i$ samples from $P_i$, and the mean of $\hat{\eta}$ is reported.

We observe in Table \ref{eta_diffK} that with a discrete kernel, $\hat{\eta}$ tends to underestimate $\eta$, and the bias increases as $k$ increases. Here $k=1$ gives the most accurate estimate in the sense that it has the least bias in all scenarios.

The empirical mean squared error of $(\hat{\eta}-\eta)^2$ is given in Figure~\ref{diffKmse}.
\begin{figure}
    \centering
    \includegraphics[width = 1\textwidth]{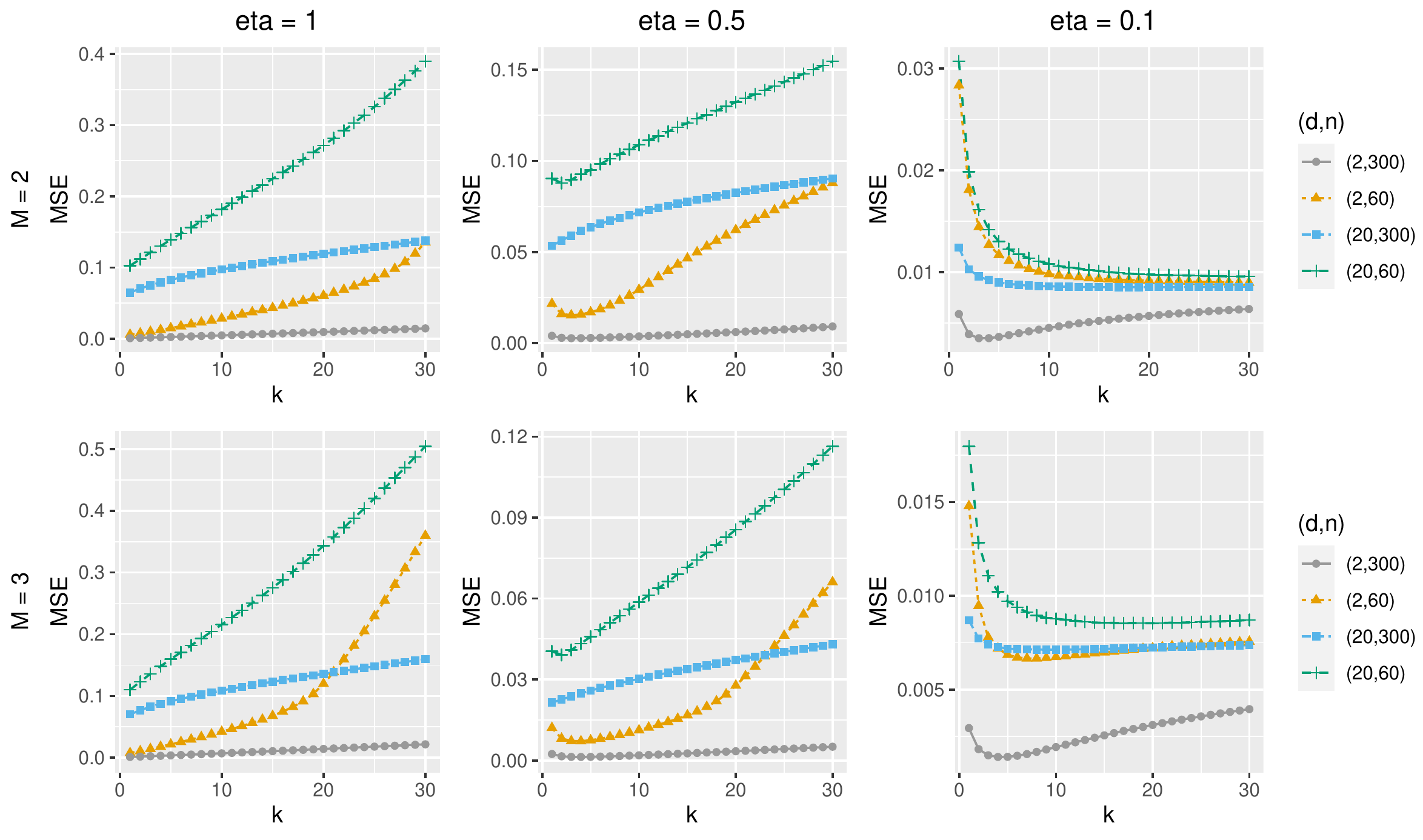}
    \caption{The mean squared error of $\hat{\eta}$ for different values of $\eta$ as $k$ (in the $k$-NN graph) varies (for different choices of $M,n$, and $d$), computed using $10^4$ independent replications.}
    \label{diffKmse}
\end{figure}
It can be seen that when $\eta=1$, the mean squared error (MSE) strictly increases as $k$ increases; when $\eta=0.5$, $k=1$ is also close to the optimal $k$. When $\eta=0.1$, which is close to 0, the MSE first decreases and then increases as we increase $k$. Moreover, when $\eta = 0.1$, the MSE is small for all choices of $k$ compared to the previous examples where $\eta=0.5$ or $1$. When $d=20$, it is clear from Figures~\ref{eta_diffK} and \ref{diffKmse} that the bias dominates the variance in its contribution to the MSE. 
When we get closer to the null $\eta=0$ (where $\hat{\eta}$ is unbiased), the bias decreases to 0, so the MSE also decreases dramatically. For the case $\eta=0.1$, one may slightly increase $k$ to 4 or 5 to reduce the variance and obtain the optimal MSE.

\subsection{Validation of Theorem~\ref{thm:DistFree}}\label{sec:validation_distfree}
Here we empirically verify our theoretical results in Theorem~\ref{thm:DistFree}, which states that under ${\rm H}_0$, the (conditional) variance of $\hat{\eta}$ converges to a distribution-free constant not depending on the common distribution, thus yielding an asymptotic test using this distribution-free variance. We first examine the convergence of $\tilde{g}_i$ to $g_i$.
\begin{figure} 
    \centering
    \includegraphics[width = 1\textwidth]{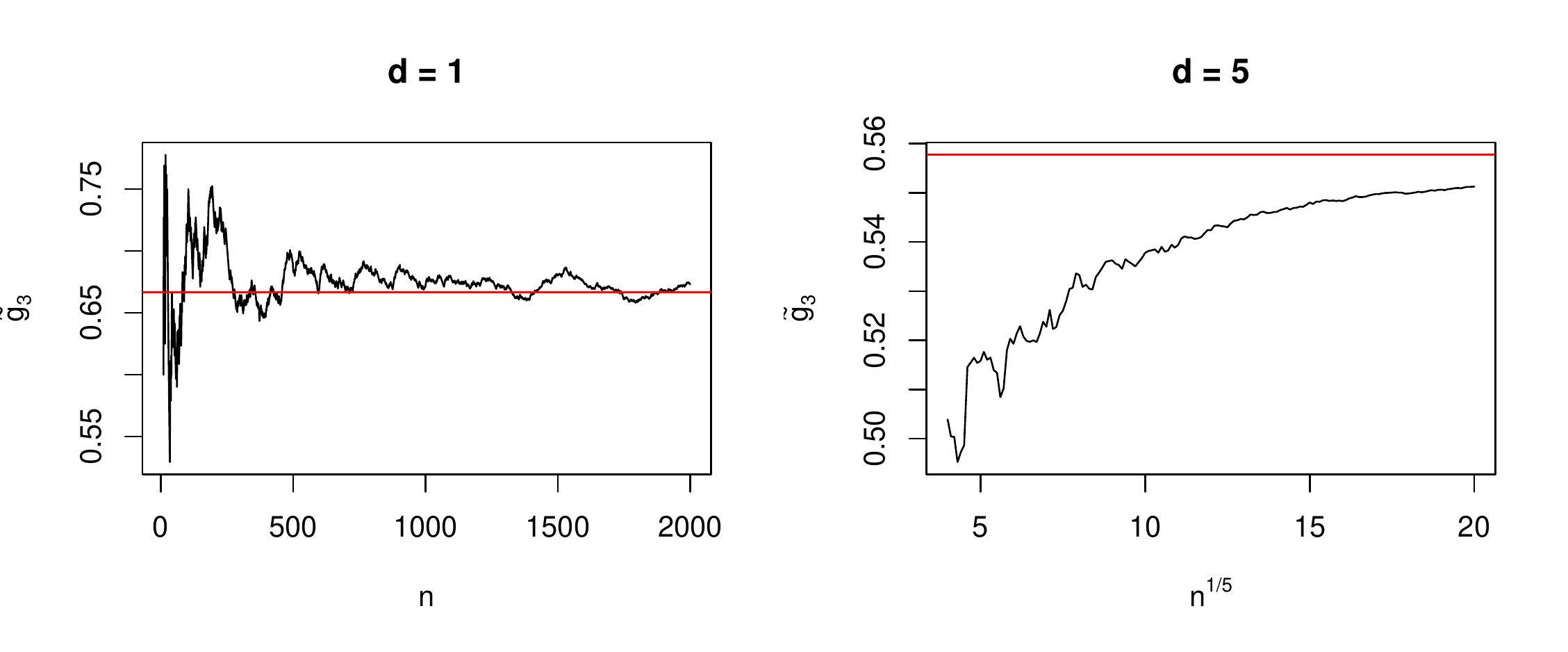}
    \caption{The computed $\tilde{g}_3$  as the sample size $n$ varies. The red line shows $g_3$.}
    \label{tilde_g3}
\end{figure}
Consider $n$ observations from three identical distributions $P_1=P_2=P_3 \equiv N(\mathbf{0},I_d)$. Each observation has probability $1/3$ to be sampled from $P_i$, $i=1,2,3$; so $\pi_1=\pi_2=\pi_3=1/3$.
With a directed 1-NN graph,
$g_1=g_2=1$, and $g_3$ depends on $d$. The exact value of $g_3$ can be obtained from \citet{Henze1986mutualNN}.
When $d=1$, it can be seen from the left panel of Figure~\ref{tilde_g3} that $\tilde{g}_3$ converges to $g_3$ as $n$ increases. When $n=2000$, the difference between $\tilde{g}_3$ and $g_3$ is only 0.006. However, the convergence becomes slower as the dimension $d$ becomes higher, as is seen from the right panel of Figure~\ref{tilde_g3}.
When $d=5$, the difference between $\tilde{g}_3$ and $g_3$ is 0.043 when $n=2000$, and it takes $n=3.2\times 10^6$ observations to bring the error down to 0.006.

Next, since both Theorems~\ref{thm:asympnull} and \ref{thm:DistFree} provide a CLT for $\hat{\eta}$, we compare the quality of the approximation given by the two theorems.
\begin{figure} 
    \centering
    \includegraphics[width = 1\textwidth]{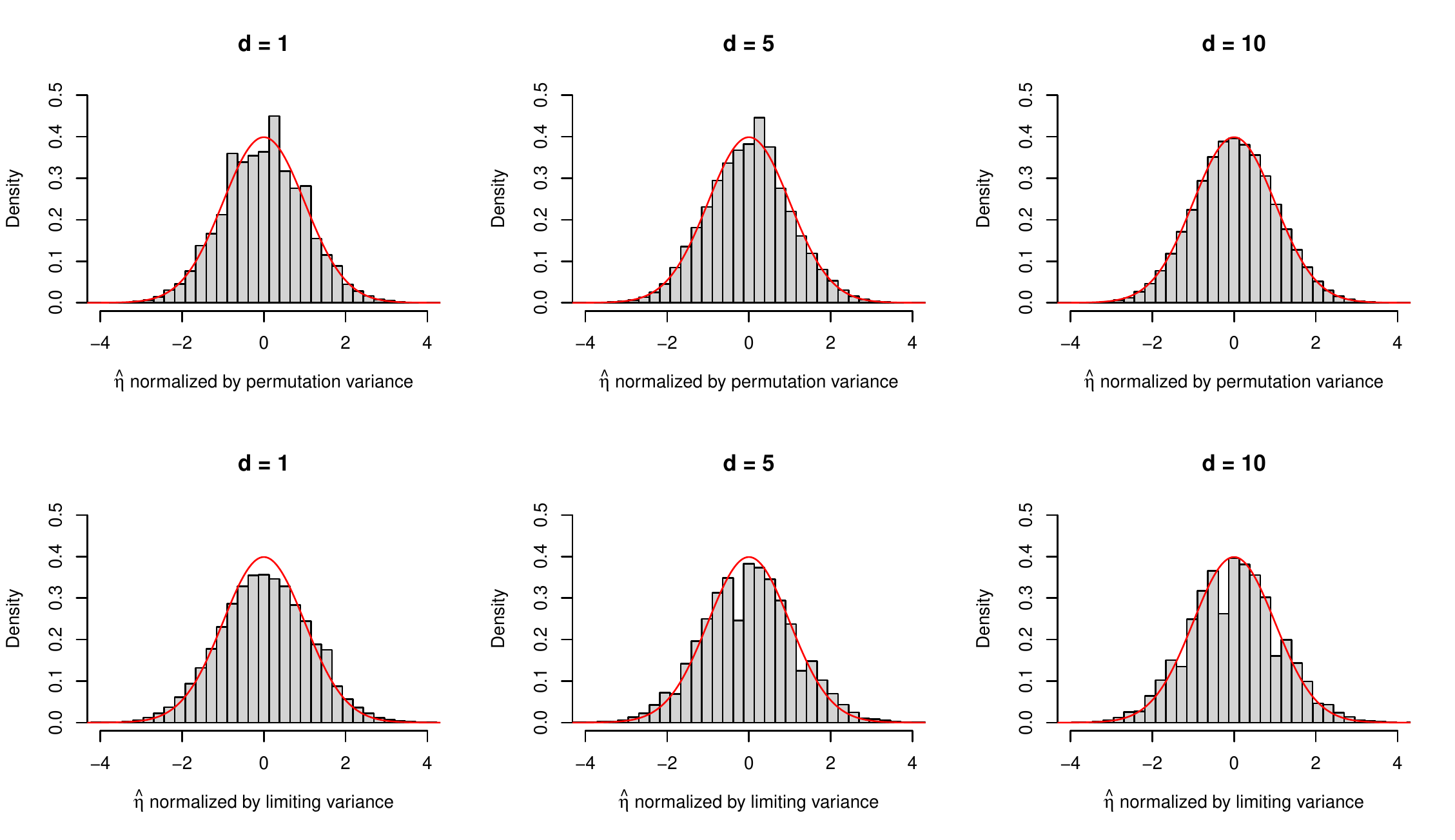}
    \caption{Histograms of normalized $\hat{\eta}$, either by the permutation variance in Theorem~\ref{thm:asympnull} or by the limiting variance in Theorem~\ref{thm:DistFree}. The red curves show the standard normal densities.}
    \label{Thm3vs4}
\end{figure}
We consider $n_1=n_2=n_3=150$, $P_1=P_2=P_3 \equiv N(\mathbf{0},I_d)$ and vary $d \in \{1,5,10\}$, and use a directed 1-NN graph to compute $\hat \eta$. Figure~\ref{Thm3vs4} shows the histograms of $\hat{\eta}$ normalized its limiting variance, i.e., $\frac{\sqrt{n}\hat{\eta}} {\sigma_{\mathcal{G},K,d,\pi}}$,
along with the histograms of $\hat{\eta}$ normalized by its permutation variance (as in Theorem~\ref{thm:asympnull}), i.e.,
$\frac{\hat{\eta}}{\sqrt{{\rm Var}(\hat{\eta}|\mathcal{F}_n)}}$.
The red curves are the standard normal densities. It can be seen that all the histograms are close to the standard normal density, and using the permutation variance seems to yield a better approximation than using the limiting variance. Therefore, we recommend using Theorem~\ref{thm:asympnull} over Theorem~\ref{thm:DistFree} for testing equality of distributions in practice.

\section*{Acknowledgement}
The authors would like to thank Bhaswar Bhattacharya and Nabarun Deb for a number of useful comments and references.

\bibliographystyle{chicago}
\bibliography{OT}

\end{document}